\newcommand{\cref}[1]{\autoref{#1}}
\newcommand{\remove}[1]{}
\newcounter{func}
\newcommand{\funref}[1]{\hyperref[#1]{f_{\ref*{#1}}}} 
\tikzset{black node/.style={draw, circle, fill = black, minimum size = 5pt, inner sep = 0pt}}
\tikzset{white node/.style={draw, circlternary_treese, fill = white, minimum size = 5pt, inner sep = 0pt}}
\tikzset{normal/.style = {draw=none, fill = none}}
\tikzset{lean/.style = {draw=none, rectangle, fill = none, minimum size = 0pt, inner sep = 0pt}}
\tikzset{diam/.style={draw, diamond, fill = black, minimum size = 7pt, inner sep = 0pt}}
\newcommand{\Ccal}{\mathcal{C}}
\newcommand{\Dcal}{\mathcal{D}}
\newcommand{\Gcal}{\mathcal{G}}
\newcommand{\Hcal}{\mathcal{H}}
\newcommand{\Mcal}{\mathcal{M}}
\newcommand{\Ocal}{\mathcal{O}}
\newcommand{\Pcal}{\mathcal{P}}
\newcommand{\Qcal}{\mathcal{Q}}
\newcommand\Rcal{\mathcal{R}}
\newcommand{\Scal}{\mathcal{S}}
\newcommand{\Tcal}{\mathcal{T}}
\newcommand{\Xcal}{\mathcal{X}}
\newcommand{\Nbbb}{\mathbb{N}}
\newcommand{\Rbbb}{\mathbb{R}}
\newcommand{\Sbbb}{\mathbb{S}}
\newcommand{\eqdef}{\stackrel{{\scriptsize\rm def}}{=}}
\definecolor{MidnightBlack}{rgb}{0.1,0.1,.34}
\definecolor{MidnightBlue}{rgb}{0.1,0.1,0.43}
\definecolor{Black}{rgb}{0,0, 0}
\definecolor{Blue}{rgb}{0, 0 ,1}
\definecolor{Red}{rgb}{1, 0 ,0}
\definecolor{White}{rgb}{1, 1, 1}
\definecolor{grey}{rgb}{.6, .6, .6}
\definecolor{Mygreen}{rgb}{.0, .7, .0}
\definecolor{Yellow}{rgb}{.55,.55,0}
\definecolor{Mustard}{rgb}{1.0, 0.86, 0.35}
\definecolor{applegreen}{rgb}{0.55, 0.71, 0.0}
\definecolor{darkturquoise}{rgb}{0.0, 0.81, 0.82}
\definecolor{celestialblue}{rgb}{0.29, 0.59, 0.82}
\definecolor{green_yellow}{rgb}{0.68, 1.0, 0.18}
\definecolor{crimsonglory}{rgb}{0.75, 0.0, 0.2}
\definecolor{darkmagenta}{rgb}{0.30, 0.0, 0.30}
\definecolor{magenta}{rgb}{0.50, 0.0, 0.50}
\definecolor{internationalorange}{rgb}{1.0, 0.31, 0.0}
\definecolor{darkorange}{rgb}{1.0, 0.55, 0.0}
\definecolor{ao}{rgb}{0.0, 0.5, 0.0}
\definecolor{awesome}{rgb}{1.0, 0.13, 0.32}
\definecolor{darkcyan}{rgb}{0.0, 0.50, 0.50}
\definecolor{violet}{rgb}{0.93, 0.51, 0.93}
\definecolor{brown}{rgb}{0.65, 0.16, 0.16}
\definecolor{orange}{rgb}{1.0, 0.65, 0.0}
\definecolor{darkgreen}{rgb}{0,.5,0}
\definecolor{Red}{rgb}{1, 0 ,0}
\definecolor{Blue}{rgb}{0, 0 ,1}
\newtheorem{theorem}{Theorem}[section]
\newaliascnt{question}{theorem}
\newaliascnt{lemma}{theorem}
\newtheorem{lemma}[lemma]{Lemma}
\newaliascnt{claim}{theorem}
\newtheorem{claim}[claim]{Claim}
\newaliascnt{invariant}{theorem}
\newaliascnt{proposition}{theorem}
\newtheorem{proposition}[proposition]{Proposition}
\newaliascnt{observation}{theorem}
\newtheorem{observation}[observation]{Observation}
\newaliascnt{corollary}{theorem}
\newaliascnt{definition}{theorem}
\newaliascnt{conjecture}{theorem}
\newaliascnt{counterexample}{theorem}
\newcommand{\hh}{\end{document}}
\newcommand{\idpr}{\mathsf{idpr}\xspace}
\newcommand{\bidim}{\mathsf{bidim}\xspace}%
\newcommand{\bidi}{\mathsf{bidi}}
\newcommand{\gall}{\mathcal{G}_{{\text{\rm  \textsf{all}}}}}
\newcommand{\gplanar}{\mathcal{G}_{\text{\rm  \textsf{planar}}}}
\newcommand{\gedgeapex}{\mathcal{G}_{\text{\rm  \textsf{edge-apex}}}}
\newcommand{\gprojective}{\mathcal{G}_{\text{\rm  \textsf{projective}}}}
\newcommand{\gsinglycrossing}{\mathcal{G}_{\text{\rm \textsf{singly-crossing}}}}
\newcommand{\hw}{\mathsf{hw}\xspace}
\newcommand{\tw}{\mathsf{tw}\xspace}
\newcommand{\p}{\mathsf{p}\xspace}
\newcommand{\size}{\mathsf{size}\xspace}
\newcommand{\psize}{\mathsf{psize}\xspace}
\newcommand{\cupall}{\pmb{\pmb{\bigcup}}}
\newenvironment{cproof}{\proof[Proof of claim]}{\endproof}
\newcommand{\ground}{\ensuremath{\mathsf{ground}}}
\newcommand{\torso}{\ensuremath{\mathsf{torso}}}
\newcommand{\poly}{\text{$\mathsf{poly}$}\xspace}
\newcommand{\inG}{\text{$\mathsf{inner}$}\xspace}
\newcommand{\outG}{\text{$\mathsf{outer}$}\xspace}
\newcommand{\bd}{\mathsf{bd}\xspace}
\newcommand{\lin}[1]{\langle #1\rangle}
\newcommand{\defi}[1]{{\emph{#1}}}
\title{Excluding Pinched Spheres}
\author{Laure Morelle\thanks{LIRMM, Univ Montpellier, CNRS, Montpellier, France. \\ Email: \texttt{laure.morelle@lirmm.fr}.}~$^{,}$\thanks{Supported by the ANR project  ELIT (ANR-20-CE48-0008-01), the French-German Collaboration ANR/DFG Project UTMA (ANR-20-CE92-0027), and by the Franco-Norwegian project PHC AURORA 2024 (Projet n°\! 51260WL).}\and
Evangelos Protopapas\thanks{Faculty of Mathematics, Informatics and Mechanics, University of Warsaw, Poland.\\ Email: \texttt{eprotopapas@mimuw.edu.pl}}~$^{,}$\thanks{Supported by the ERC project BUKA and the French-German Collaboration ANR/DFG Project UTMA (ANR-20-CE92-0027).}\and
Dimitrios M. Thilikos\thanks{LIRMM, Univ Montpellier, CNRS, Montpellier, France.\\  Email: \texttt{sedthilk@thilikos.info}.}~$^{,}$\thanks{Supported by the French-German Collaboration ANR/DFG Project UTMA (ANR-20-CE92-0027), the ANR project GODASse ANR-24-CE48-4377,  and by the Franco-Norwegian project PHC AURORA 2024-2025 (Projet n°\! 51260WL).}\and
\\ Sebastian Wiederrecht\thanks{KAIST, School of Computing, Daejeon, South Korea. \\ Email:
\texttt{wiederrecht@kaist.ac.kr}}}
\date{}
\begin{document}

\maketitle

\begin{abstract}
\noindent 
The \emph{pinched sphere}  is the pseudo-surface  $\Sbbb^{\circ}_0$ obtained by identifying two distinct points of the sphere.
We provide a structural characterization of graphs excluding an $\Sbbb^{\circ}_0$-embeddable graph as a minor.
Given a graph $G$ and a vertex set $X$, the \emph{bidimensionality} of $X$ in $G$ is the maximum $k$ such that $G$ contains the $(k\times k)$-grid as an $X$-rooted minor, i.e., there exists a minor model of the $(k \times k)$-grid in~$G$ such that every branchset of this model contains a vertex of $X$.
We prove that there is a function~$f$ such that, if a graph $G$ excludes an $\Sbbb^{\circ}_0$-embeddable graph $H$ as a minor, $G$ has a tree decomposition where each torso $G_{t}$ contains some set of vertices $X,$ whose bidimensionality in $G_{t}$ is at most $f(k)$  such that $G_{t}$ can be reduced to a graph embeddable in the projective plane by identifying vertices from $X$.
This result is optimal in the sense that every graph admitting such a tree decomposition must exclude some $\Sbbb^{\circ}_0$-embeddable graph as a minor.
An alternative interpretation of this result can be obtained by the fact that edge-apex graphs, i.e., graphs that can be made planar by removing an edge, are graphs embeddable in the pinched sphere.
Several consequences and variants of this min-max duality are discussed.
\end{abstract}

\medskip
\noindent \textbf{Keywords:} Pinched sphere,  Edge-apex graphs, Graph minors, Tree decompositions, Universal obstructions, Vertex identifications.

\thispagestyle{empty}

\newpage
\thispagestyle{empty}
\tableofcontents

\newpage
\section{Introduction}\label{secintroduction}

Let $\gall$ be the class of all finite graphs without loops or multiple edges.
A \defi{graph parameter} $\p:\gall\to\Nbbb$ is a function mapping graphs to non-negative integers. 
Given two graph parameters $\p,\p'$,  we write $\p\preceq \p'$ if there exists  some function $f:\Nbbb\to\Nbbb$
such that  for every graph $G$, $\p(G)≤f(\p'(G))$.
Also we say that $\p$ and $\p'$ are \defi{equivalent}, if  $\p\preceq \p'$ and  $\p\preceq \p'$ and we 
denote this by  $\p\sim\p'$.
\medskip


An important result in structural graph theory is the ``grid minor theorem'', proved by Robertson and Seymour in \cite{RobertsonS86exclu}. This theorem asserts that,
for every planar graph $H$, there is a constant $c_{H}$ such that every graph excluding $H$ as a minor  has a tree decomposition of width most  $c_H$ (see next paragraph for the definitions of tree decompositions and treewidth). This result  had  important structural and algorithmic consequences and a long line of research has been devoted 
to optimizing the dependence between the size of $H$ and the constant $c_H$ (see for instance \cite{RobertsonS94quick,chuzhoy2021towards}). Moreover, it is easy to see that the result is ``parametrically tight'' in the sense that every class of graphs of bounded treewidth should exclude some planar graph as a minor. 

\subsection{Excluding graphs with certain properties}

An important question is whether a similar tree decomposition theorem exists when the excluded graph is a non-planar graph.  A general answer to this question was provided by the celebrated 
Graph Minor Structure Theorem (GMST) asserting that graphs excluding a (non-necessarily planar) graph as a minor can be tree-decomposed into pieces enjoying certain structural characteristics of topological nature.
The GMST appeared in \cite{Robertson2003GMXVI} and has  
been one of the cornerstone results of the Graph Minors Series of Robertson and Seymour (see \cite{kawarabayashi2020quickly,kawarabayashi2018anewp,gorsky2025polynomial} for simpler and self-contained proofs with explicit bounds). This result, however -- due to its vast generality -- does not provide more refined structure when the excluded 
graphs enjoy particular structural properties. 
A general research program of proving refined versions of the GMST can be outlined as follows.
\begin{eqnarray}
\begin{minipage}{14cm}
\textsl{Given a class of graphs $\Hcal$, find a graph parameter, defined in terms of tree decompositions, 
such that its value  is bounded for graphs excluding some graph in $\Hcal$ as a minor and, moreover, its value  is unbounded for the graphs in $\Hcal$.}
\end{minipage}
\label{@constraint}
\end{eqnarray}
%
As already mentioned, when 
$\Hcal$ is the class of planar graphs, denoted by $\gplanar$, the parameter in \eqref{@constraint} should be chosen to be  treewidth or any other parameter equivalent to treewidth.

\begin{quote}
What if the graphs in $\Hcal$  are not planar but instead ``\textsl{close to being planar}''?
\end{quote}

Clearly the GMST, being a general and ``all purpose'' theorem,  provides only a partial answer to \eqref{@constraint} and it is a challenge to detect which tree-decomposition-based parameter is the suitable answer for fixed instantiations of the  class $\Hcal$.
This paper is motivated by this general question and provides a precise answer for the case where $\Hcal$ is the class of \textsl{edge-apex graphs}, i.e. graphs that can be made planar by removing an edge.
In fact our result applies for all graphs that are minors of edge-apex graphs which is precisely the class of all graphs embeddable in the \textsl{pinched sphere} $\Sbbb^{\circ}_0$ (see \cref{horn_torus} for a visualization).

\subsection{General decomposition framework}\label{subsec_frame}

To further formalize Question~\eqref{@constraint}, we introduce a general framework based on tree decompositions.

\paragraph{Tree decompositions.}

A \defi{tree decomposition} of a graph $G$ is a tuple $\mathcal{T}=(T,\beta)$ where $T$ is a tree and $\beta\colon V(T)\rightarrow 2^{V(G)}$ is a function, whose images are called the \emph{bags} of $\mathcal{T},$ such that
\begin{enumerate}
\item $\bigcup_{t\in V(T)} \beta (t)= V (G),$
\item for every $e\in E(G)$, there exists $t\in V(T)$ with $e\subseteq \beta(t),$ and
\item for every $v\in V(G)$, the set $\{t\in V(T) \mid v\in \beta(t)\}$ induces a subtree of $T.$
\end{enumerate} 
The \defi{width} of $\mathcal{T}$ is the maximum size of a bag minus $1$ and the \defi{treewidth} of $G$, denoted by $\tw(G),$ is the minimum width over all tree decompositions of $G$.
The \defi{torso} of $\mathcal{T}$ at a node $t$ is the graph, denoted by $G_{t}$, obtained by adding all edges between the vertices of $\beta (t) \cap \beta (t')$ for every neighbor $t'$ of $t$ in $T$. 
The \defi{clique-sum closure} of a graph class $\mathcal{G}$, denoted by  $\mathcal{G}^\star$, is the graph class containing  every graph that has a tree decomposition whose torsos belong to $\mathcal{G}$.

One may use tree decompositions in order to define graph parameters using simpler ones as follows: if $\p \colon \gall \to \Nbbb$ is a graph parameter, then we define the \defi{clique-sum extension} of $\p$ as the graph parameter $\p^\star \colon \gall \to \mathbb{N}$ such that
$$\p^\star(G) = \min\big\{ k \mid G \in \{H \mid \p(H) \leq k\}^\star \big\},$$
in other words, $\p^\star(G) \leq k$ if and only if $G$ is in the clique-sum closure of all graphs for which the value of $\p$ is at most $k$.
Using this notation and given some graph class $\Hcal$, we may formalize Question~\eqref{@constraint} as follows:
\begin{eqnarray}
\begin{minipage}{14cm}
\textsl{Find a graph parameter $\p_{\Hcal} \colon \gall \to \Nbbb$, such that
\begin{itemize}
\item[(A)] there is a function $f \colon \Nbbb\to\Nbbb$ where for every $H \in \Hcal$, if a graph $G$ excludes the graph $H$ as a minor then $\p_{\Hcal}^\star(G) \leq f(|H|)$, and
\item[(B)] the values of $\p_{\Hcal}^\star$ for the graphs in $\Hcal$ are unbounded.
\end{itemize}}
\end{minipage}
\label{@consstraint}
\end{eqnarray}

Note that, if we have an answer to the question above for some $\Hcal$, then we essentially have an answer for its \emph{minor-closure}, that is the class of all minors of graphs in $\Hcal$.
Motivated by this, we always assume that $\Hcal$ is a minor-closed graph class, i.e., contains all minors of its elements.
One may observe that if $\p_{\Hcal}$ is an answer to \eqref{@consstraint} then any parameter equivalent to $\p_{\Hcal}$ is also a correct answer.
In fact, every parameter $\p'$ where $\p_{\Hcal}^{\star} \preceq \p' \preceq \p_{\Hcal}$ is also a correct answer\footnote{Because, for any parameter $\p$, $(\p^\star)^\star=\p^\star$, and thus $(\p')^\star\sim\p_{\Hcal}^{\star}$.}.
We refer to condition (A) (resp. (B)) as the \emph{upper bound} (resp. \emph{lower bound}) of \eqref{@consstraint}.
For the case of the class $\gplanar$ of planar graphs, we may pick $\p_{\gplanar} = \size$, where $\mathsf{size} \colon \gall\to\Nbbb$ maps a graph $G$ to its size $|V(G)|$.
Indeed, it is enough to observe that the treewidth of a graph $G$ is equal to $\mathsf{size}^\star(G) - 1$.

In the most general case where $\Hcal=\gall$, an answer to \eqref{@consstraint} is given by the GMST. Here, we may pick $\p_{\gall}$
as the parameter mapping a graph $G$ to the minimum $k$ for which 
$G$ contains at most $k$ vertices, called \emph{apices} whose removal yields a graph that is ``$k$-almost embeddable'' in the sense that it admits a drawing with at most $k$ ``vortices'' each of ``width'' at most $k$
in a surface of Euler genus at most $k$.
We postpone the technical definition of vortices to \cref{sec_draw} and their width to \autoref{sec_global}. Instead, 
we wish to give an alternative, more streamlined, answer to \eqref{@consstraint} for the case $\Hcal=\gall$, that is based on the notion of \textsl{bidimensionality} (see \cite{ThilikosW2025excluding}).

\paragraph{Rooted minors and bidimensionality.}

Given two graphs $G$ and $H$ and a set $X \subseteq V(G)$, we say that $H$ is an \defi{$X$-rooted minor} of $G$ (or simply an \emph{$X$-minor} of $G$) if there is a collection $\Scal = \{S_v \mid v \in V(H)\}$ of pairwise-disjoint connected\footnote{A set $X$ is \emph{connected} in $G$ if the subgraph of $G$ induced by $X$ is a connected graph.} subsets of $V(G)$, each containing at least one vertex of $X$ and such that, for every edge $xy \in E(H)$, the set $S_x \cup S_y$ is connected in $G$.
An \defi{annotated graph} is a pair $(G, X)$ where $G$ is a graph and $X$ is a set of vertices of $G$.

The \emph{bidimensionality} of a vertex set $X$ of a graph $G,$ denoted by $\mathsf{bidim}(G,X)$, is the maximum $k$ for which $G$ contains a $(k \times k)$-grid as an $X$-minor.
Intuitively, bidimensionality measures to what extent $X$ can be ``spread'' as a 2-dimensional grid inside the graph $G$.
%
 
According to \cite{ThilikosW2025excluding}, an answer to \eqref{@consstraint} when $\Hcal = \gall$ is given by $\p_{\gall}$ where $\p_{\gall}(G)$ is the minimum $k$ for which $G$ contains a vertex set $X$ with $\bidim(G,X) \leq k$ such that $G - X$ can be embedded in a surface of Euler genus at most $k$.
Notice that the definition of  $\p_{\gall}$ is based on a \textsl{modification operation}: We are looking for a set $X$ of ``bounded'' bidimensionality whose removal yields a graph embeddable in a surface of ``bounded''  Euler genus.
So the modification is stated in terms of \textsl{vertex removals} and the \textsl{target} of this modification is surface embeddability.
More refined results have been proven in \cite{ThilikosW2025excluding} providing an answer to Question~\eqref{@consstraint} in the case where $\Hcal$ is the class of graphs embeddable in some particular surface.
For every choice of a surface, the resulting graph parameter is defined in terms of modifications based on vertex deletion.

\paragraph{Excluding singly-crossing graphs.}

The focus of this paper is to find an answer to Question \eqref{@consstraint} for graph classes $\Hcal$ that are ``close to be planar''.
In this direction, Robertson and Seymour considered in \cite{RobertsonS91} the class of \defi{singly-crossing graphs}, denoted by $\gsinglycrossing$, i.e., graphs that can be drawn in the sphere such that there is at most one pair of edges that share a common point.
According to \cite{RobertsonS91}, if $G$ excludes a singly-crossing graph as a minor, then $G$ has a tree decomposition whose non-planar torsos have size that is bounded by some constant, depending on $H$.
Assume now that $\psize \colon \gall \to \Nbbb$ is the graph parameter where $\psize(G)$ is zero if $G$ is planar, otherwise $\psize(G)$ is the size of $G$.
It can be proven that the values of $\psize^\star$ are unbounded for the graphs in $\gsinglycrossing$.
This, along with the the aforementioned result of \cite{RobertsonS91} implies that we may pick $\p_{\gsinglycrossing} = \psize$ (or any other equivalent parameter)
as a valid answer to \eqref{@consstraint} when $\Hcal = \gsinglycrossing$.

%
%
%

\subsection{Main results}

\paragraph{Edge-apex graphs.}

Our results deal with a more relaxed version of being ``close to be planar''.
As we have already mentioned, a graph $G$ is an \defi{edge-apex graph} if it contains some edge $e$ where $G - e \in \gplanar$.
Notice that every singly-crossing graph is obviously an edge-apex graph, however the inverse is not correct as the edge $e$ to be removed may ``fly over'' more than one edge in any embedding of $G - e$.
We use $\gedgeapex$ for the class of all minors of edge-apex graphs.

\paragraph{A pseudo-surface interpretation of $\gedgeapex$.}

Given a surface $\Sigma$, the \emph{pinched} version of $\Sigma$, denoted by $\Sigma^{\circ}$, is the pseudo-surface obtained by identifying two distinct points of $\Sigma$.
A result of Knor \cite{Knor96charac} says that for every surface $\Sigma$, the class of graphs embeddable in $\Sigma^{\circ}$ is minor-closed.
One may define pseudo-surfaces by identifying more sets of points of surfaces or unions of surfaces.
However, as shown in \cite{Knor96charac}, minor-closedness of graphs embeddable in pseudo-surfaces holds only for pinched surfaces and for those pseudo-surfaces that are ``spherically reducible'' to pinched surfaces (see also \cite{SiranG92kurato}).

\begin{figure}[ht]
\begin{center}
\includegraphics[scale=0.3]{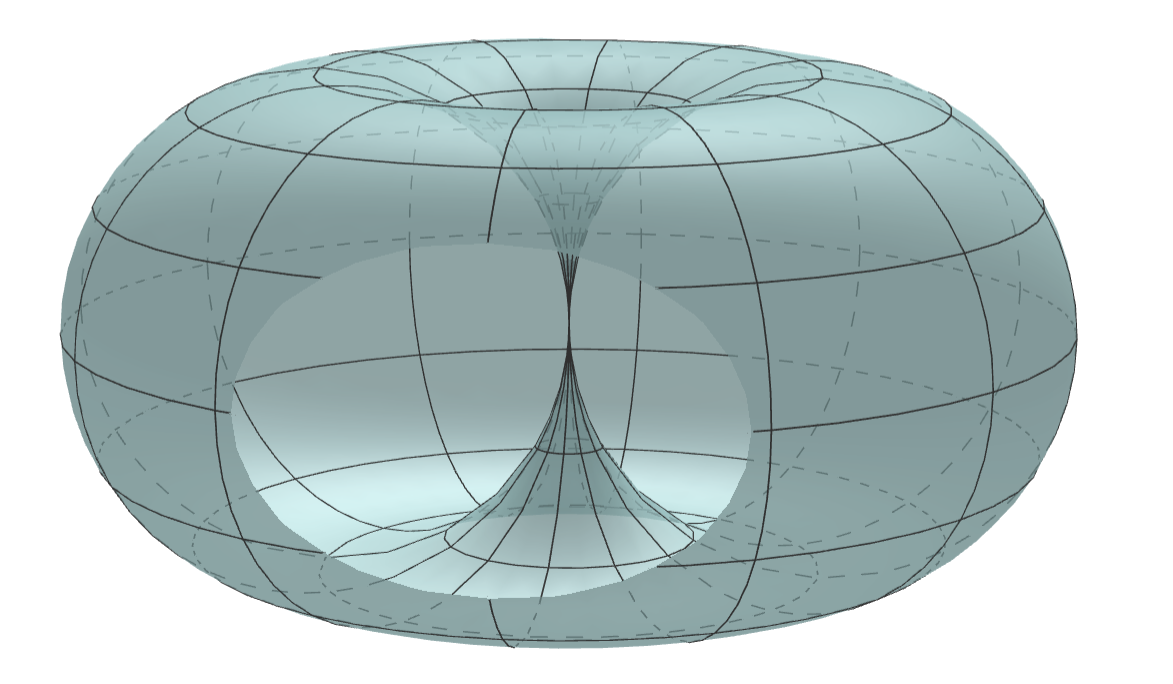}
\end{center}
\caption{A horn torus with a hole, permitting the visibility of the pinch point.}
\label{horn_torus}
\end{figure}

We denote the sphere by $\Sbbb_{0}$, that is the set of points $\{ (x,y,z) \in \Rbbb^{3} \mid x^{2} + y^{2} + z^{2} = 1 \}$.
Now, the pinched sphere $\Sbbb^{\circ}_0$ can be seen as the set of points of the horn-torus
$$\{ (x,y,z) \in \Rbbb^{3} \mid (x^2+y^2+z^{2})^2 = 4(x^2+y^2) \}$$
depicted in \cref{horn_torus}.
It is easy to verify the following.

\begin{observation}
The class $\gedgeapex$ of graphs is precisely the class of all $\Sbbb^{\circ}_0$-embeddable graphs.
\end{observation}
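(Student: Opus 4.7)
The plan is to establish both inclusions separately, with the guiding intuition that the pinch point simulates the removed edge of an edge-apex graph.

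For the inclusion $\gedgeapex \subseteq \{G \mid G \text{ embeds in } \Sbbb^{\circ}_0\}$, I would first verify the statement for edge-apex graphs themselves and then invoke the result of Knor cited above, which guarantees that $\Sbbb^{\circ}_0$-embeddability is closed under taking minors. Given an edge-apex graph $G$ and an edge $e = uv$ with $G - e$ planar, I fix a planar embedding of $G - e$ in $\Sbbb_0$ and choose two points $p_u, p_v$ inside faces of this embedding incident to $u$ and $v$ respectively, taken close enough to these vertices so that short arcs $u \to p_u$ and $p_v \to v$ can be drawn within those faces without crossing any other edge. Identifying $p_u$ with $p_v$ turns $\Sbbb_0$ into $\Sbbb^{\circ}_0$, and the concatenation of the two arcs realizes $e$ as an edge passing through the pinch point.

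For the reverse inclusion, I would analyze the local picture at the pinch point $p$ of any given embedding of $G$ in $\Sbbb^{\circ}_0$. A neighborhood of $p$ consists of two disks glued at their centers, so since the embedding is injective, at most one feature of $G$, namely either a single vertex or a single edge meeting $p$ in its interior, can lie on $p$. By subdividing such a through-edge if necessary (and recalling that $G$ is a minor of any subdivision of $G$), I may assume $p$ is either disjoint from the embedded graph or coincides with a vertex $v$ of $G$. In the first case, $G$ embeds in $\Sbbb^{\circ}_0 \setminus \{p\}$, which is homeomorphic to $\Sbbb_0$ minus two points, hence $G$ is planar and trivially in $\gedgeapex$. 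In the second case, I \emph{unpinch} the embedding by splitting $v$ into two vertices $v_1, v_2$ that inherit its incident edges according to the local disk from which these edges approach $p$; the resulting graph $G'$ is embedded in $\Sbbb_0$ and therefore planar. Then $G$ is obtained from $G'$ by identifying $v_1$ and $v_2$, so $G$ is a minor of the graph $G' + v_1 v_2$, which is edge-apex because deleting $v_1 v_2$ returns a subgraph of the planar graph $G'$.

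The main step requiring care is the unpinching operation: one has to verify that splitting the single pinch-point vertex indeed produces a bona fide embedding in $\Sbbb_0$, and symmetrically, that in the forward direction the two inserted arcs from $u$ and $v$ to the identified point together form a continuous simple arc in $\Sbbb^{\circ}_0$. The remaining work is a short case analysis depending on whether the pinch point lies off the graph, inside an edge, or at a vertex.
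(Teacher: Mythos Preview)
Your argument is correct. The paper does not supply a proof of this observation at all; it simply states ``It is easy to verify the following'' and moves on, so there is no approach to compare against beyond confirming that your reasoning is sound. Your two directions---routing the deleted edge through the pinch point in one direction, and unpinching a vertex at the singular point in the other---constitute the standard verification the authors have in mind, and your invocation of Knor's minor-closure result to pass from edge-apex graphs to their minors is exactly the right way to handle the closure of $\gedgeapex$.
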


At this point we wish to mention that the result in \cite{RobertsonS91} has a similar embeddability interpretation.
Just consider the sphere with some particular point ``duplicated'': the graphs in $\gsinglycrossing$
are exactly the graphs that are embeddable in this slightly augmented sphere.

\paragraph{Our results.}

Our purpose is to define a graph parameter $\p_{\gedgeapex}$ that may serve as the answer to \eqref{@consstraint} in the case where $\Hcal = \gedgeapex$.
Motivated by the aforementioned results of \cite{ThilikosW2025excluding} on surface-embeddable graphs, one may suspect that the answer is again some modification-based parameter where the modification operation is vertex deletion.
As we show, it is correct that we have to consider some modification-based parameter, however vertex deletion ceases to be the correct operation.
Instead, the correct modification operation turns out to be \textsl{vertex identifications}.
To formalize this, we need additional definitions.

Given an annotated graph  $(G,X)$, we define $\Mcal(G,X)$ as the class of all graphs obtainable from $G$ by identifying vertices in $X$ (see in the end of \autoref{prelims_defs} for the formal definition).
We say that a graph is \emph{projective} if it is embeddable in the projective plane and we use $\gprojective$ to denote the class of projective graphs.
We define $\mathsf{idpr} \colon \gall \to \Nbbb$ as the graph parameter where
\begin{align}
\mathsf{idpr}(G) \ &\coloneqq \ \min\{k \mid \text{there exists } X \subseteq V(G)\text{ s.t.}\ \bidim(G,X) \le k \text{~and~} \Mcal(G,X) \cap \gprojective \neq \emptyset \},\label{apoxbisintw}
\end{align}
in other words, $\mathsf{idpr}(G) \leq k$ if and only if $G$ contains a set $X$ of bidimensionality at most $k$ such that $G$ can be turned into a projective graph by identifying vertices of $X$ in $G$.

Our results imply that $\mathsf{idpr}$ provides an answer to \eqref{@consstraint} for the class $\gedgeapex$.
In particular, we can answer to \eqref{@consstraint} in the case of $\Sbbb^{\circ}_0$-embeddable graphs as follows.

\begin{theorem}[Upper bound]\label{upper_b}
There exists a function $f \colon \Nbbb\to\Nbbb$ such that, for every $H \in \gedgeapex$, every $H$-minor-free graph $G$ satisfies $\mathsf{idpr}^\star(G) \leq f(|H|)$.
That is, any such graph $G$ admits a tree decomposition $(T,\beta)$ where every torso $G_{t}$, $t\in V(T)$, contains a set $X$ of bidimensionality at most $f(|H|)$ in $G_{t}$ such that $G_t$ can be made projective by identifying vertices in $X$.
\end{theorem}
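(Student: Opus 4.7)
The plan is to begin with the Graph Minor Structure Theorem and then refine the resulting torso structure by exploiting the fact that $H$ is edge-apex, adapting the strategy used in prior work on singly-crossing excluded minors and surface-excluded minor classes. Concretely, since $G$ is $H$-minor free, the (quantitative) GMST yields a tree decomposition $(T,\beta)$ of $G$ whose torsos $G_t$ are $c$-almost embeddable in surfaces $\Sigma_t$ of Euler genus at most $c$, each carrying at most $c$ apex vertices $A_t$ and at most $c$ vortices of width at most $c$, where $c = c(|H|)$. The task reduces to exhibiting, in each $G_t$, a set $X_t \subseteq V(G_t)$ satisfying $\bidim(G_t, X_t) \leq f(|H|)$ and $\Mcal(G_t, X_t) \cap \gprojective \neq \emptyset$, i.e.\ $\mathsf{idpr}(G_t) \leq f(|H|)$; taking the same decomposition $(T,\beta)$ then yields $\mathsf{idpr}^\star(G) \leq f(|H|)$.

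The set $X_t$ would be built as the union of $A_t$, all vortex society vertices, and a collection of \emph{topology-witness} vertices placed along the handles and crosscaps of $\Sigma_t$ beyond a single designated crosscap. The intuition is that the projective plane accommodates one crosscap for free, reflecting the fact that an edge-apex graph is ``planar-plus-one-edge''; any further handle or crosscap of $\Sigma_t$ will be pinched shut by identifying a pair of witness vertices, while each vortex will be planarized by identifying its society into a single vertex, which is feasible because vortex width is bounded. To establish $\bidim(G_t, X_t) \leq f(|H|)$, I would argue by contradiction: were $G_t$ to contain a $(k\times k)$-grid as an $X_t$-rooted minor with $k$ much larger than $|H|$, then every branchset would hit an apex, vortex, or handle feature of the almost-embedding, and by combining a large planar sub-grid (in which one may realise $H-e$ as a topological minor, since $H-e$ is planar) with a single extra edge routed through such a feature, one would realise $H$ as a minor of $G_t$, contradicting the $H$-minor-freeness of $G$.

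It remains to produce an explicit identification on $X_t$ whose quotient embeds in the projective plane. The plan is to perform the identifications in a topologically controlled order: each pair of handle/crosscap witnesses is identified so as to reduce the Euler genus of the ``effective'' surface by one, each vortex society is collapsed to a single vertex so as to planarise the vortex patch, and the apex vertices are routed through the single remaining crosscap. I expect the main obstacle to be this final step: one must perform all these identifications simultaneously with (i)~a global projective embedding of the quotient, (ii)~the bidimensionality bound on $X_t$, and (iii)~compatibility with neighbouring bags of $(T,\beta)$. The technical heart of the argument will be an inductive \emph{surface-simplification} procedure that, at each step, either reduces $\Sigma_t$ by a carefully chosen identification (augmenting $X_t$ accordingly) or extracts from the blocking configuration an $H$-minor in $G_t$; the procedure terminates on the projective plane rather than on the sphere, which is the precise combinatorial manifestation of the fact that the excluded minor is edge-apex rather than planar.
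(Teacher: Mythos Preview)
Your approach diverges sharply from the paper's, and it contains two genuine gaps that I do not see how to close.

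First, the ``surface-simplification'' step is not a valid operation. Identifying a bounded number of vertices does not, in general, reduce the Euler genus of a graph. If the GMST hands you a torso almost-embedded in, say, the torus, then identifying any pair of ``witness'' vertices still leaves a graph containing a large toroidal subgrid (hence a $K_7$-minor, hence non-projective). What you would need is to identify \emph{all} vertices along a non-separating cycle, but such a cycle can have unbounded bidimensionality. Second, the apex-handling is broken: a planar graph plus a single apex vertex need not be projective (e.g.\ $K_{3,5}$ is apex but not projective-embeddable, by the bipartite Euler-formula count), so ``routing the apices through the remaining crosscap'' cannot succeed in general, regardless of how you identify within~$A_t$. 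There is also a smaller issue: your bidimensionality contradiction produces an $H$-minor in the torso $G_t$, but torsos add clique edges and are not minors of $G$, so this does not directly contradict $H$-minor-freeness of $G$.

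The paper avoids all three problems by \emph{not} starting from the GMST. Instead it proves, from scratch, a refined local structure theorem (\cref{mainresult}): excluding $\mathscr{J}_k$ forces the surface to already be the sphere or the projective plane, with \emph{zero} apices. This is the technical heart of the paper (Sections~\ref{sec_lst}--\ref{sec_loc_to_glob}), established via a society argument that iteratively splits vortices and shows that two independent crosscaps would yield a long-jump. Going from local to global then requires a new technique (\cref{thm_projective_global}), precisely because the standard inductive argument re-introduces apices; the paper circumvents this by exploiting that on the sphere and projective plane every cycle bounds a disk on at least one side. Only after obtaining this apex-free, genus-$\le 1$ decomposition does the paper identify vortices (and nothing else) to land in $\gprojective$. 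Your plan essentially assumes away the main theorem: the content of the result is exactly that the extra genus and apices supplied by the generic GMST can be eliminated, and this cannot be achieved post hoc by vertex identification.
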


\begin{theorem}[Lower bound]\label{lower_b}
For every $k \in \Nbbb$, there is a graph $G$ in $\gedgeapex$ such that $\mathsf{idpr}^\star(G) \geq k$.
 \end{theorem}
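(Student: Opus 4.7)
My plan is to produce, for each $k\in\Nbbb$, a single graph $G_k\in\gedgeapex$ with $\mathsf{idpr}^\star(G_k)\geq k$, via a grid-plus-one-apex-edge construction. Concretely, take $G_k:=\Gamma_n + e_k$, where $\Gamma_n$ is the $(n\times n)$-grid with $n=\Theta(k^2)$, and $e_k = u_kv_k$ is an additional edge joining two deep interior vertices, say $u_k = (\lfloor n/4\rfloor,\lfloor n/4\rfloor)$ and $v_k = (\lceil 3n/4\rceil,\lceil 3n/4\rceil)$. Membership in $\gedgeapex$ is free: $G_k - e_k = \Gamma_n$ is planar, so $G_k$ is edge-apex.

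The first structural claim is that $G_k$ is not projective. For $n\geq 3$, the only $2$-separators of $\Gamma_n$ are corner-neighborhoods, so its planar embedding is combinatorially unique; the interior faces are the $(n-1)^2$ unit squares, and two interior vertices at grid-distance $\geq 3$ are never cofacial. Adding $e_k$ between two such vertices therefore demands joining two distinct faces of a planar embedding. A single crosscap in one face cannot reach another face, so the minimum Euler genus of $G_k$ is $2$, and $G_k$ is non-projective.

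The heart of the proof is the claim $\mathsf{idpr}(G_k) = \Omega(\sqrt{n})$. Suppose $X\subseteq V(G_k)$ has $\mathsf{bidim}(G_k,X)\leq m$ and some partition $P$ of $X$ yields a projective graph $G' = G_k/P\in\Mcal(G_k,X)$. I split on whether the class $C$ of $P$ containing $u_k$ also contains $v_k$. In case (a), $e_k$ becomes a loop in $G'$, which reduces to an identification of $\Gamma_n$; a grid-face-rigidity argument forces the class $C$ to be \emph{connected} in $\Gamma_n$ (otherwise the merged vertex inherits two distant planar neighborhoods and the quotient has Euler genus $\geq 2$), so $|C|\geq d_{\Gamma_n}(u_k,v_k) = \Omega(n)$. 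In case (b), $e_k$ survives in $G'$; for $G'$ to be projective, the identifications in $P$ must create a common face of $G'-e_k$ incident to both images of $u_k,v_k$, which a similar rigidity argument shows requires $X$ to contain $\Omega(n)$ identified vertices forming a ``bridge'' across $\Gamma_n$. In either case, $X$ contains a long connected (or near-connected) structure between $u_k$ and $v_k$; an explicit construction (analogous to the $(2\times 2)$-grid minor rooted at the bottom row of $\Gamma_4$, extending to bottom-row/L-path bidimensionality $\Theta(\sqrt{n})$ in $\Gamma_n$) shows that any such structure supports a $\Gamma_{\Omega(\sqrt{n})}$-rooted minor in $G_k$, whence $\mathsf{bidim}(G_k,X)=\Omega(\sqrt{n})$.

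Finally, to pass from $\mathsf{idpr}$ to $\mathsf{idpr}^\star$: in any tree decomposition $(T,\beta)$ of $G_k$, the edge $e_k$ forces some bag $\beta(t^\ast)$ to contain both endpoints, so the torso $G_{t^\ast}$ contains $e_k$; since $\Gamma_n$ has treewidth $n$ and its apex-edge neighborhood has no small separators isolating it, the torso $G_{t^\ast}$ (possibly aided by adhesion cliques) contains a $G_k$-like sub-structure with the apex edge and a linear-size grid neighborhood, so the Step~3 argument applies inside $G_{t^\ast}$ and gives $\mathsf{idpr}(G_{t^\ast}) = \Omega(\sqrt{n})$. Choosing $n = \Theta(k^2)$ yields $\mathsf{idpr}^\star(G_k)\geq k$. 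The principal obstacle is case (b) of Step~3: proving that a projectivizing identification which does not dissolve the apex edge still necessitates a long connected bridge of identified vertices across the grid. The natural attack is a face-rigidity lemma for grid embeddings, combined with a careful accounting of how local identifications can (and cannot) merge faces in a projective-plane embedding.
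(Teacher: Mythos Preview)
Your construction $G_k=\Gamma_n+e_k$ coincides (up to inessential details) with the paper's $\mathscr{J}_k'$, so the basic object is right. But two of your three steps have real gaps.

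\textbf{Non-projectivity and Step 3.} You cannot argue about projective-plane embeddings via rigidity of the \emph{planar} embedding: a planar graph embedded in the projective plane need not arise by ``adding a crosscap to one face''; a grid cycle can be embedded non-contractibly, completely changing the face structure. This also undermines your case~(b) plan (``identifications must create a common face''). The paper bypasses face reasoning entirely: it fixes a specific non-projective graph $F$ (one of the known minor-obstructions for the projective plane) and shows that if $|X|$ is small relative to a margin $b$, then enough rows and columns of $\Gamma_n$ avoid $X$ that, after further contracting the two (connected) grid-squares around $u_k$ and $v_k$, one still finds $F$ as a minor of $G_k/\!\!/\Pcal$ --- contradicting $G_k/\!\!/\Pcal\in\gprojective$ since $\gprojective$ is minor-closed. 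This single argument covers both your cases (a) and (b) at once and yields $|X|=\Omega(k)$; a standard rooted-grid lemma (\cite{DemaineFHT04bidi}) then gives $\bidim(G_k,X)=\Omega(k^{1/4})$.

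\textbf{The lift to $\mathsf{idpr}^\star$.} This is where your argument genuinely fails. Knowing that some bag contains $u_k$ and $v_k$ says nothing about the \emph{shape} of that torso. For instance, the bag could consist of a single grid column (a separator of $\Gamma_n$) together with $u_k,v_k$; after adding adhesion cliques the torso is essentially a clique --- not a grid at all --- so ``Step~3 applies inside $G_{t^\ast}$'' is simply unjustified. To bound $\mathsf{idpr}^\star$ you must control \emph{every} tree decomposition, not just exhibit one bag containing $e_k$. The paper does this by (i) passing to a $4$-connected, $4$-regular variant $\mathscr{I}_k$ of the long-jump grid, (ii) proving $\mathscr{I}_k$ is \emph{tightly connected} (every separation of small order has one side of bounded size), which forces any tree decomposition to be star-shaped with one large central torso and bounded-size leaves, and (iii) applying a general transfer lemma from \cite{ThilikosW2025excluding} that propagates a bidimensionality bound on $X$ in the central torso to a bound on $X\cup(\text{leaf vertices})$ in the whole graph. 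This machinery is not a technicality you can hand-wave away; it is the substance of the lower-bound proof, and it only delivers $\mathsf{idpr}^\star(\mathscr{J}_k)=\Omega(k^{1/384})$, not anything close to $\Omega(\sqrt{n})$.
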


%

%

Let us define a variant of $\mathsf{idpr}$, namely $\mathsf{idpl}$ by considering in \eqref{apoxbisintw} $\gplanar$ instead of $\gprojective$, i.e., we now demand that the surfaces where the torsos of the tree decomposition are almost embedded are spheres. 
By a simple variant of our proof strategy, we obtain the following variant of \cref{upper_b}.

\begin{theorem}\label{upper_sb}
There exists a function $f \colon \Nbbb^2\to\Nbbb$ such that for every  $H_{1} \in \gprojective$ and every $H_2 \in \gedgeapex$, every $\{ H_1,H_2\}$-minor-free graph $G$ satisfies $\mathsf{idpl}^\star(G) \leq f(|H_1|,|H_2|)$.
That is, $G$ admits a tree decomposition $(T,\beta)$ where every torso $G_{t}$, $t\in V(T)$, contains a set $X$ of bidimensionality at most $f(|H_1|,|H_2|)$ in $G_{t}$ such that $G_t$ can be made planar by identifying vertices in $X$. 
\end{theorem}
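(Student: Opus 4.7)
The plan is to adapt the strategy used to prove \cref{upper_b} by exploiting the additional exclusion of $H_1 \in \gprojective$. I first apply the same refined decomposition theorem (a variant of the Graph Minor Structure Theorem) that underlies \cref{upper_b} to the graph $G$: this yields a tree decomposition $(T,\beta)$ of $G$ in which each torso $G_t$ is $h$-almost-embeddable in a surface of Euler genus at most $h$, where $h$ depends only on $|H_1|$ and $|H_2|$. As in the proof of \cref{upper_b}, I collect, in each torso $G_t$, the apex vertices, the vortex-boundary vertices, and the vertices witnessing ``excess'' topological complexity into a set $X_t^{(1)} \subseteq V(G_t)$ whose bidimensionality in $G_{t}$ is bounded by a function of $|H_2|$.

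In the proof of \cref{upper_b}, the identification of $X_t^{(1)}$ in $G_t$ produces a graph embeddable in $\Sbbb^{\circ}_0$, and hence in the projective plane; this is all that the exclusion of $H_2$ alone can deliver. To go from ``almost projective'' to ``almost planar'' I invoke the extra hypothesis: since $G$ excludes $H_1 \in \gprojective$ as a minor, the residual almost-projective graph at each torso cannot contain an essential projective structure of large size. Concretely, projective-plane embeddable graphs that exclude a fixed projective graph as a minor have treewidth bounded by a function of $|H_1|$ (by the standard bidimensionality/wall-on-surface argument: any projective graph of sufficiently large treewidth contains a large projective wall, which already contains $H_1$ as a minor). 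This allows me to extract a second bounded-bidim set $X_t^{(2)} \subseteq V(G_t)$ whose further identification trivialises the remaining crosscap and yields a planar graph.

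Setting $X_t := X_t^{(1)} \cup X_t^{(2)}$, the identification of $X_t$ in $G_t$ produces a planar graph, and combining the two bidim bounds gives $\bidim(G_t,X_t) \leq f(|H_1|,|H_2|)$ for a suitable function $f$. Hence $\mathsf{idpl}(G_t) \leq f(|H_1|,|H_2|)$ for every torso, which yields $\mathsf{idpl}^{\star}(G) \leq f(|H_1|,|H_2|)$ as required.

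The main obstacle is controlling the bidimensionality of the combined identification set $X_t$ in $G_t$: a priori, $\bidim(G_t, X_t^{(1)} \cup X_t^{(2)})$ could be much larger than either of the individual quantities. Verifying that this does not happen requires pulling back the projective-plane-level bidimensionality argument through the identifications already performed in the first step, ensuring that the grid-minors used by $X_t^{(2)}$ lift to $X_t$-rooted grid-minors in $G_t$. I expect this to follow from the same ``crosscap removal'' style argument that is the key surface-reduction step of the proof of \cref{upper_b}, applied once more to excise the remaining projective-plane crosscap rather than stopping at the pinch point.
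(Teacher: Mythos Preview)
Your two-step strategy has a genuine gap at the point where you invoke the exclusion of $H_1$. After Step~1 you have a partition $\Pcal_t$ of $X_t^{(1)}$ such that $G_t/\!\!/\Pcal_t$ is projective. You then want to argue that, because $G$ (and hence the torso $G_t$) excludes $H_1$, the projective graph $G_t/\!\!/\Pcal_t$ has bounded treewidth. But vertex identification is \emph{not} a minor operation: $G_t/\!\!/\Pcal_t$ is in general not a minor of $G_t$, so the $H_1$-exclusion does not transfer to it. The projective graph $G_t/\!\!/\Pcal_t$ may very well contain $H_1$ as a minor and have arbitrarily large treewidth, even though $G_t$ itself is $H_1$-minor-free. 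This breaks your bounded-treewidth claim, and with it the extraction of $X_t^{(2)}$. The difficulty you flag at the end (controlling $\bidim(G_t,X_t^{(1)}\cup X_t^{(2)})$) is already downstream of this more basic obstruction.

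The paper avoids this problem entirely by using the two exclusions \emph{simultaneously} inside the local structure theorem, rather than sequentially. Since $\gprojective$ is exactly the class of minors of the crosscap grids $\mathscr{C}$, excluding $H_1\in\gprojective$ is equivalent to excluding some $\mathscr{C}_c$. The paper then reruns the entire pipeline of \cref{upper_b} with this extra hypothesis: in the society argument (\cref{main1} in place of \cref{mainter}), the absence of a large crosscap transaction rules out the projective-plane branch, so the local structure theorem (\cref{mainresult_pl}) outputs an $\Sbbb_0$-decomposition directly. The local-to-global step and the vortex-identification step then proceed verbatim, yielding planar torsos after a single round of identifications (\cref{corr_plan_global}, \cref{th_plan_global}). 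No post-hoc ``crosscap removal'' on an already-identified graph is needed.
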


The optimality of the decomposition of \cref{upper_sb} for the set of classes $\{\gprojective, \gedgeapex\}$ follows from \cref{lower_b}.
The counterpart for $\gprojective$ and $\mathsf{idpl}^\star$ is provided by \cref{i9opb1}.

\subsection{Overview of the proof}
An important idea for our approach is to express the class $\gedgeapex$ using parametric graphs.

\paragraph{Parametric graphs.}

A \emph{parametric graph} is a sequence $\mathscr{H} \coloneqq \langle \mathscr{H}_{t} \rangle_{t \in \mathbb{N}}$ of graphs such that, for every $t \in \mathbb{N}$, $\mathscr{H}_{t}$ is a minor of $\mathscr{H}_{t+1}$.
Given two parametric graphs $\mathscr{H}^1 = \langle \mathscr{H}_{t}^1 \rangle_{t \in \Bbb{N}}$ and $\mathscr{H}^2 = \langle \mathscr{H}_{t}^2 \rangle_{t \in \Bbb{N}}$, we write $\mathscr{H}^1 \lesssim \mathscr{H}^2$ if there is a function $f \colon \mathbb{N}\to\mathbb{N}$ such that, for every $k \in \mathbb{N}$, $\mathscr{H}_{k}^1$ is a minor of $\mathscr{H}_{f(k)}^2$.
In case $f$ is a linear function, we say that $\mathscr{H}^1$ and $\mathscr{H}^2$ are \emph{linearly equivalent}.

Given a parametric graph $\mathscr{H} \coloneqq  \langle \mathscr{H}_{t} \rangle_{t \in \mathbb{N}}$, we define the parameter $\p_{\mathscr{H}} \colon \gall\to\Nbbb$ so that $\p_{\mathscr{H}}(G)$ is the maximum $k$ for which $G$ contains $\mathscr{H}_{k}$ as a minor.
Notice that $\mathscr{H}^1$ and $\mathscr{H}^2$ are equivalent if and only if $\p_{\mathscr{H}^1}$ and $\p_{\mathscr{H}^2}$ are equivalent.
Moreover, notice that $\mathsf{p}_{\mathscr{H}}$ is a \emph{minor-monotone} parameter, i.e., for every graph $G$ and every minor $H$ of $G$, we have $\p_{\mathscr{H}}(H) \leq \p_{\mathscr{H}}(G)$.

\paragraph{Cylindrical grids and their enhancements.}

For every two non-negative integers $t_1$ and $t_2$, we define the \emph{$(t_1 \times  t_2)$-cylindrical grid} as the Cartesian product of a cycle on $t_1$ vertices and a path on $t_2$ vertices.
Notice that the $(t_1 \times  t_2)$-cylindrical grid contains $t_2$ cycles (each with $t_1$ vertices) and $t_1$ paths crossing them (each with $t_2$ vertices).

The \emph{annulus grid} or order $t$, denoted by $\mathscr{A}_t,$ is the $(4t \times  t)$-cylindrical grid.
We define four parametric graphs: The annulus grid $\mathscr{A} = \lin{\mathscr{A}_t}_{t \in \Nbbb}$, the single-cross grid $\mathscr{S}= \lin{\mathscr{S}_t}_{t \in \Nbbb}$, the long-jump grid  $\mathscr{J} = \lin{\mathscr{J}_t}_{t \in \Nbbb}$, and the crosscap grid $\mathscr{C}_{t}$, where $\mathscr{S}_t$ (resp. $\mathscr{J}_t$, $\mathscr{C}_t$) are obtained by adding add two (resp. $t+1$, $2t$) edges in $\mathscr{A}_t$, as indicated in \cref{first_par_gr}.
In any case, we refer to the index $t$ as the \emph{order} of the corresponding parametric graph.

\begin{figure}[ht]
\begin{center}
\includegraphics[scale=0.77]{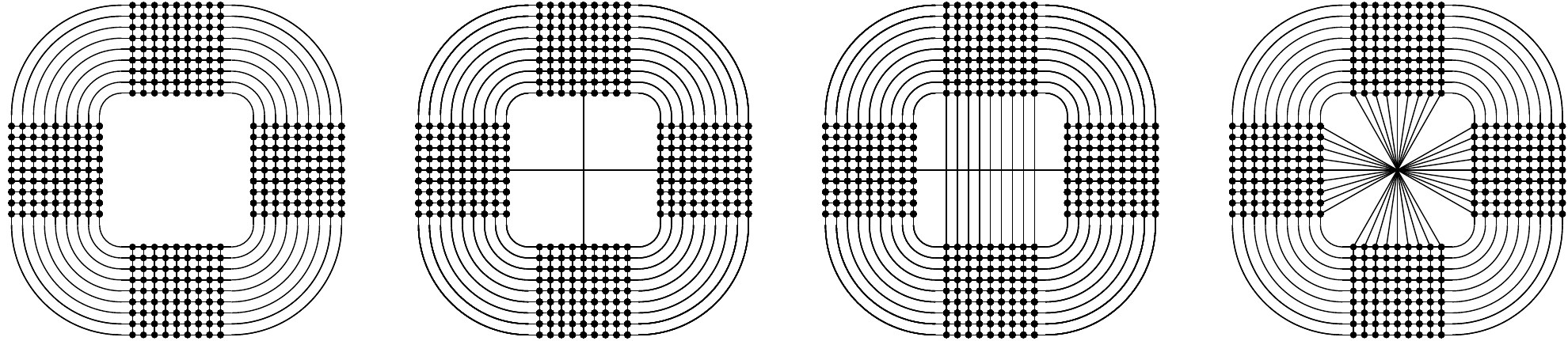}
\end{center}
\caption{The annulus grid $\mathscr{A}_{9}$, the single-cross grid $\mathscr{S}_{9}$, the long-jump grid $\mathscr{J}_{9}$, and the crosscap grid $\mathscr{C}_{9}$.}
\label{first_par_gr}
\end{figure}

As was already observed in \cite[(1.5)]{RobertsonS94quick}, planar graphs are exactly the minors of the graphs from $\mathscr{A}$ (note that \cite[(1.5)]{RobertsonS94quick} is stated in terms of grids that, seen as parametric graphs, are equivalent to the annulus grids).
With a simple adaptation of the proof of \cite[(1.5)]{RobertsonS94quick} (see also \cite{gavoille2023minoruniversal}), one may prove the following

\begin{proposition}
\label{tsw_asll}
$\gsinglycrossing$ is the set of all minors of the graphs from $\mathscr{S}$ and $\gedgeapex$ is the set of all minors of the graphs from $\mathscr{J}$.
\end{proposition}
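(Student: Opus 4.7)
The proof I propose adapts the strategy of Robertson and Seymour's classical result that $\gplanar$ equals the minor-closure of the annulus-grid family $\mathscr{A}$ \cite[(1.5)]{RobertsonS94quick}. I would handle the singly-crossing and edge-apex statements in parallel, splitting each into an easy \emph{containment} direction and a harder \emph{universality} direction.

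For the containment direction, I would first verify by inspection of \cref{first_par_gr} that $\mathscr{S}_t$ admits a spherical drawing with exactly one crossing (draw $\mathscr{A}_t$ in its natural cylindrical fashion and route the two extra edges inside a disk region so that they meet once), and that $\mathscr{J}_t$ admits a drawing where a single designated edge $e^\star$ accounts for all crossings produced by the $t+1$ long-jump edges; deleting $e^\star$ then yields a planar graph, so $\mathscr{J}_t \in \gedgeapex$. Since $\gsinglycrossing$ is minor-closed (an immediate inspection of drawings) and $\gedgeapex$ is minor-closed by definition, this proves the containment inclusion in both statements.

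The universality direction is the heart of the matter. For the singly-crossing case, given $G$ drawn in the sphere with a unique crossing between edges $e_1,e_2$, the graph $G':=G-\{e_1,e_2\}$ is planar and the four endpoints of $e_1,e_2$ lie on a common face $F$ of $G'$, namely the one containing the crossing point. Using a controlled variant of \cite[(1.5)]{RobertsonS94quick}, one embeds $G'$ as a minor of some $\mathscr{A}_t$ where the vertices of $\partial F$ appear in their correct cyclic order at consecutive positions on the outer boundary cycle of the annulus; the two extra edges of $\mathscr{S}_t$ can then be matched to $e_1,e_2$ after contracting short subpaths, exhibiting $G$ as a minor of $\mathscr{S}_t$. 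The edge-apex case runs identically, with the apex edge $e$ replacing the crossing pair, and with the $t+1$ long-jump edges of $\mathscr{J}_t$ supplying enough flexibility that, no matter where the endpoints of $e$ land in the annulus, some long-jump can be routed between them after suitable contractions.

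The main obstacle is establishing the controlled variant of \cite[(1.5)]{RobertsonS94quick}: the classical statement only asserts that a planar graph is a minor of some annulus grid, but here we additionally require that a prescribed set of boundary vertices end up on a chosen boundary cycle of the annulus in a specified cyclic order. Such an enhancement is standard in the literature on minor-universal graphs (see also \cite{gavoille2023minoruniversal}) and can be proved by refining the inductive construction of RS (1.5); for $2$-connected planar graphs it reduces to embedding a specified outer face into the outer cycle, and the general case follows by handling the blocks of $G'$ separately along the block-cut tree. Once this control is in hand, the rest of the proof is a routine matching of the special edges of $\mathscr{S}_t$ or $\mathscr{J}_t$ to the non-planar edges of $G$.
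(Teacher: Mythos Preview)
Your overall strategy matches the paper's own treatment (which merely asserts that the proposition follows by adapting \cite[(1.5)]{RobertsonS94quick} and points to \cite{gavoille2023minoruniversal}), and your containment direction together with the singly-crossing universality are fine. There is, however, a real gap in the edge-apex universality. You say it ``runs identically'' to the singly-crossing case, but the feature you exploited there---that deleting the two crossing edges leaves their four endpoints on a \emph{common} face of the planar remainder---has no analogue here: if $e=uv$ is the apex edge of $G$, then $u$ and $v$ need not share any face of $G-e$. Your controlled variant of \cite[(1.5)]{RobertsonS94quick}, as you state it, places prescribed vertices on a \emph{single} boundary cycle of the annulus, so it does not cover this situation. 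The appeal to ``the $t+1$ long-jump edges of $\mathscr{J}_t$ supplying enough flexibility'' does not close the gap and is at best loosely phrased: only one of those $t+1$ added edges is the non-planar jump (the remaining $t$ form a planar nested family, consistent with your own observation that deleting a single $e^\star$ planarises $\mathscr{J}_t$), so there is no menu of jumps from which to choose.

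The clean fix is a genuinely two-rooted version of grid universality rather than your single-boundary version. Embed $G-e$ in the plane choosing a face $F_v$ incident to $v$ as the outer face, so that in an RS94-style grid model the branch set $S_v$ can be extended through the exterior free region to any target outside the model; meanwhile a face $F_u$ incident to $u$ corresponds to a non-empty \emph{interior} free region adjacent to $S_u$. Taking the host grid large enough and positioning the model so that one endpoint of the jump edge lies inside this interior $F_u$-region while the other lies in the exterior, one extends $S_u$ and $S_v$ disjointly to the two jump endpoints, exhibiting $G$ as a minor of $\mathscr{J}_t$. This two-faces-rather-than-one step is exactly what goes beyond the control you describe, and it is the kind of refinement that \cite{gavoille2023minoruniversal} supplies.
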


Using the terminology of parametric graphs and \cref{tsw_asll}, the answer to Question \eqref{@consstraint} for the case of $\gplanar$ is $\size$, because $\p_{\mathscr{A}} \sim \size^\star$ (from \cite{RobertsonS86exclu}), and the answer to Question \eqref{@consstraint} for the case of $\gsinglycrossing$ is $\psize$, because $\p_{\mathscr{S}} \sim \psize^\star$ (from \cite{RobertsonS91}).

Our objective is to prove that the  the answer to Question \eqref{@consstraint} for the case of $\gedgeapex$ is $\mathsf{idpr}$.
This is because $\p_{\mathscr{J}} \sim \mathsf{idpr}^\star$.
Indeed, \cref{upper_b} is equivalent to $\mathsf{idpr}^\star \preceq \p_{\mathscr{J}}$ and \cref{lower_b} is equivalent to $\p_{\mathscr{J}}\preceq \mathsf{idpr}^\star$.
The upper bound $\mathsf{idpr}^\star \preceq \p_{\mathscr{J}}$ is proven in Sections \ref{sec_lst}, \ref{sec_loc_to_glob}, and \ref{sec_global} and the lower bound $\p_{\mathscr{J}} \preceq \mathsf{idpr}^\star$ is given in \cref{power_b_ec}.

For the proof of the upper bound we assume that a graph $G$ excludes $\mathscr{J}_{k}$ as a minor and and then prove that $G$ admits a tree decomposition where each torso is ``$t$-almost embeddable'' in the projective plane, for some $t$ that depends on $k$.

{Our proof is inspired by the local structure theorem in \cite{kawarabayashi2020quickly}, stating that for every ``big enough'' wall~$W$ in $G$ there is a bounded-size vertex set  of ``apex'' vertices $A$  such that $G-A$ has a $\Sigma$-decomposition: i.e., $G'\coloneqq G-A $ can be drawn in a surface $\Sigma$ of  bounded Euler genus 
so that all edge crossings appear inside a collection of ``cells'' of $\Sigma$ such that,  among these cells, those with more than 3 vertices in their boundary are 
called  \textsl{vortices}, are ``few'', and are of bounded ``depth'' (we postpone 
the formal definitions  around $\Sigma$-decompositions  in \cref{sec_draw}).

Our main structural result is that, in absence of a $\mathscr{J}_{k}$-minor, any large enough wall $W$ is ``flat'' in a $\Sigma$-decomposition as the above where $A=\emptyset$ and, moreover, the surface $\Sigma$ is either the sphere or the projective plane.
For our proof, in \cref{sec_lst} we provide a preliminary refinement of the structure of vortices.  
These results are used in  \cref{sec_loc_to_glob} to prove that the absence of $\mathscr{J}_{k}$ permits us to assume that $A=\emptyset$.
This concludes with the proof of our local structure theorem.}

Our next aim is to transform our local structure theorem into a global one.
That is, we wish to obtain a tree decomposition where every torso of ``big enough'' size admits a $\Sigma$-decomposition like the one from our local structure theorem.
A major obstacle here is that the standard proof technique to go ``from  local to global'', 
introduced in \cite{RobertsonS91obst} and used in \cite{kawarabayashi2020quickly, diestel2016graph, DiestelKMW12onth, ThilikosW24kill, DvorakT14list}, does not work for our purpose since the inductive set-up requires the introduction of additional apex vertices that do not inherently stem from the local structure theorem.
In \cite{DvorakT14list}, Dvořák and  Thomas had to overcome a similar problem and did so by introducing additional vortices.
We propose a new approach, tailor-made for our setting, that allows us to go from local to global without the introduction of additional apices and avoiding some of the drawbacks of the method in \cite{DvorakT14list}.
Our approach, as presented in \cref{sec_global}, is based on the fact that the surface of our decomposition is the projective plane or the sphere and that these are the only two surfaces where the removal of a \textsl{any} cycle creates a connected component that is a disk.

The next step is to transform our global decomposition to one that bounds $\mathsf{idpr}(G)$ according to \eqref{apoxbisintw}.
Let  $X$ be the vertices drawn in the vortices of the $\Sigma$-decomposition of some torso $G_{t}$.
An important property of such decompositions, proved in \cite{ThilikosW2025excluding}, is that the bidimensionality of $X$ in $G_{t}$ is ``upper bounded''.
We may now identify all vertices  drawn in some vortex  into a single vertex and, by performing these identifications for each vortex, obtain a planar or a  projective graph.
This completes the outline of the steps we use to prove the upper bound.

For the proof of the lower bound, we observe first that $\mathsf{idpr}$, and thus $\mathsf{idpr}^\star$, is minor-monotone (\autoref{lem_id_min}), and we prove that $\mathsf{idpr}^\star(\mathscr{J}_{k}) = \Omega(k^{\alpha})$ for some $\alpha > 0$ (\autoref{jopegftasi}).
In \cref{sub1}, we first prove that $\mathsf{idpr}(\mathscr{J}_{k}) = \Omega(k^{1/4})$ and then use a general purpose result from \cite{ThilikosW2025excluding} to show that this lower bound also holds for the clique-sum extension of $\mathsf{idpr}$.

\medskip
We stress that the proof of \cref{upper_sb} follows a simple variant of the steps above where the absence of a crosscap-grid as a minor further imposes that the surfaces in the obtained decompositions are spheres (see \cref{mainresult_pl}).  
 
\section{Preliminaries}
\label{prelims_defs}

\paragraph{Sets and integers.}

We denote the set of non-negative integers by $\mathbb{N}$.
Given two integers $p, q,$ where $p \leq q,$ we denote by $[p, q]$ the set $\{p, \dots, q\}.$
For an integer $p \geq 1,$ we set $[p] = [1, p]$ and $\mathbb{N}_{\geq p} = \mathbb{N} \setminus [0, p - 1].$
For a set $S,$ we denote the set of all subsets of $S$ by $2^{S}$ and the set of all subsets of $S$ of size $2$ by $S \choose 2$.
If $\mathcal{S}$ is a collection of objects where the operation $\cup$ is defined, we denote $\cupall \Scal = \bigcup_{X \in \mathcal{S}} X.$

\paragraph{Basic concepts on graphs.}

A graph $G$ is any pair $(V, E)$ where $V=V(G)$ is a finite set and $E=E(G) \subseteq {V \choose 2},$ i.e. all graphs in this paper are undirected, finite, and without loops or multiple edges.
We refer the reader to~\cite{diestel2016graph}  for any undefined terminology. 
We denote an edge $\{x,y\}$ by $xy$ (or $yx$).
We write $\gall$ for the set of all graphs.
Given a vertex $v \in V(G),$ we denote by $N_{G}(v)$ the set of vertices of $G$ that are adjacent to $v$ in $G.$
For a set $S \subseteq V(G),$ we set $N_{G}(S) = \bigcup_{v \in S} N_{G}(v) \setminus S$.
For $S \subseteq V(G),$ we set $G[S] = (S, E \cap {S \choose 2})$ and use $G - S$ to denote $G[V(G) \setminus S].$ We say that $G[S]$ is an \emph{induced \emph{(}by $S$\emph{)} subgraph} of $G$.
Given two graphs $G_{1}$ and $G_{2},$ we define their union as $G_{1}\cup G_{2}=(V(G_{1})\cup V(G_{2}),E(G_{1})\cup E(G_{2})).$

Given an edge $e = uv \in E(G),$ we define the \emph{subdivision} of $e$ to be the operation of deleting $e,$ adding a new vertex and making it adjacent to $u$ and $v.$ 
Given two graphs $H$ and $G,$ we say that $H$ is a \emph{subdivision} of $G$ if $H$ can be obtained from $G$ by subdividing edges.

\paragraph{Bridges.}
Let $H$ be a subgraph of a graph $G$.
An \emph{$H$-bridge} in $G$ is a connected subgraph $B$ of $G$ such that none of its edges is an edge of $H$ and either $E(B)$ consists of a unique edge with both endpoints in $H$, or for some connected component $C$ in $G-V(H)$, $E(B)$ is the set of all edges of $G$ with at least one endpoint in $V(C)$.
The vertices in $V(B)\cap V(H)$ are called the \emph{attachments} of $B$.

\paragraph{Minors.}
A graph $H$ is a \emph{minor} of a graph $G$ if $H$ can be obtained from a subgraph of $G$ by a series of edge contractions.
Equivalently, $H$ is a minor of $G$ if there is a collection $\Scal=\{S_v\mid v\in V(H)\}$ of pairwise-disjoint connected subsets of $V(G)$ such that, for each edge $xy\in E(H)$, the set $S_x\cup S_y$ is connected in $V(G)$. $\Scal$ is called a \emph{model} of $H$ in $G$.

\paragraph{Convention on $\mathscr{J}$ and $\mathscr{C}$.}

In the previous section we defined the parametric graphs $\mathscr{J}$ (the long-jump grid) and $\mathscr{C}$ (the crosscap grid) presented in \autoref{first_par_gr}.
For ease of proof in the rest of the paper, in place of $\mathscr{J}$ and $\mathscr{C}$ we consider the parametric graphs given in \autoref{figlongjumpbis}, that are obviously linearly equivalent to the ones presented in the introduction.

\begin{figure}[ht]
\begin{center}
\includegraphics[scale=.9]{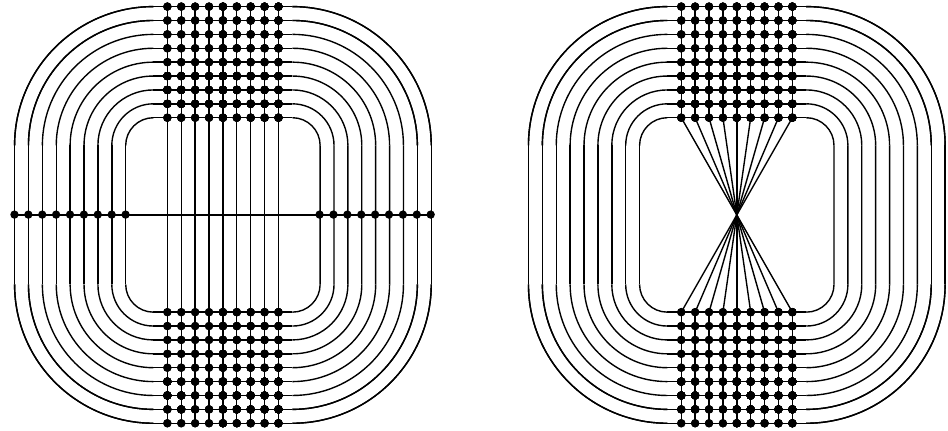}
\end{center}
\caption{The long-jump grid $\mathscr{J}_k$ (left) and the crosscap grid $\mathscr{C}_k$ (right) for the rest of the paper (here $k=9$).}
\label{figlongjumpbis}
\end{figure}

\paragraph{Partitions.}

Given $p \in \Nbbb$, a \emph{$p$-partition} of a set $X$ is a tuple $(X_1, \ldots, X_p)$ of non-empty pairwise disjoint subsets of $X$ such that $X = \bigcup_{i \in [p]} X_i$.
A \emph{partition} of $X$, denoted by $\Pcal(X)$, is a $p$-partition of $X$ for some $p \in \Nbbb$.
Given a set $U$, two subsets $X, A \subseteq U$, and $\Xcal = (X_1, \ldots, X_p) \in \Pcal(X)$, $\Xcal \cap A$ denotes the partition $(X_1 \cap A, \ldots, X_p \cap A)$ of $X \cap A$.

\paragraph{The identification operation.}

Let $G$ be a graph and $u,v\in V(G)$.
The \emph{identification of $u$ and $v$} in $G$ is the operation that transforms $G$ in a graph $G'$, denoted by $G/\!\!/\{u,v\}$, obtained by deleting $u$ and $v$ from $G$ and adding instead a new vertex $w$ adjacent to every vertex in $N_G(u)\cup N_G(v)$.
Note that contrary to a contraction, we do not require $u$ and $v$ to be adjacent.
With a slight abuse of notation, the vertex $w$ obtained from the identification of $u$ and $v$ may also be referred to as either $u$ or $v$.

Given a graph $G$ and $S\subseteq V(G)$, 
the \emph{identification of all vertices in $S$} in $G$ is the operation that transforms $G$ in a graph $G'$, denoted by $G/\!\!/S$, obtained by identifying an arbitrary vertex $s\in S$ with all vertices in $S\setminus s$.

Remark that no matter the choice of $s$ and the order of the identifications, the resulting graph $G'$ is the same.
For instance, $G/\!\!/\{u,v,w\}=G/\!\!/\{u,v\}/\!\!/\{u,w\}=G/\!\!/\{u,w\}/\!\!/\{u,v\}=G/\!\!/\{u,v\}/\!\!/\{v,w\}.$
Note also that given $S_1,S_2\subseteq V(G)$ such that $S_1\cap S_2\ne\emptyset$, we have
$G/\!\!/S_1/\!\!/S_2=G/\!\!/(S_1\cup S_2)$.

Let $X\subseteq V(G)$ and $\Pcal=(S_1,...,S_r)\in\Pcal(X)$ be a partition of $X$.
We denote $X/\!\!/\Pcal$ the sequence of identifications $X/\!\!/S_1/\!\!/S_2/\!\!/.../\!\!/S_r$.
Note that the graph obtained through this sequence of identification is the same up to a permutation of the order in the partition.

We denote by $\Mcal(G,S)$ the set of all graphs $G'$ that can be obtained by a sequence of identifications of vertices of $S$.
In other words, for each $G'\in\Mcal(G,S)$, there is a partition $\Pcal=(S_1,...,S_r)\in\Pcal(S)$ such that $G'=G/\!\!/\Pcal$.

As we already mentioned, the identification operation plays and important 
role in the definition of the parameters $\mathsf{idpr}$ and $\mathsf{idpl}$ 
and
the statement of our main results \cref{upper_b} and \cref{upper_sb}. 

%

%
%

\section{Excluding a long-jump transaction from a society}
\label{sec_lst}

In this section, and in Sections \ref{sec_loc_to_glob} and \ref{sub1}  we prove \autoref{upper_b} in three steps.
We first show a result (\autoref{mainter}) on societies (the relevant definitions are provided in \autoref{sec_draw})  in \autoref{sec_lst} that essentially says the following:
a society $(G,\Omega)$ either contains a big long-jump transaction, or has a rendition in the projective plane with a bounded number of vortices, all of small depth.
We then combine our result on societies with a new version of the flat wall theorem (\autoref{th:FWt}) that avoids the introduction of apices to obtain \autoref{mainresult}.
This local structure theorem does not introduce apices and provides a ``$\Sigma$-decomposition'' (see \autoref{sec_draw}) where $\Sigma$ is the projective plane.
That is, if $G$ has big treewidth, then either $G$ contains a big long-jump grid as a minor, or $G$ has a $\Sigma$-decomposition with a bounded number of vortices, that are of small depth, where $\Sigma$ is the projective plane.
We then use in \autoref{sec_global} our local structure theorem to prove a first global theorem (\autoref{thm_projective_global}) that says that if $G$ excludes a long-jump grid as a minor, then $G$ has a tree decomposition such that the torso at each bag has an almost embedding in the projective plane with a bounded number of vortices, that are of small depth.
Finally, we deduce from this first global structure theorem the one of \autoref{upper_b} (more precisely \autoref{th_proj_global}) by identifying each vortex to a single vertex.
As our local decomposition is ``apex-less'', this needs some special attention 
due to the fact that the classic ``local to global'' argument may introduce apices.

Additionally, we also give similar results in case we exclude both a long-jump grid and a crosscap grid (\autoref{mainresult_pl}, \autoref{corr_plan_global}, \autoref{th_plan_global}).
The results are 
similar (and simpler), as the surface is now the sphere instead of the projective plane.

The proof of \autoref{mainter} is rather involved, so we provide here a quick sketch of the proof, assuming knowledge of the concepts defined in \cref{sec_draw}.

As said above, the goal here is to prove that a society $(G,\Omega)$ excluding a long-jump transaction (hence such that $G$ excludes a long-jump grid has minor) has a rendition in the \emph{projective plane} with a bounded number of vortices of small depth (\autoref{mainter}).
We use many techniques from \cite{kawarabayashi2020quickly}, where Kawarabayashi, Thomas, and Wollan prove that a society $(G,\Omega)$, where $G$ excludes a big clique as a minor, has a rendition in the some bounded genus surface,  with a bounded number of apices, and a bounded number of vortices, all of small depth.
Given that we exclude a simpler graph, most results in terms of cliques can be simplified by directly finding the long-jump (\autoref{allinone2}).

To prove \autoref{mainter}, we first show that a society $(G,\Omega)$ excluding a long-jump transaction either has a crosscap transaction $\Qcal$, or a rendition in the \emph{plane} with a bounded number of vortices, that are of small depth (\autoref{main1}).
In the second case, we can immediately conclude.
In the first case, we find another society $(G',\Omega')$ inside $(G,\Omega)$ avoiding $\Qcal$, to which we again apply \autoref{main1}.
If we find a second crosscap transaction, this would imply that $(G,\Omega)$ contains a long-jump, a contradiction.
Otherwise, $(G',\Omega')$ has a rendition in the \emph{plane} with a bounded number of vortices of small depth, and thus $(G,\Omega)$ has a rendition in the \emph{projective plane} with a bounded number of vortices of small depth.

To prove \autoref{main1}, we first observe that if $(G,\Omega)$ excludes a long-jump of order $k$ as a minor, then it either has a rendition in the plane with a unique vortex of small depth, or contains a big crosscap transaction, or a big planar transaction $\Qcal$.
In the first two cases, we can already conclude.
In the third case, there is $\Qcal'\subseteq\Qcal$ such that the (\textsl{strip}) society $(G_{\Qcal'},\Omega_{\Qcal'})$ corresponding to $\Qcal'$ (\autoref{planarstrip}) has a vortex-free rendition in the plane.
In this case, we split $(G,\Omega)$ into two societies $(G_1,\Omega_1)$ and $(G_2,\Omega_2)$ so that they are separated by $(G_{\Qcal'},\Omega_{\Qcal'})$ (\autoref{sec:split}, \autoref{main2}).
For $i\in[2]$, if $k_i$ is the maximum order of a long-jump in $(G_i,\Omega_i)$, then $(G_i,\Omega_i)$ excludes a long-jump of order $k_i+1$.
Given that $(G,\Omega)$ excludes a long-jump of order $k$, we can actually prove that $k_i+1<k$.
We may thus recurse on $(G_1,\Omega_1)$ and $(G_2,\Omega_2)$, excluding a long-jump of order $k_1+1$ and $k_2+1$, respectively.
If one of them contains a big crosscap transaction, we may conclude.
Otherwise, both have a rendition in the \emph{plane} with a bounded number of vortices of small depth, where the number of vortices and the depth depends of $k_i+1$.
We then combine both renditions along with the one of $(G_{\Qcal'},\Omega_{\Qcal'})$ to get a rendition in the \emph{plane} with a bounded number of vortices of small depth (depending on $k$).

\subsection{Almost embeddings}
\label{sec_draw}

We begin by defining renditions, vortices, and all other necessary notions.
  
\paragraph{Drawing a graph in a surface.}
  Let $\Sigma $ be a surface, possibly with boundary.
\label{@partialities}
  A \emph{drawing} (with crossings) in $\Sigma $ is a triple $\Gamma=(U,V,E)$ such that
  \begin{itemize}[itemsep=-2pt] 
  \setlength\itemsep{0em}
  \item $V$ and $E$ are finite, 
  \item $V\subseteq U\subseteq\Sigma ,$ 
  \item $V\cup\bigcup_{e\in E}e=U$ and $V\cap (\bigcup_{e\in E}e)=\emptyset,$ 
  \item for every $e\in E,$ $e=h((0,1)),$ where $h\colon[0,1]_{\mathbb{R}}\to U$ is a homeomorphism onto its image with $h(0),h(1)\in V$
   and
  \item if $e,e'\in E$ are distinct, then $|e\cap e'|$ is finite.
  \end{itemize}
  We call the set $V,$ sometimes denoted by $V(\Gamma),$ the \emph{vertices of $\Gamma$} and the set $E,$ denoted by $E(\Gamma),$ the \emph{edges of $\Gamma$}. We also denote $U(\Gamma )=U.$
  If $G$ is a graph and $\Gamma=(U,V,E)$ is a drawing with crossings in a surface $\Sigma $ such that $V$ and $E$ naturally correspond to $V(G)$ and $E(G)$ respectively, we say that $\Gamma$ is a \emph{drawing of $G$ in $\Sigma $ (possibly with crossings)}. In the case where no two edges in $E(\Gamma)$ have a common point, we say that $\Gamma$ is a \emph{drawing of $G$ in $\Sigma $ without crossings}. In this last case, the connected components of $\Sigma \setminus U$ are the \emph{faces} of $\Gamma.$
  
\begin{figure}[h]
\center
\includegraphics[scale=0.76]{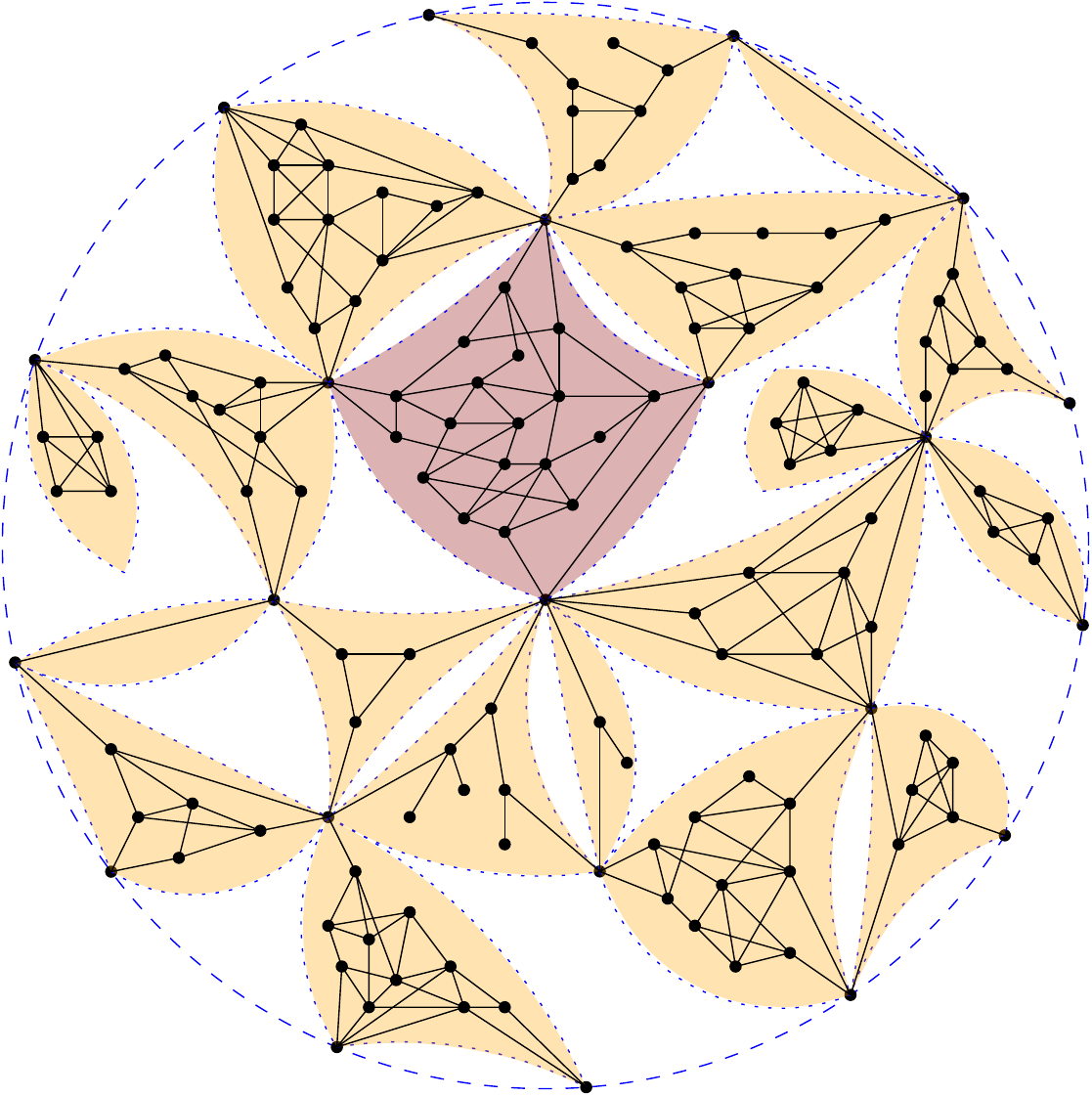}
\caption{A rendition in the plane with one vortex (in red).}
\label{fig_rendition}
\end{figure}
  
\paragraph{$\Sigma $-decompositions.}
  Let $\Sigma $ be a surface. When $\Sigma $ has a boundary, then we denote it by $\bd(\Sigma ).$ Also we refer to $\Sigma \setminus\bd(\Sigma )$ as the \emph{interior} of $\Sigma .$
  A \emph{$\Sigma $-decomposition} of a graph $G$ is a pair $\delta =(\Gamma,\mathcal{D}),$ where $\Gamma$ is a drawing of $G$ in $\Sigma $ with crossings, and $\mathcal{D}$ is a collection of closed disks, each a subset of $\Sigma $ such that 
  \begin{enumerate}[itemsep=-2pt]
  \setlength\itemsep{0em}
  \item the disks in $\mathcal{D}$ have pairwise disjoint interiors, 
  \item the boundary of each disk in $\mathcal{D}$ intersects $\Gamma$ in vertices only, 
  \item if $\delta _1,\delta _2\in\mathcal{D}$ are distinct, then $\delta _1\cap\delta _2\subseteq V(\Gamma),$ and 
  \item every edge of $\Gamma$ belongs to the interior of one of the disks in $\mathcal{D}.$ 
  \end{enumerate} 

\paragraph{$\Sigma $-embeddings.}
A \emph{$\Sigma $-embedding} of a graph $G,$ is a $\Sigma $-decomposition $\delta  = (\Gamma ,\Dcal)$ where  $\Dcal$ is a collection of closed disks such that, for any disk in $\Dcal,$  only a single edge of $\Gamma $ is drawn in its interior.  
For simplicity, we make the convention that, when we refer to a $\Sigma $-embedding, we just refer to the drawing of $\Gamma ,$ as the choice  of $\Dcal$ is obvious in this case.

\paragraph{Nodes, cells, and ground vertices.}
For a $\Sigma $-decomposition $\delta  = (\Gamma , \Dcal)$, 
  let $N$ be the set of all vertices of $\Gamma$ that do not belong to the interior of the disks in $\mathcal{D}.$ 
  We refer to the elements of $N$ as the \emph{nodes} of $\delta .$ 
  If $\delta \in\mathcal{D},$ then we refer to the set $\delta -N$ as a \emph{cell} of $\delta .$ 
  We denote the set of nodes of $\delta $ by $N(\delta )$ and the set of cells by $C(\delta ).$ 
  For a cell $c\in C(\delta )$ the set of nodes that belong to the closure of $c$ is denoted by $\tilde{c}.$ Given a cell $c\in C(\delta ),$ we define 
  its \emph{disk} as 
  $\delta _{c}=\bd(c)\cup c.$
  For a cell $c\in C(\delta )$ we define the graph $\sigma_{\delta }(c),$ or $\sigma(c)$ if $\delta $ is clear from the context, to be the subgraph of $G$ consisting of all vertices and edges drawn in $\delta _{c}.$ 
  We define $\pi_{\delta }\colon N(\delta )\to V(G)$ to be the mapping that assigns to every node in $N(\delta )$ the corresponding vertex of $G.$
We also define \emph{ground vertices} in $\delta $ as $\ground(\delta ) = \pi_{\delta }(N(\delta )).$
 

\paragraph{Vortices.}
  Let $G$ be a graph, $\Sigma $ be a surface, and $\delta =(\Gamma,\mathcal{D})$ be a $\Sigma $-decomposition of $G.$
  A cell $c\in C(\delta )$ is called a \emph{vortex} if $|\tilde{c}|\geq 4.$
  Moreover, we call $\delta $ \emph{vortex-free} if no cell in $C(\delta )$ is a vortex.

\paragraph{Societies.}
  Let $\Omega$ be a cyclic permutation of the elements of some set which we denote by $V(\Omega).$
  A \emph{society} is a pair $(G,\Omega),$ where $G$ is a graph and $\Omega$ is a cyclic permutation with $V(\Omega)\subseteq V(G).$
  
  \paragraph{Crosses.}
  A \emph{cross} in a society $(G,\Omega)$ is a pair $(P_1,P_2)$ of disjoint paths\footnote{When we say two paths are \emph{disjoint} we mean that their vertex sets are disjoint.} in $G$ such that $P_i$ has endpoints $s_i,t_i\in V(\Omega)$ and is otherwise disjoint from $V(\Omega),$ and the vertices $s_1,s_2,t_1,t_2$ occur in $\Omega$ in the order listed.

\paragraph{Renditions.}\label{@cigarettes}
Let $(G,\Omega)$ be a society and let $\delta $ be a closed disk in $\Sigma $.
A \emph{rendition} in $\Sigma $ of $G$ is a $\Sigma $-decomposition $ρ$ of $G$ such that $\pi_{\rho}(N(\rho)\cap \bd(\delta ))=V(\Omega)$, mapping one of the two cyclic orders (clockwise or counterclockwise) of $\bd(\delta )$ to the order of $\Omega$.
See \autoref{fig_rendition} for an illustration.

\paragraph{Rural societies.} A society is \emph{rural} if it has a vortex-free rendition.

\begin{proposition}[\!\!\cite{kawarabayashi2018anewp,RobertsonS90disj}]\label{thm:crossreduct}
A society $(G, Ω)$ in the disk has no cross if and only if it is rural.
\end{proposition}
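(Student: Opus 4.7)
The forward direction ("rural $\Rightarrow$ no cross") is a direct application of the Jordan curve theorem. In a vortex-free rendition $(\Gamma,\Dcal)$, the graph $G$ is drawn without crossings inside a closed disk $\delta$ with $V(\Omega)$ placed on $\bd(\delta)$ in the cyclic order $\Omega$. If $(P_1,P_2)$ were a cross with boundary sequence $s_1,s_2,t_1,t_2$, then the images of $P_1$ and $P_2$ under $\Gamma$ would be two pairwise disjoint arcs in $\delta$ whose endpoints interleave on $\bd(\delta)$, contradicting Jordan's theorem.

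For the nontrivial reverse direction, my plan is to pass to the auxiliary graph $G^+\coloneqq G+v^*$ obtained by attaching a single new vertex $v^*$ to every vertex of $V(\Omega)$, and to reduce the statement to the following two claims: (i) $G^+$ is planar, and (ii) $G^+$ admits a planar embedding in which the cyclic order of edges incident to $v^*$ coincides with $\Omega$. Once (i) and (ii) are established, deleting $v^*$ from such an embedding yields a drawing of $G$ inside the closed disk bounded by the face formerly containing $v^*$, with $V(\Omega)$ on its boundary in the cyclic order $\Omega$; choosing this disk as $\delta$ and taking $\Dcal=\{\delta_e\mid e\in E(G)\}$ gives a vortex-free rendition of $(G,\Omega)$.

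For (i), I would argue by contradiction using Kuratowski's theorem: assume $G^+$ contains a subdivision $K$ of $K_5$ or $K_{3,3}$ and perform a case analysis on the role of $v^*$. If $v^*\notin V(K)$, then $G$ itself contains $K$, and using that the branch vertices of $K$ must communicate with $V(\Omega)$ (because $G^+-v^*=G$ and $K\subseteq G^+$), three internally disjoint $V(K)$-to-$V(\Omega)$ paths can be routed to exhibit two disjoint paths with interleaving endpoints on $\Omega$. If $v^*\in V(K)$, the three or four subdivision-paths of $K$ leaving $v^*$ end at distinct vertices of $V(\Omega)$, and together with the remaining structure of $K$ (which lives in $G$) they directly provide two vertex-disjoint paths realising a cross. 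In each case the resulting linkage contradicts the no-cross hypothesis.

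The remaining and hardest step is (ii): by Whitney's theorem the planar embedding of $G^+$ is unique only up to flips at $1$- and $2$-separations, so a priori the rotation at $v^*$ in a given embedding is only one of potentially many cyclic permutations of $V(\Omega)$. The plan here is to inspect the SPQR-type decomposition of $G^+$ along its $2$-cuts: whenever two neighbours of $v^*$ are separated from the rest by a $2$-cut $\{a,b\}$, flipping the corresponding piece along $\{a,b\}$ permutes a contiguous arc of the rotation at $v^*$ and leaves the rest intact, so after finitely many such flips the rotation becomes $\Omega$—unless at some stage the discrepancy cannot be repaired, in which case two internally disjoint $V(\Omega)$-paths through the piece in question directly witness a cross in $(G,\Omega)$. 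The main obstacle is precisely this rotation-matching argument, as all the delicate use of the "no cross" assumption is concentrated there; the planarity claim (i) and the Jordan argument for the forward direction are essentially bookkeeping by comparison. The whole strategy is in line with the proof in \cite{RobertsonS90disj} derived from the $2$-linkage theorem, as well as with the streamlined treatment in \cite{kawarabayashi2018anewp}.
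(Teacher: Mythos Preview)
The paper does not give its own proof of this proposition; it is quoted as a known result from \cite{kawarabayashi2018anewp,RobertsonS90disj}. I therefore assess your proposal on its own merits.

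The forward direction is morally right but imprecisely stated. In a vortex-free rendition the drawing $\Gamma$ is still allowed to have crossings inside the cells, so ``the images of $P_1$ and $P_2$ under $\Gamma$'' need not be disjoint arcs. What you should use instead are the \emph{tracks} of $P_1$ and $P_2$ (as defined in the paper): because two vertex-disjoint $\Omega$-paths cannot both enter and exit a cell with at most three boundary nodes, their tracks are disjoint simple arcs in $\delta$ with interleaving endpoints on $\bd(\delta)$, and Jordan applies.

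The reverse direction has a genuine gap: your claim (i) is false as stated. Take $G$ to be the disjoint union of $K_5$ and a cycle $C$, with $V(\Omega)=V(C)$ in its natural cyclic order. Every $\Omega$-path is an edge of $C$, so $(G,\Omega)$ has no cross; yet $G^+$ contains $K_5$ and is non-planar. The society is nevertheless rural: draw $C$ on $\bd(\delta)$ and place the entire $K_5$ in a single cell with $|\tilde c|=0$. Your case ``$v^*\notin V(K)$'' is exactly where the argument collapses---the parenthetical ``because $G^+-v^*=G$ and $K\subseteq G^+$'' only gives $K\subseteq G$, it says nothing about the branch vertices of $K$ communicating with $V(\Omega)$. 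The standard repair is a preliminary reduction: whenever some part of $G$ is separated from $V(\Omega)$ by a cutset of size at most three, absorb that part into a single non-vortex cell and recurse on the rest. Only after this reduction does the Kuratowski argument on $G^+$ go through (and even then the case analysis, as well as your step (ii) on matching the rotation at $v^*$ to $\Omega$, require substantially more detail than you have sketched).
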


\paragraph{Grounded graphs.}
Let $\delta $ be a $\Sigma $-decomposition of a graph $G$ in a surface $\Sigma .$ 
Let $Q \subseteq G$ be either a cycle or a path that uses no edge of $\sigma(c)$ for every vortex $c \in C(\delta ).$ 
We say that $Q$ is \emph{grounded in $\delta $} if either $Q$ is a non-zero length path with both endpoints in $\pi_{\delta }(N(\delta )),$ or $Q$ is a cycle, and it uses edges of $\sigma(c_{1})$ and $\sigma(c_{2})$ for two distinct cells $c_{1}, c_{2} \in C(\delta ).$
A $2$-connected subgraph $H$ of $G$ is said to be \emph{grounded in $\delta $} if every cycle in $H$ is grounded in $\delta .$

\paragraph{Tracks.}
Let $\delta $ be a $\Sigma $-decomposition of a graph $G$ in a surface $\Sigma .$ 
For every cell $c \in C(\delta )$ with $|\tilde{c}| = 2$ we select one of the components of $\bd(c) - \tilde{c}.$  
This selection is called a \emph{tie-breaker} in $\delta ,$ and we assume every $\Sigma $-decomposition to come equipped with a tie-breaker. 
If $Q$ is grounded in $\delta $, we define the \emph{track} of $Q$ as follows.
Let $P_{1}, \dots, P_{k}$ be distinct maximal subpaths of $Q$ such that $P_{i}$ is a subgraph of $\sigma(c)$ for some cell $c.$ Fix an index $i.$
The maximality of $P_{i}$ implies that its endpoints are $\pi_\delta (n_{1})$ and $\pi_\delta (n_{2})$ for distinct $\delta $-nodes $n_{1}, n_{2} \in N(\delta ).$
If $|\tilde{c}| = 2,$ define $L_{i}$ to be the component of $\bd(c) - \{ n_{1}, n_{2} \}$ selected by the tie-breaker, and if $|\tilde{c}| = 3,$ define $L_{i}$ to be the component of $\bd(c) - \{ n_{1}, n_{2} \}$ that is disjoint from $\tilde{c}.$
Finally, we define $L'_{i}$ by slightly pushing $L_{i}$ to make it disjoint from all cells in $C(\delta ).$ We define such a curve $L'_{i}$ for all $i$ while ensuring that the curves intersect only at a common endpoint.
The \emph{track} of $Q$ is defined to be $\bigcup_{i \in [k]} L'_{i}.$
So the track of a cycle is the homeomorphic image of the unit circle, and the track of a path is an arc in $\Sigma $ with both endpoints in $N(\delta ).$ 

\paragraph{$\delta $-aligned disks.}
We say  a closed disk $\delta $ in $\Sigma $ is \emph{$\delta $-aligned} if
 its boundary intersects $\Gamma $ only in nodes of $\delta $.
 We denote by $Ω_{\delta }$ one of the cyclic orderings of the vertices on the boundary of $\delta .$
We define the \emph{inner graph} of a $\delta $-aligned closed disk $\delta $ as 
$$\inG_{\delta }(\delta ) \coloneqq \bigcup_{\textrm{$c \in C(\delta )$ and $c \subseteq \delta $}} \sigma(c)$$ 
and the \emph{outer graph of $\delta $} as 
$$\outG_{\delta }(\delta ) \coloneqq \bigcup_{\textrm{$c \in C(\delta )$ and $c \cap \delta  \subseteq \ground(\delta )$}} \sigma(c).$$ 
  
If $\delta $ is $\delta $-aligned, we define $\Gamma  \cap \delta $ to be the drawing of $\inG_{\delta }(\delta )$ in $\delta $ which is the restriction of $\Gamma $ in $\delta .$
If moreover $|Ω_{\delta }|≥4,$ we denote by $\delta [\delta ]$ the rendition $(\Gamma  \cap \delta , \{ \delta _{c} \in \mathcal{D} \mid c \subseteq \delta \})$ of $(\inG_{\delta }(\delta ), Ω_{\delta })$ in $\delta .$

Let $\delta  = (\Gamma,\mathcal{D})$ be a $\Sigma $-decomposition of a graph $G$ in a surface $\Sigma .$
Let $C$ be a cycle in $G$ that is grounded in $\delta ,$ such that the track $T$ of $C$ bounds a closed disk $\delta _{C}$ in $\Sigma .$
We define the \emph{outer} (resp. \emph{inner}) \emph{graph} of $C$ in $\delta $ as the graph $\outG_{\delta }(C) \coloneqq  \outG_{\delta }(\delta _C)$ (resp. $\inG_{\delta }(C) \coloneqq  \inG_{\delta }(\delta _C)$).

\paragraph{Paths.}
If~$P$ is a path and~$x$ and~$y$ are vertices on~$P,$ we denote by~${xPy}$ the subpath of~$P$ with endpoints~$x$ and~$y.$
Moreover, if~$s$ and~$t$ are the endpoints of~$P,$ and we order the vertices of~$P$ by traversing~$P$ from~$s$ to~$t,$ then~${xP}$ denotes the path~${xPt}$ and~${Px}$ denotes the path~${sPx}.$
Let~$P$ be a path from~$s$ to~$t$ and~$Q$ be a path from~$q$ to~$p.$
If~$x$ is a vertex in~${V(P) \cap V(Q)}$ such that~$Px$ and~$xQ$ intersect only in $x$, then~${PxQ}$ is the path obtained from the union of~$Px$ and~$xQ.$
Let~${X,Y \subseteq V(G)}.$
A path is an \emph{$X$-$Y$-path} if it has one endpoint in $X$ and the other in $Y$ and is internally disjoint from~${X \cup Y},$
Whenever we consider~$X$-$Y$-paths we implicitly assume them to be ordered starting in $X$ and ending in $Y,$ except if stated otherwise.
An \emph{$X$-path} is an $X$\nobreakdash-$X$\nobreakdash-path of length at least one.
In a society~$(G,\Omega),$ we write~$\Omega$-path as a shorthand for a~$V(\Omega)$-path.

\paragraph{Linkages.}\label{@mediterranean}   Let~$G$ be a graph.
  A \emph{linkage} in~$G$ is a set of pairwise vertex-disjoint paths.
  In slight abuse of notation, if~$\mathcal{L}$ is a linkage, we use~$V(\mathcal{L})$ and~$E(\mathcal{L})$ to denote~${\bigcup_{L\in\mathcal{L}}V(L)}$ and~${\bigcup_{L\in\mathcal{L}}E(L)}$ respectively.
  Given two sets~$A$ and~$B$, we say that a linkage~$\mathcal{L}$ is an \emph{$A$-$B$-linkage} if every path in~$\mathcal{L}$ has one endpoint in~$A$ and one endpoint in~$B.$
  We call $|\mathcal{L}|$ the \emph{size} of $\mathcal{L}.$

\paragraph{Segments.}
Let~$(G,\Omega)$ be a society.
A \emph{segment} of~$\Omega$ is a set~${S \subseteq V(\Omega)}$ such that there do not exist~${s_1,s_2 \in S}$ and~${t_1,t_2 \in V(\Omega) \setminus S}$ such that~${s_1,t_1,s_2,t_2}$ occur in~$\Omega$ in the order listed. 
A vertex~${s \in S}$ is an \emph{endpoint} of the segment~$S$ if there is a vertex~${t \in V(\Omega) \setminus S}$ which immediately precedes or immediately succeeds~$s$ in the order~$\Omega.$
For vertices~${s,t\in V(\Omega)},$ if~$t$ immediately precedes~$s$,
 we define~$s\Omega t$ to be the \emph{trivial segment}~$V(\Omega),$
and otherwise we define~$s\Omega t$ to be the uniquely determined segment with first vertex~$s$ and last vertex~$t.$
  
  \paragraph{Transactions.}
  Let~${(G,\Omega)}$ be a society. 
  A \emph{transaction} in~${(G,\Omega)}$ is an $A$-$B$-linkage for disjoint segments~$A,B$ of~$\Omega.$ 
  We define the \emph{depth} of~${(G,\Omega)}$ as the maximum order of a transaction in~${(G,\Omega)}.$

Let $\mathcal{T}$ be a transaction in a society $(G,\Omega).$ 
We say that $\mathcal{T}$ is \emph{planar} if no two members of $\mathcal{T}$ form a cross in $(G,\Omega).$ 
An element $P\in\mathcal{T}$ is \emph{peripheral} if there exists a segment $X$ of $\Omega$ containing both endpoints of $P$ and no endpoint of another path in $\mathcal{T}.$ 
A transaction is \emph{crooked} if it has no peripheral element.

Let $\Tcal = \{P_1,...,P_t\}$ be a transaction between segments $A$ and $B$.
For $i\in[t]$, let $a_i$ (resp. $b_i$) be the endpoint of $P_i$ in $A$ (resp. $B$).
Up to a permutation, we may assume that $a_1,...,a_t$ occur in this order in $\Omega$.
We say that $\Tcal$ is a \emph{$t$-crosscap} transaction if $b_1,...,b_t$ occur in this order in $\Omega$.
We say that $\Tcal$ is a \emph{crosscap} transaction if it is a $t$-crosscap transaction for any $t\in\Nbbb$.
A transaction is called \emph{monotone} if it is either a planar or a crosscap transaction.
Suppose $t\ge1$.
We say that $\Tcal$ is a \emph{$(t-1)$-long-jump} transaction if $b_1,b_t,b_{t-1},\dots,b_3,b_2$ occur in this order in $\Omega$. (In other words, $P_1$ crosses the $t-1$ paths of the planar transaction $\{P_2,...,P_t\}$; such transactions 
have been introduced in \cite{RobertsonS90disj}
under the name \textsl{leap transactions}.)

\begin{proposition}[\!\!\cite{ErdosS35acom}]\label{planar or crosscap}
Let $r,s\in\Nbbb$.
Let $(G,\Omega)$ be a society.
Let $\Qcal$ be a transaction in $(G,\Omega)$ of size $(r-1)(s-1)+1$.
Then $\Qcal$ contains either a planar transaction $\Qcal'\subseteq \Qcal$ of size $r$, or a crosscap transaction $\Qcal'\subseteq \Qcal$ of size $s.$
\end{proposition}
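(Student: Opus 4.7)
\medskip
\noindent\textbf{Proof plan for \autoref{planar or crosscap}.}

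The plan is to reduce this statement to the classical Erd\H{o}s--Szekeres theorem on monotone subsequences. The key observation is that once we fix the ordering of the path-endpoints along the segment $A$, the combinatorial type of any sub-transaction (planar versus crosscap) is completely determined by the relative ordering of the corresponding endpoints along $B$.

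First, I would enumerate $\Qcal = \{P_1,\ldots,P_n\}$ with $n = (r-1)(s-1)+1$ so that their endpoints $a_1,\ldots,a_n$ in $A$ occur in this order along $\Omega$. Each path $P_i$ has its other endpoint $b_i \in B$, and this gives a permutation $\pi$ of $[n]$ defined by letting $\pi(i)$ denote the position of $b_i$ in the natural order of $B$ induced by $\Omega$. The next step is to translate the two structural conditions into monotonicity conditions on $\pi$. Given indices $i<j$, the endpoints of $P_i$ and $P_j$ appear along $\Omega$ in the cyclic order $a_i, a_j, b_?, b_?$, and a direct case analysis on which of $b_i, b_j$ comes first in $B$ shows that $P_i, P_j$ form a cross if and only if $\pi(i) < \pi(j)$. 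Consequently, a subfamily $\{P_{i_1},\ldots,P_{i_t}\}$ (with $i_1<\cdots<i_t$) is a planar transaction if and only if $\pi(i_1) > \cdots > \pi(i_t)$, and it is a crosscap transaction if and only if $\pi(i_1) < \cdots < \pi(i_t)$, directly from the definitions given before the statement.

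The conclusion then follows by applying the Erd\H{o}s--Szekeres theorem to the sequence $(\pi(1),\ldots,\pi(n))$ of length $(r-1)(s-1)+1$: it must contain either an increasing subsequence of length $s$ or a decreasing subsequence of length $r$, which gives, respectively, a crosscap sub-transaction of size $s$ or a planar sub-transaction of size $r$ by the translation above.

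I do not expect any essential obstacle here; the only point that requires a bit of care is the case analysis showing that ``crossing'' corresponds precisely to ``$\pi$ increasing''. One should be careful that $A$ and $B$ are arbitrary disjoint segments of a cyclic order $\Omega$ (not intervals on a line), so one must fix an orientation of $\Omega$ when reading off the order of $b_i$'s in $B$; the argument is symmetric under reversing this orientation, which only swaps the roles of ``planar'' and ``crosscap'', and both cases are covered by Erd\H{o}s--Szekeres.
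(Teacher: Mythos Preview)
Your proposal is correct and is exactly the argument the paper has in mind: the proposition is stated without proof, merely citing the Erd\H{o}s--Szekeres paper, so your reduction of the planar/crosscap dichotomy to decreasing/increasing subsequences of the induced permutation is the intended (and standard) justification. The only thing to add is that the paper treats this as a black box, so your write-up actually supplies more detail than the paper itself.
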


\paragraph{Vortex societies.}

Let~$\Sigma $ be a surface and~$G$ be a graph.
Let~${\delta  = (\Gamma,\mathcal{D})}$ be a $\Sigma $-decomposition of~$G.$
Every vortex~$c$ defines a society~${(\sigma(c),\Omega)},$ called the \emph{vortex society} of~$c,$ by saying that~$\Omega$ consists of the vertices in $\pi_{\delta }(\tilde{c})$   in the order given by~$\Gamma$ (there are two possible choices of~$\Omega,$ namely~$\Omega$ and its reversal.
Either choice gives a valid vortex society).
The \emph{breadth} of $ \delta$ is the number of cells $c\in C( \delta)$ which are a vortex and the \emph{depth} of $ \Delta$ is the maximum depth of the vortex societies $( \sigma(c),\Omega)$ over all vortex cells $c\in C( \Delta).$

\paragraph{Cylindrical renditions.}

Let $(G, Ω)$ be a society, $ρ = (\Gamma , \mathcal{D})$ be a rendition of $(G, Ω)$ in a disk, and let $c_{0} \in C(ρ)$ be such that no cell in $C(ρ) \setminus \{ c_{0} \}$ is a vortex. We say that the triple $(\Gamma , \mathcal{D}, c_{0})$ is a \emph{cylindrical rendition} of $(G, Ω)$ around $c_{0}.$

\begin{proposition}[Lemma 3.6, \cite{kawarabayashi2020quickly}, see also \cite{Robertson2003GMXVI}]\label{lem:GM9} 
Let $(G,\Omega)$ be a society and $p\geq 4$ be a positive integer.
Then $(G,\Omega)$ has a crooked transaction of size $p$, or a cylindrical rendition of depth at most $6p$.
\end{proposition}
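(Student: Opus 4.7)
My plan is to prove the contrapositive: assuming $(G,\Omega)$ admits no crooked transaction of size $p$, I construct a cylindrical rendition of $(G,\Omega)$ of depth at most $6p$. First, if $(G,\Omega)$ contains no cross, then by \autoref{thm:crossreduct} it is rural, hence admits a vortex-free rendition; choosing any cell $c_0$ yields a cylindrical rendition of depth $0\leq 6p$. Hence I may assume that $(G,\Omega)$ contains a cross, which implies the existence of a non-trivial transaction; let $\mathcal{T}$ be a transaction in $(G,\Omega)$ of maximum size.

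The next step is to extract a large planar subtransaction from $\mathcal{T}$. I iteratively remove a peripheral path from the current transaction: by definition of peripherality, each such path is non-crossing with every other path of $\mathcal{T}$, so the collection of removed paths is itself a planar transaction. The peeling process can only terminate at a crooked (or empty) transaction, and by hypothesis every crooked subtransaction of $\mathcal{T}$ has size strictly less than $p$. Together with \autoref{planar or crosscap} (which rules out large crosscap subtransactions, as these are themselves crooked), this argument produces a planar subtransaction $\mathcal{P}=\{P_1,\ldots,P_m\}$ of $\mathcal{T}$ with $m\geq |\mathcal{T}|-cp$ for an absolute constant $c$.

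Now I use $\mathcal{P}$ to build the rendition. Because $\mathcal{P}$ is planar, its paths can be ordered so that each $P_i$, together with an arc of $\Omega$, bounds a closed disk $D_i$ in the ambient disk of $(G,\Omega)$, with $D_1\subseteq\cdots\subseteq D_m$. Between two consecutive chords $P_i$ and $P_{i+1}$ the induced annular subsociety admits no cross, since any such cross could be rerouted to produce a transaction larger than $\mathcal{T}$, contradicting maximality. Hence by \autoref{thm:crossreduct} each annular subsociety is rural. Gluing the resulting vortex-free renditions along the chords $P_i$ yields a rendition $(\Gamma,\mathcal{D})$ of $(G,\Omega)$ in the disk in which the only cell that can be a vortex is the innermost cell $c_0$ enclosed by $P_1$; thus $(\Gamma,\mathcal{D},c_0)$ is a cylindrical rendition.

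It remains to bound the depth of $(\sigma(c_0),\Omega_{c_0})$ by $6p$. Suppose, for contradiction, that this vortex society admits a transaction $\mathcal{T}'$ of size greater than $6p$. Each path of $\mathcal{T}'$ has endpoints on the boundary of $c_0$, so it can be extended outward through the rural annular pieces by concatenation with disjoint subpaths drawn from $\mathcal{P}$ together with arcs of $\Omega$, yielding a transaction $\widetilde{\mathcal{T}}$ in $(G,\Omega)$ of size greater than $6p$. Applying the peeling argument to $\widetilde{\mathcal{T}}$ together with \autoref{planar or crosscap} produces either a crooked subtransaction of size at least $p$, contradicting the hypothesis, or a planar subtransaction of size exceeding $m$ whose outermost extensions contradict the maximality of $\mathcal{T}$. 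I expect the main obstacle to be the quantitative bookkeeping needed to land at precisely $6p$, and in particular the verification that the extension-through-rural-rings procedure preserves the cyclic order on $\Omega$ well enough that both the maximality and crookedness contradictions can be extracted from a single transaction $\widetilde{\mathcal{T}}$.
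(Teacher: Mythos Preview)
The paper does not prove this proposition; it is quoted as Lemma~3.6 of \cite{kawarabayashi2020quickly} (going back to \cite{Robertson2003GMXVI}) and used as a black box. So there is no in-paper proof to compare against, and your proposal must stand on its own.

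Your Step~4 contains a genuine gap. The assertion that ``between two consecutive chords $P_i$ and $P_{i+1}$ the induced annular subsociety admits no cross'' does \emph{not} follow from the maximality of $\mathcal{T}$. Consider a society on $\Omega=(a_1,c_1,c_2,a_2,b_2,d_1,d_2,b_1)$ whose only $\Omega$-paths are $P_1=a_1b_1$, $P_2=a_2b_2$, $Q_1=c_1d_1$, $Q_2=c_2d_2$, pairwise internally disjoint. The maximum transaction is $\mathcal{T}=\{P_1,P_2,Q_1,Q_2\}$; peeling peripherals removes $P_1$ then $P_2$, leaving the crooked pair $\{Q_1,Q_2\}$. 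Your planar transaction is $\{P_1,P_2\}$, the nested pair, and the strip between them contains the cross $(Q_1,Q_2)$. No rerouting produces a fifth path, so maximality is not contradicted, yet the strip is not rural. More generally, there is no reason the crooked residue of $\mathcal{T}$ should land in the innermost disk; it can sit in any strip. This is exactly why results such as \autoref{planarstrip} in the paper (and its source \cite[Theorem~5.11]{kawarabayashi2020quickly}) have to work hard, sacrificing many paths, to locate a rural sub-strip.

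Step~5 is also underspecified: extending a transaction of the vortex society outward ``through the rural annular pieces'' requires disjoint routings that the rural pieces need not supply, and the two alternatives you offer (contradicting crookedness versus contradicting maximality of $\mathcal{T}$) are not shown to be exhaustive. The actual proofs in \cite{kawarabayashi2020quickly,Robertson2003GMXVI} proceed differently: roughly, one takes a rendition with the vortex chosen minimal in a suitable sense and shows that a transaction of order exceeding $6p$ in the vortex society can be converted into a crooked transaction of order $p$ in $(G,\Omega)$, via a careful case analysis rather than a one-line rerouting. Landing at the constant $6$ is not just bookkeeping; it reflects the specific combinatorics of that argument.
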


\paragraph{Nests and railed nests.} \label{@fingerprint}

Let $\delta  = (\Gamma,\mathcal{D})$ be a $\Sigma $-decomposition of a graph $G$ in a surface $\Sigma $ and let $\delta  \subseteq \Sigma $ be an arcwise connected set.
A \emph{nest in $\delta $ around $\delta $ of order $s$} is a sequence $\mathcal{C}= \langle C_1,C_2,\dots,C_s \rangle$ of disjoint cycles in $G$ such that each of them is grounded in $\delta $ and the track of $C_i$ bounds a closed disk $\delta _{C_{i}}$ in such a way that $\delta  \subseteq \delta _{C_{1}} \subsetneq \delta _{C_{2}} \subsetneq \dots \subsetneq \delta _{C_{s}} \subseteq\Sigma .$ We call $C_{1}$ (resp. $C_{s}$) the \emph{internal} (resp. \emph{external}) cycle of $\Ccal.$
We call the sequence $\langle \delta _{C_{1}}, \delta _{C_{2}}, \dots,  \delta _{C_{s}}\rangle$ the \emph{disk sequence} of the nest in $\delta $ around $\delta $.
If $\delta = (\Gamma, \Dcal, c_{0})$ is a cylindrical rendition, then we say that $\Ccal$ is a nest in $\rho$ around $c_{0}$.

Moreover, let $A = V(C_{1})\cap \pi_{\delta }(N(\delta )),$ $B = V(C_{s})\cap \pi_{\delta }(N(\delta )),$ and assume that $\{P_{1},\ldots,P_{r}\}$ is an $A\mbox{-}B$-linkage such that for every $(i,j) \in [s] \times [r]$ the graph $C_{i}\cap P_{j}$ is a (possibly edgeless) path.
We call the pair $(\Ccal, \Pcal)$ a \emph{railed nest in $\delta $ around $\delta $ of order $(s, r)$}.
Notice that $\cupall \Pcal$ is disjoint from $\inG_{\delta }(C_{1}) - V(C_{1})$ and $\outG_{\delta }(C_{s}) - V(C_{s}).$

\paragraph{Orthogonal and exposed transactions.}
Let $\rho=(\Gamma , \mathcal{D}, c_{0})$ be a cylindrical rendition in a society $(G,\Omega)$.
Let $\Ccal$ be a nest in $\rho$ around $c_0$ and $\Qcal$ be a transaction in $(G,\Omega)$.
We say that $\Qcal$ is \emph{orthogonal} to $\Ccal$ if, for each $C\in\Ccal$ and each $Q\in\Qcal$, the graph $C\cap Q$ has at most two components. 
We say that $\Qcal$ is \emph{exposed}\footnote{Please note that what we call ``\textsl{exposed}'' here has been dubbed as ``\textsl{unexposed}'' in the work of Kawarabayashi et al.\@ \cite{kawarabayashi2020quickly}. Following the convention of \cite{gorsky2025polynomial} we adapt this slight change as we believe it to be more intuitive.} in $\rho$ if each $Q\in\Qcal$ has at least one edge in $\sigma(c_0)$.
Note that any crooked transaction is necessarily exposed.
If $\Qcal$ is orthogonal and exposed, then every element of $\Qcal$ contains exactly two disjoint minimal subpaths which each have one endpoint in $V(\Omega)$ and the other in $V(\sigma(c_0))$.
Let $\Pcal$ be the union of all such minimal subpaths over the elements of $\Qcal$.
$\Pcal$ is called the \emph{rail truncation} of $\Qcal$.
Note that $(\Ccal,\Pcal)$ is a railed nest.

\paragraph{Coterminal transactions.}
Let $\rho=(\Gamma , \mathcal{D}, c_{0})$ be a cylindrical rendition in a society $(G,\Omega)$.
Let $(\Ccal=(C_1,\dots,C_s),\Pcal)$ be a railed nest in $\rho$ around $c_0$ and $\Qcal$ be a transaction in $(G,\Omega)$.
We say that $\Qcal$ is \emph{coterminal} with $\Pcal$ up to level $C_i$ if there exists a subset $\Pcal'\subseteq \Pcal$ such that $\outG_\rho(C_i)\cap\Qcal=\outG_\rho(C_i)\cap\Pcal$.
When it is clear from the context which nest we are referring to, we say that $\Qcal$ is \emph{coterminal} with $\Pcal$ up to level $i$.

\begin{proposition}[Lemma 4.5,\cite{kawarabayashi2020quickly}]\label{lem_crookedrooted}
Let $r, s$ be positive integers with
$s \ge 2r+7$.
Let $(G, \Omega)$ be a society and $\rho=(\Gamma,\Dcal,c_0)$ be a cylindrical rendition of $(G, \Omega)$.
 Let $((C_1, C_2, \dots, C_s),\Pcal)$ be a railed nest of order $(s,4r+6)$ in $\rho$ around $c_0$.
If there exists a crooked transaction of  size at least $r$ in $(G, \Omega)$, then there exists a crooked transaction
 of size at least $r$
in $(G, \Omega)$ that is coterminal with $\Pcal$ up to level $C_{2r+7}$.
\end{proposition}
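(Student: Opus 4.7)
The plan is to perform a controlled rerouting of the given crooked transaction $\Qcal_0$ of size at least $r$ into a new transaction $\Qcal$ of the same size whose portion outside $C_{2r+7}$ uses only rail segments. Since every crooked transaction is automatically exposed, each $Q \in \Qcal_0$ has an edge in $\sigma(c_0)$, and therefore enters every nest disk $\delta_{C_i}$ and exits it again. In particular each $Q$ has a uniquely defined ``inner portion'' $Q^{\text{in}}$ lying in $\inG_{\rho}(C_{2r+7})$ and two ``tails'' lying in $\outG_\rho(C_{2r+7})$ connecting $Q^{\text{in}}$ to the two endpoints of $Q$ in $V(\Omega)$. We will keep the inner portions intact and replace the tails by pieces of rails in $\Pcal$.

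The combinatorial heart of the construction is a disjoint-routing step in the annular region between $C_{2r+7}$ and $C_s$, augmented by the outer sector of $\rho$ between $C_s$ and $V(\Omega)$. We may assume $|\Qcal_0|=r$, so there are at most $2r$ tail endpoints on $C_{2r+7}$, whereas the rails in $\Pcal$ supply $4r+6$ distinct meeting points with $C_{2r+7}$. Using the nest cycles $C_{2r+7},C_{2r+8},\dots,C_s$ as transit cycles, each tail endpoint can be slid along a chosen nest cycle to a nearby rail-meeting point and then routed outward through that rail to $V(\Omega)$; pairwise disjointness is obtained from the fact that the rails are pairwise disjoint and the sliding can be performed on distinct nest cycles (or on disjoint segments of a common one), which is precisely why the nest is required to have order $s \ge 2r+7$. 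The modifications live entirely in $\outG_\rho(C_{2r+7})$, so the inner portions $Q^{\text{in}}$ survive unchanged, and by construction the new outer parts coincide with a subset $\Pcal'\subseteq \Pcal$; thus $\Qcal$ is coterminal with $\Pcal$ up to level $C_{2r+7}$.

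The main obstacle is preserving crookedness, because rerouting shifts the endpoints on $V(\Omega)$ and could make some $Q'\in\Qcal$ peripheral. The key structural observation is that all rails cross each cycle $C_i$ in a single path (by the railed-nest definition), and their meeting points with $C_{2r+7}$ occur in a cyclic order that is compatible with the cyclic order of their endpoints in $V(\Omega)$ inherited from the rendition $\rho$. Consequently, if each tail endpoint is assigned to its \emph{closest available} rail-meeting point along $C_{2r+7}$, then the cyclic order of the $2r$ new endpoints on $V(\Omega)$ is the same as the cyclic order of the corresponding tails' endpoints on $C_{2r+7}$. Because $\Qcal_0$ is crooked, the inner portions already witness an interleaving pattern of their attachment pairs on $C_{2r+7}$ with no peripheral pair; this pattern is transported verbatim to $V(\Omega)$ by the order-preserving rail assignment, so $\Qcal$ inherits non-peripherality, i.e. remains crooked.

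The slack between the $4r+6$ rails and the $2r$ tails (plus the extra additive $6$ over $4r$) is what guarantees that the ``closest available rail'' greedy assignment never has to violate the cyclic ordering: at each step we have more than two free rails in each of the two arcs determined by the current partial assignment, so the next tail endpoint can be matched to a rail on the appropriate side without disturbing previous choices. A concluding check that the rerouted paths are vertex-disjoint and that $|\Qcal|=r$ then completes the proof.
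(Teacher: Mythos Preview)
The paper does not give its own proof of this statement; it is quoted as Lemma~4.5 of \cite{kawarabayashi2020quickly} and used as a black box. So there is nothing in the present paper to compare your argument against.

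On its own merits, your sketch has the right shape but two genuine gaps. First, the sentence ``each $Q$ has a uniquely defined inner portion $Q^{\text{in}}$ lying in $\inG_\rho(C_{2r+7})$ and two tails lying in $\outG_\rho(C_{2r+7})$'' is false as stated: $\Qcal_0$ is not assumed orthogonal to $\Ccal$, so a member $Q$ may weave across $C_{2r+7}$ many times, and there is no canonical decomposition into one inner piece and two outer tails. The proof in \cite{kawarabayashi2020quickly} spends Lemmas~4.3--4.4 precisely on manufacturing an orthogonal crooked replacement first, and preserving crookedness through that step is already non-trivial. Second, and more seriously, your crookedness-preservation argument does not go through. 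You claim (i) that the attachment pattern on $C_{2r+7}$ has no peripheral pair because the endpoints on $V(\Omega)$ do not, and (ii) that a greedy ``closest available rail'' assignment transports that pattern back to $V(\Omega)$ in an order-preserving way. For (i) you need a single well-defined crossing point per tail, which you do not have by the previous point; for (ii), a nearest-neighbour greedy on a cycle is not order-preserving in general, and the bare slack $4r+6$ versus $2r$ is suggestive but not a proof. The disjointness paragraph has the same defect at the boundary case $s=2r+7$: only $C_{2r+7}$ itself is available for ``sliding'', and you give no argument that the $2r$ slides fit on pairwise disjoint arcs of it while remaining disjoint from the retained inner pieces of the other paths.
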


\subsection{Recognizing a long-jump}
\label{sec:find_jump}

In this part, we define a few more parametric graphs (resp. types of transactions), and prove that they all contain a long-jump grid as a minor (resp. a long-jump transaction).

\medskip
We consider here five new parametric graphs defined by adding edges in the \emph{double-parameterized annulus grid} $\Gamma = \langle \Gamma_{k,r}\rangle$, where $\Gamma_{k,r}$ is the $(k\times r)$-cylindrical grid, as indicated in \autoref{figfindlongjump}.
\begin{figure}[h!]
\begin{center}
\includegraphics[scale=1]{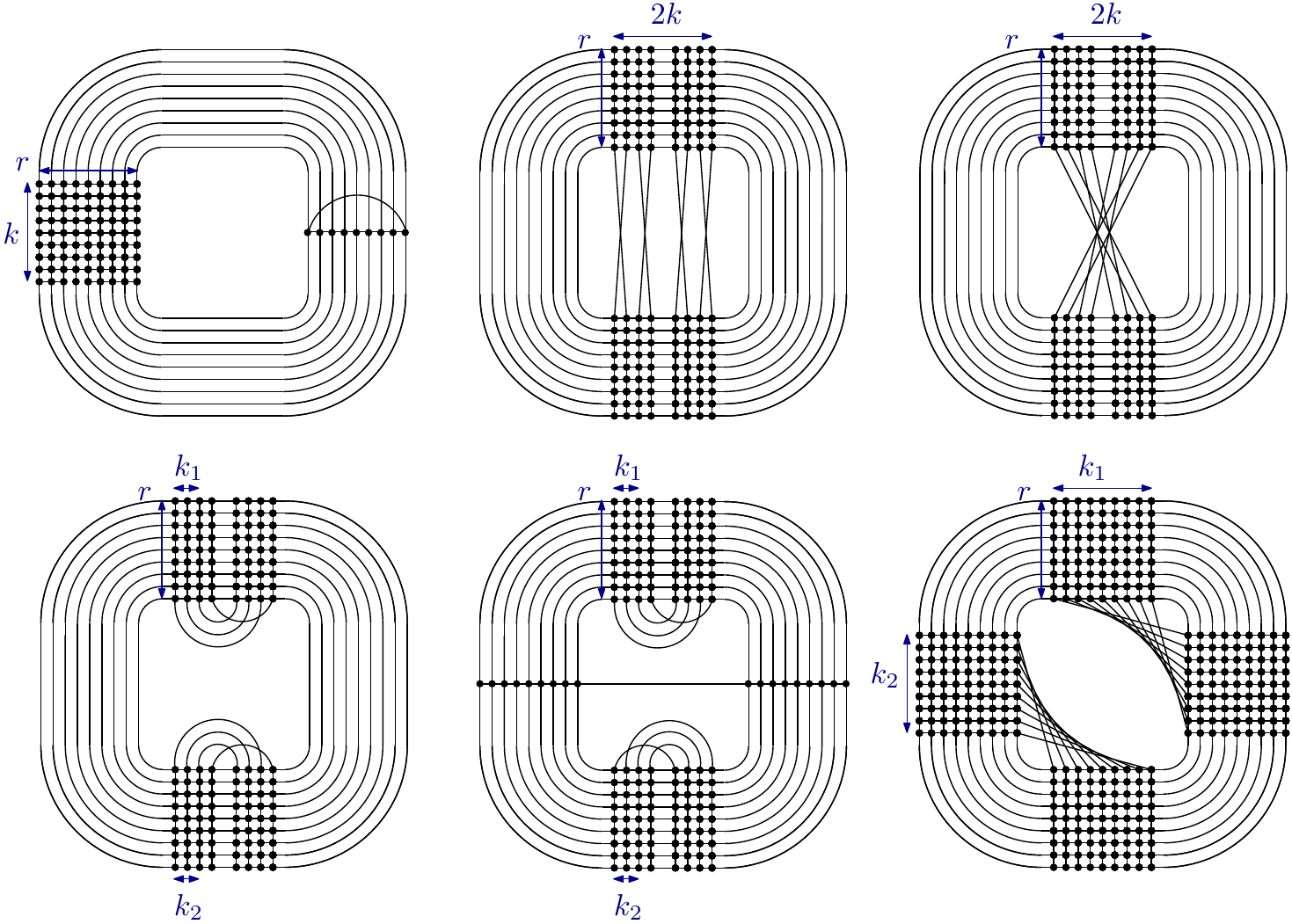}
\end{center}
  \caption{From up to down and left to right: the alternative jump $\hat{\mathscr{J}}_k^r$, the nested-crosses grid $\mathscr{NC}_k^r$, the twisted-crosses grid $\mathscr{TC}_k^r$, the double-jump grid $\mathscr{P}_{k_1,k_2}^r$, the alternative double-jump grid $\mathscr{Q}_{k_1,k_2}^r$, and the klein grid $\mathscr{K}_{k_1,k_2}^r$.}
\label{figfindlongjump}
\end{figure}
These are the the alternative jump grid $\hat{\mathscr{J}}_k^r$,the nested-crosses grid $\mathscr{NC}_k^r$, the twisted-crosses grid $\mathscr{TC}_k^r$, the double-jump grid $\mathscr{P}_{k_1,k_2}^r$, the alternative double-jump grid $\mathscr{Q}_{k_1,k_2}^r$, and the klein grid $\mathscr{K}_{k_1,k_2}^r$.

We also define the transactions corresponding to the grids $\mathscr{NC}_k^r$, $\mathscr{TC}_k^r$, $\mathscr{P}_{k_1,k_2}^r$, $\mathscr{Q}_{k_1,k_2}^r$, and $\mathscr{K}_{k_1,k_2}^r$, respectively.
Let $(G,\Omega)$ be a society and $k,k_1,k_2\in\Nbbb$.
Let $A$ and $B$ be two segments in $(G,\Omega)$ and $\Pcal=\{P_1,\dots,P_t\}$ be a transaction in $(G,\Omega)$ between $A$ and $B$ such that the endpoints of $P_i$ are $a_i$ and $b_i$ with $a_1, \dots, a_t$ occurring in $A$ in this order.
\begin{itemize}
\item $\Pcal$ is a \emph{$k$-nested crosses transaction} if $t=2k$ and 
for all $1 \le i < j \le 2k$, $P_i$ and $P_j$ cross if and only if $i$ is odd and $j=i+1$.
\item $\Pcal$ is a \emph{twisted $k$-nested crosses transaction} if $t=2k$ and
for all $1 \le i < j \le 2k$, $P_i$ and $P_j$ do not cross if and only if $i$ is odd and $j=i+1$.
In other words, $b_2,b_1,b_4,b_3,\dots,b_{2k-2},b_{2k-3},b_{2k},b_{2k-1}$ occur in $\Omega$ in this order.
\item $\Pcal$ is a \emph{$(k_1,k_2)$-double-jump transaction} if $t=k_1+k_2+2$ and 
$\Pcal$ can be partitioned into two planar transactions $\Qcal_1$ and $\Qcal_2$ of size $k_1$ and $k_2$ respectively and two isolated paths $Q_1$ and $Q_2$ such that $\Qcal_i'\coloneqq \Qcal_i\cup Q_i$ is a $k_i$-long-jump transaction for $i\in[2]$, $\Qcal_1'$ and $\Qcal_2'$ do not cross, and there is an endpoint $q_1$ of $Q_1$ and an endpoint $q_2$ of $Q_2$ such that $\Pcal\setminus\{Q_1,Q_2\}$ is a either a  $q_1\Omega q_2$-linkage or a $q_2\Omega q_1$-linkage.
In other words, no path of $\Pcal\setminus\{Q_1,Q_2\}$ has an endpoint in one of $q_1\Omega q_2$ and $q_2\Omega q_1$.
\item $\Pcal$ is an \emph{ alternative $(k_1,k_2)$-double-jump transaction} if $t=k_1+k_2+3$ and 
$\Pcal$ can be partitioned into two planar transactions $\Qcal_1$ and $\Qcal_2$ of size $k_1$ and $k_2$ respectively and three isolated paths $Q$, $Q_1$, and $Q_2$ such that $\Qcal_i'\coloneqq \Qcal_i\cup Q_i$ is a $k_i$-long-jump transaction for $i\in[2]$, for any endpoint $q_1$ of $Q_1$ and endpoint $q_2$ of $Q_2$, $\Pcal\setminus\{Q_1,Q_2\}$ is a neither a  $q_1\Omega q_2$-linkage nor a $q_2\Omega q_1$-linkage, and 
if $a$ and $b$ are the endpoints of $Q$, then one of $\Qcal_1$ and $\Qcal_2$ is a $a\Omega b$-linkage, and the other is a $b\Omega a$-linkage.
\item Finally, $\Pcal$ is an \emph{$(k_1,k_2)$-klein transaction} if $t=k_1+k_2$ and 
$\Pcal$ can be partitioned into two crosscap transactions $\Qcal_1$ and $\Qcal_2$ of size $k_1$ and $k_2$ respectively that do not cross.
\end{itemize}

\begin{lemma}\label{allinone2}
Let $k,k_1,k_2,r\in\Nbbb$ such that $k_1+k_2\ge k$.
Then $\mathscr{J}_k^r$ is a minor of $\hat{\mathscr{J}}_{2r}^{2r+k-2}$, $\mathscr{NC}_k^{r+1}$, $\mathscr{TC}_{k+2r}^{r+1}$, $\mathscr{P}_{k_1,k_2}^{r+1}$, $\mathscr{Q}_{k_1,k_2}^{r+1}$, and $\mathscr{K}_{k,k+1}^{r+k}$.
\end{lemma}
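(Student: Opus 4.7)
The plan is to give, for each of the six parametric graphs listed in the statement, an explicit minor model of $\mathscr{J}_k^r$. The common template in every case is two-fold: (i) locate inside the target graph a $(k \times r)$-sub-cylindrical grid that serves as the cylindrical core of $\mathscr{J}_k^r$, and (ii) designate a set of paths or edges of the target graph that can be contracted and re-routed to produce the $k{+}1$ long-jump chords of $\mathscr{J}_k^r$. Almost all of the work lies in bookkeeping which rings and columns of the source grid are used for the core, which are used to host the jump chords, and which are contracted away. Because each of the six source graphs was defined in \autoref{figfindlongjump} by adding a specific pattern of chords to a larger cylindrical grid, the inequalities relating the parameters (e.g.\ the factor $2r$ in $\hat{\mathscr{J}}_{2r}^{2r+k-2}$, the ``$+1$'' in $\mathscr{P}_{k_1,k_2}^{r+1}$, or the ``$+k$'' in $\mathscr{K}_{k,k+1}^{r+k}$) are precisely the slack needed to carry out this bookkeeping.

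First I would dispatch the ``linear'' cases $\hat{\mathscr{J}}_{2r}^{2r+k-2}$, $\mathscr{NC}_k^{r+1}$ and $\mathscr{TC}_{k+2r}^{r+1}$. For $\hat{\mathscr{J}}_{2r}^{2r+k-2}$ one re-routes the alternative-jump chords through the $2r$ outermost rings, and the remaining $k-2$ rings together with $r$ of the columns give the $(k\times r)$-core and its $k{+}1$ chords. For $\mathscr{NC}_k^{r+1}$ one contracts each of the $k$ nested crossing pairs to a single chord, one of which becomes the long-jump chord of $\mathscr{J}_k^r$ while the other $k-1$ are absorbed into consecutive columns of the core, at the cost of one extra ring. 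The twisted variant $\mathscr{TC}_{k+2r}^{r+1}$ is handled by the same contraction, but one needs an additional $2r$ crosses to ``untwist'' the chord endpoints into the cyclic order required by $\mathscr{J}_k^r$. Next, for the two double-jump graphs $\mathscr{P}_{k_1,k_2}^{r+1}$ and $\mathscr{Q}_{k_1,k_2}^{r+1}$, the hypothesis $k_1+k_2\ge k$ lets us concatenate the two disjoint long-jump fans of the source into a single long-jump fan of order $k$; the concatenation consumes exactly one ring of the cylindrical core, which accounts for the ``$+1$'' in the exponent. In the $\mathscr{Q}$-case the extra isolated path of the alternative double-jump transaction is simply contracted into the core.

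The main obstacle will be the Klein grid $\mathscr{K}_{k,k+1}^{r+k}$. Here two crosscap transactions of orders $k$ and $k+1$ sit on the two sides of the cylinder, and they must be combined into one $k$-long-jump transaction. The difficulty is that a crosscap transaction twists the cyclic order of its endpoints in a fundamentally different way than a long-jump transaction, so the two crosscaps cannot be directly concatenated the way the two halves of a double-jump can. The strategy I would pursue is to ``unroll'' one of the crosscaps: for each of its $k$ paths, I would extend it through one additional concentric ring of the source core (this is precisely what the ``$+k$'' in the exponent buys us), which converts the twisted bundle into a planar fan on one side. I would then use the $(k{+}1)$-st path of the larger crosscap, deflected once around the unrolled fan, as the single crossing chord that characterizes a $k$-long-jump transaction. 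The technical point that requires care is to verify that the re-routings preserve pairwise disjointness and stay clear of the $r$ rings reserved for the core of the target $\mathscr{J}_k^r$; the margin $r+k$ of concentric rings in the source is tight enough to make this verification non-trivial but still routine.
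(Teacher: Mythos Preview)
Your overall template matches the paper's proof, which is entirely pictorial (\autoref{figlongjumpingrids}): in each of the six source graphs one exhibits a single orange ``jump'' path crossing a blue planar linkage of size $k$, with a green annulus sub-grid supplying the $r$ rings.

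Your $\mathscr{NC}_k^{r+1}$ sketch, however, has a real gap. Contracting each cross to a single chord and designating one of them as ``the'' jump while absorbing the remaining $k-1$ into columns produces only a $(k{-}1)$-long-jump, since that one retained chord crosses at most $k-1$ others. The idea you are missing is that each of the $k$ crosses supplies \emph{both} a member of the parallel linkage and a piece of the jump: the odd paths $P_1,P_3,\dots,P_{2k-1}$ are kept as the $k$ parallel paths, while the even paths $P_2,P_4,\dots,P_{2k}$ are stitched together through arcs of the innermost ring into one long jump path that crosses all $k$ odd paths---this concatenation is precisely what the extra ``$+1$'' ring is for. Your $\mathscr{TC}$ description (``same contraction, then untwist'') inherits the same defect. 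The double-jump and Klein cases are headed in the right direction and line up with the paper's picture.
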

\begin{proof}
For this proof, an illustration is more convenient than a long description.
See \autoref{figlongjumpingrids}.
\begin{figure}[h!]
\begin{center}
\includegraphics[scale=1.05]{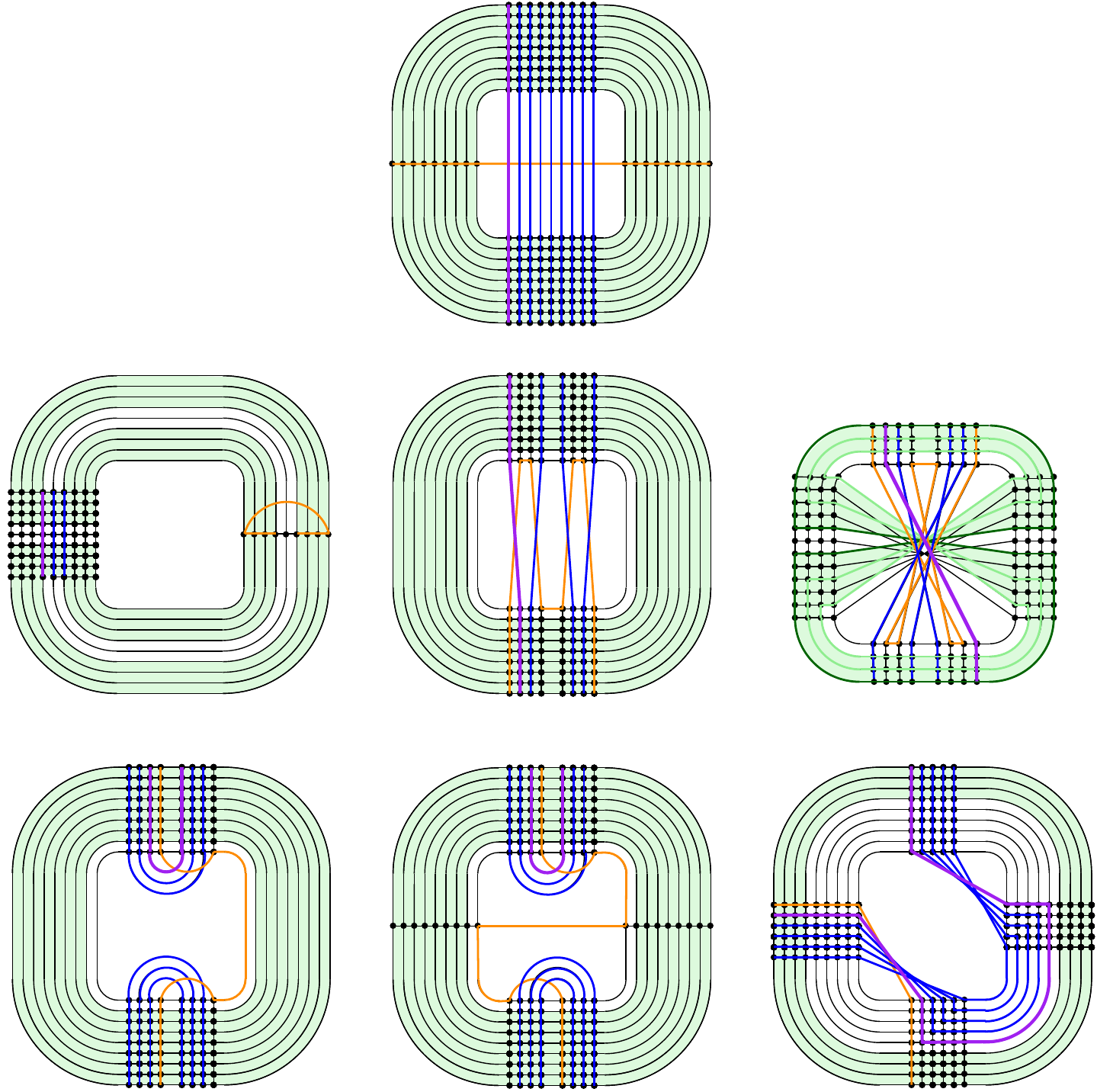}
\end{center}
\caption{Proof of \autoref{allinone2}.}
\label{figlongjumpingrids}
\end{figure}
In each grid, we need to find a path (in orange in \autoref{figlongjumpingrids}), jumping over a parallel linkage (in blue).
The annulus grid is represented in green.
\end{proof}

\begin{lemma}\label{allinone}
Let $k,k_1,k_2,r\in\Nbbb$ such that $k_1+k_2\ge k$.
Let $(G,\Omega)$ be a society and $\rho = (\Gamma, \Dcal, c_0)$ be a cylindrical rendition of $(G,\Omega)$ with a railed nest $(\Ccal, \Pcal)$ in $\rho$ around $c_0$, where $\Ccal =(C_1,\dots,C_s)$ is of order $s \ge a$ and $a = 2$ for items (i)-(iv) and $a = k+1$ for item (v).
Let $\Qcal$ be a transaction in $(G, \Omega)$ orthogonal to $\Pcal$ that is either:
\begin{itemize}
\item[(i)] a $k$-nested crosses transaction, or
\item[(ii)] a twisted $3k$-nested transaction, or
\item[(iii)] a $(k_1,k_2)$-double-jump transaction, or
\item[(iv)] an alternative $(k_1,k_2)$-double-jump transaction, or
\item[(v)] a $(k,k+1)$-klein transaction.
\end{itemize}
Then there is a $k$-long-jump transaction that is coterminal with $\Pcal$ up to level $a$.
\end{lemma}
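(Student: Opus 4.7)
My plan is to combine the abstract combinatorial patterns of $\Qcal$ listed in cases (i)--(v) with the geometric grid structure supplied by the railed nest $(\Ccal,\Pcal)$ so as to realize, in each case, one of the enhanced annulus grids drawn in Figure~\ref{figfindlongjump} as a topological minor of the ground graph in $\rho$. Once this is done, \autoref{allinone2} delivers a $k$-long-jump grid as a minor, from which the desired $k$-long-jump transaction can be extracted by interpreting its ``radial'' paths as $\Omega$-paths between two disjoint segments of $\Omega$. The orthogonality hypothesis is what makes this work: since $|C_i\cap Q|\le 2$ for every $C_i\in\Ccal$ and every $Q\in\Qcal$, each path of $\Qcal$ enters the disk $\delta_{C_1}$ at most once and crosses each rail of $\Pcal$ in a controlled, bounded way. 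Consequently, each $Q\in\Qcal$ plays the role of a single chord/jump edge relative to the cylindrical grid whose ``concentric cycles'' come from $\Ccal$ and whose ``transverse paths'' come from $\Pcal$.

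For cases (i)--(iv) I would use only the two outer cycles $C_1,C_2$ of the nest (so $a=2$) and then attach $\Qcal$ to the cylindrical grid rails $\Pcal$. In case (i), the $k$ pairs of crossing $\Omega$-paths of the $k$-nested crosses transaction are exactly the jumps of $\mathscr{NC}_k^{r+1}$; in case (ii), the twisted $3k$-nested transaction realizes $\mathscr{TC}_{3k}^{r+1}$, where the factor $3$ absorbs the overhead $k+2r$ of \autoref{allinone2} so that the conclusion $\mathscr{J}_k$ holds; in cases (iii) and (iv), the two embedded long-jump substructures of $\Qcal$ together with their connecting paths realize $\mathscr{P}_{k_1,k_2}^{r+1}$ or $\mathscr{Q}_{k_1,k_2}^{r+1}$ respectively. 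In case (v) the two crosscap transactions $\Qcal_1$ and $\Qcal_2$ of a $(k,k{+}1)$-klein transaction must be nested through one another through the cycles $C_1,\dots,C_{k+1}$ of the railed nest, which is why $a=k+1$ concentric cycles are required as ``room'' to unravel the pattern into the klein grid $\mathscr{K}_{k,k+1}^{r+k}$. In each case, once the enhanced grid is realized, \autoref{allinone2} provides the $k$-long-jump grid as a minor and its natural $k{+}1$ jump paths become the $k{+}1$ paths of a $k$-long-jump transaction between two disjoint segments of $\Omega$.

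Coterminality with $\Pcal$ up to level~$a$ follows from the construction itself: outside the disk $\delta_{C_a}$, the paths of the long-jump transaction we produce are by design subpaths of rails in $\Pcal$, since $\Qcal$ contributes only to the structure \emph{inside} $\delta_{C_a}$ (inside $\delta_{C_2}$ for (i)--(iv), inside $\delta_{C_{k+1}}$ for (v)), while the connections to $V(\Omega)$ are all routed through a suitable subfamily $\Pcal'\subseteq\Pcal$ of rails. The main obstacle I expect is the case-by-case verification of orientation and interleaving on $\Omega$, especially in cases (iv) and (v): for (iv), the extra isolated ``bridge'' path $Q$ forces a delicate choice of which rails to keep so that the resulting jump really crosses a planar transaction rather than being planarized; for (v), stacking the two crosscap transactions through $k+1$ nested cycles requires a careful pigeonhole-style routing so that the constructed jumps are coterminal with the chosen $\Pcal'$, not with some rerouted modification of the rails. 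Case (ii) additionally requires a short combinatorial observation to reduce a twisted $3k$-pattern to a usable $k$-long-jump, which is where the factor $3$ is consumed.
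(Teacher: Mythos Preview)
Your plan is essentially the paper's: assemble $\Ccal\cup\Pcal\cup\Qcal$ into one of the enhanced grids of \autoref{figfindlongjump}, invoke \autoref{allinone2}, and read off the long-jump transaction. Two corrections, however, are worth making.

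First, the paper adds an auxiliary cycle $C$ on $V(\Omega)$ (whose edges need not lie in $G$) to serve as the outermost ring of the annulus; this is what makes the union $H=\Ccal\cup\Pcal\cup\Qcal\cup C$ realize $\mathscr{NC}_k^{s+1}$, $\mathscr{TC}_k^{s+1}$, $\mathscr{P}_{k_1,k_2}^{s+1}$, $\mathscr{Q}_{k_1,k_2}^{s+1}$, or $\mathscr{K}_{k,k+1}^{s+1}$ (with $s+1$ concentric cycles, not $s$), and what ties the endpoints of $\Qcal$ together in the right cyclic order. You do not mention this device, and without it the grid picture is incomplete.

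Second, your account of coterminality is inverted. The paths of $\Qcal$ are $\Omega$-paths, so they already reach $V(\Omega)$; you do not extend them outward along $\Pcal$. Rather, the paper observes that the long-jump grid produced by \autoref{allinone2} is an $X$-rooted minor of $H$ for $X=V(C)\cup V(C_a)\cup\dots\cup V(C_s)$, so its annulus part lives in the cycles $C,C_a,\dots,C_s$ and all the rerouting needed to turn the pattern of $\Qcal$ into a single jump happens \emph{inside} $\delta_{C_a}$ using $C_1,\dots,C_{a-1}$. This is why $a=2$ suffices for (i)--(iv) and $a=k+1$ is needed for (v): it is not that one ``uses only two cycles'' of the nest, but that one \emph{sacrifices} $a-1$ inner cycles for rerouting while the remaining $s-a+1$ cycles plus $C$ carry the annulus of $\mathscr{J}_k^{s-a+2}$.
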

\begin{proof}
Let $H$ be the graph obtained from the union of $\Ccal$, $\Pcal$, $\Qcal$, and a cycle $C$ with vertex set $V(\Omega)$ and an edge $uv$ between any two consecutive vertices $u, v$ in the cyclic ordering of $\Omega$ (note that these edges may not be edges in $G$).
If $\Qcal$ is one of items (i) to (v), then $H$ is one of $\mathscr{NC}_k^{s+1}$, $\mathscr{TC}_k^{s+1}$, $\mathscr{P}_{k_1,k_2}^{s+1}$, $\mathscr{Q}_{k_1,k_2}^{s+1}$, and $\mathscr{K}_{k_1,k_2}^{s+1}$.
Then, by \autoref{allinone2}, $\mathscr{J}_k^{s-a+2}$ is a minor of $H$.
Let $X$ be the set of vertices in the cycles $C$ and $C_{a},\dots,C_s$.
More particularly, if we look at \autoref{figlongjumpingrids}, $\mathscr{J}_k^{s-a+2}$ is an $X$-minor of $H$.
Hence, we deduce that there is $\Qcal'\subseteq \Qcal$ that is a $k$-long-jump transaction that is coterminal with $\Pcal$ up to level $C_a$.
\end{proof}

\subsection{Finding an isolated and rural strip in a vortex}
\label{sec:strip}

In this part, we prove that a society $(G,\Omega)$ containing a planar transaction $\Qcal$ either contains a long-jump transaction, or is such that the society (called \emph{strip society}) bordered by some strip $\Qcal'\subseteq \Qcal$ has a vortex-free rendition.

We first define strip societies (from \cite{kawarabayashi2020quickly}).

\paragraph{Strip societies.}
Let $\Pcal=(P_1,...,P_m)$ be a monotone transaction of size $m\ge2$ in a society $(G, \Omega)$.
Let $X_1$ and $X_2$ be disjoint segments of $\Omega$ such that $\Pcal$ is a linkage from $X_1$ to $X_2$.
For each $i\in[m]$, the endpoints of $P_i$ are denoted by
$a_i$ and $b_i$ in such a way that $a_i\in X_1$, $b_i\in X_2$, and 
the endpoints occur in $\Omega$ in the order 
$a_1,a_2,\ldots,a_m,b_m,b_{m-1},\ldots,b_1$ if $\Pcal$ is a planar transaction,
and in the order $a_1,a_2,\ldots,a_m,b_1,b_{2},\ldots,b_m$ if $\Pcal$ is a crosscap transaction.

Let $H$ denote the subgraph of $G$ obtained from the union of the elements of $\Pcal$ by adding the elements of  $V(\Omega)$
as isolated vertices.
Let $H'$ be the subgraph of $H$ consisting of $\Pcal$, $a_1\Omega a_m$, and $b_1\Omega b_m$.
Let us consider all $H$-bridges of $G$ with at least one attachment in $V(H')\setminus V(P_1\cup P_{n})$, and for each such
$H$-bridge $B$, let $B'$ denote the graph obtained from $B$ by deleting all attachments that do not belong to  $V(H')$.
Finally, let $G_1$ denote the union of $H'$ and all the graphs $B'$ as above.

If $\Pcal$ is a planar transaction,
let the cyclic permutation $\Omega_1$ be defined by saying that
$V(\Omega_1)=a_{1}\Omega a_m\cup b_m\Omega b_{1}$
and that the order on $\Omega_1$ is the one induced by $\Omega$.
If $\Pcal$ is a crosscap transaction,
let the cyclic permutation $\Omega_1$ be defined by saying that
$V(\Omega_1)=a_{1}\Omega a_m\cup b_1\Omega b_m$
and that the order on $\Omega_1$ is obtained by following $a_1 Ωa_m$ in the order
given by $Ω$, and then following $b_1 Ωb_m$ in the \emph{reverse order} from the one given by $Ω$.

Thus $(G_1,\Omega_1)$ is a society, and we call it the {\em $\Pcal$-strip society of $(G, \Omega)$} with respect to $(X_1,X_2)$.
When there can be no confusion as to the choice of the segments $(X_1, X_2)$, we will omit specifying them.

We say that $P_1$ and $P_m$  are the {\em boundary paths} of the $\Pcal$-strip society  $(G_1,\Omega_1)$.
We say that the $\Pcal$-strip society of $(G, \Omega)$ is {\em isolated} in $G$ if no edge of $G$ has one
endpoint in $V(G_1)\setminus V(P_1 \cup P_m)$ and the other endpoint in $V(G)\setminus V(G_1)$.
Thus $(G_1,\Omega_1)$ is isolated if and only if every $H$-bridge of $G$ with at least one attachment in $V(H')\setminus V(P_1\cup P_m)$
has all its attachments in $V(H')$.

\medskip
The following lemma, that is very much inspired from \cite[Theorem 5.11]{kawarabayashi2020quickly}, says that if a society has a big monotone transaction $\Qcal$, then either it contains a long-jump transaction, or the $\Qcal'$-strip society defined by some $\Qcal' \subseteq \Qcal$ is isolated and rural.

\begin{lemma}\label{planarstrip}
Let $l,k,s\in\Nbbb$ with $s\ge 9$.
Let $(G, \Omega)$ be a society.
Let $\rho = (\Gamma, \Dcal, c_0)$ be a cylindrical rendition of $(G, \Omega)$.
Let $\Qcal$ be an exposed planar (resp. crosscap) transaction in $(G, \Omega)$ of order $l'\ge k(l+3k)$ (resp.  $l'\ge 3k(l+3k)$).
Let $(\Ccal = (C_1, \dots, C_{s}),\Pcal)$ be a railed nest of order $(s,2l')$ in $\rho$ around $c_0$
where $\Pcal$ is the rail truncation of $\Qcal$.
Let $X_1,X_2$ be disjoint segments of $\Omega$ such that $\Qcal$ is a linkage from $X_1$ to $X_2$.
Then one of the following holds.
\begin{itemize}
\item[(i)] there is a $k$-long-jump transaction in $(G,\Omega)$ that is coterminal with $\Pcal$ up to level $C_9$, or
\item[(ii)] there is a transaction $\Qcal' \subseteq \Qcal$ of size at least $l$
such that the $\Qcal'$-strip society of $(G, \Omega)$ with respect to $(X_1,X_2)$ is isolated and rural.
\end{itemize}
\end{lemma}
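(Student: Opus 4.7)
\medskip
\noindent\textbf{Strategy.} The plan is to follow the template of \cite[Theorem 5.11]{kawarabayashi2020quickly}. Partition the transaction $\Qcal$ into $k$ consecutive sub-transactions, each of size $l+3k$ in the planar case (respectively $3(l+3k)$ in the crosscap case, where an application of \cref{planar or crosscap} first lets us extract a planar sub-transaction inside each block), so that each sub-transaction defines a candidate strip. If at least one candidate has an isolated and rural strip society, we truncate it to size $l$ and output option~(ii). Otherwise, for each of the $k$ sub-transactions we will extract an ``obstruction'' witnessing the failure of isolation or of rurality, and then combine these $k$ obstructions with portions of the rails $\Pcal$ and of $\Qcal$ itself to assemble a single $\Omega$-path that jumps over $k$ pairwise non-crossing paths from $\Qcal$; this is exactly a $k$-long-jump transaction.

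\medskip
\noindent\textbf{Obstructions.} Fix a sub-transaction $\Qcal_{i}$ of size $l+3k$ and let $\Qcal_{i}'\subseteq\Qcal_{i}$ be its middle $l$ paths, with corresponding strip society $(G_{i}',\Omega_{i}')$ with respect to $(X_1,X_2)$. If $(G_{i}',\Omega_{i}')$ is isolated and rural we are done, so assume one of the two fails. If $(G_{i}',\Omega_{i}')$ is not rural, \cref{thm:crossreduct} yields a cross in $(G_{i}',\Omega_{i}')$, i.e.\ two disjoint paths whose endpoints interleave on $\Omega_{i}'$. If $(G_{i}',\Omega_{i}')$ is not isolated, then by definition there is an edge of $G$ with one endpoint in $V(G_{i}')\setminus V(\text{boundary paths of }\Qcal_{i}')$ and the other outside $V(G_{i}')$, which extends through a bridge to a path reaching $V(\Omega)\setminus V(\Omega_{i}')$. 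In either case, after combining with the $3k$-path buffer $\Qcal_{i}\setminus\Qcal_{i}'$, we obtain a $V(\Omega)$-path $J_{i}$ in $G$ that enters the strip of $\Qcal_{i}'$, jumps across at least one path of $\Qcal_{i}'$, and leaves on the opposite side of it.

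\medskip
\noindent\textbf{Assembling the long-jump and the main obstacle.} Using the rails $\Pcal$ (of width $2l'$) to reroute between sub-transactions without entering the interior of any single strip, the $k$ obstructions $J_{1},\ldots,J_{k}$ can be concatenated, via sub-arcs of $\Omega$ and of $\Qcal$, into a single $\Omega$-path $R$ that successively jumps across one selected path of each $\Qcal_{i}'$. These $k$ selected paths from $\Qcal$ together with $R$ then form the desired $k$-long-jump transaction, and since the nest $\Ccal$ has $s\geq 9$ levels we can carry out this routing outside the first eight levels so that the resulting transaction is coterminal with $\Pcal$ up to level $C_{9}$. The crosscap case is analogous once the extra factor $3$ absorbs the twist at the two ends of the crosscap transaction. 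The most delicate part is the combinatorial bookkeeping of the concatenation: we must ensure that the final $k+1$ paths, read along $\Omega$, realize the exact cyclic order $a_{1},\ldots,a_{k+1},b_{1},b_{k+1},b_{k},\ldots,b_{2}$ that defines a long-jump (and not, say, the order of a crosscap, klein, nested-crosses, or double-jump transaction of \cref{allinone}). This is precisely what the $3k$-path buffer inside each group is engineered to guarantee: it provides enough slack on either side of each obstruction to route the jumping segment of $R$ in the prescribed cyclic position relative to the selected parallel paths.
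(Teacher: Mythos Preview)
Your overall architecture matches the paper's: partition $\Qcal$ into $k'$ consecutive blocks (with $k'=k$ in the planar case and $k'=3k$ in the crosscap case), and if one block yields an isolated rural strip you win. But the failure case is where your argument breaks down, in two separate places.

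First, the crosscap case. You propose to apply \cref{planar or crosscap} inside each block to extract a planar sub-transaction. This cannot work: every pair of paths in a crosscap transaction crosses, so every subset of a crosscap transaction is again a crosscap transaction, and the only planar sub-transaction you can extract has size one. The paper does not try to make the blocks planar. Instead it keeps them as crosscap sub-transactions, and when every block produces a cross, the resulting collection of $2k'$ paths forms a \emph{twisted} $k'$-nested-crosses transaction (this is where the factor $3$ enters: $k'=3k$), which \cref{allinone} then converts into a $k$-long-jump.

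Second, even in the planar case your ``concatenation'' step is not justified. You assert that the $k$ local obstructions $J_1,\dots,J_k$ can be glued via sub-arcs of $\Omega$ and of $\Qcal$ into a single $\Omega$-path $R$ jumping over one chosen path from each block. But a cross in the strip $(G_i',\Omega_i')$ is two paths with interleaving endpoints on $\Omega_i'$; it does not hand you a single path that enters on one side of some $Q\in\Qcal_i'$ and exits on the other, and even if it did, routing between consecutive $J_i$ along $\Omega$ would typically force you to cross the very parallel paths you are trying to jump over. The paper avoids this entirely: it shows that if every block has a cross, then the $2k$ crossing paths (after rerouting through the nest to be coterminal with $\Pcal$) form a $k$-nested-crosses transaction, and \cref{allinone} does the work of producing the long-jump. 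The paper also separates a different failure mode you have folded in: if two of the auxiliary strip graphs $H_i'$ share a vertex, a single bridge path already gives the long-jump directly, before any cross argument is needed. Your proposal does not isolate this case, and your concatenation sketch does not substitute for it.
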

\begin{proof}
Assume the claim is false.  
Let $k'=k$ if $\Qcal$ is a planar transaction, and $k'=3k$ if $\Qcal$ is a crosscap transaction.
Let $X^1 \subseteq X_1$ and $X^2\subseteq X_2$ be the two disjoint minimal segments of $\Omega$ such  that every path
in $\Qcal$ has one endpoint in $X^1$ and the other point in $X^2$.
Let $\Qcal=\{Q_1,Q_2,\ldots,Q_{l'}\}$, where the paths are numbered in such a way that
their endpoints appear in $X_1$ (and therefore also in $X_2$) in order. 

Let $I_1',...,I_{k'}'$ be intervals of length $l+3k$ with union $[l']$.
For $i\in[k']$, let $I_i$ be obtained from $I_i'$ by deleting the last $k$ elements.
Thus $|I_i|=l+2k$.

For $i\in[k']$, let $\Qcal_i$ be the set $\{Q_j\mid j \in I_i\}$.
For $i\in[k']$, and $j\in[2]$, let $X^j_i$ be the minimum subset of $X^j$ forming a segment of $\Omega$ containing an endpoint of every element of $\Qcal_i$.
Let $(H_i, \Omega_i)$ be the $\Qcal_i$-strip society of $(G, \Omega )$ with respect to $(X^1_i,X_i^2)$.

Let $i \in [k']$.
We define a society $(H_i', \Omega_i)$ which is closely related to the strip society $(H_i, \Omega_i)$ as follows.
Let $J$ be the graph consisting of the union of  $V(\Omega)$ treated as isolated vertices and $\Qcal$.
For every $J$-bridge $B$ in $G$, let $B'$ be the subgraph obtained by deleting all attachments of $B$ not in $V(\Qcal_i) \cup X^1_i \cup X^2_i$.  
Let $\alpha+1$ be the smallest value in $I_i$.  
Let $H_i'$ be the union of $J[V(\Qcal_i) \cup X^1_i \cup X^2_i]$ and $B'$ for every $J$-bridge $B$ in $G$ with at least one attachment in $(V(\Qcal_i) \cup X^1_i \cup X^2_i) \setminus V(Q_{\alpha+1} \cup Q_{\alpha+|I_i|})$ (that is, an attachment not in the first or last element of $\Qcal_i$).
Note that the difference between $H_i$ and $H_i'$ is that, to define $H_i$, we consider $Q_{i} \cup V(\Omega)$-bridges, while for $H_i'$, we consider $Q \cup V(\Omega)$-bridges.

\begin{claim}\label{cl:stripdisjoint}
The subgraphs $H_i'$ are pairwise vertex disjoint.
\end{claim}
\begin{cproof}
Assume the claim is false and let us show that there is a long-jump transaction of order $k$ in $(G,\Omega)$.
Let $i< i'$ be indices such that $H_i'$ intersects $H_{i'}'$.  Thus, there exists a path $R$ in $G$ with one endpoint $x_i$ in $X^1_i \cup X^2_i \cup V(\Qcal_i)$, the other endpoint $x_{i'}$ in $X^1_{i'} \cup X^2_{i'} \cup V(\Qcal_{i'})$ and no internal vertex in $V(\Omega) \cup V(\Qcal)$.

Let $F$ be the subgraph formed by the union of $C_1$ and the inner graph of $C_1$.  We fix the path $R$ to minimize the number of edges not contained in $E(F)$.  Let $\bar{R} = R \cap F$.

Consider a maximal subpath $T$ of $R$ with all internal vertices contained in $V(R) \setminus V(\bar{R})$.  There are two possible cases given the cylindrical rendition and the fact that $R$ is internally disjoint from $\Qcal$: either $T$ has one endpoint in $\{x_i,x_{i'}\}$ and one endpoint in $C_1$ or, alternatively, $T$ has both endpoints in a component of $C_1 - V(\Qcal)$.  By replacing any such subpath $T$ with an appropriately chosen subpath of $C_1$, it follows that there exists a path $R'$ contained in $C_1 \cup \bar{R}$ with endpoints $x_i'$ and $x_{i'}'$ such that
\begin{itemize}
\item $R'$ is internally disjoint from $\Qcal$ and,
\item $x_i'$ is contained in $V(\Qcal_i)$ and $x_{i'}'$ is contained in $V(\Qcal_{i'}')$.
\end{itemize}
Note that if $x_i$ is in an element of $\Qcal$, we can choose $x_i'$ to be in the same element of $\Qcal$ as $x_i$.  If $x_i \in V(\Omega)$, we can choose $x_i'$ to be in either of the two elements of $\Qcal$ closest to $x_i$ on $\Omega$.

We conclude, by the choice of $R'$, that $R'$ is a subgraph of $F$.
Thus, both endpoints $x_i'$ and $x_{i'}'$ are contained in $V(\Qcal) \cap V(F)$.
This path $R'$ along with the $k$ paths $(Q_{\alpha+|I_i|+1},...,Q_{\alpha+|I_i|+k})$ as well as $\Qcal_i$, and $\Qcal_{i'}$, create a $k$-long-jump transaction in $(G,\Omega)$ that is coterminal with $\Pcal$ up to level $C_2$, and hence $C_9$, which is outcome (i). 
See \autoref{figwallsinwall} for an illustration of the proof of \autoref{ca}, which is similar.
\end{cproof}

\begin{claim}
There exists an index $i\in[k']$, such that $(H_i', \Omega_i')$ is rural.
\end{claim}
\begin{cproof}
The proof is exactly the same as the one of \cite[Theorem 5.11, Claim 3]{kawarabayashi2020quickly} with the specificity that for us, $Z = \emptyset$, and thus $\alpha=0$ and $a=5$.
The idea is that if none of the $(H_i', \Omega_i')$ is rural, then each contain a cross.
Thus, we will find a transaction $\Qcal'$ from $X_1$ to $X_2$ that is coterminal with $\Qcal$ up to level $C_{8}$, such that $\Qcal'$ is either a $k'$-nested crosses transaction (if $\Qcal$ is planar) or a twisted $k'$-nested crosses transaction (if $\Qcal$ is a crosscap transaction).
Then, by \autoref{allinone}, there is a $k$-long-jump transaction that is coterminal with $\Pcal$ up to level $C_9$, which is outcome (ii) of the statement.
\end{cproof}

We fix, for the remainder of the proof, an index $i$ such that $(H_i', \Omega_i')$ is rural, and fix a vortex-free rendition $\rho'$ of $(H_i', \Omega_i')$ in the disk $\Delta$.  Let $I^\star$ be the interval obtained from $I_i$ by deleting the first and last $k$ elements.  Note that $|I^\star| = l$.  Let the first element in $I^\star$ be $\beta + 1$.  Let $\Qcal^\star$ be the set $\{Q_j\mid j \in I^\star\}$, and $X^{1*}$ and $X^{2*}$ be the minimal segments contained in $X^1_i$ and $X^2_i$, respectively, containing an endpoint of each element of $\Qcal^\star$.  Let $T_1$ and $T_l$ be the track of $Q_{\beta + 1}$ and $Q_{\beta +l}$ in the rendition $\rho'$.

Of the three connected components of $\Delta \setminus ( T_1 \cup T_l)$, let $\Delta^\star$ be the (unique) component whose boundary contains $T_1$ and $T_l$.  Let $H^\star = \bigcup_{c \in C(\rho'): c \subseteq \Delta^\star} \sigma(c) \cup Q_{\beta + 1} \cup Q_{\beta + l}$.  Let $\Omega^\star$ be the cyclically ordered set with $V(\Omega^\star) = X^{1*} \cup X^{2*}$ obtained by restricting $\Omega_i'$ to $V(\Omega^\star)$.

The rendition $\rho'$ restricted to the disk $\Delta^\star$ can be extended to a vortex-free rendition
of $(H^\star, \Omega^\star)$ by mapping the vertices of $Q_{\beta+1}$ and $Q_{\beta+l}$ to the boundary, and thus the following claim immediately follows.

\begin{claim}
$(H^\star, \Omega^\star)$ is rural.
\end{claim}

We now see that no edge to $V(H^\star)$ avoids the paths $Q_{\beta+1}$ and $Q_{\beta+l}$.  Recall that the definition of the subgraph $J$ of $G$ is the subgraph consisting of the union of  $V(\Omega)$ treated as isolated vertices and $\Qcal$.

\begin{claim}\label{cl:nojumpedge}
There does not exist an edge $xy$ of $G$ with $x$ in $V(H^\star) \setminus (V(Q_{\beta+1}) \cup V(Q_{l}))$ and $y$ in $V(G) \setminus V(H^\star)$.
\end{claim}

\begin{cproof}
Assume that such an edge $xy$ exists.  Given the rendition $\rho'$, it follows that the edge $xy \notin E(H_i')$, and thus the vertex $y$ is the attachment of a $J$-bridge $B$ in $G$ such that $y \in (V(\Omega) \setminus V(H_i')) \cup \bigcup_{j \notin I_i} V(Q_j)$ and $B$ has an attachment $x'$ in $V(\Omega^\star) \cup V(\Qcal^\star)$.  Fix a path $P$ in $B$ from $x'$ to $y$ which is internally disjoint from $V(\Omega) \cup V(\Qcal)$.
Then, 
using $P$, $\Qcal^\star$, and $\{Q_j\mid j\in I_i\setminus I^\star\}$,
we conclude that there is a $k$-long-jump transaction in $(G,\Omega)$ that is coterminal with $\Pcal$ up to level $C_2$, a contradiction.
\end{cproof}

Finally, we have the following.

\begin{claim}
The $\Qcal^\star$-strip society of $(G, \Omega)$ is a subgraph of $H^\star$.
\end{claim}

\begin{cproof}
The proof is exactly the same as the one of \cite[Theorem 5.11, Claim 6]{kawarabayashi2020quickly} with $Z=\emptyset$.
Essentially, if that happened, then it would contradict \autoref{cl:nojumpedge}.
\end{cproof}

The theorem now follows as the vortex-free rendition of $(H^\star, \Omega^\star)$ restricted to the $\Qcal^\star$-strip society shows that the strip society is rural, and \autoref{cl:nojumpedge} shows that the $\Qcal^\star$-strip society is isolated.
\end{proof}

\subsection{Splitting a vortex}
\label{sec:split}

In this part, we prove that, if a society $(G,\Omega)$ contains a unique vortex $c_0$ and a strip society that is rural and isolated, then $c_0$ can be split into two vortices $c_1$ and $c_2$ of smaller size that are separated by the strip society.\medskip

We require the following technical result.

\begin{proposition}[Lemma 5.15, \cite{kawarabayashi2020quickly}]\label{lem:planarstriprestriction}
Let $(G,\Omega)$ be a society with a cylindrical rendition $\rho=(\Gamma,\Dcal,c_0)$.
Let $\Ccal=(C_1,...,C_s)$ be a nest in $\rho$ and let $\Qcal$ be an exposed monotone transaction in $(G,\Omega)$ of order at least $3$ orthogonal to $\Ccal$.
Let $(H,\Omega_H)$ be the $\Qcal$-strip society in $(G,\Omega)$ and let $Y_1$ and $Y_2$ be the two segments of $\Omega\setminus V(H)$.
Assume that there exists a linkage $\Pcal=\{P_1,P_2\}$ such that $P_i$ links $Y_i$ and $V(\sigma(c_0))$ for $i\in\{1,2\}$, $\Pcal$ is disjoint from $\Qcal$, and $\Pcal$ is orthogonal to $\Ccal$.

Let $i\geq7$. Let $(G',\Omega')$ be the inner society of $C_i$ and let $\Qcal'$ be the restriction of $\Qcal$ to $(G',\Omega')$.
Let $\rho'=(\Gamma',\Dcal',c_0)$ be the restriction of $\rho$ to be a cylindrical rendition of $(G',\Omega')$.
Then $\Qcal'$ is exposed and monotone in $\rho'$. 
Moreover, $\Qcal'$ is a crosscap transaction if and only if $\Qcal$ is a crosscap transaction.
If the $\Qcal$-strip society is rural and isolated in $(G,\Omega)$, then the $\Qcal'$-strip society is rural and isolated in $(G',\Omega')$.
\end{proposition}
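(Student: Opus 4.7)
The plan is to verify the four assertions of the proposition — that $\Qcal'$ is a transaction that is (a) exposed in $\rho'$, (b) monotone, (c) a crosscap transaction iff $\Qcal$ is, and (d) whose strip society inherits rurality and isolation — one at a time, exploiting the nest structure together with the existence of the linkage $\Pcal$.

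First I would make $\Qcal'$ precise. Each path $Q\in\Qcal$ has its endpoints in $V(\Omega)$, which lies strictly outside $C_i$, while by exposure $Q$ contains at least one edge of $\sigma(c_0)$, which lies strictly inside the disk bounded by $C_1\subseteq\delta_{C_i}$. Orthogonality of $\Qcal$ to $\Ccal$ forces $Q\cap C_i$ to have at most two components, and together with exposure this gives exactly two. Hence there is a unique maximal subpath $Q'$ of $Q$ contained in the inner graph of $C_i$; its endpoints lie on $C_i$ and therefore in $V(\Omega')$. Setting $\Qcal'=\{Q'\mid Q\in\Qcal\}$ yields a vertex-disjoint family, and exposure of $\Qcal'$ in $\rho'$ is immediate, since $Q'$ still contains the $\sigma(c_0)$-edge witnessing exposure for $Q$.

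For monotonicity and the crosscap/planar dichotomy, the key observation is that the cyclic order in which the paths of $\Qcal$ cross $C_i$ agrees with the cyclic order of their endpoints in $\Omega$. This follows because the annulus of $\rho$ between $C_i$ and $\bd(\delta)$ contains no vortices, so the portions of $\Qcal$ lying in that annulus admit a plane drawing compatible with $\Gamma$; in particular two paths cross on $C_i$ iff they already crossed along $\Omega$. Reading this order off of $C_i$ with the cyclic orientation coming from $\rho'$ shows that $\Qcal'$ has the same planar-versus-crosscap combinatorial type as $\Qcal$.

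The subtle assertion is rurality and isolation of the $\Qcal'$-strip society $(H',\Omega_{H'})$. I would first show $H'\subseteq H$ by comparing the two strip constructions: every $Q\cup V(\Omega)$-bridge $B$ of $G$ whose truncation contributes to $H$ restricts, inside $\delta_{C_i}$, to a $Q'\cup V(\Omega')$-bridge contributing to $H'$. The vortex-free rendition of $(H,\Omega_H)$ then restricts to a vortex-free rendition of $(H',\Omega_{H'})$ in the restricted disk, giving rurality. For isolation I would argue by contradiction: any edge $xy$ witnessing failure of isolation in $G'$ must come from a bridge $B$ of $G$ attaching both inside the $\Qcal'$-strip and outside it but inside $\delta_{C_i}$; here the linkage $\Pcal$ intervenes, together with the $i\ge 7$ rings of $\Ccal$, which provide enough room in the annulus to reroute such a bridge through $\sigma(c_0)$ out to $Y_1$ or $Y_2$, thereby realising $B$ as an obstruction to isolation of the original $\Qcal$-strip, contradicting the hypothesis.

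The hard part, as I expect it, is precisely (d): accounting for how $J$-bridges in $G$ translate to $J'$-bridges in $G'$ once we truncate at $C_i$. New bridges can appear in $G'$ that were not bridges in $G$, because a $Q\in\Qcal$ has been shortened to $Q'$, and the role of $\Pcal$ is exactly to rule out the dangerous cases where such a newly created bridge could attach outside the strip. Carefully tracing through these cases, using the orthogonality of $\Pcal$ to $\Ccal$ — so that $\Pcal$ reaches $\sigma(c_0)$ already inside $\delta_{C_i}$ and hence offers rerouting options strictly within the restriction — is where the technical work lies.
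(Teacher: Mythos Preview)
The paper does not prove this proposition: it is quoted verbatim as Lemma~5.15 of \cite{kawarabayashi2020quickly} and used as a black box in the proof of \autoref{main2}. There is therefore no proof in the paper to compare your attempt against.

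As for the merits of your sketch on its own terms: parts (a)--(c) are essentially correct and routine. Your treatment of (d), however, is too vague at the crucial point. You write that a hypothetical isolation-violating bridge in $G'$ can be ``rerouted through $\sigma(c_0)$ out to $Y_1$ or $Y_2$'' using $\Pcal$ and the $i\ge 7$ nest cycles, thereby contradicting isolation of the original $\Qcal$-strip. But you have not explained \emph{why} such a bridge must be connectable to $\Pcal$, nor how the seven cycles are actually consumed in this rerouting; the number~7 is not incidental and your argument gives no indication of where it enters. Moreover, your claim that $H'\subseteq H$ and that the vortex-free rendition of $(H,\Omega_H)$ simply restricts to one of $(H',\Omega_{H'})$ glosses over the fact that the boundary cyclic orders $\Omega_H$ and $\Omega_{H'}$ live on different vertex sets (one on $V(\Omega)$, the other on $V(C_i)$), so ``restriction'' of the rendition is not automatic and requires the annulus structure between $C_i$ and $\bd(\delta)$ to be made explicit. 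If you want to turn this into a proof you should consult the original argument in \cite{kawarabayashi2020quickly}, which handles these bridge-tracking details carefully.
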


The main result of this section is that, given a cylindrical rendition $\rho$ of a society $(G,\Omega)$ and a planar transaction $\Qcal$ such that the $\Qcal$-strip society in $(G,\Omega)$ is rural and isolated, there is another rendition $\rho'$ of $(G,\Omega)$ of breadth two containing two smaller cylindrical renditions, and otherwise vortex-free.

\begin{lemma}\label{main2}
Let $k, l,m, s,s' \in\Nbbb$ such that $l\ge1$, $m\ge\max\{2s'+5,k+2\}$, and $s\ge s'+8$.
Let $(G, \Omega)$ be a society and $\rho = (\Gamma,\Dcal,c_0)$ be a cylindrical rendition of $(G, \Omega)$ in a disk $\Delta$.
Let $X_1,X_2$ be two disjoint segments of $\Omega$ and let $Y_1,Y_2$ be the two segments of $\Omega\setminus(X_1\cup X_2)$.
Let $\Qcal$ be an exposed planar transaction from $X_1$ to $X_2$ of size $m$ such that the $\Qcal$-strip society in $(G,\Omega)$ is isolated and rural.
Let $(\Ccal = (C_1,C_2, \dots, C_s),\Pcal \cup \Pcal_1 \cup \Pcal_2)$ be a railed nest in $\rho$ of order $(s,2m+2l)$ around $c_0$ such that $\Pcal$ is the rail truncation of $\Qcal$ and,
for $i\in[2]$, $\Pcal_i$ is a linkage in $\rho$ from $Y_i$ to $V(\sigma(c_0))$ of size $l$.

Then either there is a $k$-long-jump transaction in $(G,\Omega)$ that is coterminal with $\Pcal\cup\Pcal_1\cup\Pcal_2$ up to level $C_8$, 
or there is a rendition $\rho'$ of $(G,\Omega)$ in $\Delta$ with exactly two vortices $c_1$ and $c_2$, and two disjoint $\rho'$-aligned disks $\Delta_1',\Delta_2'\subseteq \Delta$ such that
\begin{itemize}
\item for $i\in[2]$, $\rho_i = \rho'[\Delta_i']$ is a cylindrical rendition of $(\inG_{\rho'}(\Delta_i'),\Omega_{\Delta_i'})$ with vortex $c_i$,
\item there is a railed nest $(\Ccal_i=(C_1^i,\dots,C_{s'}^i),\Pcal_i')$ of order $(s',l)$ around $c_i$ in $\rho_i$, where, for each $P\in\Pcal_i'$, $P$ is a subpath of an element of $\Pcal_i$ and, for each $j\in[s']$, $P\cap C_j^i= P\cap C_{j+7}$,
\item $(C_8,\dots,C_s)$ is a nest around both the closure of $c_1$ and the closure of $c_2$, and
\item there is a path $Q\in\Qcal$ such that $\Delta_1'$ and $\Delta_2'$ are contained in different connected components of $\Delta\setminus T$, where $T$ is the track of $Q$ in $\rho'$.
\end{itemize}
\end{lemma}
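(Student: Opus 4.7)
\medskip\noindent\textbf{Proof sketch.}
The strategy is to use the rurality and isolation of the $\Qcal$-strip society to surgically redraw the rendition $\rho$, splitting the single vortex $c_0$ into two vortices $c_1, c_2$ separated by a suitable path of $\Qcal$; when no such clean split exists, a $k$-long-jump transaction emerges from the many parallel paths in $\Qcal$. First, pass to the inner society of $C_8$ by applying \autoref{lem:planarstriprestriction} with $i = 8$, which is legitimate since $s \geq s' + 8 \geq 9$. The proposition preserves the fact that (the restriction of) $\Qcal$ is an exposed planar transaction whose strip society is rural and isolated. The cycles $C_8, \ldots, C_s$ form a nest of order $s - 7 \geq s' + 1$ around $c_0$ in the restricted cylindrical rendition, and will play the role of the common outer nest around both eventual vortices.

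Next, take the vortex-free rendition of the restricted strip society and paste it into $\rho$ in place of the current drawing of the strip. Since the strip is isolated in $G$, this local surgery leaves every edge outside the strip---in particular the rails $\Pcal_1, \Pcal_2$ and the cycles $C_j$ for $j \geq 8$---untouched. The redrawn strip now occupies a topological ribbon in $\Delta$ connecting $X_1$ to $X_2$, and the original vortex disk $\delta_{c_0}$ is cut by this ribbon into two regions (up to identifying components by which side of the strip they lie on). Absorb all non-strip material of $\sigma(c_0)$ on the $Y_1$-side into a new vortex $c_1$ and all such material on the $Y_2$-side into a new vortex $c_2$; pick $Q \in \Qcal$ inside the ribbon so that $c_1$ and $c_2$ lie in distinct components of $\Delta$ minus the track of $Q$ in the resulting drawing.

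This surgery yields a valid rendition $\rho'$ of $(G,\Omega)$ with exactly two vortices precisely when no edge or path of $\sigma(c_0)$ links a would-be-$c_1$ vertex to a would-be-$c_2$ vertex without traversing the strip. If such a ``jumping'' path $R$ does exist, then combining it with $k$ well-chosen parallel paths from $\Qcal$ (available because $m \geq k + 2$) and extending $R$ to an $\Omega$-path via the railed-nest structure produces a $k$-long-jump transaction. The rails $\Pcal, \Pcal_1, \Pcal_2$ together with the containment of all the newly constructed paths inside the disk bounded by $C_8$ ensure coterminality with $\Pcal \cup \Pcal_1 \cup \Pcal_2$ up to level $C_8$. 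In the ``good'' case, verification of the output rendition is routine: for $i \in [2]$, the paths of $\Pcal_i$ meet the cycles $C_8, \ldots, C_{s'+7}$ transversally by orthogonality with $\Ccal$, which yields the railed nest $(\Ccal_i, \Pcal'_i)$ of order $(s', l)$ around $c_i$ with $C^i_j = C_{j+7}$.

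The main technical difficulty lies in analyzing the ``bad'' case: one must confirm that the jumping path, together with the chosen paths of $\Qcal$, form a genuine $k$-long-jump transaction rather than merely a crosscap, nested-crosses, or other pattern. This is handled by carefully routing $R$ through the ribbon to determine which $Q_j$'s it actually crosses and in which cyclic order on $\Omega$, then invoking \autoref{allinone} to extract the long-jump from whichever pattern arises.
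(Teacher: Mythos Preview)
Your overall strategy matches the paper's: restrict to an inner society via \autoref{lem:planarstriprestriction}, paste in the vortex-free rendition of the strip, split the leftover non-strip material into two new vortex cells, and argue that any path linking the two sides forces a $k$-long-jump through $\Qcal$. The paper's \autoref{cu} is exactly your ``jumping path'' argument, and it is direct---no appeal to \autoref{allinone} is needed there.

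However, there is a genuine gap in your construction of the nests $\Ccal_i$. You assert that $C_j^i = C_{j+7}$, i.e.\ that the original cycles $C_8,\ldots,C_{s'+7}$ themselves form the nest around $c_i$ inside $\rho_i=\rho'[\Delta_i']$. This cannot work: those cycles encircle \emph{both} new vortices (as the third bullet of the lemma records), so they are not even contained in the disk $\Delta_i'$, let alone grounded in the cylindrical rendition $\rho_i$. The nest around $c_i$ has to live entirely inside $\Delta_i'$, which by the fourth bullet lies on one side of the track of some $Q\in\Qcal$.

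What the paper does instead is build each $C_j^i$ out of \emph{two} pieces: an arc of $C_{j+7}$ together with a subpath of a suitable member of $\Qcal$. Concretely, $C_j^1$ is the cycle in $Q_{j+1}\cup C_{j+7}$ (other than $C_{j+7}$ itself) whose track bounds a disk containing $c_1$, and symmetrically $C_j^2$ uses $Q_{m-j}\cup C_{j+7}$. Because the rails $\Pcal_i$ are disjoint from $\Qcal$, one still gets $P\cap C_j^i = P\cap C_{j+7}$ for $P\in\Pcal_i'$, which is exactly what the second bullet demands---but this identity of intersections is much weaker than your claimed identity of cycles. This construction is also where the hypothesis $m\ge 2s'+5$ is actually spent: one needs enough paths in $\Qcal$ so that the cycles $C_1^1,\ldots,C_{s'}^1$ (using $Q_2,\ldots,Q_{s'+1}$) and $C_1^2,\ldots,C_{s'}^2$ (using $Q_{m-1},\ldots,Q_{m-s'}$) are pairwise disjoint, with room for a separating $Q$ in between. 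Your sketch never invokes $m\ge 2s'+5$, which is a symptom of the missing construction.

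A minor related point: the paper restricts at $C_7$, not $C_8$; with your choice the indices in $P\cap C_j^i=P\cap C_{j+7}$ would be off by one.
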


\begin{figure}[h!]
\centering
\includegraphics[width=0.54\textwidth]{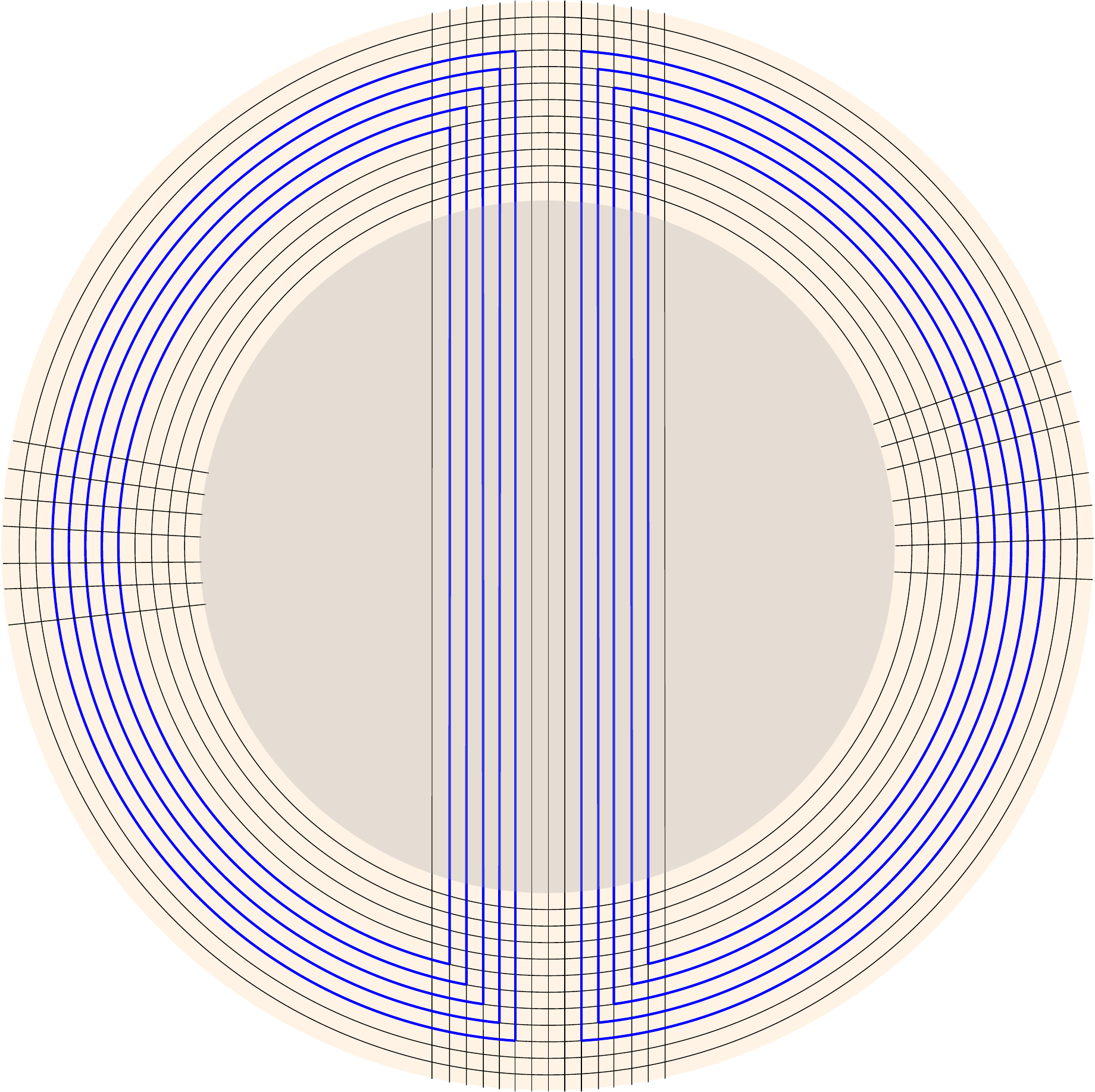}
 \caption{The rails, the nest, and the planar transaction of the original cylindrical rendition are represented in black and the rails and nests of the two new cylindrical renditions are represented in blue.}
\label{figsplit}
\end{figure}

\begin{proof}
See \autoref{figsplit} for an illustration.
Let us assume that $G$ does not contain a long-jump transaction of order $k$ in $(G,\Omega)$ that is coterminal with $\Pcal_1\cup\Pcal_2\cup\Pcal$ up to level $C_8$.
%
Let the elements of $\Qcal$ be enumerated $Q_1, Q_2, \dots, Q_{m}$ by the order in which their endpoints occur on $X_1$.

{\bf Restricting the strip society.}
We have $s\ge7$.
Let $(H, \Omega_H)$ be the inner society of $C_7$ in $(G, \Omega)$ with respect to the rendition $\rho$.  Let $\Delta_H$ be the closed subdisk bounded by the track of $C_7$.  Let $\Qcal_H$ be the restriction of $\Qcal$ to $(H, \Omega_H)$.
Let $(J, \Omega_J)$ be the $\Qcal_H$-strip society in $(H, \Omega_H)$.

Given that $l\ge1$, there are $P_1\in\Pcal_1$ and $P_2\in\Pcal_2$.
By \autoref{lem:planarstriprestriction} applied to the society $(G, \Omega)$, linkages $\Qcal$ and $\{P_1,P_2\}$, and cycle $C_7$,
the society $(J, \Omega_J)$ is rural and isolated in $(H, \Omega_H)$.  There exists
 a vortex-free rendition $\rho_{J}$ of $(J, \Omega_J)$ in $\Delta_H$
 such that
$\pi_\rho^{-1}(v)=\pi^{-1}_{\rho_{J}}(v)$ for all $v\in V(\Omega _J)$.
 Note that we are using the fact that $\Qcal$ is planar to ensure that the cyclic ordering $\Omega_J$ is the same as the cyclic ordering of $V(\Omega_J)$ in $\Omega_H$.

Let $T_1$ (resp. $T_2$) be the track of $Q_{1}$ (resp. $Q_{m}$) restricted to $(H, \Omega_H)$ in $\rho_{J}$.   
There is a unique connected component of $\Delta_H \setminus(T_1 \cup T_2)$ whose boundary includes both $T_1$ and $T_2$; 
let $\Delta^{*}$ be the closure of this connected component. 
Let $J'$ be the subgraph formed by
$\bigcup \{\sigma_{\rho_{J}}(c)\,:\,c \in C(\rho_{J}), c \subseteq \Delta^{*}\}$ 
along with any vertices $v$ of $J$ such that $\pi^{-1}_{\rho_{J}}(v)$ exists and belongs to $\Delta^{*}$.  
Let $\Omega_{J'}$ be the cyclically ordered set of vertices with $V(\Omega_{J'}) = V(\Omega_J) \cap V(J')$ with the cyclic order induced by $\Omega_J$.  
Let $\rho_{J'}$ be the restriction of $\rho_{J}$ to the disk $\Delta^{*}$.

{\bf Defining a new vortex-free rendition $\rho^\star$ avoiding the two future vortices.}
We define the society $(G', \Omega)$ as follows.
Let $G'$ be the union of $J'$ and the outer graph of $C_7$ with respect to $\rho$.
Thus, the union of $\rho_{J'}$ along with the restriction of $\rho$ to the complement of the interior of $\Delta_H$ gives us a vortex-free rendition $\rho^\star$ of $(G', \Omega)$ in the disk $\Delta$.

{\bf Defining $\Delta_1$ and $\Delta_2$.}
Consider $\Delta_H \setminus \Delta^{*}$.  
There is one connected component which contains $T_1$ in its boundary and one which contains $T_2$ in its boundary.  
For $i\in[2]$, let $U_i$ be the set of vertices $u \in V(G')$ such that
$\pi_{\rho*}^{-1}(u)$
exists and is contained in the boundary of the connected component of $\Delta_H \setminus \Delta^{*}$ with $T_i$ in the boundary. 
Thus by construction, ($U_1, U_2)$ is a partition of the set of vertices $\left( V(\Omega_H) \setminus V(\Omega_{J'})\right) \cup \pi_{\rho_{J'}}((T_1 \cup T_2) \cap N(\rho_{J'}))$.
Finally, for $i\in[2]$, fix the closed disk $\Delta_i$ in $\Delta_H \setminus \Delta^{*}$ such that $\Delta_i \cap( \bd(\Delta_H) \cup T_i ) = \pi_{\rho^\star}^{-1}(U_i)$.


{\bf Defining the subgraphs contained in $\Delta_1$ and $\Delta_2$.}
Let $L$ be the minimal subgraph of $G$ such that $G = G' \cup L$.  
Note that $L$ is a subgraph of $H$.

\begin{claim}
$V(L) \cap V(G') \subseteq U_1 \cup U_2$.
\end{claim}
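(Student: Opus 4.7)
The plan is to fix a vertex $v \in V(L) \cap V(G')$ and an edge $e \in E(L) = E(G) \setminus E(G')$ incident to $v$, and directly show that $v \in U_1 \cup U_2$. First I would observe that $E(G) \setminus E(G') \subseteq E(H) \setminus E(J')$: every edge of $G$ is drawn in some cell of $\rho$, and any cell contributing to $\outG_\rho(C_7)$ already lies in $G'$, while every cell of $\rho_J$ contained in $\Delta^*$ contributes to $E(J')$. Consequently both endpoints of $e$ must lie in $V(H)$, and I would split the analysis into two cases according to whether $v$ lies in $V(J')$ or only in $V(\outG_\rho(C_7))$.

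In the case $v \in V(\outG_\rho(C_7)) \setminus V(J')$, $v$ must lie on $C_7$, since otherwise $v$ would be drawn strictly outside $\Delta_H$ and could not be incident to any edge of $E(H)$; hence $v \in V(\Omega_H)$. Since $v \notin V(J')$, we have $v \notin V(\Omega_{J'})$, and therefore $v \in V(\Omega_H) \setminus V(\Omega_{J'}) \subseteq U_1 \cup U_2$, as required. In the case $v \in V(J')$, I would invoke \autoref{lem:planarstriprestriction} applied to the cycle $C_7$, together with the hypothesis that the $\Qcal$-strip society is isolated in $(G,\Omega)$, to conclude that the $\Qcal_H$-strip society $(J, \Omega_J)$ is isolated in $(H, \Omega_H)$. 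This isolation implies that every edge of $H$ incident to a vertex of $V(J) \setminus V(Q_1^H \cup Q_m^H)$ belongs to $E(J)$, where $Q_1^H, Q_m^H$ denote the boundary paths of $(J, \Omega_J)$. If $v \in V(Q_1^H \cup Q_m^H)$, then $v \in \pi_{\rho_{J'}}((T_1 \cup T_2) \cap N(\rho_{J'})) \subseteq U_1 \cup U_2$ and we are done; otherwise we conclude $e \in E(J) \setminus E(J')$.

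The main obstacle is to rule out this last situation, i.e., to show that no edge of $E(J) \setminus E(J')$ can be incident to $v \in V(J') \setminus V(Q_1^H \cup Q_m^H)$. For this, I would argue topologically. Let $c$ be the cell of $\rho_J$ whose interior contains $e$; by assumption $c \not\subseteq \Delta^*$. By the very definition of the tracks, $T_1$ and $T_2$ are unions of pushed-off arcs $L'_j$ chosen to avoid every cell of $\rho_J$, so the open set $\mathrm{int}(c)$ is disjoint from $T_1 \cup T_2$. Since $v$ is a node on $\bd(c)$ drawn in $\Delta^*$ but off $T_1 \cup T_2$, the interior of $c$ has points arbitrarily close to $v$ in the interior of $\Delta^*$; being connected and disjoint from $T_1 \cup T_2$, it must remain in the component of $\Delta_H \setminus (T_1 \cup T_2)$ whose closure is $\Delta^*$. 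Hence $c \subseteq \Delta^*$ and $e \in E(J')$, contradicting $e \in E(L)$ and completing the proof.
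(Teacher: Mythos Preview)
Your proof is correct and follows the same skeleton as the paper's: pick an edge of $L$ incident to $v$, observe it lies in $E(H)$, and split into the cases $v\in V(\outG_\rho(C_7))\setminus V(J')$ and $v\in V(J')$. The only difference is that in the second case the paper simply asserts that ``$(J',\Omega_{J'})$ is isolated in $(H,\Omega_H)$'' and concludes directly, whereas you unpack this: you first use the isolation of $(J,\Omega_J)$ to force $e\in E(J)$ when $v\notin V(Q_1^H\cup Q_m^H)$, and then give the topological argument (tracks avoid cell interiors, so a cell touching $\Delta^*$ off $T_1\cup T_2$ is contained in $\Delta^*$) to rule out $e\in E(J)\setminus E(J')$. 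This is exactly what the paper's one-line assertion is hiding, so your version is a more explicit rendering of the same proof rather than a different approach.
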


\begin{cproof} 
Consider a vertex $x \in V(L) \cap V(G')$.  
By the minimality of $L$, the vertex $x$ cannot be deleted from $L$ and still have the property that $L \cup G' = G$.  
Thus, there exists an edge $xy$ incident to $x$ which is not contained in $G'$ and consequently, $xy$ is in $H$, the inner graph of $C_7$.  
Given that $x\in V(G')$, the vertex $x$ is either in the outer graph of $C_7$ or in $J'$.  
If $x$ is in the outer graph of $C_7$, but not in $J'$, then $x \in V(\Omega_H)\setminus V(\Omega_{J'})$
 and thus $x \in U_1 \cup U_2$.  
Thus, we may assume $x \in V(J')$.  
However, in this case as $(J', \Omega_{J'})$ is isolated in $(H, \Omega_H)$, it follows that $x \in \pi_{\rho_{J'}} \left ( ( T_1 \cup T_2) \cap N(\rho_{J'}) \right)$.  
 We conclude that $x \in U_1 \cup U_2$ as claimed.
 \end{cproof}

\begin{claim}\label{cu}
There is no $U_1-U_2$-path in $L$.
\end{claim}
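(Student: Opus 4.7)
The plan is to proceed by contradiction: assume the existence of a $U_1$-$U_2$-path $R$ in $L$ with endpoints $u_1 \in U_1$ and $u_2 \in U_2$, and exhibit a $k$-long-jump transaction in $(G,\Omega)$ coterminal with $\Pcal \cup \Pcal_1 \cup \Pcal_2$ up to level $C_8$, contradicting the standing assumption made at the top of the proof of \autoref{main2}. The strategy is to extend $R$ at each end through the outer graph of $C_7$ to a path $R'$ in $G$ with both endpoints in $V(\Omega)$, and then combine $R'$ with $k$ well-chosen paths from $\Qcal \setminus \{Q_1, Q_m\}$, using $R'$ as the ``jumping'' path over the parallel $Q_j$'s.

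First I would record that $R$ is vertex-disjoint from every ``middle'' path $Q_j$ with $2 \le j \le m-1$. Indeed, by the previous claim $V(L) \cap V(G') \subseteq U_1 \cup U_2$; each such $Q_j$ lies in $V(J') \subseteq V(G')$, with its two $V(\Omega)$-endpoints in $X_1 \cup X_2$ and its internal vertices contained in $V(J') \setminus (V(C_7) \cup V(Q_1) \cup V(Q_m))$ by the isolation of $(J,\Omega_J)$ in $(H,\Omega_H)$. Neither of these sets meets $U_1 \cup U_2$, so $V(R) \cap V(Q_j) = \emptyset$.

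Next I would extend each endpoint $u_i$ of $R$ to a vertex of $V(\Omega)$. By the description of $U_1 \cup U_2 = (V(\Omega_H)\setminus V(\Omega_{J'})) \cup \pi_{\rho_{J'}}((T_1 \cup T_2) \cap N(\rho_{J'}))$, each $u_i$ is either on the track of $Q_1$ (for $i=1$) or $Q_m$ (for $i=2$), or lies on the arc of $C_7$ corresponding to the $Y_i$-side of $\Delta^*$. In the first case, continue along $Q_1$ or $Q_m$ to one of its endpoints in $X_1 \cup X_2$. In the second case, travel along $C_7$ within its $Y_i$-arc until reaching a rail of $\Pcal_i$ (such rails exist since $l \ge 1$), then follow that rail outward through the outer graph of $C_7$ to its endpoint in $V(\Omega) \cap Y_i$. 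Either extension is vertex-disjoint from every middle $Q_j$: the boundary paths $Q_1, Q_m$ are disjoint from the middle $Q_j$'s as elements of a linkage, the rails of $\Pcal_i$ are disjoint from $\Qcal$, and the chosen arc of $C_7$ lies on the $Y_i$-side, where no middle $Q_j$ crosses $C_7$. Call the resulting extended path $R'$.

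Choosing any $k$ of the paths $Q_2,\dots,Q_{m-1}$ (possible since $m \ge k+2$), an inspection of the cyclic order $X_1, Y_2, X_2, Y_1$ of $\Omega$ together with the planar ordering $a_1,\dots,a_m$ in $X_1$ and $b_m,\dots,b_1$ in $X_2$ shows that $R'$ together with the selected $Q_j$'s forms a $k$-long-jump transaction in $(G,\Omega)$ in which $R'$ is the unique jumping path. The portion of each path in this transaction lying outside $C_8$ is contained in $\Pcal \cup \Pcal_1 \cup \Pcal_2$ by construction (the middle $Q_j$'s agree with $\Pcal$ outside $C_8$ by the rail-truncation definition, while $R'$'s two outer extensions use either $Q_1, Q_m$ or rails of $\Pcal_1, \Pcal_2$), hence the transaction is coterminal with $\Pcal \cup \Pcal_1 \cup \Pcal_2$ up to level $C_8$, yielding the desired contradiction. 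The main obstacle is the four-way case analysis in the extension step: one must verify uniformly, across every combination of types for $(u_1,u_2)$, that the cyclic arrangement of endpoints really produces a long-jump rather than merely a planar transaction, and that the two extensions together with the $k$ chosen middle $Q_j$'s are pairwise vertex-disjoint; the numerical slack $m \ge k+2$ and $l \ge 1$ is precisely what makes this possible.
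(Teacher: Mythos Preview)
Your proposal is correct and takes essentially the same approach as the paper: assume a $U_1$-$U_2$-path $R$ exists in $L$, extend it to an $\Omega$-path $R'$, and combine $R'$ with $k$ of the middle paths $Q_2,\dots,Q_{m-1}$ to produce a $k$-long-jump transaction coterminal with the rails up to $C_8$. The only difference is in the extension step: the paper routes both ends of $R$ using only $Q_1$, $Q_m$, and $C_7$ (if $u_i$ lies on $C_7$, walk along $C_7$ to $Q_1$ or $Q_m$ and then out), which sidesteps the need to identify $U_i$ with a particular $Y_i$; your alternative of routing via a rail of $\Pcal_i$ is equally valid once that labeling is aligned.
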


\begin{cproof}
Assume otherwise that a $U_1-U_2$-path $R$ exists.
$R$ can be extended to a $\Omega$-path $R'$ using $Q_1$, $Q_m$, and $C_7$.
Then $R'$ along with $\{Q_2,...,Q_{m-1}\}$ and $C_7$ forms a $k$-long-jump transaction in $(G,\Omega)$ that is coterminal with $\Pcal_1\cup\Pcal_2\cup\Qcal$ up to level $C_8$, given that $m-2\ge k$.
\end{cproof}
 
Let $K_1$ be the union of all components of $L$ which contain a vertex of $U_1$ and
let $K_2$ be the union of all components of $L$ which contain no vertex of $U_1$.
Hence, by \autoref{cu}, $K_2$ contains all components of $L$ which contain a vertex of $U_2.$


{\bf Defining the new rendition $\rho'$.}
We are now ready to define the desired rendition $\rho' = (\Gamma', \Dcal')$ of $(G, \Omega)$.  
Let $\rho^\star=(\Gamma^\star,\Dcal^\star)$.
Define $\Dcal' = \{D \in \Dcal^\star: \hbox{int}(D) \subseteq \Delta \setminus (\Delta_1 \cup \Delta_2)\} \cup \{\Delta_1, \Delta_2\}$.  
The drawing $\Gamma'$ is obtained from the restriction of $\Gamma^\star$ to $\Delta \setminus (\hbox{int}(\Delta_1) \cup \hbox{int}(\Delta_2))$ along with an arbitrary drawing of $K_i$ in $\Delta_i$ such that the only points on the boundary of $\Delta_i$ are exactly the vertices of $U_i\cap V(K_i)$ for $i\in[2]$.


{\bf Defining the nests.}
Observe that for all $i\in[2,m-1]$, the path $Q_{i}$ is contained in $G'$
and the subgraph $Q_{i} \cup C_{7 + i}$ contains exactly two cycles which are distinct from $C_{7 + i}$.  
If we consider the track in $\rho^\star$ of each these two cycles, one bounds a disk in $\Delta$ which contains $\Delta_1$ and the other bounds a disk which contains $\Delta_2$.  
For $i\in[s'+1]$,
we define $C_i^1$ (resp. $C_i^2$) to be the cycle contained in $Q_{i+1} \cup C_{7 + i}$ (resp. $Q_{m-i} \cup C_{7 + i}$)
and distinct from $C_{7+i}$ such that the disk bounded by the track of $C_i^1$ (resp. $C_i^2$) in $\rho^\star$ contains $\Delta_1$ (resp. $\Delta_2$).  
Observe that, for $i\in[2]$, $\Ccal_i=(C_1^i, \dots, C_{s'}^i)$ forms a nest around $\Delta_i$.
Additionally, note that $\Ccal_1$ and $\Ccal_2$ are disjoint given that they do not intersect $\{Q_i\mid i\in[s'+2,m-s'-1]\}$ and that $m-s'-1\ge s'+4$.
Moreover, $\Pcal_1$ and $\Pcal_2$ immediately validate the conditions of the statement.

{\bf Defining $\Delta_i'$.}
Let $\Delta_i'$ be the closed subdisk bounded by the track of $C_{s'+1}^i$.
Then $\rho_i=\rho'[\Delta_i']$ is indeed a cylindrical rendition with vortex $c_i=\Delta_i$.
Note that $\Delta'_1$ and $\Delta'_2$ do not intersect given that $m\ge 2s'+5$, and thus that the track of $\Qcal_{s'+3}$ is disjoint from both $\Delta'_1$ and $\Delta'_2$.
Additionally, for each $P\in\Pcal_i$, $i\in[2]$, there is a unique subpath from $\Omega_{\Dcal_i'}$ to $V(\sigma(c_i))$.
Then the set $\Pcal_i'$ of all such subpaths checks all constraint of the second item of the lemma.
This concludes the proof.
\end{proof}

\subsection{Finding a planar rendition of small depth and breadth}
\label{sec:main1}

We now prove that a cylindrical rendition contains either a long-jump transaction, or a crosscap transaction, or has a rendition in the plane of small depth and small breadth.

\medskip
We define recursively
$$f_{\ref{main1}}(k,r)=a_kf_{\ref{main1}}(k-1,r)+b_k$$
with $f_{\ref{main1}}(1,r)=14$, $a_k=6(r-1)(2k+1),$ and $b_k=6(r-1)k(3k+5)+6.$
Hence,
$$f_{\ref{main1}}(k,r) = (\prod_{i=1}^k a_i) f_{\ref{main1}}(1,r) + \sum_{i=2}^k (\prod_{j+1}^k a_j) b_i = 2^{O(k\log(k\cdot r))}.$$

\begin{theorem}\label{main1}
Let $k, r, s \in \Nbbb$ with $k \ge 1$ and $s, t \ge f_{\ref{main1}}(k, r)$.
Let $(G, \Omega)$ be a society and $\rho=(\Gamma,\Dcal,c_0)$ be a cylindrical rendition of $(G, \Omega)$.
Let $(\Ccal = (C_1,C_2, \dots, C_s),\Pcal)$ be a railed nest in $\rho$ around $c_0$ of order $(s,t)$.
Then one of the following holds.
\begin{itemize}
\item[\rm{(i)}] There is an $r$-crosscap transaction in $(G, \Omega)$ that is coterminal with $\Pcal$ up to level $f_{\ref{main1}}(k,r)$,
\item[\rm{(ii)}]  There is a $k$-long-jump transaction in $(G, \Omega)$ that is coterminal with $\Pcal$ up to level $f_{\ref{main1}}(k,r)$, or
\item[\rm{(iii)}] $(G, \Omega)$ has a rendition in the disk of breadth at most $k-1$ and depth at most
$f_{\ref{main1}}(k,r)$.
\end{itemize}
Additionally, in case (iii), the closures of the vortex cells are pairwise disjoint.
\end{theorem}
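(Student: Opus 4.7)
The plan is to proceed by induction on $k$.

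For the base case $k = 1$, outcome (iii) demands a vortex-free rendition (breadth $0$). If $(G,\Omega)$ contains no $1$-long-jump — equivalently, no cross — then by \cref{thm:crossreduct} it is rural, providing a vortex-free rendition and hence outcome (iii). If a cross does exist, it is a crooked transaction of size $2$, and \cref{lem_crookedrooted} (whose hypotheses are met since $s, t \ge 14 = f_{\ref{main1}}(1,r)$) promotes it to a cross coterminal with $\Pcal$ up to a level at most $f_{\ref{main1}}(1,r)$, yielding outcome (ii).

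For the inductive step $k \ge 2$, I would first extract a large coterminal crooked transaction. Applying \cref{lem:GM9} with $p$ polynomial in $k$ and $r$ either delivers a cylindrical rendition of depth $\le 6p$ — in which case outcome (iii) holds immediately, as a cylindrical rendition has breadth $1 \le k - 1$ — or produces a crooked transaction of size $p$, which I would upgrade to be coterminal with $\Pcal$ via \cref{lem_crookedrooted}. Then \cref{planar or crosscap} splits this transaction into either an $r$-crosscap (outcome (i)), or a large planar transaction $\Qcal$. Feeding $\Qcal$ into \cref{planarstrip} either produces a $k$-long-jump coterminal with $\Pcal$ (outcome (ii)), or exposes an isolated rural strip society $(G_{\Qcal'}, \Omega_{\Qcal'})$ for some $\Qcal' \subseteq \Qcal$ of controlled size.

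Equipped with this strip, I would apply \cref{main2} to split the vortex $c_0$ into two vortices $c_1, c_2$ with disjoint closures, producing cylindrical sub-renditions $\rho_1, \rho_2$ of two halves $(G_1, \Omega_1), (G_2, \Omega_2)$, each endowed with its own railed nest, plus a surviving common nest $(C_8, \dots, C_s)$ around both. Let $k_i$ denote the maximum order of a long-jump in $(G_i, \Omega_i)$. The induction hypothesis applied to each half with parameter $k_i + 1$ rules out the long-jump outcome by the very definition of $k_i$, lifts the crosscap outcome directly to outcome (i) for $(G, \Omega)$, and otherwise delivers a rendition of breadth $\le k_i$ and depth $\le f_{\ref{main1}}(k_i + 1, r)$ with pairwise disjoint vortex closures. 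Pasting these two renditions into $\Delta$ across the vortex-free rendition of the strip yields the desired rendition of $(G, \Omega)$, with total breadth $\le k_1 + k_2$ and all vortex closures still pairwise disjoint (as they lie in disjoint sub-disks).

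The main obstacle will be establishing the joint bound $k_1 + k_2 \le k - 1$, which is what keeps the glued breadth at most $k - 1$. I would prove this by contrapositive: from a $k_1$-long-jump in $G_1$ and a $k_2$-long-jump in $G_2$, together with the separating rural strip and the shared nest $(C_8, \dots, C_s)$, I would assemble an alternative $(k_1, k_2)$-double-jump in $(G, \Omega)$ by routing the endpoints of the two half-long-jumps through $\Qcal'$; \cref{allinone}(iv) then yields a $(k_1 + k_2)$-long-jump in $(G, \Omega)$ coterminal with $\Pcal$, so $k_1 + k_2 \ge k$ already delivers outcome (ii). The delicate part is performing this routing while preserving orthogonality with the surviving nest and balancing the coterminality offsets ($+7, +8, +9$ from the relevant lemmas) so that the depth bound fits into the recurrence $f_{\ref{main1}}(k,r) = a_k f_{\ref{main1}}(k-1, r) + b_k$.
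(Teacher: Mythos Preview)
Your outline follows the paper's strategy closely, but there is a genuine gap in the inductive step that would prevent the argument from closing.

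You propose to apply the induction hypothesis to each half $(G_i,\Omega_i)$ with parameter $k_i+1$. Since the induction is on $k$, this requires $k_i+1\le k-1$, i.e.\ $k_i\le k-2$. Your joint bound $k_1+k_2\le k-1$ alone does not deliver this: nothing you have argued rules out $k_1=0$ and $k_2=k-1$, in which case $k_2+1=k$ and you cannot invoke the hypothesis on the second half. The paper fills this hole with a separate claim that $k_i\ge 1$ for both $i$, and the argument is not automatic: it exploits the fact that the isolated rural strip $\Qcal'$ was carved out of a \emph{crooked} transaction $\Qcal_1$. Crookedness forces, on each side segment $Y_i$ of $\Omega$ left over by the strip, a path of $\Qcal_1$ that is crossed by another path of $\Qcal_1$; these crosses survive into $(G_1,\Omega_1)$ and $(G_2,\Omega_2)$ respectively (after checking that neither crossing path can wander through the isolated strip without yielding a $k$-long-jump), and a further application of \cref{lem_crookedrooted} makes them coterminal. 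Without this step your recursion does not terminate.

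A second, related omission: \cref{main2} requires, in addition to the strip $\Qcal'$, two side linkages $\Pcal_1,\Pcal_2$ from the segments $Y_1,Y_2$ into $c_0$. In the paper these are manufactured by reserving the outer $t'$ paths on each end of the large planar transaction \emph{before} feeding the middle portion to \cref{planarstrip}. These side rails are exactly what one routes the half long-jumps through when assembling the double-jump (or alternative double-jump) transaction; your phrase ``routing \dots\ through $\Qcal'$'' is not quite right, since $\Qcal'$ is precisely the strip that separates the two halves.
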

\begin{proof}
Assume the theorem is false, and pick a counterexample to minimize $k$.

We fix the following values.
Let $s'= f_{\ref{main1}}(k-1,r)$ and $t'= \lceil f_{\ref{main1}}(k-1,r)/2\rceil$.
Additionaly, let
\begin{align*}
m_3 = & 2s'+5,\\
m_2' = & k(m_3+3k),\\
m_2 = & m_2'+2t', \text{ and}\\
m_1 = & (m_2-1)(r-1)+1\le f_{\ref{main1}}(k,r)/6.
\end{align*}

Fix the cylindrical rendition $\rho$ in a disk $\Delta$.
If $(G, \Omega)$ has no cross, then by \autoref{thm:crossreduct} and given that $k\ge1$, it satisfies (iii).  
Given a cross in $(G, \Omega)$,
since $s\ge 11$ and $t\ge 14$,
by \autoref{lem:crookedrooted}, there is a cross in  $(G, \Omega)$ that is coterminal with $\Pcal$ up to level $C_{11}$.
Given that $f_{\ref{main1}}(k,r)\ge 11$, (i) and (ii) hold in the case $r\le 2$ and $k\le 1$, respectively.
We conclude that $r\ge 3$ and $k\ge 2$.


\medskip
\noindent{\bf Step 1: Finding a crooked transaction.}
By \autoref{lem:GM9}, the society $(G, \Omega)$ has either a cylindrical rendition of depth $6m_1\le f_{\ref{main1}}(k,r)$,
or a crooked transaction of cardinality $m_1$.  In the former case, (iii) holds. Note we are using the fact that $m_1 \ge 4$.
Hence, we can assume that there exists a crooked transaction $\Qcal_1$ of cardinality $m_1$.
By \autoref{lem:crookedrooted}, given that $s\ge 6m_1\ge2m_1+7$ and $t\ge 6m_1\ge4m_1+6$, we may assume that $\Qcal_1$ is coterminal with $\Pcal$ up to level $C_{2m_1+7}$, and hence $C_{f_{\ref{main1}}(k,r)}$.


\medskip
\noindent{\bf Step 2: Finding an exposed planar transaction.}
Given that $m_1\ge (m_2-1)(r-1)+1$, by \autoref{planar or crosscap},
there is $\Qcal_2\subseteq \Qcal_1$ that is either a crosscap transaction of cardinality $r$, or a planar transaction of cardinality $m_2$.
In the first case, (i) holds, so we conclude that $\Qcal_2$ is a planar transaction.


\medskip
\noindent{\bf Step 3: Creating rails.}
Let $X_1$ and $X_2$ be two disjoint segments of $\Omega$ such that $\Qcal_2$ is a transaction from $X_1$ to $X_2$.
Let the elements of $\Qcal_2$ be enumerated $Q_1,\dots,Q_{m_2}$ by the order in which their endpoints occur in $X_1$.
Let $\Qcal_2'=\{Q_i\mid i\in[t'+1,t'+m_2']\}$.
Let $\Pcal_1$ be the rail truncation of $\{Q_i\mid i\in[1,t']\}$ and $\Pcal_2$ be the rail truncation of $\{Q_i\mid i\in[t'+m_2'+1,m_2'+2t']\}$.
Hence, $\Pcal_1$ and $\Pcal_2$ both have $2t'$ elements.


\medskip
\noindent{\bf Step 4: Finding a rural and isolated strip society.}
Apply \autoref{planarstrip} to the transaction $\Qcal_2'$ in $(G, \Omega)$ with the rendition $\rho$, $l'=m_1$, $l=m_2'$, and the nest $(C_{2m_1+7}, C_{2m_1+8}, \dots, C_s)$.
We can do so because $s-2m_1-6\ge 9$ and $m_2'\ge k(m_3+3k)$.
If we find a long-jump transaction of order $k$ that is coterminal with $\Pcal$ up to level $C_{2m_1+15}$, then we satisfy (ii) given that $f_{\ref{main1}}(k,r)\ge2m_1+15$.
Thus, we may assume that we find a transaction $\Qcal_3\subseteq \Qcal_2'$ of size $m_3$ such that the $\Qcal_3$-planar strip society in $(G, \Omega)$ is rural and isolated.


\medskip
\noindent{\bf Step 5: Splitting the vortex in two.}
We apply \autoref{main2} to $(G,\Omega)$ with $m=m_3$, $l=2t'$, railed nest $((C_{2m_1+7},\dots,C_s),\Pcal_1\cup\Pcal_2\cup\Pcal_3)$ where $\Pcal_3$ is the rail truncation of $\Qcal_3$.
We can do so because $t'\ge1$, $m_3\ge2s'+5\ge k+2$, and $s-2m_1-6\ge 4m_1-6\ge 8ks'-6\ge s'+8$.
If there is a long-jump transaction of order $k$ that is coterminal with $\Pcal_1\cup\Pcal_2\cup\Pcal_3$ up to level $C_{2m_1+15}$, then this transaction is coterminal with $\Pcal$ up to level $C_{f_{\ref{main1}}(k,r)}$ since $f_{\ref{main1}}(k,r)\ge 2m_1+15$.
We conclude that there is a rendition $\rho'$ of $(G,\Omega)$ of breadth two, with vortices $c_1$ and $c_2$, two disjoint $\rho'$-aligned disks $\Delta_1$ and $\Delta_2$ such that, for $i\in[2]$,
$\rho'[\Delta_i]$ is a cylindrical rendition $\rho_i=(\Gamma_i,\Dcal_i,c_i)$ of $(G_i,\Omega_i)=(\inG_\rho'(\Delta_i),\Omega_{\Delta_i})$ with a railed nest $(\Ccal_i=(C_1^i,\dots,C_{s'}^i),\Pcal_i')$ of order $(s',t')$
such that, for each $P\in\Pcal_i'$, $P$ is a subpath of an element of $\Pcal_i$ and, for each $j\in[s']$, $P\cap C_j^i= P\cap C_{j+\alpha}$, where $\alpha=2m_1+21$.
Additionally, $(C_{2m_1+14},\dots,C_s)$ is a nest around both $c_1$ and $c_2$, and there is a path $Q\in\Qcal_3$ such that $\Delta_1$ and $\Delta_2$ are contained in different connected components of $\Delta\setminus T$, where $T$ is the track of $Q$ in $\rho'$.


\medskip
\noindent{\bf Step 6: Finding a contradiction.}
For $i\in[2]$,
let $k_i$ be the largest value such that there exists a $k_i$-long-jump transaction $\Rcal_i$ in $(G_i,\Omega_i)$ that is coterminal with $\Pcal_i'$ up to level $C_{s'}^i$.

\begin{claim}\label{qy}
For $i\in[2]$, $k_i>0$.
\end{claim}

\begin{cproof}
Let $X_1'$ and $X_2'$ be minimal segments of $\Omega$ such that $\Qcal_3$ is a linkage from $X_1'$ to $X_2'$.
Let $Y_1$ and $Y_2$ be the two maximal segments of $\Omega\setminus (X_1'\cup X_2')$.
Let $i\in[2]$.
Since the planar transaction $\Qcal_3$ is a subset of the crooked transaction $\Qcal_1$,  there exists a path $P_i\in\Qcal_1$ with an endpoint in $Y_i$  that is crossed by another path $P_i'\in\Qcal_1$.
If $P_i$ crosses some $Q\in\Qcal_3$, then, given that the $\Qcal_3$-strip society in $(G,\Omega)$ is isolated, it follows that the other endpoint  of $P_i$ is in $Y_2$.
But then, given that $m_3\ge k$, we conclude that there is a long-jump transaction $\Qcal\subseteq\Qcal_1$ of order $k$ in $(G,\Omega)$.
Given that $Q_1$ is coterminal with $\Pcal$ up to level $C_{f_{\ref{main1}}(k,r)}$, (ii) holds, a contradiction.
Hence, $P_i$ crosses no $Q\in\Qcal_3$, and neither does $P_i'$.
Hence, $P_i$ and $P_i'$ have both endpoints in $Y_i$.
The pairs of paths $(P_1,P_1')$ and $(P_2,P_2')$ form a cross, one in $(G_1,\Omega_1)$ and the other in $(G_2,\Omega_2)$ since there is a path $Q\in\Qcal_3$ such that the interiors of $\Delta_1$ and $\Delta_2$ are contained in different connected components of $\Delta\setminus T$, where $T$ is the track of $Q$ in $\rho'$.
Therefore, since $s'\ge 11$ and $t'\ge14$,
by \autoref{lem_crookedrooted}, for $i\in[2]$, there is a cross, and hence a 1-long-jump transaction, in  $(G_i, \Omega_i)$ that is coterminal with $\Pcal_i'$ up to level $C_{11}^i$, and hence $C_{s'}^i$.
We conclude that $k_i>0$.
\end{cproof}

\begin{claim}\label{qi}
$k_1+k_2< k$.
\end{claim}

\begin{cproof}
Given that $\Pcal_1'$ and $\Pcal_2'$ are disjoint linkages, whose paths are subpaths of $\Pcal_1$ and $\Pcal_2$ respectively, that $(C_{2m_1+14}, \dots, C_s)$ is a nest around both $c_1$ and $c_2$, and that for each $P \in \Pcal_i'$ and $j\in[s']$, $P\cap C_j^i = P\cap C_{i+\alpha}$ by \autoref{main2}, we conclude that, for $i\in[2]$, $\Rcal_i$ can be extended using $\Pcal_i\subseteq \Qcal_1$ to a long-jump transaction $\Rcal_i'$ of $(G,\Omega)$, and that $\Rcal_1'$ and $\Rcal_2'$ form either a $(k_1,k_2)$-double-jump transaction or, along with $Q$, an alternative $(k_1,k_2)$-double-jump transaction that is coterminal with $\Pcal$ up to level $C_{s'+\alpha}$.
Therefore, by \autoref{allinone}, there is a $(k_1+k_2)$-long-jump transaction in $(G,\Omega)$ that is coterminal with $\Pcal$ up to level $C_{s'+\alpha+1}$, and thus $C_{f_{\ref{main1}}(k,r)}$, since $f_{\ref{main1}}(k,r)\ge6m_1\ge s'+2m_1+22$.
Given that there is no $k$-long-jump transaction in $(G,\Omega)$ that is coterminal with $\Pcal$ up to level $C_{f_{\ref{main1}}(k,r)}$, we conclude that 
$k_1+k_2< k$.
\end{cproof}

By \autoref{qy} and \autoref{qi}, we conclude that, for $i\in[2]$, $1\le k_i\le k-2$.
By the choice of our counterexample to minimize $k$, it follows that one of (i)-(iii) must hold for $(G_i,\Omega_i)$ with parameters $k_i+1$ and $r$.
We can do so because $s',2t'\ge f_{\ref{main1}}(k-1,r,a)$.
By the maximality of $k_i$, outcome (ii) does not hold.

If (i) holds, then there is a $r$-crosscap transaction in $(G_i,\Omega_i)$ that is coterminal with $\Pcal_i'$ up to level $C_{f_{\ref{main1}}(k_i+1,r)}^i$.
Then, given that for each $P \in \Pcal_i'$ and $j\in[s']$, $P\cap C_j^i= P\cap C_{i+\alpha}$, we can extend $\Qcal_i$ to a crosscap transaction of $(G,\Omega)$ that is coterminal with $\Pcal$ up to level $C_{f_{\ref{main1}}(k_i+1,r)+\alpha}$.
Given that $f_{\ref{main1}}(k_i+1,r)+\alpha\le f_{\ref{main1}}(k_i+2,r)\le f_{\ref{main1}}(k,r)$, (i) thus holds for $(G,\Omega)$.

Hence, outcome (iii) holds for both $(G_1,\Omega_1)$ and $(G_2,\Omega_2)$.
Thus, for $i\in[2]$, there is a rendition $\rho_i'$ of $(G_i,\Omega_i)$ of breadth at most $k_i$ and depth at most $f_{\ref{main1}}(k_i+1,r)$.
Given that $k_1+k_2<k$ by \autoref{qi}, 
we conclude that $(G,\Omega)$ has a rendition of breadth at most $k_1+k_2\le k-1$ and depth at most $ f_{\ref{main1}}(k-1,r)$, by restricting $\rho'$ to $\Delta\setminus ({\sf int}(\Delta_1)\cup {\sf int}(\Delta_2))$ and using $\rho_1'$ and $\rho_2'$ in the disks $\Delta_1$ and $\Delta_2$, respectively.
In particular, given that $\Delta_1$ and $\Delta_2$ are disjoint, the closures of the vortex cells of this rendition are pairwise disjoint.
This contradiction completes the proof.
\end{proof}

\subsection{Finding a projective rendition of small depth and breadth}
\label{sec:mainter}

In the previous section, we proved that if a society has neither a  long-jump nor a crosscap transaction, then it has a rendition in the plane of small breadth and depth.
Our goal is now to only exclude a long-jump transaction to get a rendition in the projective plane of small breadth and depth.

\medskip
We first borrow the following result from \cite{kawarabayashi2020quickly}.
It essentially says that, if our cylindrical rendition with vortex $c_0$ contains a crosscap transaction $\Qcal$, then we can use $\Qcal$ to add a crosscap to the surface (that is, we are now in the projective plane), and find a new cylindrical rendition with vortex $c_1$ avoiding the crosscap.

\begin{proposition}[Lemma 10.2, \cite{kawarabayashi2020quickly}]\label{vortexcrosscap}
\label{lem:xcaptrans}
Let $m,m',s,s',t\in\Nbbb$ with $s\ge s'+8$ and $m\ge m'+2s'+7$.
Let $(G, \Omega)$ be a society and $\rho = (\Gamma, \Dcal, c_0)$ be a cylindrical rendition of $(G, \Omega)$ in the disk $\Delta$.
Let $X_1,X_2$ be disjoint segments of $V(\Omega)$.
Let $\Ccal = (C_1, \dots, C_{s})$ be a nest in $\rho$ of size $s$ and $\Pcal=\{P_1,P_2,\ldots ,P_t\}$ a linkage from $X_1$
to $\widetilde{c_0}$ orthogonal to $\Ccal$.  Let $\Sigma^\star$ be a surface homeomorphic to the projective plane minus an open disk obtained from $\Delta$ by adding a crosscap to the interior of $c_0$.
If there exists an $m$-crosscap transaction $\Qcal$ in $(G , \Omega )$ orthogonal to $\Ccal$ and disjoint from
$\Pcal$ such that every member of $\Qcal$ has both endpoints in $X_2$ and
  the $\Qcal$-strip society in $(G , \Omega )$ is isolated and rural, then there exists a subset $\Qcal'$ of $\Qcal$
of size $m'$ and a rendition $\rho'$ of $(G, \Omega)$
in $\Sigma^\star$ such that there exists a unique vortex $c'\in C(\rho')$ and the following hold:
\begin{itemize}
\item[\rm{(i)}] $\Qcal'$ is disjoint from $\sigma (c')$,
\item [\rm{(ii)}] the vortex society of $c'$ in $\rho'$ has a cylindrical rendition $\rho_1=(\Gamma_1,\Dcal_1,c_1)$,
\item [\rm{(iii)}] every element of $\Pcal$ has an endpoint in $V(\sigma_{\rho_1}(c_1))$,
\item [\rm{(iv)}] $\rho_1$ has a nest $\Ccal' = (C_1', \dots, C_{s'}')$ of size $s'$ such that $\Pcal$ is orthogonal to $\Ccal'$ and for every $i$, $1 \le i \le s'$, and for all $P \in \Pcal$, $C_i' \cap P = C_{i+7} \cap P$,
\item [\rm{(v)}] for $i=1,2,\ldots ,t$, let $x_i$ be the endpoint 
of $P_i$ in $X_1$, and let $y_i$ be the last entry of $P_i$ into $c'$ (which exists),
then if $x_1,x_2,\ldots ,x_m$ appear in $\Omega$ in the order listed,
then $y_1,y_2,\ldots ,y_m$ appear on $\widetilde{c'}$ in the order listed,
\item [\rm{(vi)}] let $\Delta'$ be the open disk bounded by the track of $C_{s}$; then $\rho$ restricted to $\Delta \setminus \Delta'$ is equal to $\rho'$ restricted to $\Delta \setminus \Delta'$.
\end{itemize}
\end{proposition}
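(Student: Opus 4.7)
The plan is to exploit the fact that a crosscap transaction is exactly the combinatorial obstruction to being drawn on a disk: its endpoints appear in a \emph{twisted} cyclic order along $\Omega$ that can only be realized by adding a crosscap to the ambient surface. I would first use the assumption that the $\Qcal$-strip society is rural and isolated to fix a vortex-free planar rendition $\rho_{\Qcal}$ of it in a disk $\Delta_{\Qcal}$. The idea is then to cut out this strip from the original rendition $\rho$, reglue it after identifying the two boundary sides with a twist — which is precisely equivalent to inserting a crosscap into $c_0$ — and absorb the innermost portion of the old vortex, together with the portion of the strip adjacent to $c_0$, into a new vortex $c'$.

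Concretely, I would discard the $m - m'$ outermost paths of $\Qcal$ to create a buffer region, so that the surviving $m'$ paths of $\Qcal'$ form the ``equator'' crossing the crosscap, while the discarded paths provide the rerouting arcs needed to (re)build nest cycles. The inequality $m \geq m' + 2s' + 7$ is tailored for this: the $2s'+7$ extra paths are used to construct, inside the vortex society of $c'$, a nest $\Ccal' = (C_1', \dots, C_{s'}')$ of size $s'$ by taking each $C_i'$ to be a cycle formed by rerouting $C_{i+7}$ through a pair of discarded $\Qcal$-paths that cross the twist. The condition $s \geq s'+8$ provides the $8$ outermost cycles of $\Ccal$ that are consumed by the rerouting process and that remain untouched outside of the new vortex, so that (vi) holds essentially for free.

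The main obstacle is to simultaneously achieve clauses (iii)--(v). For (iii), every $P_i \in \Pcal$ originally ends in $\widetilde{c_0}$; since the new vortex $c'$ contains a neighborhood of the old one and is bounded by a cycle built from pieces of $\Qcal'$ and $\Ccal$, each $P_i$ has a well-defined last entry point into $c'$, giving the vertex $y_i$. For (v), the key combinatorial observation is that the twist encoded by the crosscap transaction and the twist produced by the added crosscap exactly cancel, so the cyclic order of the $y_i$ on $\widetilde{c'}$ inherits the cyclic order of the $x_i$ on $\Omega$ without reversal; this is what distinguishes this construction from the planar case of \autoref{main2}. For (iv), orthogonality of $\Pcal$ with $\Ccal'$ is inherited from orthogonality with $\Ccal$ together with the fact that each $C_i'$ differs from $C_{i+7}$ only along short rerouting arcs lying inside the strip, which are disjoint from $\Pcal$ because $\Pcal$ is disjoint from $\Qcal$ by hypothesis. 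The hardest technical point is the careful choreography of the rerouting: one must ensure that the new cycles $C_i'$ are pairwise disjoint and properly nested around $c'$ inside the new surface $\Sigma^\star$, which forces an explicit pairing between levels of the new nest and specific discarded paths of $\Qcal$, and a careful verification that the redrawing is consistent with the planar rendition $\rho_{\Qcal}$ of the strip.
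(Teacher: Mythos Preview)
The paper does not prove this proposition: it is quoted verbatim as Lemma~10.2 of \cite{kawarabayashi2020quickly} and used as a black box in the proof of \autoref{mainter}. So there is no in-paper proof to compare your proposal against.

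That said, your sketch is a faithful outline of how the argument in \cite{kawarabayashi2020quickly} actually goes. The central idea --- that a rural, isolated crosscap strip can be excised and reglued with a twist to realise a crosscap, while the leftover paths of $\Qcal$ are spent rerouting nest cycles into a new nest $\Ccal'$ around the residual vortex --- is exactly the mechanism behind the result, and your reading of the arithmetic ($m - m' \ge 2s'+7$ paths consumed by rerouting, $s - s' \ge 8$ cycles consumed so that $C_i' \cap P = C_{i+7} \cap P$) is correct. Your explanation of clause~(v), that the combinatorial twist of the crosscap transaction and the topological twist of the added crosscap cancel so the $y_i$ inherit the cyclic order of the $x_i$, is also the right intuition. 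The only place where your sketch is a bit loose is the ``careful choreography'' you flag at the end: in the actual proof one must be explicit about which side of the strip the new vortex $c'$ lies on and how the boundary of $c'$ is assembled from pieces of $C_8$ and the two boundary paths of the strip, but you correctly identify this as the technical crux rather than a conceptual one.
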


We can finally prove our local structure theorem.

\begin{theorem}\label{mainter}
Let $k, r,s,t \in\Nbbb$ such that $s\ge f_{\ref{main1}}(k,m_1)+f_{\ref{main1}}(k,k+1)+7$ and $t\ge f_{\ref{main1}}(k,m_1)$ where $m_1=(6k+1)f_{\ref{main1}}(k,k+1)+3k(4k+9)$.
Let $(G, \Omega)$ be a society and $\rho=(\Gamma,\Dcal,c_0)$ be a cylindrical rendition of $(G, \Omega)$ in the disk $\Delta$.
Let $(\Ccal = (C_1,C_2, \dots, C_s),\Pcal)$ be a railed nest in $\rho$ around $c_0$ of order $(s,t)$.
Then one of the following holds.
\begin{itemize}
\item[\rm{(i)}] There is a $k$-long-jump transaction in $(G,\Omega)$ that is coterminal with $\Pcal$ up to level $f_{\ref{main1}}(k,m_1)$, or
\item[\rm{(ii)}] $(G, \Omega )$ has a rendition in the plane of depth at most $f_{\ref{main1}}(k,m_1)$ or in the projective plane of depth at most $f_{\ref{main1}}(k,k+1)$, and of breadth at most $k-1$ in both cases
\end{itemize}
Additionally, in case (ii), the closures of the vortex cells of the rendition are pairwise disjoint.
\end{theorem}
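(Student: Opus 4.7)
The strategy is to apply \autoref{main1} twice, using the crosscap transaction produced by the first application to absorb a crosscap into the surface via \autoref{vortexcrosscap}, and then checking what the second application yields.

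First, apply \autoref{main1} with parameters $k$ and $m_1$ to the railed nest $(\Ccal,\Pcal)$. Since $s,t\geq f_{\ref{main1}}(k,m_1)$, exactly one of three outcomes holds. A $k$-long-jump transaction coterminal with $\Pcal$ directly gives case~(i). A rendition in the disk of breadth at most $k-1$ and depth at most $f_{\ref{main1}}(k,m_1)$ with pairwise disjoint vortex closures yields the planar half of case~(ii). Otherwise, we obtain an $m_1$-crosscap transaction $\Qcal$ coterminal with $\Pcal$ up to level $f_{\ref{main1}}(k,m_1)$, and we continue with this case.

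Next, I would apply \autoref{planarstrip} (crosscap variant) to $\Qcal$, working inside the remaining nest $(C_{f_{\ref{main1}}(k,m_1)+1},\dots,C_s)$ with the rails inherited from $\Pcal$ (which is legitimate because, by coterminality, a suitable rail truncation of $\Qcal$ is available). The crosscap size bound $m_1 \geq 3k(l+3k)$ is precisely met with $l = (6k+1)\,f_{\ref{main1}}(k,k+1)$. Either we obtain a $k$-long-jump transaction (case~(i)) or a sub-transaction $\Qcal'\subseteq\Qcal$ of size $l$ whose strip society is isolated and rural. Applying \autoref{vortexcrosscap} to $\Qcal'$ then pastes a crosscap into $c_0$ and produces a rendition $\rho'$ of $(G,\Omega)$ in (a disk patched from) the projective plane with a single new vortex $c'$, along with a cylindrical rendition $\rho_1$ of the vortex society of $c'$ containing a nest of size $f_{\ref{main1}}(k,k+1)$ and rails coming from $\Pcal$; it also leaves a residual crosscap sub-transaction $\Qcal''\subseteq\Qcal'$ of size at least $3k(4k+9)+f_{\ref{main1}}(k,k+1)$ that is disjoint from $\sigma(c')$. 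The precise value $m_1=(6k+1)\,f_{\ref{main1}}(k,k+1)+3k(4k+9)$ is engineered exactly so this residual headroom survives.

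Finally, apply \autoref{main1} to $\rho_1$ with parameters $k$ and $r=k+1$; by construction the nest and rail sizes meet the required $f_{\ref{main1}}(k,k+1)$. Three sub-cases arise. A $k$-long-jump in the vortex society of $c'$ immediately lifts to a $k$-long-jump in $(G,\Omega)$ coterminal with $\Pcal$, yielding case~(i). A $(k+1)$-crosscap transaction $\Rcal$ inside $\rho_1$ combines with any size-$k$ sub-transaction of $\Qcal''$ (which lies outside $c'$ and is therefore disjoint from and non-crossing with $\Rcal$) to form a $(k,k+1)$-klein transaction in $(G,\Omega)$ orthogonal to $\Pcal$; \autoref{allinone}(v) then extracts a $k$-long-jump transaction coterminal with $\Pcal$, again yielding case~(i). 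Lastly, a rendition of the vortex society of $c'$ in the disk of breadth at most $k-1$, depth at most $f_{\ref{main1}}(k,k+1)$, with pairwise disjoint vortex closures, can be substituted into $\rho'$ (which agrees with $\rho$ outside the disk bounded by the innermost surviving cycle), producing a rendition of $(G,\Omega)$ in the projective plane of breadth at most $k-1$ and depth at most $f_{\ref{main1}}(k,k+1)$ with pairwise disjoint vortex closures --- the projective half of case~(ii).

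The main obstacle is the parameter bookkeeping: guaranteeing that after each invocation the remaining structure still satisfies the size, orthogonality, coterminality, and exposedness hypotheses of the next step. One must track how the nest and rails degrade through \autoref{planarstrip} (which consumes eight cycles and reduces the transaction), \autoref{vortexcrosscap} (which shifts the nest by seven and converts paths of $\Pcal$ into rails inside $\rho_1$), and the second call to \autoref{main1} (which demands a full $(f_{\ref{main1}}(k,k+1),f_{\ref{main1}}(k,k+1))$ railed nest inside $\rho_1$); this dictates the hypotheses $s\ge f_{\ref{main1}}(k,m_1)+f_{\ref{main1}}(k,k+1)+7$ and $t\ge f_{\ref{main1}}(k,m_1)$. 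A secondary subtlety is verifying that the $(k,k+1)$-klein configuration assembled in sub-case~(b) is genuinely non-crossing in $(G,\Omega)$, which relies on the topological separation of $\Rcal$ and $\Qcal''$ by the boundary of $c'$ in $\rho'$.
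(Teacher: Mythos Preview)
Your high-level architecture is exactly that of the paper: apply \autoref{main1} once, feed the crosscap outcome through \autoref{planarstrip} and \autoref{vortexcrosscap}, then call \autoref{main1} a second time inside the new vortex and translate its three outcomes. The klein-transaction argument in your sub-case~(b) is also the paper's.

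However, you miss one structural step, and this is what makes your constants fail. The linkage $\Pcal$ required by \autoref{vortexcrosscap} must run from a segment $X_1$ to $\tilde{c}_0$ and be \emph{disjoint} from the crosscap transaction (whose endpoints lie in a separate segment $X_2$). The original rails $\Pcal$ of the railed nest in the hypothesis do not satisfy this: they only run between $C_1$ and $C_s$, and since the crosscap $\Qcal$ is coterminal with $\Pcal$, they share paths with $\Qcal$ in the outer region. The paper's fix is a preliminary \emph{split} of the $m_1$-crosscap $\Qcal_1$ itself: peel off $t'=f_{\ref{main1}}(k,k+1)$ of its paths (those with one endpoint in a chosen sub-segment $X_1$), take their rail truncation to obtain $\Pcal'$, and keep only the remaining $m_2=m_1-t'$ paths $\Qcal_2$ (both endpoints in $X_2$) as the input to \autoref{planarstrip}. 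Because the paths of $\Qcal_1$ are pairwise disjoint, $\Pcal'$ is automatically disjoint from $\Qcal_2$ and reaches $\tilde{c}_0$; it then becomes, via item (iii)--(iv) of \autoref{vortexcrosscap}, the rails for the inner railed nest of size $f_{\ref{main1}}(k,k+1)$.

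This split is precisely why your arithmetic in the second paragraph fails. You claim $m_1\ge 3k(l+3k)$ with $l=(6k+1)f_{\ref{main1}}(k,k+1)$, but plugging in gives $(6k+1)f_{\ref{main1}}(k,k+1)+3k(4k+9)\ge (18k^2+3k)f_{\ref{main1}}(k,k+1)+9k^2$, which is false already for $k=1$. The correct decomposition is $m_1 = t' + m_2$ with $m_2 = 3k(m_3+3k)$ and $m_3 = k + 2f_{\ref{main1}}(k,k+1) + 9$: \autoref{planarstrip} shrinks $\Qcal_2$ down to size $m_3$, and \autoref{vortexcrosscap} leaves a residual $\Qcal_4$ of size only $m'=k$ (not the large $\Qcal''$ you posit), which is exactly what is needed for the $(k,k+1)$-klein transaction.
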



\begin{proof}
Assume that the theorem is false.
We fix the following values.
Let
\begin{align*}
s',t' = & f_{\ref{main1}}(k,k+1),\\
m_3 = & k+2s'+9,\\
m_2 = & 3k(m_3+3k), \text{ and}\\
m_1 = & m_2+t'.
\end{align*}

\medskip
\noindent{\bf Step 1: Finding a crosscap transaction.}
We apply \autoref{main1} with $r=m_1$.
We can do so because $s,t\ge f_{\ref{main1}}(k,m_1)$.
Outcome (ii) and (iii) of \autoref{main1} immediately imply outcome (i) and (ii) of the theorem respectively.
Hence, there is a $m_1$-crosscap transaction $\Qcal_1$ in $(G,\Omega)$ that is coterminal with $\Pcal$ up to level $f_{\ref{main1}}(k,m_1)$.

\medskip
\noindent{\bf Step 2: Separating $\Qcal_1$ into a crosscap transaction $\Qcal_2$ and rails $\Pcal'$.}
Let $X$ be a minimal segment of $\Omega$ containing both endpoints of every path in $\Qcal_1$.
Let $X_1$ and $X_2$ be two disjoint segments contained in $X$ such that $X_2$ contains both endpoints of $m_2$ paths of $\Qcal_1$; we call this set of paths $\Qcal_2$; and $X_1$ contains one endpoint of the $t'$ paths in $\Qcal_1\setminus\Qcal_2$. We can do so because $m_1\ge m_2 + t'$.
Let $\Pcal'$ be the rail truncation of $\Qcal_1\setminus\Qcal_2$.
$\Pcal'$ has size at least $t'$.

\medskip
\noindent{\bf Step 3: Finding a rural and isolated strip society in $\Qcal_2$.}
Let $X_1'$ and $X_2'$ be the minimal segments contained in $X_2$ so that each path of $\Qcal_2$ has one endpoint in $X_1'$ and the other in $X_2'$.
We apply \autoref{planarstrip} with input the society $(G,\Omega)$, the nest $(C_{f_{\ref{main1}}(k,m_1)},\dots,C_s)$, the crosscap transaction $\Qcal_2$, segments $X_1'$ and $X_2'$, $l=m_3$, and $l'=m_2$.
We can do so because $s-f_{\ref{main1}}(k,m_1)+1\ge 9$ and $m_2\ge 3k(m_3+3k)$.
If there is a long-jump transaction of order $k$ in $(G,\Omega)$ that is coterminal with $\Pcal$ up to level $C_{f_{\ref{main1}}(k,m_1)+9}$, then (i) holds given that $s\ge f_{\ref{main1}}(k,m_1)+9$.
Hence, there exists a transaction $\Qcal_3\subseteq \Qcal_2$ of size $m_3$ such that the $\Qcal_3$-strip of $(G,\Omega)$ with respect to $(X_1',X_2')$ is isolated and rural.

\medskip
\noindent{\bf Step 4: Embedding the society in the projective plane.}
Let $\Sigma^\star$ be a surface  homeomorphic to the projective plane minus an open disk obtained from $\Delta$ by adding a crosscap to the interior of $c_0$.
We apply \autoref{vortexcrosscap} to the society $(G,\Omega)$, segments $X_1,X_2$, nest $(C_{f_{\ref{main1}}(k,m_1)},\dots,C_s)$, $\Pcal'$, $\Qcal_3$, and $m'=k$.
We can do so because $s-f_{\ref{main1}}(k,m_1)+1\ge s'+ 8$ and $m_3\ge k+2s'+7$.
Thus, there exists a transaction $\Qcal_4\subseteq \Qcal_3$ of size $k$, a rendition $\rho'$ of $(G,\Omega)$ in $\Sigma^\star$, and a unique vortex $c_0'\in C(\rho')$ satisfying items (i)-(vi) of \autoref{vortexcrosscap}.
In particular, $\Qcal_4$ is disjoint from $\sigma(c_0')$, and the vortex society $(G_1,\Omega_1)$ of $c_0'$ in $\rho'$ has a cylindrical rendition $\rho_1=(\Gamma_1,\Dcal_1,c_1)$ with nest $\Ccal'=(C_1',...,C_{s'}')$.


\medskip
\noindent{\bf Step 5: Finding a contradiction.}
We apply \autoref{main1} to $(G_1,\Omega_1)$ with the cylindrical rendition $\rho_1$, nest $\Ccal'=(C_1',...,C_{s'}')$, the truncation $\Pcal''$ of $\Pcal'$ in $(G_1,\Omega_1)$ for $\rho_1$, and $r=k+1$.
We can do so because $s',t'\ge f_{\ref{main1}}(k,k+1)$.
There are three cases.

\noindent{\bf Case 1: There is a long-jump transaction $\Qcal_5$ of order $k$ in $(G_1,\Omega_1)$ that is coterminal with $\Pcal''$ up to level $C'_{f_{\ref{main1}}(k,k+1)}$.}
By items (i)-(vi) of \autoref{vortexcrosscap} and given that $\Pcal''$ is the truncation of $\Pcal'$, $\Qcal_5$ can extended to a
long-jump transaction of order $k$ in $(G,\Omega)$ that is coterminal with $\Pcal'$ up to level $C_{f_{\ref{main1}}(k,k+1)+7}$, and hence with $\Pcal$ up to level $C_{g(k)}$ given that $s\ge f_{\ref{main1}}(k,m_1)\ge f_{\ref{main1}}(k,k+1)+7$.

\noindent{\bf Case 2: there is a crosscap transaction $\Qcal_5$ in $(G_1,\Omega_1)$ of size $k+1$ that is coterminal with $\Pcal''$ up to level $C_{f_{\ref{main1}}(k,k+1)}'$.}
Similarly to Case 1, $\Qcal_5$ can be extended to a crosscap transaction $\Qcal_6$ of size $k+1$ in $(G,\Omega)$ that is coterminal with $\Pcal'$ up to level $C_{f_{\ref{main1}}(k,k+1)+7}$.
In particular, $\Qcal_4$ and $\Qcal_6$ are disjoint, are coterminal to $\Pcal$ up to level $C_{f_{\ref{main1}}(k,k+1)+7}$, $\Qcal_4$ has size $k$ and $\Qcal_6$ has size $k+1$.
Therefore, there is a $(k,k+1)$-klein transaction $\Qcal_4\cup\Qcal_6$ that is coterminal with $\Pcal$ up to level $C_{f_{\ref{main1}}(k,k+1)+7}$.
But then, by \autoref{allinone}, there is a $k$-long-jump transaction that is coterminal with $\Pcal$ up to level $C_{f_{\ref{main1}}(k,k+1)+7+k}$.
(i) thus holds given that $f_{\ref{main1}}(k,m_1)\ge f_{\ref{main1}}(k,k+1)+7+k$.

\noindent{\bf Case 3: $(G_1,\Omega_1)$ has a rendition $\rho_1'$ in the disk of breadth at most $k-1$ and depth at most $f_{\ref{main1}}(k,k+1)$,} such that the closure of the vortex cells in $\rho_1'$ are pairwise disjoint.
Hence, $(G,\Omega)$ has a rendition in $\Sigma^\star$ of breadth at most $k-1$ and depth at most $f_{\ref{main1}}(k,k+1)$ by restricting $\rho_1$ to $\Delta\setminus(\mathsf{int}(c_0'))$ and using $\rho_1'$ in $c_0'$.
Trivially, the closure of the vortex cells in this rendition are pairwise disjoint.
This contradiction completes the proof.
\end{proof}

\section{From societies to a local structure theorem}
\label{sec_loc_to_glob}

In this section, we will prove our local structure theorem using the main result of previous section (\autoref{mainter}).
To apply \autoref{mainter}, we need to find a cylindrical rendition in the input graph $G$.
We do so using the Flat Wall theorem (which originates from \cite{RobertsonS95bthed}) which find a grid-like structure (a \emph{wall}) with a vortex-free rendition, called a \emph{flat wall}, in a graph $G$ that excludes a big clique as a minor, after the removal of a small (apex) set $A\subseteq V(G)$.
More specifically, we prove in \autoref{sec_flat} a new version of the flat wall theorem (\autoref{lem:FWt}), where we exclude a long-jump grid instead of a clique as a minor, which allows us to find a flat wall without needing to remove an apex set.
It is then enough to make a vortex out of everything that is not part of the flat wall to obtain a cylindrical rendition.
In \autoref{sec:tangles} we thus put everything together and prove that if $G$ contains no big long-jump grid as a minor, then $G$ has a $\Sigma$-decomposition $\delta$ of small breadth and depth, where $\Sigma$ is the projective plane (\autoref{mainresult}).
Additionally, we add another property on $\delta$ concerning \emph{tangles}, that we define there.

\subsection{Finding a flat wall}\label{sec_flat}
In the section, we adapt the Flat Wall theorem \cite{kawarabayashi2018anewp,RobertsonS95bthed} to long-jump grids (\autoref{th:FWt}).
The Flat Wall theorem essentially states that, given $k,r\in\Nbbb$ and a big enough wall $W$ in a graph $G$, either $G$ contains a $K_k$ minor or there is a set $A$ such that an $r$-subwall of $W$ is a flat wall of $G-A$.
In this section, we essentially prove that given given $k,r\in\Nbbb$ and a big enough wall $W$ in a graph $G$, either $G$ contains a $\mathscr{J}_k$ minor or there is a $r$-subwall of $W$ that is a flat wall of $G$.

Let us first define walls, flat walls, and some related notions.

\paragraph{Walls.}
An \emph{$(n\times m)$-grid} is the graph $G_{n,m}$ with vertex set $[n]\times[m]$ and edge set \[\{(i,j)(i,j+1) \mid i\in[n],j\in[m-1]\}\cup\{(i,j)(i+1,j) \mid i\in[n-1],j\in[m]\}.\]
We call the path where vertices appear as $(i,1),(i,2),\dots, (i,m)$ the \emph{$i$th row} and the path where vertices appear as $(1,j),(2,j),\dots, (n,j)$ the \emph{$j$th column} of the grid.
An \emph{elementary $(k,\ell)$-wall}~$W_{k,\ell}$ for~${k,\ell \geq 3},$ is obtained from the ${(k\times 2\ell)}$-grid $G_{k,2\ell}$ by deleting every odd edge in every odd column and every even edge in every even column, and then deleting all degree-one vertices.
The \emph{rows} of~$W_{k,\ell}$ are the subgraphs of~$W_{k,\ell}$ induced by the rows of~$G_{k,2\ell},$ while the \emph{$j$th column} of~$W_{k,\ell}$ is the subgraph induced by the vertices of columns~${2j-1}$ and~${2j}$ of~$G_{k,2\ell}.$
We define the perimeter of $W_{k,\ell}$ to be the subgraph induced by~${\{ (i,j) \in V(W_{k,\ell}) \mid j \in \{1,2,2\ell,2\ell-1\} \text{ and } i \in [k] \textnormal{, or } i \in \{1,k\} \text{ and } j \in [2\ell] \}}.$
A \emph{$(k,\ell)$-wall}~$W$ is a graph isomorphic to a subdivision of~$W_{k,\ell}.$
The vertices of degree three in $W$ are called the \emph{branch vertices}.
In other words, $W$ is obtained from a graph $W'$ isomorphic to $W_{k,\ell}$ by subdividing each edge of~$W'$ an arbitrary (possibly zero) number of times.
We define rows and columns of $(k,\ell)$-walls analogously to their definition for elementary walls.
A \emph{(elementary) $k$-wall} is a (elementary) $(k,k)$-wall.
A \emph{wall} is a $(k,\ell)$-wall for some~$k,\ell.$
The vertices
in the perimeter of an elementary $r$-wall that have degree two are called \emph{pegs},
while the vertices $(1,1), (2,r), (2r-1,1), (2r,r)$ are called \emph{corners} (notice that the corners are also pegs).

An $h$-wall $W'$ is a \emph{subwall} of some $k$-wall $W$ where $h\leq k$ if every row (column) of $W'$ is contained in a row (column) of $W.$

Notice that, as $k \geq 3,$ an elementary $k$-wall is a planar graph that has a unique (up to topological isomorphism) embedding in the plane $\mathbb{R}^{2}$ such that all its finite faces are incident to exactly six edges. The perimeter of an elementary $r$-wall is the cycle bounding its infinite face, while the cycles bounding its finite faces are called \emph{bricks}. A cycle of a wall $W,$ obtained from the elementary wall $W',$ is the \emph{perimeter} of $W$, denoted by $D(W)$, if its branch vertices are the vertices of the perimeter of $W'.$

The following is a statement of the Grid Theorem.
While we will not explicitly use the Grid Theorem, we will, later on, make use of the existence of a function that forces the existence of a large wall in a graph with large enough treewidth. 

\begin{proposition}[Grid Theorem \cite{RobertsonS86exclu,chuzhoy2021towards}]\label{@reestablish}
	There exists a universal constant $c\geq 1$ such that for every $k\in\mathbb{N}$ and every graph $G,$ if $\mathsf{tw}(G)\geq ck^{10},$ then $G$ contains the $(k\times k)$-grid as a minor. 
	\end{proposition}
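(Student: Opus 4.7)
The plan is to reduce the statement to the Polynomial Grid Minor Theorem of Chuzhoy and Tan (this is in fact exactly the content cited as \cite{chuzhoy2021towards}), so my sketch is the high-level strategy of that proof rather than an independent argument. First, I would translate large treewidth into a combinatorial substitute that is easier to manipulate: namely, a \emph{well-linked set} of size polynomial in $\tw(G)$. The standard reduction (via balanced separators) shows that if $\tw(G) \geq ck^{10}$, then $G$ contains a set $S$ of vertices with $|S| \geq c'k^{10}$ such that any two equal-sized subsets of $S$ can be linked by $|S|/2$ disjoint paths. This is a convenient object because well-linkedness is preserved, up to constant factors, under taking subsets and minors.

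The second step is to convert such a well-linked set into a \emph{path-of-sets system} of length and width roughly $k$: a sequence $\langle C_1,\dots,C_k\rangle$ of pairwise disjoint connected subgraphs of $G$ together with $k$ vertex-disjoint paths between consecutive $C_i$'s, whose endpoints form a well-linked interface inside each $C_i$. From such a system one builds a $(k\times k)$-grid minor by routing $k$ disjoint paths through the $C_i$'s one level at a time: at each step one uses the internal well-linkedness of $C_i$ to reroute the current linkage so that it crosses $C_i$ transversally, giving the horizontal rails of the grid while the path-of-sets backbone gives the vertical rails. The delicate point is that at each level one may lose a fraction of the linkage; the Chuzhoy--Tan ``shrink step'' is engineered so that only a polylogarithmic fraction is lost per level, so $k$ iterations cost at most a $\mathsf{polylog}(k)$-factor overall.

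The main obstacle, and the entire reason the proof is hard, is obtaining a polynomial bound on the treewidth in terms of $k$. The original Robertson--Seymour proof \cite{RobertsonS86exclu} establishes the qualitative statement but only with tower-type bounds, because it proceeds by a two-sided induction that produces either a large grid or a large clique minor, and the clique-minor branch is controlled via an iterated linkage argument. Upgrading this to a polynomial bound requires the careful amortization developed across the series \cite{RobertsonS86exclu,chuzhoy2021towards}, culminating in the explicit exponent $10$. Since our paper only needs the existence of \emph{some} polynomial function witnessing the Grid Theorem (the exponent never enters quantitatively in our arguments), I would simply invoke the statement of \cite{chuzhoy2021towards} as a black box and not attempt to reprove it here.
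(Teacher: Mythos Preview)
Your proposal is appropriate: the paper does not prove this proposition at all but simply states it as a cited result from \cite{RobertsonS86exclu,chuzhoy2021towards}, and you correctly conclude that the right move is to invoke it as a black box. Your high-level sketch of the Chuzhoy--Tan argument is reasonable background, but note that the paper explicitly remarks it will not even use the Grid Theorem directly, only the existence of some function forcing a large wall from large treewidth.
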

Notice that any graph that contains a $(2k\times 2k)$-grid as a minor contains a $k$-wall as a subgraph.

\paragraph{Tilts.}
The \emph{interior} of a wall $W$ is the graph obtained from $W$ if we remove from it all edges of $D(W)$ and all vertices of $D(W)$ that have degree two in $W$.
Given two walls $W$ and $\tilde{W}$ of a graph $G$, we say that $\tilde{W}$ is a \emph{tilt} of $W$ if $\tilde{W}$ and $W$ have identical interiors.

\paragraph{Layers of a wall.} 
Let $r \in \Nbbb^{\mathsf{even}}.$ The \emph{layers} of an $r$-wall $W$ are recursively defined as follows.
The first layer of $W$ is its perimeter.
For $i = 2, \ldots, r/2,$ the $i$-th layer of $W$ is the $(i-1)$-th layer of the subwall $W'$ obtained from $W$ after removing from $W$ its perimeter and removing recursively all occurring vertices of degree one.

\paragraph{Central walls.} 
Let $r,q\in\Nbbb$ be with the same parity and $q\le r$, and let $W$ be an $r$-wall.
We define the \emph{central $q$-subwall} of $W$ to be the $q$-wall obtained from $W$ after removing its first $(r-q)/2$ layers and all occurring vertices of degree one. 



\paragraph{Separations.}
We say that a pair $(A, B) \in 2^{V(G)} \times 2^{V(G)}$ is a \emph{separation} of a graph $G$ if $A \cup B = V(G)$ and there is no edge in $G$ between $A \setminus B$ and $B \setminus A.$
We call $|A \cap B|$ the \emph{order} of $(A, B)$.

\paragraph{Flat walls.}
Let $G$ be a graph and let $W$ be an $r$-wall of $G$, for some odd integer $r\geq 3$.
We say that a pair $(P,C)\subseteq V(D(W))\times V(D(W))$ is a {\em choice of pegs and corners for $W$} if $W$ is a subdivision of an elementary $r$-wall $\bar{W}$ where $P$ and $C$ are the pegs and the corners of $\bar{W}$, respectively (clearly, $C\subseteq P$).

We say that $W$ is a \emph{flat $r$-wall} of $G$ if there is a separation $(X,Y)$ of $G$ and a choice $(P,C)$
of pegs and corners for $W$ such that:
\begin{itemize}
	\item $V(W)\subseteq Y$,
	\item $P\subseteq X\cap Y\subseteq V(D(W))$, and
	\item if $\Omega$ is the cyclic ordering of the vertices $X\cap Y$ as they appear in $D(W)$,
	      then $(G[Y],\Omega)$ is rural.
\end{itemize}

We say that $W$ is a {\em flat wall}
of $G$ if it is a flat $r$-wall for some odd integer $r \geq 3$.

\begin{proposition}[Theorem 5, \cite{SauST21accu}]
\label{tilt}
There is an algorithm that, given a graph $G$, a flat wall $W$ of $G$, and a subwall $W'$ of $W$, outputs, in time $\Ocal(n+m)$, a flat wall $\tilde{W}'$ that is a tilt of $W'$.
\end{proposition}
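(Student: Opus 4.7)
The plan is to recover the tilt $\tilde{W}'$ by exploiting the vortex-free rendition that witnesses the flatness of $W$, and then to reroute only the ``peripheral'' branches of $W'$ so that its new perimeter hugs the boundary of an appropriate sub-disk. Concretely, let $(X,Y)$ be a separation witnessing that $W$ is flat, and let $\rho$ be a vortex-free rendition of $(G[Y],\Omega)$ in a disk $\Delta$, mapping $X\cap Y$ to $\bd(\Delta)$ in the order $\Omega$. The subwall $W'$ lies inside $G[Y]$, hence its perimeter $D(W')$ is drawn by $\rho$ as a closed curve in $\Delta$ that bounds a closed subdisk $\Delta'\subseteq \Delta$ containing the interior of $W'$.

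The first step is to define the candidate separation $(X',Y')$ by putting into $Y'$ all vertices of $G[Y]$ drawn inside $\Delta'$, plus every $G$-bridge of $G[Y]$ whose attachments lie entirely on this inner side, and by letting $X'$ be the rest of $V(G)$ together with $V(D(W'))$. The second step is to modify $W'$ into $\tilde{W}'$ with the same interior as $W'$, but whose perimeter walks along $D(W')$ in such a way that its pegs and corners land on $\pi_\rho^{-1}(\bd(\Delta')\cap N(\rho))$. This is the tilt: only the subdivision paths of $W'$ connecting the interior to its corners and pegs are rerouted, using the planar structure induced by $\rho$ on $\Delta'$, while the branch vertices in the interior (which uniquely determine $W'$ up to tilt) are left untouched.

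The third step is to verify that $(G[Y'],\Omega')$ is rural, where $\Omega'$ is the cyclic ordering of $X'\cap Y'$ along the new perimeter $D(\tilde{W}')$. For this one takes the restriction of $\rho$ to $\Delta'$, which is automatically vortex-free; bridges with all attachments inside $\Delta'$ were added to $Y'$ and can be drawn in the same cells they already occupy in $\rho$; bridges that attach both inside and outside $\Delta'$ were pushed to $X'$ by construction, so they do not appear in $G[Y']$. Together with the matching between pegs and $\bd(\Delta')$ secured by the tilt, this gives a vortex-free rendition of $(G[Y'],\Omega')$ in $\Delta'$, certifying flatness of $\tilde{W}'$. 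Linear running time is obtained by storing $\rho$ as a combinatorial planar embedding and performing all the relevant walks (identifying $\Delta'$, listing the bridges, tilting the peripheral branches) with a constant number of traversals of $G$.

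The main obstacle is purely combinatorial book-keeping around the perimeter: pegs of $W'$ are distinguished vertices of $D(W')$ of degree two in $W'$, but in the rendition $\rho$ they need not coincide with the nodes that $\bd(\Delta')$ touches, and bridges of $G[Y]$ may attach to $D(W')$ on both sides of $\Delta'$. Choosing the rerouting of the peripheral branches so that the resulting corners and pegs sit exactly in $\pi_\rho(N(\rho)\cap\bd(\Delta'))$ and respect the cyclic order $\Omega'$, while keeping the interior of $W'$ untouched and avoiding any bridge that would create a cross in $(G[Y'],\Omega')$, is the delicate part; once this is done, rurality of $(G[Y'],\Omega')$ follows directly from \autoref{thm:crossreduct} applied to the restricted rendition.
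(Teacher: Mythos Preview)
The paper does not prove this proposition; it is quoted as Theorem~5 of \cite{SauST21accu}, so there is no in-paper argument to compare against.  Your outline matches the natural strategy: restrict the witnessing rendition $\rho$ to a sub-disk determined by $W'$, then reroute the outer branches of $W'$ so that its new pegs land on nodes of the restricted rendition.

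There is, however, a gap that precedes the book-keeping you flag in your last paragraph.  You assert that $D(W')$ ``is drawn by $\rho$ as a closed curve in $\Delta$ that bounds a closed subdisk $\Delta'$,'' but in a $\Sigma$-decomposition edges may cross inside cells, so the drawing of $D(W')$ need not be simple; what bounds a disk is the \emph{track} of $D(W')$, and that is defined only when $D(W')$ is \emph{grounded} in $\rho$.  The definition of a flat wall places no constraint on where the branch vertices of a subwall sit relative to the cells of $\rho$: nothing rules out a coarse rendition in which a single non-vortex cell $c$ with $|\tilde c|\le 3$ contains all of $W'$.  In that situation $D(W')$ lies entirely in the interior of $c$, has no track, and your $\Delta'$---and with it the separation $(X',Y')$ and everything downstream---is undefined.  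The argument in \cite{SauST21accu} first normalises the rendition so that branch vertices of $W$ become nodes (their framework of ``regular'' flatness pairs is built around exactly this kind of refinement); only after that normalisation is $D(W')$ guaranteed to be grounded and your remaining steps applicable.  The peg-alignment difficulty you isolate at the end is real but secondary: it only makes sense once $\Delta'$ exists.
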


Let us prove our flat wall theorem when we exclude a long-jump grid as a minor.

\begin{lemma}\label{lem:FWt}
Let $k,r\geq 1$ be integers, with $r$ odd, let $G$ be a graph, and let $W$ be a $((r+3k)k+6k-4,r+8k-4)$-wall in $G$.
Then one of the following holds.
\begin{itemize}
\item Either $G$ contains $\mathscr{J}_k$ as a minor, or
\item there is a flat wall $W'$ in $G$, where $W'$ is the tilt of an $r$-subwall of $W$.
\end{itemize}
\end{lemma}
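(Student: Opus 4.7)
The proof follows the structure of the classical Flat Wall Theorem~\cite{RobertsonS95bthed,kawarabayashi2018anewp,SauST21accu}, tailored to the planar obstruction $\mathscr{J}_k$ so that no apex set appears in the conclusion. The guiding intuition is that whereas producing a $K_k$ minor requires many apex-joined crosses, producing $\mathscr{J}_k$ as a minor requires only a single ``jumping'' path that crosses enough concentric rails of a sufficiently large annular region of the wall. Accordingly, each iteration of cross detection that fails to produce a flat subwall should directly contribute one new rail to an eventual long-jump transaction in the society of the outermost perimeter of $W$.

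Concretely, I would set $W^{(0)} \coloneqq W$ and, for $i = 1,\dots,k$, define $W^{(i)}$ as the central subwall of $W^{(i-1)}$ obtained by peeling off roughly $r+3k$ concentric layers while preserving a buffer of outer columns for later rerouting. The row and column counts $(r+3k)k + 6k - 4$ and $r+8k-4$ of $W$ are chosen precisely so that $W^{(k)}$ still contains an $r$-subwall and each belt $W^{(i-1)} \setminus W^{(i)}$ is wide enough to host a fresh concentric cycle dedicated to one jump. At iteration $i$, I examine the society $(G_i, \Omega_i)$ whose ground cycle is the perimeter of $W^{(i)}$ and whose graph consists of $W^{(i-1)}$ together with all $\Omega_i$-bridges of $G$ lying outside $W^{(i)}$. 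By \autoref{thm:crossreduct}, either $(G_i,\Omega_i)$ is rural --- in which case $W^{(i)}$ is a flat $r$-wall and \autoref{tilt} returns a tilt that is automatically a tilt of an $r$-subwall of $W$, yielding the second conclusion --- or $(G_i,\Omega_i)$ contains a cross, giving two disjoint jumping paths with interleaved endpoints on $\Omega_i$. If every one of the $k$ iterations produces a cross, I collect the $k$ jumps, reroute each along a dedicated unused cycle inside its belt, and combine them with the outermost perimeter of $W$ and with parallel rails drawn from the $8k-4$ outer columns. The resulting subgraph matches one of the grids in \autoref{figfindlongjump}, so by \autoref{allinone2} it contains $\mathscr{J}_k$ as a minor of $G$.

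The main obstacle is the inductive extension step: at iteration $i+1$, the newly found cross must be routed so as to add a genuinely new rail rather than merge with or duplicate an earlier jump. The $3k$ extra layers per belt provide enough disjoint concentric cycles to separate successive jumps, but one must carefully track the cyclic order of all jumps' endpoints along the outermost $\Omega_0$ to guarantee that the final arrangement is a true $(k-1)$-long-jump pattern (hence matches $\hat{\mathscr{J}}_k^r$ or one of its relatives) rather than a lower-order or purely planar arrangement. A secondary, more bookkeeping-level point is that \autoref{tilt} preserves the interior of a wall, so the flat $r$-wall returned at a successful iteration is automatically a tilt of an $r$-subwall of the original $W$, requiring no separate argument.
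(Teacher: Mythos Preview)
Your approach is genuinely different from the paper's and, as written, has a real gap that you yourself flag as the ``main obstacle'' but do not resolve.

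The paper does \emph{not} use nested concentric subwalls. Instead it places $k$ disjoint $(r+2k)$-subwalls $W_1,\dots,W_k$ \emph{side by side} inside $W$ (after peeling a thin buffer of $3k-2$ layers), separated by $k$ unused columns between consecutive $W_i$'s. This is why the wall dimensions in the statement are so asymmetric: one side is roughly $k(r+3k)$ and the other only $r+O(k)$. The argument then proceeds in two disjointness steps. First, if two of the associated bridge-closures $H_i,H_j$ intersect, a single $W$-path jumps over the $k$ separating columns and directly yields $\mathscr{J}_k$. Second, once the $H_i$ are pairwise disjoint, each society $(H_i,\Omega_i)$ is examined; if all $k$ of them contain a cross, these crosses are \emph{automatically disjoint} and \emph{automatically nested} (because the $H_i$ are disjoint and sit in parallel strips), giving the nested-crosses grid $\mathscr{NC}_k^{k+1}$ and hence $\mathscr{J}_k$ via \autoref{allinone2}. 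Otherwise some $(H_i,\Omega_i)$ is rural, and after taking a central $r$-subwall and one more long-jump argument (to rule out edges from $G\setminus H_i$ reaching deep inside) one obtains the flat wall in $G$.

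In your concentric scheme the $k$ crosses live in the compasses of nested walls $W^{(1)}\supset\cdots\supset W^{(k)}$, and a cross in $(G_i,\Omega_i)$ is a pair of paths in the full compass of $W^{(i)}$: it can wander anywhere inside, in particular through the compasses of all deeper $W^{(j)}$. Nothing forces the cross found at level $i$ to be disjoint from, or even distinguishable from, the cross found at level $i+1$; in the worst case you find the same cross $k$ times. Your proposed remedy (``reroute each along a dedicated unused cycle inside its belt'') does not apply, because the crosses are not paths in $W$ but arbitrary $\Omega_i$-paths in $G$, and the belt cycles give you no control over them. Making this work would require an additional argument that, whenever a cross is found, either a long jump already appears or the cross can be pushed into its own belt --- essentially a separate lemma of comparable difficulty to the one you are proving. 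The paper's side-by-side layout sidesteps the issue entirely: disjointness of the $H_i$ is either free or itself produces the long jump.
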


\begin{figure}[ht]
\begin{center}
\includegraphics[scale=1]{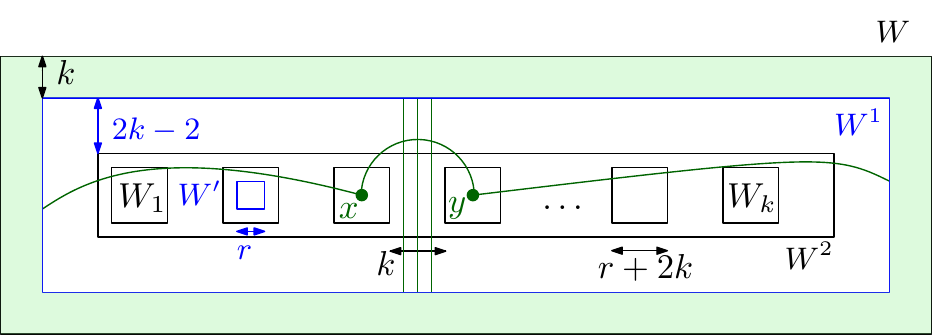}
\end{center}
  \caption{The walls $W_1,...,W_k$ in $W$. In darkgreen is depicted the $\mathscr{J}_k$ minor of \autoref{ca}.}
\label{figwallsinwall}
\end{figure}

\begin{proof}
Let $W^1$ be the $((r+3k)k+4k-4,r+6k-4)$-wall obtained by removing the first $k$ layers of $W$.
Let $W^2$ be the $((r+3k)k,r+2k)$-wall obtained by removing the first $2k-2$ layers of $W^1$.
Let $a=(r+3k)k$,and $b=r+2k$.
Let $P_1,...,P_b$ (resp. $Q_1,...,Q_a$) be the horizontal (resp. vertical) paths of $W^2$.
Let $I_1',...,I_k'$ be intervals of length $r+3k$ with union $[a]$.
For $i\in[k]$, let $I_i$ be obtained from $I_i'$ by deleting the last $k$ elements.
Thus, $|I_i|=r+2k$.
For $i\in[k]$, we define $W_i$ as the $(r+2k)$-subwall of $W^2$ obtained by removing all paths $Q_j$ for $j\in[a]\setminus I_i$.
Refer to \autoref{figwallsinwall} for an illustration.
Observe that the $k$ first layers of $W$ do not contain vertices from the $W_i$, $i\in[k]$, and, hence, form, along with horizontal and vertical subpaths, a railed nest $(\Ccal,\Pcal)$ of order $(k,2a+2b)$ (in light green in \autoref{figwallsinwall}).

Let $B$ be a $W$-bridge in $G$ with at least one attachment in $V(W_i)$.
Let $B'$ be obtained from $B$ by deleting all its attachments that do not belong to $V(W_i)$.
Let $H_i$ be the graph obtained from the union of $W_i$ and all graphs $B'$ defined as above.

\begin{claim}\label{ca}
If there are indices $i$ and $j$ with $i<j$ such that $H_i$ and $H_j$ share a vertex, then $G$ contains $\mathscr{J}_k$ as a minor.
\end{claim}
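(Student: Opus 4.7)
The plan is to exploit the hypothesis $V(H_i) \cap V(H_j) \neq \emptyset$ to extract a $W$-bridge $B$ with attachments in both $W_i$ and $W_j$, and then to combine a path through $B$ with the grid structure of $W$ to build a minor model of $\mathscr{J}_k$, following the pattern indicated in darkgreen in \autoref{figwallsinwall}.

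First I would observe that any shared vertex $w \in V(H_i) \cap V(H_j)$ must be an internal vertex of a $W$-bridge. Indeed, by construction $V(W_i) \cap V(W_j) = \emptyset$, so $w$ cannot simultaneously lie in $V(W_i)$ and $V(W_j)$; and internal vertices of $W$-bridges avoid $V(W)$, so $w \notin V(W_i)$ forces the piece of $H_i$ containing $w$ to come from a bridge $B_i$, and similarly for $H_j$. Since distinct $W$-bridges are vertex-disjoint outside $V(W)$, we must have $B_i = B_j =: B$, and $B$ has attachments both in $V(W_i)$ and in $V(W_j)$. Picking one attachment $u$ in $V(W_i)$ and one attachment $u'$ in $V(W_j)$ and connecting them through $B$ yields a path $R$ from $u$ to $u'$ whose internal vertices all lie outside $V(W)$; in particular, $R$ is internally disjoint from $W$, from all vertical paths $Q_1,\dots,Q_a$ of $W^2$, and from the nest formed by the first $k$ layers of $W$.

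Next I would isolate the ``buffer'' paths. By the definition of the intervals $I_1',\dots,I_k'$ and the construction of the $W_\ell$, between the last vertical path of $W_i$ and the first vertical path of $W_{i+1}$ sit exactly $k$ consecutive vertical paths of $W^2$, and the same holds between any two consecutive walls; in particular, there are at least $k$ paths in $\{Q_p : p \in I_i' \setminus I_i\}$ that lie in $W^2$ but belong to neither $W_i$ nor $W_j$. These $k$ vertical paths will play the role of the parallel linkage that the long jump crosses. Combined with the $k$ concentric cycles of the nest $\mathcal{C}$ (coming from the first $k$ layers of $W$ preserved by the definition of $W^2$) and with enough horizontal rows of $W^2$ and rails from $\mathcal{P}$, these ingredients already yield a clean cylindrical grid of order $k$.

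Finally I would assemble the $\mathscr{J}_k$ minor model: the $k$ concentric cycles come from $\mathcal{C}$, the radial paths and further horizontal cycles are cut from $W_i$, $W_j$, the $k$ buffer paths $Q_{p_1},\dots,Q_{p_k}$, and suitable horizontal rows of $W^2$; the long-jump path is obtained by following a vertical path of $W_i$ from the outer nest down to $u$, traversing $R$, and then following a vertical path of $W_j$ from $u'$ back to the outer nest, thereby jumping over the $k$ buffer paths. This is exactly the pattern highlighted in \autoref{figwallsinwall}. The main obstacle will be the bookkeeping needed to verify that all branchsets of this model are connected and pairwise disjoint; however, since $R$ is internally disjoint from $V(W)$ and the buffer paths are disjoint from $W_i \cup W_j$, this is entirely routine and proceeds exactly as in the constructions used in \autoref{allinone2} and in the analogous argument of \autoref{cl:stripdisjoint}.
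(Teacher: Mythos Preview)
Your proposal is correct and follows essentially the same approach as the paper: both extract a $W$-bridge with attachments $x\in V(W_i)$ and $y\in V(W_j)$ from the shared vertex, and both build $\mathscr{J}_k$ from the resulting $W$-path together with the $k$ buffer columns $Q_l$, $l\in I_i'\setminus I_i$, and the railed nest $(\Ccal,\Pcal)$, exactly as in \autoref{figwallsinwall}. The only cosmetic difference is that the paper routes $x$ and $y$ to $\Ccal$ along horizontal subpaths of $W$ whereas you propose vertical ones; either choice works, and your more careful justification of why the shared vertex forces a single bridge with attachments in both subwalls is in fact more explicit than the paper's one-line assertion.
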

\begin{cproof}
Assume that there are indices $i$ and $j$ with $i<j$ such that $H_i$ and $H_j$ share a vertex.
Then there exists a $W$-bridge with an attachment $x\in V(W_i)$ and $y\in V(W_j)$.
Therefore there exists a $W$-path $P$ in $G$ with endpoints $x$ and $y$.
Then, the union of $P$, the $k$ vertical paths $Q_l$ for $l\in I_i'\setminus I_i$, the railed nest $(\Ccal,\Pcal)$, and an appropriate horizontal subpath from $x$ (resp. $y$) to $\Ccal$ in $W$, contains $\mathscr{J}_k$ as a minor.
\end{cproof}

In this case, we directly conclude, so we may assume that the subgraphs $H_i$ are pairwise disjoint.
Let $C_i$ be the perimeter of $W_i$ for $i\in[k]$.
Let $N_i=V(C_i)\cap N_G(V(G)\setminus V(H_i))$.
Let $\Omega_i$ be the cyclic ordering of the vertices of $N_i$ as they appear in $C_i$.
If, for each $i\in[k]$, $(H_i,\Omega_i)$ contains a cross, then the nested-crosses grid $\mathscr{NC}_k^{k+1}$ is a minor of $G$.
By \autoref{allinone2}, $\mathscr{J}_k$ is hence a minor of $G$.
In this case, we can directly conclude, so we may assume that there exists an $i\in[k]$ such that $(H_i,\Omega_i)$
do not have a cross.
Then, by \autoref{thm:crossreduct}, the society $(H_i,\Omega_i)$ is rural. 
It implies that $W_i$ is a flat wall of $H_i$.

We now want to obtain a flat wall of $G$.
Let $W''$ be the $r$-wall obtained from $W_i$ by deleting the $k$ first layers of $W_i$.
By \autoref{tilt}, there is a flat wall $W'$ of $H_i$ such that $W'$ is a tilt of $W''$.
Let $(X',Y)$ be a separation of $H_i$ and $(P,C)$ be a choice of pegs and corners certifying that $W'$ is flat wall of $H_i$, with $V(W)\subseteq Y$.
Let $X=X'\cup (V(G)\setminus V(H_i))$.
Let us show that $(X,Y)$ along with $(P,C)$ certifies that $W'$ is a flat wall of $G$.
For this, it is enough to prove the following.
\begin{claim}
$(X,Y)$ is a separation of $G$ or $G$ contains $\mathscr{J}_k$ as a minor.
\end{claim}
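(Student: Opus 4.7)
The plan is to show that any edge violating the separation property forces the same kind of minor that was constructed in Claim~\ref{ca}. First, the equality $X\cup Y=V(G)$ is immediate: since $(X',Y)$ is a separation of $H_i$ we have $X'\cup Y=V(H_i)$, and by definition $X\supseteq V(G)\setminus V(H_i)$, so $X\cup Y\supseteq V(H_i)\cup(V(G)\setminus V(H_i))=V(G)$. It remains to prove that there is no edge between $X\setminus Y$ and $Y\setminus X$, which I do by contradiction: suppose $uv\in E(G)$ with $u\in X\setminus Y$ and $v\in Y\setminus X$. Since $Y\subseteq V(H_i)$, we have $v\in V(H_i)$, and $v\notin X\supseteq X'$ gives $v\in Y\setminus X'$.

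I split on whether $u$ belongs to $V(H_i)$. If $u\in X'\setminus Y$, then $u,v\in V(H_i)$, and a short case analysis based on whether each of $u,v$ lies in $V(W_i)$ or is internal to some $W$-bridge with an attachment in $V(W_i)$ shows that $uv$ must in fact be an edge of $H_i$: if $u$ or $v$ is internal to a bridge $B$, then $B$ contains the other endpoint as well, and the edge survives in $B'\subseteq H_i$; if both endpoints are in $V(W_i)$, the edge is either in $W_i$ or forms a single-edge $W$-bridge with both attachments in $V(W_i)$, which is also in $H_i$. This contradicts that $(X',Y)$ is a separation of $H_i$. Hence we may assume $u\in V(G)\setminus V(H_i)$. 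Tracing the edge $uv$ from $v\in V(H_i)$, I observe that $u$ cannot be an internal vertex of a $W$-bridge (such a bridge would contain $v$ and therefore have an attachment in $V(W_i)$, forcing $u\in V(H_i)$), so $u\in V(W)\setminus V(W_i)$. Moreover, either $v\in V(W_i)$ and the edge $uv$ itself gives a $W$-path from $V(W_i)$ to $u$, or $v$ is internal to a $W$-bridge $B_v$ with an attachment in $V(W_i)$; in the latter case the concatenation of $uv$ with a path in $B_v$ from $u$ (an attachment of $B_v$) to an attachment $w\in V(W_i)$ gives a $W$-path $P^\star$ from $V(W_i)$ to $u$ that is internally disjoint from $V(W)$.

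If $u\in V(W_j)$ for some $j\neq i$, then $P^\star$ is precisely the $W$-path between $V(W_i)$ and $V(W_j)$ used in the proof of Claim~\ref{ca}, so the argument there produces the desired $\mathscr{J}_k$ minor. The only remaining case is $u\in V(W)\setminus\bigcup_{l\in[k]}V(W_l)$, meaning $u$ sits in a gap column $Q_\ell$ with $\ell\in I'_j\setminus I_j$ for some $j$, or in one of the outer layers $V(W)\setminus V(W^2)$. In this case I extend $P^\star$ at its endpoint $u$ by a subpath inside $W$ that stays within $V(W)\setminus\bigcup_{l\in[k]}V(W_l)$ until its final step, which enters a branch vertex of some $W_{j'}$ with $j'\neq i$ chosen on the opposite side of an adjacent gap from $u$. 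The resulting $G$-path from $V(W_i)$ to $V(W_{j'})$, together with the $k$ vertical rails $Q_\ell$ for $\ell$ in a conveniently chosen $I'_{j_0}\setminus I_{j_0}$ and the railed nest $(\Ccal,\Pcal)$ coming from the first $k$ layers of $W$, plugs into the same topological scheme as in Claim~\ref{ca} (see \autoref{figwallsinwall}) and yields a $\mathscr{J}_k$ minor of $G$. The main obstacle is exactly this last step: one must verify that the routing of the extension inside $W\setminus\bigcup_l V(W_l)$ can be chosen disjoint from the gap rails and from the nest actually used, which is a direct adaptation of the bookkeeping already performed in the proof of Claim~\ref{ca}.
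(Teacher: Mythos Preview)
Your reduction to the case $u\in V(W)\setminus V(W_i)$ and the construction of the $W$-path $P^\star$ from some $w\in V(W_i)$ to $u$ are correct, and for $u$ lying in another block $W_j$ or in a gap column inside $V(W^1)$ your ``extend and reuse Claim~\ref{ca}'' plan is close to what the paper does (the paper simply says ``similar argument to Claim~\ref{ca}'' for $x\in V(W^1)$).

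The genuine gap is the sub-case $u\in V(W)\setminus V(W^1)$, i.e.\ $u$ sits on one of the outermost $k$ layers of $W$. Your extension from $u$ to any $W_{j'}\subseteq V(W^2)$ must travel inwards through the wall and therefore meets the cycles of the nest $\Ccal$, which is precisely the collection of these outermost $k$ layers. Hence the combined path is not disjoint from $\Ccal$, and the Claim~\ref{ca} scheme, which needs $\Ccal$ as the annulus part of $\mathscr{J}_k$, cannot be applied. This is not a bookkeeping issue: there is no way to route the extension while keeping a full $k$-nest available, because $u$ itself already lies on $\Ccal$.

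The paper avoids this obstacle by a different construction. It first shows (using the separation $(X',Y)$ and $X'\cap Y\subseteq V(D(W'))$) that $y\notin V(W_i)\setminus V(W')$, so the $W$-path can be taken to land at some $y'\in V(W')$, i.e.\ in the inner $r$-wall obtained from $W_i$ after deleting its first $k$ layers. Now the $W$-path from $y'$ to $x\in V(W)\setminus V(W^1)$ jumps over $3k-2$ pairwise disjoint concentric cycles: the $k$ layers of $W_i\setminus V(W')$ together with the $2k-2$ layers of $W^1\setminus W^2$. This yields the alternative jump grid $\hat{\mathscr{J}}_{2k}^{3k-2}$, which contains $\mathscr{J}_k$ as a minor by \autoref{allinone2}. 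You never establish that the $W_i$-endpoint lies in $V(W')$, and without this the $\hat{\mathscr{J}}$ argument is unavailable; your proposed substitute does not work for the reason above.
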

\begin{cproof}
Suppose that $(X,Y)$ is not a separation of $G$, and thus that there are $x\in X\setminus Y$ and $y\in Y\setminus X$ that are adjacent.
Given that $(X',Y)$ is a separation of $H_i$, it implies that $x\notin V(H_i)$ and $y\in V(H_i)$.

Assume that $y\in V(W_i)\setminus V(W')$.
Let $D$ be the perimeter of $W'$.
Then there is a path from $y$ to $V(C_i)$ disjoint from $V(D)$, contradicting the fact that $V(C_i)\subseteq X'$, $y\in Y$, and $X'\cap Y\subseteq V(D)$.
Hence, $y\notin V(W_i)\setminus V(W')$.

The edge joining $x$ and $y$ belongs to a $W$-bridge $B$ of $G$, and hence, $x$ is an attachment of $B$ outside $W_i$.
Thus, there is a $W$-path with one endpoint $x$ and the other $y'\in V(W')$.
If $x\in V(W^1)$, then, by a similar argument to \autoref{ca}, $G$ contains $\mathscr{J}_k$ as a minor. 
Otherwise, $x\in V(W)\setminus V(W^1)$.
But then, there are $k$ paths from $W_i\setminus V(W')$ plus $2k-2$ paths from $W^1\setminus W^2$ separating $x$ from $y'$.
Hence, $G$ contains the alternative jump grid $\hat{\mathscr{J}}_{2k}^{3k-2}$, and thus $\mathscr{J}_k$ by \autoref{allinone2} as a minor. 
\end{cproof}
Thus, the separation $(X,Y)$ witnesses that $W'$ is a flat wall in $G$.
\end{proof}

Given that a $(a,b)$-wall is a subset of an $\Ocal(\sqrt{ab})$-wall, \autoref{lem:FWt} immediately implies the following.

\begin{theorem}\label{th:FWt}
Let $k,r\geq 1$ be integers, with $r$ odd, let $G$ be a graph, and let $W$ be a $f_{\ref{th:FWt}}(k,r)$-wall in $G$, where $f_{\ref{th:FWt}}(k,r)=\Ocal(\sqrt{k}(r+k))$.
Then one of the following holds.
\begin{itemize}
\item Either $G$ contains $\mathscr{J}_k$ as a minor, or
\item there is a flat wall $W'$ in $G$, where $W'$ is the tilt of an $r$-subwall of $W$.
\end{itemize}
\end{theorem}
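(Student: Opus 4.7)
The plan is to deduce Theorem~\ref{th:FWt} directly from Lemma~\ref{lem:FWt} by passing from a square wall to a rectangular subwall of suitable dimensions. With the parameters of Lemma~\ref{lem:FWt} set to $a = (r+3k)k + 6k - 4$ and $b = r + 8k - 4$, we have $a\cdot b = \Ocal(k(r+k)^2)$ and hence $\sqrt{ab} = \Ocal(\sqrt{k}(r+k))$, which matches the order of magnitude claimed for $f_{\ref{th:FWt}}(k,r)$.

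Next I would invoke the folklore fact that an $(a,b)$-wall can be realized as a subwall of a square $N$-wall provided $N = \Omega(\sqrt{ab})$. The intuitive picture is to ``snake'' the long rows of the rectangular wall through several consecutive columns of the square wall, while preserving degree-$3$ branch vertices and subdivided paths so that the containment is in the strict sense of subwalls (each row, resp.\ column, of the rectangular wall sits inside a row, resp.\ column, of the square wall) rather than merely as a minor. Setting $f_{\ref{th:FWt}}(k,r)$ equal to the resulting $N$, any $f_{\ref{th:FWt}}(k,r)$-wall $W$ of $G$ therefore contains an $(a,b)$-subwall $W^{\ast}$.

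Applying Lemma~\ref{lem:FWt} to $W^{\ast}$ then yields one of the two desired outcomes: either $G$ has $\mathscr{J}_k$ as a minor, or there is a flat wall $W'$ in $G$ that is the tilt of an $r$-subwall of $W^{\ast}$. Since the subwall relation is transitive, any $r$-subwall of $W^{\ast}$ is also an $r$-subwall of the original $W$, so $W'$ is the tilt of an $r$-subwall of $W$, exactly as required.

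The main obstacle I expect is the first step, namely verifying the $\Ocal(\sqrt{ab})$ embedding at the level of subwalls rather than minors or mere subgraphs: this requires a combinatorial snaking construction whose parity conditions (odd/even columns in the definition of elementary walls) must be handled carefully to produce a genuine wall subdivision. If this particular bound turned out to be elusive, one could fall back to $f_{\ref{th:FWt}}(k,r) = \Ocal(\max\{a,b\}) = \Ocal(k(r+k))$ by simply choosing $N \geq \max\{a,b\}$, which is still sufficient for the downstream applications in Sections~\ref{sec:tangles} and~\ref{sec_global}.
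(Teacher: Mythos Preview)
Your proposal is correct and matches the paper's proof exactly: the paper's entire argument is the single sentence preceding the theorem statement, ``Given that a $(a,b)$-wall is a subset of an $\mathcal{O}(\sqrt{ab})$-wall, \autoref{lem:FWt} immediately implies the following.'' Your caution about whether the snaking embedding realises a strict row/column subwall (rather than merely a subgraph) is well placed---the paper glosses over the same point by writing ``subset''---but this does not affect the downstream use in \autoref{mainresult}, where only the tangle-truncation property of the resulting flat wall is needed.
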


\subsection{Tangles}\label{sec:tangles}

To create the tree decomposition of the global structure theorem, we require tangles, first introduced in~\cite{RobertsonS91obst}.

%

\paragraph{Tangles.} Let $G$ be a graph and $k$ be a positive integer. 
We denote by $\mathcal{S}_{k}$ the collection of all separations $(A, B)$ of order less than $k$ in $G$.
An \emph{orientation} of $\mathcal{S}_{k}$ is a set $\mathcal{O}$ such that for all $(A, B) \in \mathcal{S}_{k}$ exactly one of $(A, B)$ and $(B, A)$ belongs to $\mathcal{O}.$
A \emph{tangle} of order $k$ in $G$ is an orientation $\mathcal{T}$ of $\mathcal{S}_{k}$ such that for all $(A_{1}, B_{1}), (A_{2}, B_{2}), (A_{3}, B_{3}) \in \mathcal{T}$, 
we have $A_{1} \cup A_{2} \cup A_{3} \neq V(G).$

Let $\mathcal{T}'\subsetneq\mathcal{T}$ be a tangle which is {properly contained} in the tangle $\mathcal{T}.$
We say that $\mathcal{T}'$ is a \emph{truncation} of~$\mathcal{T}.$
This implies in particular that the order of $\Tcal'$ is smaller that the order of $\Tcal$.

\paragraph{Tangles  of  walls.}
Let $W$ be a $k$-wall in a graph $G$ and $(A, B)$ be a separation of order strictly less than $k$.
Then exactly one of $A\setminus B$ or $B\setminus A$ contains a row and a column of $W.$
Let $\mathcal{T}_{W}$ be the orientation of $\mathcal{S}_{k}$ where $(A, B) \in \mathcal{T}_{W}$ if and only if 
$B\setminus A$ contains a row and a column of $W.$
Then it is easy to observe that $\mathcal{T}_{W}$ is a tangle, which we call the \emph{tangle of $W$}. 

\paragraph{Flat walls in a $\Sigma$-decomposition.}
Let $W$ be a wall in a graph $G$.
We say that $W$ in \emph{flat} is a $\Sigma$-decomposition $\delta$ of $G$ if there exists a $\delta$-aligned disk $\Delta$ such that 
\begin{itemize}
\item $\pi(N(\delta)\cap \bd(\Delta))\subseteq V(D(W))$,
\item if $S$ is the collection of corners and branch vertices of $W$ that are not in $\ground(\delta)$, then, for any $c\in C(\delta)$, there exists at most one $v\in S$ such that $v \in V(\sigma(c))$,
\item no cell $c\in C(\delta)$ with $c\subseteq \Delta$ is a vortex, and 
\item $W-V(D(W))$ is a subgraph of $\bigcup_{c\subseteq \Delta}\sigma(c)$.
\end{itemize}

In the next result, we reformulate the local structure theorem in the form we need to prove the global structure theorem in the next section.

\begin{theorem}\label{mainresult}
  There exist functions $f_{\ref{mainresult}}\colon\mathbb{N}^2\to\mathbb{N}$, $d_{\ref{mainresult}}\colon\mathbb{N}\to\mathbb{N}$ such that, for every choice of non-negative integers $k,r$ with odd $r\ge3$ and every graph $G$ with an $f_{\ref{mainresult}}(k,r)$-wall $W$, one of the following holds
  \begin{enumerate}
    \item[\rm{(i)}] $G$ contains the long-jump grid of order $k$ as a minor, or
    \item[\rm{(ii)}]  $G$ has a $\Sigma$-decomposition $\delta$ of breadth at most $k-1$ and depth at most $d_{\ref{mainresult}}(k)$ such that the closures of the vortex cells in $\delta$ are pairwise disjoint.
    Moreover, $\Sigma$ is either the sphere or the projective plane and there exists a wall of height at least $r$ which is flat in $\delta$ and whose tangle is a truncation of the tangle of $W$. 
  \end{enumerate}
Moreover, $f_{\ref{mainresult}}(k,r)=\max\{2^{\Ocal(k\log k)},\Ocal(\sqrt{k}(r+k))\}$ and $d_{\ref{mainresult}}(k)=2^{\Ocal(k\log k)}$.
\end{theorem}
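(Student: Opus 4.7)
The plan is to combine the new Flat Wall theorem \autoref{th:FWt} with the local structure theorem for societies \autoref{mainter}. I will define $f_{\ref{mainresult}}(k,r)$ and an intermediate parameter $r'$ so that, upon applying \autoref{th:FWt} to the wall $W$, either $G$ contains $\mathscr{J}_k$ as a minor (giving outcome (i)) or we obtain a flat wall $W_1$ of height $r'$ that is a tilt of an $r'$-subwall of $W$. The parameter $r'$ is chosen sufficiently large so that the interior of $W_1$ houses a railed nest of the order required by the hypothesis of \autoref{mainter}---namely of order $(s,t)$ with $s \geq f_{\ref{main1}}(k,m_1) + f_{\ref{main1}}(k,k+1)+7$ and $t \geq f_{\ref{main1}}(k,m_1)$---while still leaving an $r$-subwall available outside the nest. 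The estimate $r' = r + 2s + O(1)$ suffices and yields the claimed asymptotic bound for $f_{\ref{mainresult}}$.

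Next, from the flatness of $W_1$, witnessed by a separation $(X,Y)$ with $V(W_1)\subseteq Y$ and a vortex-free rendition of $(G[Y],\Omega_{D(W_1)})$ in a disk, I will build a cylindrical rendition $\rho = (\Gamma,\mathcal{D},c_0)$ of a society $(G,\Omega)$, where $\Omega$ is a cyclic order on the pegs of $W_1$ and the single vortex $c_0$ absorbs $G[X]$ together with the innermost core of $W_1$. The concentric layers of $W_1$ separating this core from $D(W_1)$ provide the nest $\mathcal{C} = (C_1,\dots,C_s)$, while the transverse columns of $W_1$ supply the rails $\mathcal{P}$, forming the required railed nest. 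Since $W_1$ is flat, every edge from $X$ to $Y$ attaches to $D(W_1)$, which lies on the boundary of the disk, so $G[X]$ can be absorbed into $c_0$ without disturbing the vortex-free drawing of the rest of $W_1$.

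I then apply \autoref{mainter} to $(G,\Omega)$, $\rho$, and $(\mathcal{C},\mathcal{P})$. If outcome (i) of \autoref{mainter} triggers, the produced $k$-long-jump transaction in $(G,\Omega)$, combined with the parts of $W_1$ outside the nest---which still afford a large parallel linkage between the two segments carrying the transaction---yields a $\mathscr{J}_k$-minor of $G$ in the spirit of \autoref{allinone2}, giving outcome (i) of the present theorem. Otherwise, outcome (ii) provides a rendition $\rho'$ of $(G,\Omega)$ in the sphere or the projective plane, of breadth at most $k-1$, depth at most $d_{\ref{mainresult}}(k) \coloneqq f_{\ref{main1}}(k,m_1) = 2^{\mathcal{O}(k\log k)}$, and with pairwise disjoint vortex closures; promoting this rendition to a $\Sigma$-decomposition $\delta$ of $G$ by viewing its disk as a subset of $\Sigma$ yields the desired structural conclusion.

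Finally, to certify the flat wall of outcome (ii), I will identify within $W_1$ a subwall of height at least $r$ drawn entirely in the region of $\delta$ unaffected by the operations of \autoref{mainter}. Inspecting the proof of \autoref{mainter}, both \autoref{main2} and \autoref{vortexcrosscap} act strictly inside a fixed level of the nest, leaving the outer annular region between the outermost preserved nest cycle and $D(W_1)$ untouched and vortex-free in $\delta$. A suitable $r$-subwall of $W_1$ sitting in this annulus is therefore flat in $\delta$, and since it is a subwall of a tilt of a subwall of $W$, and tilts share interiors with the walls they tilt, its tangle is a truncation of $\mathcal{T}_W$. The main obstacle I anticipate is the careful bookkeeping of the second step: arranging the cylindrical rendition so that $c_0$ absorbs $G[X]$ cleanly, that $\Omega$ makes long-jump transactions in the society translate faithfully into $\mathscr{J}_k$-minors of $G$, and that an honest rectangular $r$-subwall of $W_1$ can be placed entirely within the preserved annular region so that its flatness survives the modifications carried out in \autoref{mainter}.
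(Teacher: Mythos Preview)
Your overall strategy---apply \autoref{th:FWt} to obtain a flat wall $W_1$, then feed a derived society into \autoref{mainter}---is exactly the paper's approach. However, the construction of the cylindrical rendition in your second paragraph is geometrically incoherent, and this is the crux of the proof.

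You place $\Omega$ on the pegs of $W_1$ and say the vortex $c_0$ ``absorbs $G[X]$ together with the innermost core of $W_1$''. These two pieces sit on opposite sides of the concentric layers of $W_1$: $G[X]$ attaches only to $D(W_1)$ (the outermost cycle), while the core is at the centre. They cannot be placed in a single cell lying inside a nest made of those layers. More fundamentally, if $\Omega=D(W_1)$ and the rendition disk is the flat side of $W_1$, then $G[X]$ is not drawn in that disk at all; and if you take the complementary disk on the sphere, then $W_1$'s layers are not in it and you have no nest. Either way, no cylindrical rendition of $(G,\Omega)$ with the properties you describe exists.

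The paper resolves this with an \emph{inversion}: it first sets aside the central $r$-subwall $W_2$ of $W_1$ and takes $\Omega'$ on $D(W_2)$, not on $D(W_1)$. The society is $(G',\Omega')$ where $G'$ is $G$ minus the interior of $W_2$. On the sphere, the rendition disk for $(G',\Omega')$ is everything outside the track of $D(W_2)$; the vortex $c_0$ is the complement of the original flat disk $\Delta$ and contains \emph{only} $G[X]$. The layers of $W_1$ between $D(W_2)$ and $D(W_1)$ then form the required nest around $c_0$ (see the paper's \autoref{fig:reversewall}). After \autoref{mainter} returns a rendition of $(G',\Omega')$, one glues back the untouched disk $\Delta_2$ containing $W_2$ to obtain the $\Sigma$-decomposition $\delta$ of all of $G$.

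This also fixes your final step: the flat $r$-wall in $\delta$ is simply $W_2$, which was reserved from the outset and never entered the society, so its flatness is immediate. There is no need to hunt for a subwall inside an annulus (and indeed, no need to inspect the internals of \autoref{mainter}).
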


\begin{proof}
We set $m_1=(6k+1)f_{\ref{main1}}(k,k+1)+3k(4k+9)$, $\ell=f_{\ref{main1}}(k,m_1)+f_{\ref{main1}}(k,k+1)+7$, and $d_{\ref{mainresult}}(k)=f_{\ref{main1}}(k,m_1)$. We also set $r_2$ to be the smallest odd integer bigger than $\max\{r,f_{\ref{main1}}(k,m_1)/4\}$, $r_1=2(\ell+1)+r_2$, and $f_{\ref{mainresult}}(k,r)=f_{\ref{th:FWt}}(k,r_1)$.
By \autoref{th:FWt}, given that $f_{\ref{mainresult}}(k,r)=f_{\ref{th:FWt}}(k,r_1)$, either $G$ contains $\mathscr{J}_k$ as a minor, in which case we conclude, or there is a flat wall $W_1$ in $G$, where $W_1$ is the tilt of an $r_1$-subwall of $W$.
In the latter case, let $W_2$ be the central $r_2$-subwall of $W_1$. 
Given that $W_1$ is a flat wall in $G$, there is a separation $(X,Y)$ of $G$ and a cyclic ordering $\Omega$ of the vertices of $X\cap Y$ witnessing the flatness of $W_1$.
In particular, $(G[Y],\Omega)$ has a vortex-free rendition $\rho=(\Gamma,\Dcal)$ in a disk $\Delta$.
Let $T$ be the track of the perimeter of $W_2$.
$\mathbb{S}_0\setminus T$ is the union of two disks whose closure is $\Delta_1$ and $\Delta_2$ respectively.
We assume without loss of generality that $W_2- V(D(W_2))$ is a subgraph of $\bigcup_{c\subseteq \Delta_2}\sigma(c)$.
Notice that $\pi(N(\rho)\cap \bd(\Delta_2))\subseteq V(D(W_2))$, given that $T$ is the track of $D(W_2)$ and the boundary of $\Delta_2$.
Let $G'$ be the graph obtained from $G$ after removing, for each $c \in C(\rho)$ with $c\subseteq \Delta_2$, the edges of $\sigma_{\rho}(c)$ and the vertices of $\sigma_{\rho}(c)-\pi_\rho(T\cap N(\rho))$.
Let $\Omega'$ be the cyclic ordering of the vertices of $\pi_\rho(T\cap N(\rho))$ with the cyclic order induced by the perimeter of $W_2$. 
We construct a cylindrical rendition $\rho'=(\Gamma',\Dcal',c_0)$ of $(G',\Omega')$ in the disk $\Delta_1$ as follows.
We set $\Dcal'=\{c_0\}\cup\{c\in\Dcal\mid c\subseteq\Delta_1\}$, where $c_0=\mathbb{S}_0\setminus\Delta$ with $\sigma(c_0)=G[X]$ and $\pi_{\rho'}(\tilde{c}_0)=X\cap Y$.
We define $\Gamma'$ to be obtained from the restriction of $\Gamma$ to $\Delta_1$ by drawing $G[X]-Y$ arbitrarily in $c_0$ and adding the appropriate edges with $\pi_{\rho'}(\tilde{c}_0)$.
Given that, for $i\in[2,\ell+1]$, the $i$-th layer of $W_1$ is drawn in $\Delta_1\setminus c_0$, $\rho'$ is a cylindrical rendition of $(G',\Omega')$ with a railed nest ($\Ccal,\Pcal)$ in $\rho$ around $c_0$ of order $(\ell,4r_2)$.
See \autoref{fig:reversewall} for an illustration.

\begin{figure}[h]
\center
\includegraphics[scale=0.7]{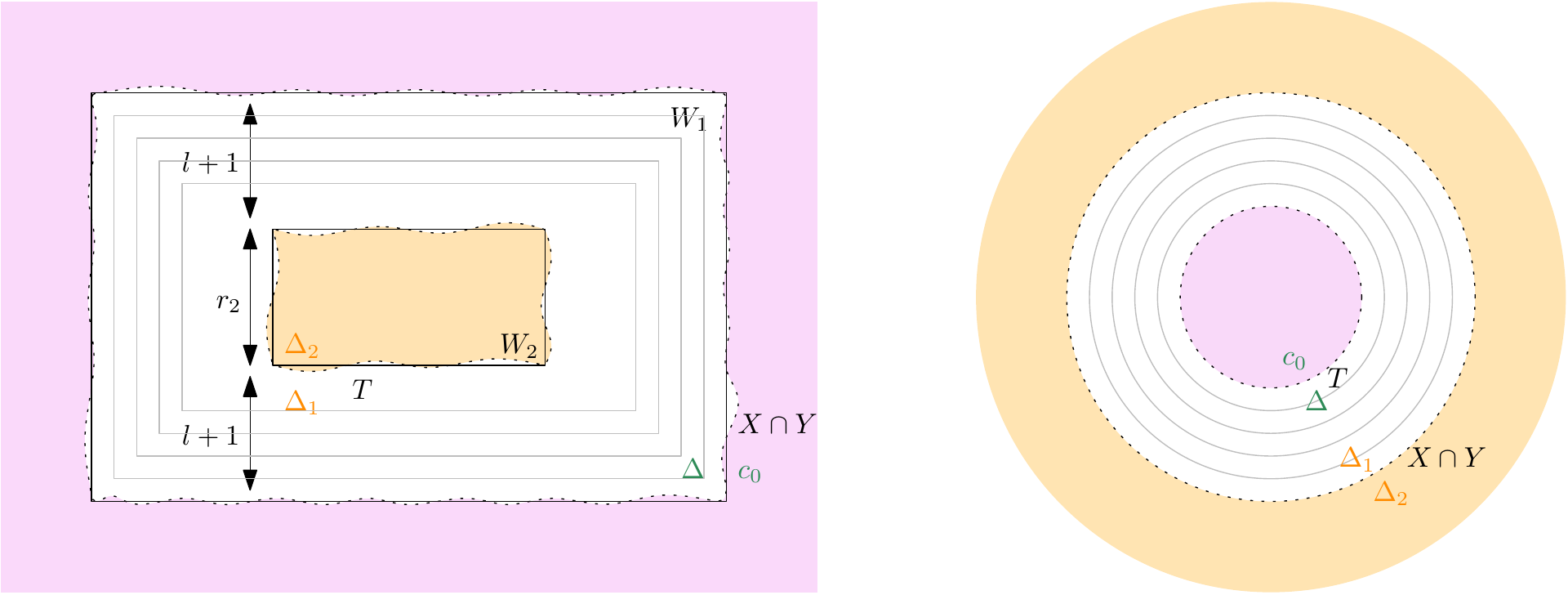}
\caption{Illustration for the proof of \autoref{mainresult}. $\rho$ is the vortex-free rendition in the disk $\Delta$ witnessing that $W_1$ is a flat wall. $\rho'$ is the cylindrical rendition composed of all but the orange disk $\Delta_2$, where the vortex $c_0$ is composed of all but the disk $\Delta$.}
\label{fig:reversewall}
\end{figure}

Given that $\ell=f_{\ref{main1}}(k,m_1)+f_{\ref{main1}}(k,k+1)+7$ and that $4r_2\ge f_{\ref{main1}}(k,m_1)$, by \autoref{mainter}, either (a) $(G',\Omega')$ contains a $k$-long-jump transaction that is coterminal with $\Pcal$ up to level $f_{\ref{main1}}(k,m_1)$, or (b) $(G',\Omega')$ has a rendition in the plane of depth at most $f_{\ref{main1}}(k,m_1)$ or in the projective plane of depth at most $f_{\ref{main1}}(k,k+1)$, and of breadth at most $k-1$ in both cases, such that the closures of the vortex cells are pairwise disjoint.
Given that $\ell-f_{\ref{main1}}(k,m_1)+1\ge k$, it implies in case (a) that $\mathscr{J}_k$ is a minor of $G$, so we can conclude.
In case (b), by combining the restriction of $\rho$ to $\Delta_2$ and $\rho'$, we get a $\Sigma$-decomposition $\delta$ of breadth at most $k-1$, where either $\Sigma$ is the sphere and $\delta$ has depth at most $f_{\ref{main1}}(k,m_1)=d_{\ref{mainresult}}(k)$, or $\Sigma$ is the projective plane and $\delta$ has depth at most $f_{\ref{main1}}(k,k+1)\le d_{\ref{mainresult}}(k)$, such that the closure of the vortex cells are pairwise disjoint.

Given that $W_2$ is a subwall of $W$, its tangle is obviously a truncation of the tangle of $W$.
Moreover, from the flatness of $W_1$, we easily derive that $W_2$ is flat in $\delta$.
Hence the result.
\end{proof}

If we want to exclude both a long-jump grid and a crosscap grid, then we get a similar result.

\begin{theorem}\label{mainresult_pl}
 There exist functions $f_{\ref{mainresult_pl}}\colon\mathbb{N}^3\to\mathbb{N}$, $d_{\ref{mainresult_pl}}\colon\mathbb{N}^2\to\mathbb{N}$ such that, for every choice of non-negative integers $k,c,r$ with odd $r\ge3$ and every graph $G$ with an $f_{\ref{mainresult_pl}}(k,r)$-wall $W$, one of the following holds
  \begin{enumerate}
    \item[\rm{(i)}] $G$ contains the long jump grid of order $k$ as a minor, or
    \item[\rm{(ii)}]  $G$ contains the crosscap grid of order $c$ as a minor, or
    \item[\rm{(iii)}] $G$ has a $\mathbb{S}_0$-decomposition $\delta$ of breadth at most $k-1$ and depth at most $d_{\ref{mainresult_pl}}(k,c)$ such that the closures of the vortex cells in $\delta$ are pairwise disjoint.
    Moreover, there exists a wall of height at least $r$ which is flat in $\delta$ and whose tangle is a truncation of the tangle of $W$. 
  \end{enumerate}
Moreover, $f_{\ref{mainresult_pl}}(k,r)=\max\{2^{\Ocal(k\log (k\cdot c))},\Ocal(\sqrt{k}(r+k))\}$ and $d_{\ref{mainresult_pl}}(k)=2^{\Ocal(k\log(k\cdot c))}$.
\end{theorem}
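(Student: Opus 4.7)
\textbf{Proof plan for \autoref{mainresult_pl}.} The strategy is to follow the proof of \autoref{mainresult} step by step, but to apply \autoref{main1} directly instead of routing through \autoref{mainter}, since the crosscap transaction that \autoref{mainter} absorbs via \autoref{vortexcrosscap} is exactly the obstruction we now wish to extract as a crosscap grid minor. Concretely, I set the parameters so that a $c$-crosscap transaction coterminal with a railed nest of sufficient order yields $\mathscr{C}_c$ as a minor, and set the height of the initial wall so that the flat-wall theorem (\autoref{th:FWt}) and \autoref{main1} both apply with room to spare.

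First I pick $d_{\ref{mainresult_pl}}(k,c)\coloneq f_{\ref{main1}}(k,c)$, then set $r_2$ to be the smallest odd integer with $r_2\ge\max\{r,f_{\ref{main1}}(k,c)/4\}$, set $\ell\coloneq f_{\ref{main1}}(k,c)+c$, and $r_1\coloneq 2(\ell+1)+r_2$, and finally let $f_{\ref{mainresult_pl}}(k,c,r)\coloneq f_{\ref{th:FWt}}(k,r_1)$. Applying \autoref{th:FWt} to $W$ either produces $\mathscr{J}_k$ as a minor (and we are done in case (i)) or provides a flat $r_1$-subwall $W_1$ of $W$. I then reproduce verbatim the construction from the proof of \autoref{mainresult}: cut $W_1$ along the perimeter track of its central $r_2$-subwall $W_2$, treat everything outside as a new vortex $c_0$, and obtain a cylindrical rendition $\rho'$ of a society $(G',\Omega')$ in a disk $\Delta_1$ whose concentric layers from $W_1\setminus W_2$ provide a railed nest $(\Ccal,\Pcal)$ around $c_0$ of order $(\ell,4r_2)$.

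Next I invoke \autoref{main1} on $(G',\Omega')$, $\rho'$, $(\Ccal,\Pcal)$ with second parameter $c$. The theorem's outcome (ii) — a $k$-long-jump transaction — is extended through $\rho$ and the nest to a $\mathscr{J}_k$ minor of $G$ exactly as in the proof of \autoref{mainresult}, using that $\ell\ge f_{\ref{main1}}(k,c)+k$. Outcome (iii) gives a rendition of $(G',\Omega')$ in the disk of breadth $\le k-1$ and depth $\le f_{\ref{main1}}(k,c)=d_{\ref{mainresult_pl}}(k,c)$ with pairwise disjoint vortex closures; gluing it with the restriction of the flat-wall rendition to the other side of the track of the perimeter of $W_2$ yields a $\mathbb{S}_0$-decomposition $\delta$ with all the properties required in (iii), and the flatness of $W_2$ in $\delta$ together with the tangle-truncation property follow as in \autoref{mainresult}.

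The only genuinely new ingredient is handling outcome (i) of \autoref{main1}, namely a $c$-crosscap transaction $\Qcal=\{Q_1,\dots,Q_c\}$ in $(G',\Omega')$ that is coterminal with $\Pcal$ up to level $f_{\ref{main1}}(k,c)$. Here I claim this forces a $\mathscr{C}_c$ minor in $G$, giving outcome (ii) of the theorem. The verification is exactly in the style of \autoref{allinone}/\autoref{allinone2}: the rails of $\Pcal$ crossed transversally by $c$ concentric cycles of $\Ccal$ form a cylindrical grid, which together with the segments of $\Omega$ closing into a cycle of length $|V(\Omega)|$ and the $c$ crosscap paths $Q_1,\dots,Q_c$ (whose endpoints on $\Omega$ are interleaved according to a crosscap pattern $a_1,\dots,a_c,b_1,\dots,b_c$) gives exactly the combinatorial structure of $\mathscr{C}_c$. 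Since we started with $\ell\ge f_{\ref{main1}}(k,c)+c$ nest cycles and at least $c$ rails that survive the coterminality, we can route the required subdivision. The main obstacle is purely a careful bookkeeping of the parameters and an explicit check that this crosscap-transaction-to-crosscap-grid embedding, analogous to \autoref{allinone2}, indeed produces $\mathscr{C}_c$ rather than a near-variant, but no new structural idea beyond those already developed in \autoref{sec:find_jump} is required.
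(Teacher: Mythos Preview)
Your approach is essentially identical to the paper's: the paper also proves \autoref{mainresult_pl} by rerunning the proof of \autoref{mainresult} verbatim, replacing the call to \autoref{mainter} with a direct call to \autoref{main1} (with second parameter $c$), and adjusting the constants accordingly. Your extra paragraph explaining how outcome~(i) of \autoref{main1} (a $c$-crosscap transaction coterminal with the rails) yields $\mathscr{C}_c$ as a minor is correct and in the spirit of \autoref{allinone}/\autoref{allinone2}; the paper leaves this implicit.

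One small bookkeeping slip: you set $\ell\coloneqq f_{\ref{main1}}(k,c)+c$ but later, when handling the long-jump outcome, you invoke ``$\ell\ge f_{\ref{main1}}(k,c)+k$''. This only holds when $c\ge k$. To cover both extractions uniformly you should take $\ell\coloneqq f_{\ref{main1}}(k,c)+\max\{k,c\}$ (or any larger value); this does not affect the stated asymptotics.
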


\begin{proof}
The proof is exactly the same as for \autoref{mainresult}, but using \autoref{main1} instead of \autoref{mainter}, and setting $\ell=d_{\ref{mainresult_pl}}(k)=f_{\ref{main1}}(k,c)$, $r_2$ to be the smallest integer bigger than $\max\{r,f_{\ref{main1}}(k,c)/4\}$, $r_1=2(\ell+1)+r_2$, and $f_{\ref{mainresult_pl}}(k,c,r)=f_{\ref{th:FWt}}(k,r_1)$.
\end{proof}

\section{The global structure theorem}\label{sec_global}

In this section, we prove our global structure theorem (\autoref{upper_b}) using the local structure theorem (\autoref{mainresult}) proved in the previous section.
For now, we know that if $G$ contains no big long-jump grid as a minor, then $G$ has a $\Sigma$-decomposition $\delta$ of small breadth and depth, where $\Sigma$ is the projective plane.

Instead of the depth, the parameter we actually care about on vortices is their \emph{width}.
A graph with a $\Sigma$-decomposition of breadth and width at most $k$ (with some additional properties) is said to be \emph{$k$-almost embeddable} in $\Sigma$.
We prove in \autoref{corr_proj_global} in \autoref{sec_global} that, if $G$ excludes a long-jump grid as minor, then it has a tree decomposition $\Tcal=(T,\beta)$ such that the torso at each node has an almost embedding on the projective plane of small breadth and width.
Imagine that all of $V(G)$, and thus the $\Sigma$-decomposition $\delta$ of small breadth and depth, is originally in the root $r$ of $T$.
Then, essentially, most of what is inside each vortex $c_v$ is pushed out to a child $t_v$ of $r$.
Then, in the root, we obtain that each vortex has now small width instead of small depth, and it remains to recurse on the children of $r$.
This is the very standard ``local to global'' approach, used for instance in \cite{kawarabayashi2020quickly,diestel2016graph,DiestelKMW12onth,ThilikosW24kill,DvorakT14list}.
However, there is a catch here.
To the authors' knowledge, the structure theorem proved in this paper is the first structure theorem with vortices but \textsl{no apices} (note that the structure theorem of \cite{RobertsonS91} for singly-crossing graphs has no apices, but also no vortices).
Indeed, usually, one proves that, the torso of $\Tcal$ at each node is ``almost embeddable'' in some surface after the removal of a bounded set of vertices (the \emph{apices}), with sometimes some additional properties.
These apices are very useful, in particular to hide among them the set $X$ of vertices in the intersection of $r$ and $t_v$.
By definition of the torso, we need to make a clique out of $X$, which we can do here safely without destroying the almost embeddability of the rest of the bag.
In our case however, we have no apices. Hence, we develop in \autoref{thm_projective_global} a new technique to go from the local to the global structure theorem \textsl{in the absence of apices}.
This technique will be explained more in detail later and, as we already mentioned, it works because the surface we consider is either the plane or the projective plane, but that it would not work on other surfaces given that we do not know how to handle cycles that do not bound a disk.

Finally, in \autoref{sec_id_global}, we easily derive from \autoref{corr_proj_global} our global structure theorem (\autoref{upper_b}, \autoref{th_proj_global}) in terms of identifications: the set of vertices in vortex cells has bounded bidimensionality, so we can identify each vortex separately to obtain an embedding in the projective plane.

\subsection{From local to global}\label{subsec:localtoglobal}

The proof of the global structure theorems, providing the upper bounds for our main results, follows a well-established strategy, that was formalized in \cite{DiestelKMW12onth}.
This strategy allows us to prove a slightly stronger ``rooted'' version of the global structure theorem (\autoref{thm_projective_global}).
The main advantage of this stronger version is that it allows for a straightforward proof by induction.

Let us first give some definition before sketching how this well-established strategy usually works.

%
%
\paragraph{Rooted tree decompositions and adhesion.}

Let $G$ be a graph.
A triple $\mathcal{T} = (T,r,\beta)$ is called a \emph{rooted tree decomposition} of $G$ if $(T,\beta)$ is a tree decomposition of $G$ and $r\in V(T)$.

For each $t\in V(T)$, we define the \emph{adhesions of $t$} as the sets in $\{ \beta(t)\cap\beta(d) \mid d\text{ adjacent with }t \}$ and the maximum size of them is called the  \emph{adhesion of $t$}.
The \emph{adhesion} of $\mathcal{T}$   is the maximum adhesion of a node of $\mathcal{T}$.


\paragraph{Linear decompositions.}
A \emph{linear decomposition} of a society $(G,\Omega)$ is a labeling $v_1,\dots, v_\ell$ of $V(\Omega)$, such that $v_1,\dots,v_n$ occur in that order on $\Omega$, and subsets $(X_1,\dots,X_\ell)$ such that 
\begin{itemize}
\item for each $i\in[\ell]$, $v_i\in X_i\subseteq V(G)$,
\item $\bigcup_{i\in[\ell]}X_i=V(G)$ and, for each $uv\in E(G)$, there exists $i\in[\ell]$ such that $\{u,v\}\in X_i$, and
\item for each $x\in V(G)$, the set $\{i\mid x\in X_i\}$ is an interval in $[\ell]$.
\end{itemize}
The \emph{width} of a linear decomposition is $\max_{i\in[\ell]}|X_i|$.

It is not hard to see that every society with a linear decomposition of adhesion at most $d$ has depth at most $2d$. 
For the converse, we have the following result.

\begin{proposition}[\!\!\cite{kawarabayashi2020quickly,RobertsonS90disj}]\label{prop_width}
Let $d\in\Nbbb$.
Every society of depth at most $d$ has a linear decomposition of adhesion at most $d$.
\end{proposition}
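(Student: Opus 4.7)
\textbf{Proof plan for \autoref{prop_width}.}

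The plan is to construct the linear decomposition explicitly from a family of vertex separators provided by Menger's theorem. First, for each $i\in[\ell-1]$, set $L_i=\{v_1,\dots,v_i\}$ and $R_i=\{v_{i+1},\dots,v_\ell\}$, so that $L_i\cup R_i=V(\Omega)$. A transaction of $(G,\Omega)$ from $L_i$ to $R_i$ is exactly a set of pairwise vertex-disjoint $L_i$--$R_i$-paths whose interiors avoid $V(\Omega)$. A simple rerouting argument (shorten any path that uses a $V(\Omega)$-vertex internally so that it terminates at that vertex) shows that the maximum number of such transaction paths equals the maximum number of vertex-disjoint $L_i$--$R_i$-paths in $G$. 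Thus, by the assumed depth bound and Menger's theorem, there is an $L_i$--$R_i$-separator $S_i\subseteq V(G)$ of size at most $d$.

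Second, I would upgrade the family $(S_i)_{i\in[\ell-1]}$ to a \emph{nested} family. For each $i$, the separator $S_i$ determines two sides $L_i^\star,R_i^\star\subseteq V(G)$ with $L_i^\star\cap R_i^\star=S_i$, $L_i\subseteq L_i^\star$ and $R_i\subseteq R_i^\star$ (where ``undecided'' components of $G-S_i$ are, say, placed on the left side). A standard uncrossing argument based on submodularity of the vertex-boundary function shows that, whenever $i<j$, one may simultaneously replace $L_i^\star,L_j^\star$ by $L_i^\star\cap L_j^\star$ and $L_i^\star\cup L_j^\star$ without increasing the sizes of the corresponding separators; iterating this, I may assume $L_1^\star\subseteq L_2^\star\subseteq\cdots\subseteq L_{\ell-1}^\star$, still with $|S_i|\le d$.

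Third, I define the bags. Set $S_0=S_\ell=\emptyset$ and $L_0^\star=\emptyset$, $L_\ell^\star=V(G)$. For every $v\in V(G)$, set $h(v)=\min\{i\in[\ell]:v\in L_i^\star\}$, and let $H_i=\{v\in V(G):h(v)=i\}$. The bag at position $i$ is
\[
X_i \;=\; S_{i-1}\cup S_i\cup H_i.
\]
It is then routine to verify the three axioms of a linear decomposition: $v_i\in X_i$ (either $h(v_i)=i$, or $v_i\in S_{i-1}$); the sets $X_i$ cover $V(G)$ (since $h$ takes values in $[\ell]$); and for every vertex $v$ the nestedness of the $L_i^\star$'s forces $\{i:v\in X_i\}$ to be the single interval $[h(v),p_+(v)+1]$, where $p_+(v)=\max\{i:v\in R_i^\star\}$. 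The adhesion bound is immediate: since $H_i\cap H_{i+1}=\emptyset$ and, using nestedness, both $S_{i-1}\cap S_{i+1}$ and every element of $S_{i-1}\cup H_i$ that lies in $X_{i+1}$ must actually lie in $S_i$, we obtain $X_i\cap X_{i+1}\subseteq S_i$, so $|X_i\cap X_{i+1}|\le d$.

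The main obstacle is the edge-covering axiom, and it is where the nested choice of separators pays off. Let $uv\in E(G)$ and assume $h(u)\le h(v)$. For every index $i$ with $h(u)\le i<h(v)$ we have $u\in L_i^\star$ and $v\notin L_i^\star$; since any edge of $G$ that crosses $L_i^\star\setminus S_i$ and $V(G)\setminus L_i^\star$ is impossible, one of its endpoints must lie in $S_i$, and that endpoint cannot be $v$ (otherwise $h(v)\le i$). Hence $u\in S_i$ for all $i\in[h(u),h(v)-1]$, which places $u$ in every bag $X_{h(u)},\dots,X_{h(v)}$; in particular both $u$ and $v$ appear in $X_{h(v)}$, covering the edge. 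This completes the construction and yields a linear decomposition of adhesion at most $d$.
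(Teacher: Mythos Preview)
The paper does not supply its own proof of this proposition; it is quoted from \cite{kawarabayashi2020quickly,RobertsonS90disj}. Your argument is precisely the classical one appearing in those references: apply Menger across each complementary pair of segments $(L_i,R_i)$ to obtain separators of size at most $d$, uncross them, and read off the bags from the resulting laminar family.

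Two small points deserve tightening. First, for the submodular uncrossing to keep each separator of size at most $d$ you should begin with \emph{minimum} $L_i$--$R_i$ separators: then the uncrossed separation $(L_i^\star\cap L_j^\star,R_i^\star\cup R_j^\star)$ still separates $L_i$ from $R_i$, so its order is at least the minimum, and submodularity forces both new orders to equal the old minima. Second, your verification of the interval axiom and of $X_i\cap X_{i+1}\subseteq S_i$ silently uses that the $R_i^\star$'s are nested too (e.g.\ to get $S_{i-1}\cap S_{i+1}\subseteq S_i$ you need $R_{i+1}^\star\subseteq R_i^\star$). This does come out of the same uncrossing---replacing the pair by $(L_i^\star\cap L_j^\star,R_i^\star\cup R_j^\star)$ and $(L_i^\star\cup L_j^\star,R_i^\star\cap R_j^\star)$ nests both sides simultaneously---but it should be stated. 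With these clarifications the proof is complete and matches the standard argument.
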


\paragraph{Almost embeddings.}
Let $G$ be a graph and $\Sigma$ be a surface.
An \emph{almost embedding} of $G$ in $\Sigma$ of \emph{breadth $b$} and \emph{width $d$} is a $\Sigma$-decomposition $\delta$ of $G$ such that there is a set $C_0\subseteq C(\delta)$ of size at most $b$ containing all vortex cells of $\delta$ such that:
\begin{itemize}
\item no vertex of $G$ is drawn in the interior of a cell of $C(\delta)\setminus C_0$ and, 
\item for each vortex cell, there exists a linear decomposition of its vortex society of width at most $d$ (and for each non-vortex cell $c\in C_0$, $|V(\sigma(c))|\le d$).
\end{itemize}
$C_0$ is called the \emph{vortex set} of $\delta$.

\paragraph{Well-linked sets.}
Let $\alpha \in [2/3, 1)$.
Moreover, let $G$ be a graph and $X \subseteq V(G)$ be a vertex set. 
A set $S \subseteq V(G)$ is said to be an \emph{$\alpha$-balanced separator} for $X$ if for every component $C$ of $G - S$ it holds that $|V(C) \cap X| \leq \alpha|X|$. 
Let $k$ be a non-negative integer.
We say that $X$ is a \emph{$(k, \alpha)$-well-linked set} of $G$ if there is no $\alpha$-balanced separator of size at most $k$ for $X$ in $G$.
\medskip

Given a $(k, \alpha)$-well-linked set $X$ of $G$ we define $$\mathcal{T}_{X} \coloneqq \{ (A, B) \in \mathcal{S}_{k+1}(G) \mid |X \cap B| > \alpha|X| \}.$$ 
It is not hard to see that $\mathcal{T}_{S}$ is a tangle of order $k+1$ in $G$.

We need an algorithmic way to find, given a well-linked set, a large wall whose tangle is a truncation of the tangle of the well-linked set.
This is done in \cite{ThilikosW2025excluding} by algorithmatising a proof of Kawarabayashi, Wollan, and Thomas found in \cite{kawarabayashi2020quickly}.

\begin{proposition}[Thilikos and Wiederrecht \cite{ThilikosW2025excluding} (see Theorem 3.4.)]\label{thm:algogrid}
Let $k\geq 3$ be an integer and $\alpha\in [2/3,1)$.
There exist universal constants $c_1,c_2\in\mathbb{N}\setminus\{ 0\}$, and an algorithm that, given a graph $G$ and a $(c_1k^{20},\alpha)$-well-linked set $X\subseteq V(G)$ computes in time $2^{\mathcal{O}(k^{c_2})}|V(G)|^2|E(G)|\log(|V(G)|)$ a $k$-wall $W\subseteq G$ such that $\mathcal{T}_W$ is a truncation of $\mathcal{T}_X$.
\end{proposition}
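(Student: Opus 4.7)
The plan is to combine the tangle induced by the well-linked set $X$ with an algorithmic polynomial grid minor theorem to extract a $k$-wall whose tangle is a truncation of $\mathcal{T}_X$. First, I would formally verify that $\mathcal{T}_X$ is a tangle of order $c_1k^{20}+1$. Given any $(A,B) \in \mathcal{S}_{c_1k^{20}+1}(G)$, since $|A\cap B| \leq c_1k^{20}$ and $X$ is $(c_1k^{20},\alpha)$-well-linked, $A\cap B$ is not an $\alpha$-balanced separator for $X$, so some component $C$ of $G-(A\cap B)$ contains more than $\alpha|X|\geq 2|X|/3$ vertices of $X$; since $\alpha > 1/2$ this component is unique and lies entirely in exactly one of $A\setminus B$ or $B\setminus A$, thereby determining the orientation. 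For the tangle axiom, if $(A_i,B_i) \in \mathcal{T}_X$ for $i\in[3]$ satisfied $A_1\cup A_2\cup A_3 = V(G)$, then $|X| \leq \sum_i|X\cap A_i| < 3(1-\alpha)|X| \leq |X|$ (using $\alpha\geq 2/3$), a contradiction.

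Next, I would appeal to an algorithmic polynomial grid minor theorem in the spirit of Chuzhoy-Tan and its tangle-aware variants. The polynomial grid theorem (\cref{@reestablish}) guarantees that treewidth $\Omega(k^{10})$ suffices for a $k$-wall, and the algorithmic version produces such a wall in time $\mathrm{poly}(|V(G)|)\cdot 2^{\mathrm{poly}(k)}$. A tangle of order $\Omega(k^{20})$ localizes a high-treewidth ``core'' of $G$ inside the tangle; the exponent $20$ in $c_1k^{20}$ is chosen to absorb the $k^{10}$ bound from the polynomial grid theorem together with the additional slack needed in the alignment step below.

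The main obstacle is to guarantee that the extracted wall $W$ satisfies $\mathcal{T}_W\subseteq \mathcal{T}_X$, i.e., that $\mathcal{T}_W$ is a truncation of $\mathcal{T}_X$. For this, I would arrange that the wall is computed \emph{inside} the tangle: for every separation $(A,B)$ of order less than $k$, the rows and columns of $W$ lie on the $X$-heavy side. The key compatibility observation is that if a row and a column of $W$ both lie in $B\setminus A$, then their union is a connected subgraph entirely inside a single component of $G-(A\cap B)$; by well-linkedness of $X$ with parameter much bigger than $k$, this component must also contain more than $\alpha|X|$ vertices of $X$, so $(A,B)\in\mathcal{T}_W$ implies $(A,B)\in \mathcal{T}_X$. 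Implementing this tangle-faithful wall extraction is precisely what Thilikos and Wiederrecht carry out by algorithmatising the Kawarabayashi-Thomas-Wollan argument; the runtime $2^{O(k^{c_2})}|V(G)|^2|E(G)|\log|V(G)|$ combines the polynomial-time grid extraction with the logarithmic-depth recursion used to locate the tangle-compatible subregion, while the $2^{O(k^{c_2})}$ factor accounts for the combinatorial bookkeeping that enforces the truncation property on all separations of order less than $k$.
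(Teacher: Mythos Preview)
This proposition is not proved in the paper; it is quoted as an external result from Thilikos and Wiederrecht \cite{ThilikosW2025excluding} (their Theorem~3.4), with the paper only remarking that it is obtained ``by algorithmatising a proof of Kawarabayashi, Wollan, and Thomas found in \cite{kawarabayashi2020quickly}.'' There is therefore no proof in the present paper to compare your proposal against.

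That said, your sketch is a reasonable outline of the strategy one expects such a proof to follow, and your verification that $\mathcal{T}_X$ is a tangle is correct. However, your treatment of the truncation step contains a genuine gap. You write that if a row and a column of $W$ lie in $B\setminus A$, then ``by well-linkedness of $X$ with parameter much bigger than $k$, this component must also contain more than $\alpha|X|$ vertices of $X$.'' This does not follow: well-linkedness of $X$ says nothing about where an \emph{arbitrary} wall sits relative to $X$. One could easily have a $k$-wall entirely contained in a component of $G-(A\cap B)$ that carries almost none of $X$. The whole point is that the wall must be \emph{constructed} so as to live on the $X$-heavy side of every small separation, and this is precisely the nontrivial content of the tangle-faithful extraction. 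You acknowledge this in your final paragraph by deferring to the cited work, but as written your argument before that point does not establish the truncation property; it presupposes it.
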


The rooted version of the structure theorem is usually stated along the lines of: \emph{Let $G,H$ be graphs and $X\subseteq V(G)$ be a set of small size. Then either $G$ contains $H$ as a minor, or there is a rooted tree decomposition $(T,\beta,r)$ of $G$ such that the torso at each node has an almost embedding in $\Sigma$ after removing a small apex set $A$, and such that $X\subseteq \beta(r)$.}

Obviously, if $X=\emptyset$, then this is the global structure theorem.
$X$ essentially corresponds to vertices inherited from a parent bag in the induction, from which we will make a clique to obtain the torso.
The proof of such a result goes as follows.
If there is a balanced separator $S$ in $G$ for $X$, then we inductively find a rooted tree decomposition $\Tcal_C=(T_C,\beta_C,t_C)$ for each connected component $C$ of $G-S$ with $X_C=X\cap C\cup S$.
Then $\Tcal=(T,\beta,r)$ is the tree decomposition where the children of $r$ are the nodes $t_C$, $\beta(r)=S\cup X$, and the restriction of $\Tcal$ to the subtree rooted at $t_C$ is $\Tcal_C$.
Otherwise, $X$ is a well-linked set from which we can derive a wall (\autoref{thm:algogrid}) whose tangle agrees with the tangle of $X$.
Then, we can apply the local structure theorem (here \autoref{mainresult}) to find an apex set $A$ (for us $A=\emptyset$) such that $G-A$ has a $\Sigma$-decomposition $\delta$ of small breadth and depth at most $d$.
For each non-vortex cell $c$, we recurse on $G_c^1=\sigma(c)$ with $X_c^1$ that is the union of $X\cap\sigma(c)$ and the boundary $A_c^1$ of the cell, to find a tree decomposition $\Tcal_c^1$.
For each vortex cell $c$, we fix a linear decomposition $(Y_1,\dots,Y_\ell)$ of adhesion at most its depth.
Then, for each $i\in[\ell]$, we recurse on $G_c^i=G[Y_i]$ with $X_c^i$ being the union of $X\cap Y_i$ and the set $A_c^i$ composed of its adhesion with its neighbors as well as the $i$th vertex of the cyclic ordering~$\Omega_c$, and we obtain a tree decomposition $\Tcal_c^i$.
Then, we put all those tree decompositions together, that we attach to the root $r$ with $\beta(r)$ that is the union of $A$, $X$, and the sets $A_c^i$, to obtain a tree decomposition $\Tcal=(T,\beta,r)$ of $G$.
It remains to prove that the torso of $\Tcal$ at each node $t$ has an almost embedding in $\Sigma$ after removing a small apex set $A$ of small depth and small width.
This is immediate for $t\in V(T)$ that is not a child of $r$.
For $r$, torsifying corresponds to making a clique out of each $X_c^1$.
For each $c,i$, let us add $X_c^i\setminus A_c^i$ to the apex set.
The size of the apex set increases by at most $|X|$.
Now, it is enough to prove that making a clique out of each $A_c^i$ does not destroy the almost embedding.
For non-vortex cells, we have $|A_c^1|\le3$, so making a clique trivially does not destroy the almost embedding.
For vortex cells, $A_c^i$ is now a bag of the linear decomposition, so the width does not increase after the torsification.
Additionally, $A_c^i$ has size at most $2d+1$, so the width of a vortex is at most $2d+1$.
Hence, we have an almost embedding of $\beta(r)$ of small breadth and width at most $2d+1$ after removing the apex set $A+X$.
Finally, for each child $t_c^i$ of $r$, after removing an apex set $B_c^i$, we already have an almost embedding of the torso of $\Tcal_c^i$ at $t_c^i$, where $\Tcal_c^i$ is a tree decomposition of $G_c^i$.
To make it an almost embedding of the torso of $\Tcal$ at $t_c^i$, we need to add all edges between the vertices of $X_c^i$ (which is the adhesion of $\beta(r)$ and $\beta(t_c^i)$), which might destroy the almost embedding.
To handle this problem, it is enough to remove $X_c^i$ from the almost embedding, that is to add $X_c^i$ to the apex set.
Hence, we have an almost embedding of the torso of $\Tcal$ at $t_c^i$ of small breadth and width at most $2d+1$ after removing the apex set $B_c^i\cup X_c^i$, which concludes the proof.

In our case, we cannot add $X_c^i$ to the apex set.
That is, we need to argue that, even if we add edges between the vertices of $X_c^i$, \textsl{it does not destroy} the almost embedding too much, in such a way that, by creating new vortices, there is still an almost embedding of bounded breadth and bounded depth.
Before going further, let us define a stellation in a graph and the torso of a set in a graph (to distinguish with that torso of a tree decomposition at a node).

\paragraph{Stellation and torso.}
Let $G$ be a graph and $\Scal$ be a collection of subsets of $V(G)$.
We denote by $G_\Scal^\star$ the graph obtained from $G$ by adding, for each $S\in\Scal$, a vertex $v_S$ adjacent to the vertices in $S$.
The vertices $v_S$ are called \emph{stellation vertices}.
Let $X\subseteq V(G)$. 
The \emph{torso} of $X$ in $G$, denoted by $\torso_G(X)$ is the graph derived from the induced subgraph $G[X]$ by turning $N_G(V(C))$ into a clique for each connected component $C$ of $G-X$.
We denote by $G_\Scal^\circ$ the graph obtained from $G$ by turning each $S\in\Scal$ into a clique.
In other words, $G_\Scal^\circ$ is the torso of $V(G)$ in $G_\Scal^\star$.

Note that we now have two definitions of torso.
One is torso of a set $X\subseteq V(G)$ in  a graph $G$ defined just above.
The other one (see \autoref{subsec_frame}) is, given a tree decomposition $\Tcal=(T,\beta)$ of $G$, the torso of $\Tcal$ at node $t$.
This second notion is stronger, in the sense that the torso of $\Tcal$ at node $t$ is the graph obtained from $G[\beta(t)]$ by making a clique out of each set $\beta(t)\cap\beta(t')$ with $t'$ adjacent to $t$, while the torso of $\beta(t)$ in $G$ makes a clique out of subsets of the sets $\beta(t) \cap \beta(t')$.
However, we can prove the following.

\begin{lemma}\label{lem_2torso}
If a graph $G$ has treewidth $k$, then there is always a tree decomposition $\Tcal=(T,\beta)$ of $G$ of width $k$ such that the torso of $\Tcal$ at node $t$ is exactly the torso of $\beta(t)$ in $G$.

Moreover generally, let $\Hcal$ be a hereditary\footnote{A graph class $\Hcal$ is \emph{hereditary} if, for each $G\in\Hcal$ and each $v\in V(G)$, $G-v\in\Hcal$.}graph class and suppose that $G$ admits a tree decomposition $\Tcal'=(T',\beta')$ of width $k$ such that the torso $\Tcal'$ at each node is in $\Hcal$. 
Then, there is a tree decomposition $\Tcal=(T,\beta)$ of $G$ of width $k$ such that the torso $G_t$ of $\Tcal$ at node $t$ is exactly the torso of $\beta(t)$ in $G$ and that $G_t\in\Hcal$.
\end{lemma}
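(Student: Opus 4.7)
The plan is to start from $\Tcal'=(T',\beta')$ and iteratively apply two normalizing operations that shrink bags only, hence keep the width at most $k$, until the torso of the resulting $\Tcal$ at every node coincides with the torso of the corresponding bag in $G$. For every tree edge $tt'$ let $U_{t\to t'}$ denote the union of the bags in the $t'$-side subtree of $T'-t$ minus $\beta'(t)$; the target is to reach a tree decomposition in which every edge $tt'$ satisfies (a) every vertex of $\beta(t)\cap\beta(t')$ has a $G$-neighbor in $U_{t\to t'}$, and (b) $G[U_{t\to t'}]$ is connected.

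The first operation, \emph{trimming}, enforces (a): whenever some $v\in\beta'(t)\cap\beta'(t')$ has no $G$-neighbor inside $U_{t\to t'}$, remove $v$ from every bag in the $t'$-side subtree. This preserves the tree-decomposition axioms ($v$'s bag-subtree remains connected as it still contains $t$, and every $G$-edge incident to $v$ is still covered because its other endpoint lies in $\beta'(t)$ or outside $U_{t\to t'}$) and preserves the width. The second operation, \emph{splitting}, enforces (b): when $G[U_{t\to t'}]$ has connected components $C_1,\dots,C_p$ with $p\ge 2$, replace the $t'$-side subtree by $p$ parallel copies attached to $t$; in the $j$-th copy, the bag at each node $s$ becomes $(\beta'(s)\cap V(C_j))\cup(\beta'(s)\cap A_j)$, where $A_j\coloneqq N_G(V(C_j))\subseteq\beta'(t)\cap\beta'(t')$. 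Since $V(C_j)$ and $A_j$ are disjoint subsets of $\beta'(s)$, the width is preserved; no $G$-edge runs between distinct $C_j$'s so every edge remains covered; and the new adhesion between $t$ and the root of the $j$-th copy is exactly $A_j$. Iterating the two operations terminates because the lexicographic pair (total bag size, number of edges $tt'$ with $G[U_{t\to t'}]$ disconnected) strictly decreases at every step.

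At termination, (a) and (b) hold on every edge $tt'\in E(T)$, so for each $t\in V(T)$ the map $t'\mapsto U_{t\to t'}$ is a bijection between the neighbors of $t$ in $T$ and the connected components of $G-\beta(t)$, with $\beta(t)\cap\beta(t')=N_G(V(U_{t\to t'}))$; the cliques contributed to $G_t$ by the adhesions therefore match those contributed to $\torso_G(\beta(t))$ by the component-neighborhoods, proving $G_t=\torso_G(\beta(t))$. For the hereditary case, both operations only restrict each bag to a subset of an original bag, so the final bag $\beta(t)$ at any node is contained in some $\beta'(t^*)$ and the torso $G_t$ arises from $G_{t^*}'\in\Hcal$ by successive vertex deletions; closure of $\Hcal$ under vertex deletion then yields $G_t\in\Hcal$. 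The main technical obstacle is the correctness of splitting: one must verify that, after trimming has been exhausted, the set $A_j$ is indeed present in the restricted bag at the root of the $j$-th copy, and that no $G$-edge incident to $V(C_j)$ is lost, which is exactly where the witness-neighbor property granted by trimming is essential.
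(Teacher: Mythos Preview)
Your argument has a genuine gap in the hereditary-preservation step. You claim that each final torso $G_t$ is obtained from the corresponding original torso $G'_{t^\star}$ by vertex deletions. But trimming a vertex $v$ from the $t'$-side of an edge $tt'$ shrinks the adhesion $\beta(t)\cap\beta(t')$ while leaving $\beta(t)$ itself unchanged; the torso at $t$ therefore loses clique-edges incident to $v$ without losing the vertex $v$. The new torso at $t$ is a \emph{non-induced} subgraph of the old one, and a hereditary class need not be closed under edge deletion.

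Concretely: let $G$ be the path on $u,a,b,v$ (edges $ua,ab,bv$), let $(T',\beta')$ have two adjacent nodes with $\beta'(t)=\{u,v\}$ and $\beta'(s)=\{u,v,a,b\}$, and let $\Hcal$ be the class of cographs. The original torsos are $K_2$ (at $t$) and $C_4$ (at $s$), both cographs. Since $U_{s\to t}=\emptyset$, your trimming empties $\beta'(t)$; the output is the single bag $\{u,v,a,b\}$ whose torso is $G=P_4$, not a cograph. So your procedure can output torsos outside $\Hcal$. (The paper's own argument is also incomplete on this very point: it only splits when several components of $G-\beta'(t)$ meet one neighbouring bag, and never treats the case of an adhesion vertex with no neighbour on the far side; on this same example the paper's loop halts without resolving the mismatch at $s$.)

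There is also a gap in the termination argument: splitting at $tt'$ can strictly increase the total bag size, since a vertex of the old adhesion may lie in $A_j=N_G(V(C_j))$ for several $j$ and thus be duplicated across the parallel copies. Already for the star with centre $c$, leaves $\ell_1,\ell_2$, and bags $\{c\}$, $\{c,\ell_1,\ell_2\}$, the total goes from $4$ to $5$ after one split. So the lexicographic potential you propose does not decrease at each step.
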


\begin{proof}
While there is $t\in V(T')$ such that the torso of $\Tcal'$ at node $t$ is different from the torso of $\beta'(t)$ in $G$, we modify the tree decomposition $\Tcal'$ as follows.
If the two notion of torso differ for $t$, then this means that there is a neighbor $t'$ of $t$ such that several connected components $C_1,\dots,C_\ell$ of $G-\beta'(t)$, with $\ell\ge2$, contain vertices of $\beta'(t')$, meaning that we add more edges in the torso of $\Tcal$ at $t$ (where we make a clique out of all of $\beta'(t)\cap\beta'(t')$) than in the torso of $\beta'(t)$ in $G$ (where we only make a clique out of each $N_G(C_i)\subseteq \beta'(t)\cap\beta'(t')$).
We may modify the tree decomposition by removing the subtree $T_{t'}'$ rooted at $t'$ (where we assume that the root is $t$), and adding instead $\ell$ copies $T_{t'}^1,\dots,T_{t'}^\ell$ with $\beta'(u_i)=\beta'(u)\cap V(C_i)$ for $u\in V(T_{t'})$ and $u_i$ its copy in $T_{t'}^i$, and joining $t$ to each copy of $t'$.
In this new tree decomposition, for both notions of the torso, we make a clique out of the entirety of $\beta'(t)\cap\beta'(t_i)$ for $i\in\ell$.
For each $u\in V(T'_t)$, if the torso $G_u$ at $u$ is in $\Hcal$, then the torso at $u_i$ is equal to $G_u\cap V(C_i)$, which is also in $\Hcal$ given that $\Hcal$ is hereditary.
Hence, by proceeding as such by induction, we obtain the desired tree decomposition.
\end{proof}

\begin{figure}[h]
  \begin{center}
  \scalebox{0.65}{\includegraphics{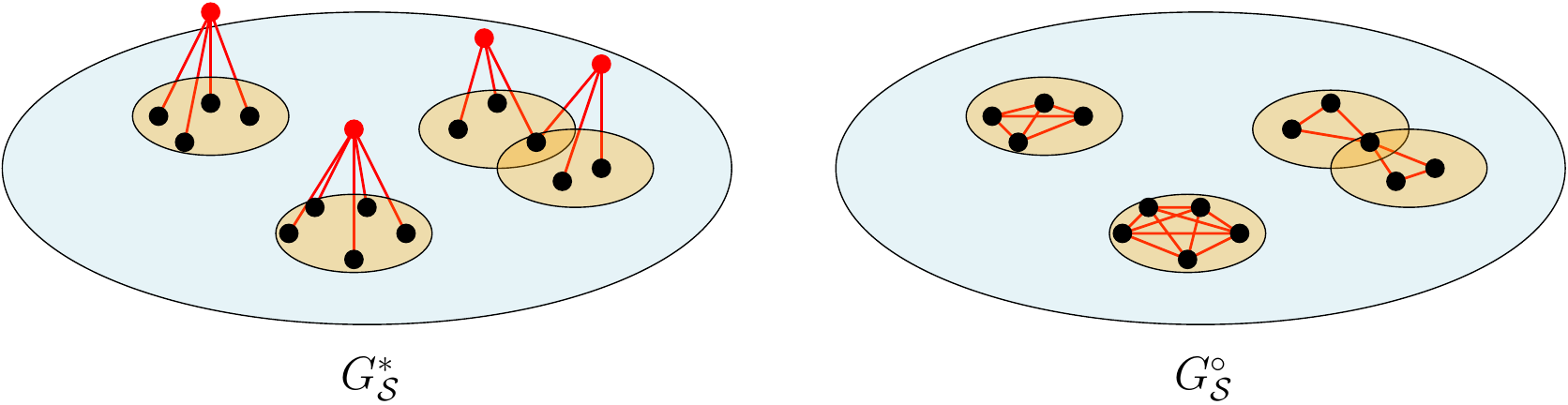}}
  \end{center}
    \caption{Illustration of $G_\Scal^\star$ and $G_\Scal^\circ$, where the sets in $\Scal$ are depicted in orange, and the new vertices and edges are depicted in red.}
  \label{fig:stelltorso}
\end{figure}

\smallskip
\autoref{thm_projective_global} is the rooted version of our global theorem and it essentially goes as follows:
\begin{quote}
Let $G,H$ be graphs ($H$ is a long-jump grid), $X\subseteq V(G)$ be a set of small size, and $\Scal$ be a collection of subsets of $X$. Then either $G_\Scal^\star$ contains $H$ as a minor, or there is a rooted tree decomposition $\Tcal=(T,\beta,r)$ of $G_\Scal^\circ$ such that, for each node $t$ of $T$, the torso of $\beta(t)$ in $G_\Scal^\circ$ has an almost embedding in $\Sigma$ and such that $X\subseteq \beta(r)$.
\end{quote}
If $X=\emptyset$, the above gives our global structure theorem by \autoref{lem_2torso}.
Let us give intuition on $X$ and $\Scal$.
As previously, $X$ corresponds to vertices that are present in the parent bag inherited from the induction.
Given that we now work with the torso of $\beta(t)$ and not the torso of $\Tcal$ at $t$, we will not make a clique out of all of $X$.
Instead, we make a clique out of a subset of $X$ if it neighbors some connected component with respect to the parent.
This is what $\Scal$ represents.

Let us sketch the proof of our result.
As previously, if $X$ has a balanced separator, we easily conclude, so we may assume that there is a wall in $G$, and thus in $G_\Scal^\star$.
We now apply the local structure theorem on $G_\Scal^\star$ and conclude that, if $G_\Scal^\star$ is $H$-minor-free, then there is a $\Sigma$-decomposition of $G_\Scal^\star$ of small breadth and depth at most $d$.
For each cell $c$, we define $G_c^i$, $A_c^i$, and $X_c^i$ as previously.
We additionally define $\Scal_c^i$ to be the collection of all sets $N_{G_\Scal^\star}(C)$ such that $C$ is a connected component of $G_\Scal^\star-G_c^i$.
Hence, we can recurse on $G_c^i$, $X_c^i$, and $S_c^i$.
$(G_c^i)_{\Scal_c^i}^\star$ is a minor of $G_\Scal^i$ (obtained by contracting each connected component $C$ of $G_\Scal^\star-G_c^i$ to a point, so if $H$ is a minor of $(G_c^i)_{\Scal_c^i}^\star$, it is also a minor of $G_\Scal^\star$.
So we can assume that we found a tree decomposition $\Tcal_c^i$ respecting the criteria.
Then we define the tree decomposition $\Tcal=(T,\beta,r)$ just as before.
It remains to prove that there is an almost embedding of the torso of $\beta(t)$ in $G_\Scal^\circ$ of small breadth and depth for each $t\in V(T)$.
This is immediate for $t\ne r$ by induction.
The difficult part is to prove so for $r$.
We currently have an almost embedding in $G_\Scal^\star$ of small breadth and width at most $2d+1$ (each cell containing vertices of $X$ is added to the vortex set).
To obtain the torso of $\beta(r)$ in $G_\Scal^\circ$, we need to make a clique out of each $X_c^i$, as well as each $S\in\Scal$.
Making a clique out of each $X_c^i$ does not destroy the embedding.
Moreover, for each $S$, if the corresponding stellation vertex $v_S$ is in the interior of a cell $c$, then its neighborhood $S$ is in $\sigma(c)$, so making a clique out of $S$ does not destroy the embedding.
The problem is when $v_S$ is on the boundary of at least two cells.
Then, it has neighbors in different cells between which we need to add an edge, hence destroying the almost embedding.
The idea is to create a new vortex containing all cells with $v_S$ on its boundary (whose number can be assumed to be bounded by $|X|$).
The problem is that another stellation vertex $v_{S'}$ could possibly be on the boundary of this new vertex.
So, we need to add all cells with $v_{S'}$ on its boundary to the new vortex, and so on.
Two sets $S,S'\in\Scal$ whose stellation vertices are on the boundary of the same cell are said to be adjacent, which allows us to talk about connected components of $\Scal$.
What we will show (using topological arguments) is that we can create a vortex $c_Y$ for each connected component $Y$ of $\Scal$ such that, if $S\in Y$, then $v_S$ is in the interior of $c_Y$, and thus $S$ is in $\sigma(c_Y)$, allowing us to make a clique out of $S$ safely.
These new vortices can be chosen to be distinct and to have width bounded by a function of $|X|$ and $d$.
Hence, we find the desired almost embedding: instead of increasing the size of the apex set, what we grow is the width and the breadth of the embedding.

\smallskip
Before proving the rooted version of the global structure theorem, let us prove the following results, that essentially says that the number of stellation vertices (here $|S|$) is bounded by $|X|$.

Given a graph $G$ embedded in a surface $\Sigma$ such that the faces of $G$ are disks, the \emph{degree} of a face of $G$ is the number of edges bounding the face (counted with multiplicity).

\begin{lemma}\label{lem_propproj}
Let $G$ be a graph embeddable in the projective plane, and $(X,S)$ be a partition of $V(G)$ such that $S$ is an independent set and $|X|\ge1$.
Then $|\{N_G(s)\cap X\mid s\in S\}|\le 6|X|-4$.
\end{lemma}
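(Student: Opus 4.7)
The plan is to pick, for each distinct set $N$ in $\{N_G(s)\cap X\mid s\in S\}$, a single representative $s_N\in S$, collect them into $S'\subseteq S$, and let $H$ be the bipartite subgraph of $G$ on vertex set $X\cup S'$ whose edge set is the set of all $G$-edges with one endpoint in $X$ and one in $S'$. Then $|\{N_G(s)\cap X\mid s\in S\}|=|S'|$, the graph $H$ is bipartite with parts $X$ and $S'$, it inherits an embedding in the projective plane from $G$, and the vertices of $S'$ have pairwise distinct neighborhoods in $X$. I partition $S'$ into the four classes $S'_0,S'_1,S'_2,S'_{\geq 3}$ according to whether $\deg_H(s)$ equals $0,1,2$ or is at least $3$, and bound each class separately.

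The classes $S'_0$ and $S'_1$ are immediate: $|S'_0|\leq 1$, since only one representative can have empty neighborhood, and $|S'_1|\leq|X|$, since the singleton subsets of $X$ consumed by the representatives in $S'_1$ are pairwise distinct. For $|S'_2|$, I suppress each $s\in S'_2$ in the projective-plane embedding of $H$: topologically, $s$ and its two incident edges are replaced by a single arc joining its two neighbors in $X$. The resulting embedded graph on vertex set $X$ is simple, because bipartiteness of $H$ prevents loops and the distinct-neighborhoods hypothesis prevents multi-edges. The Euler inequality $n-m+f\geq 1$ for the projective plane combined with $2m\geq 3f$ then yields $m\leq 3n-3$, and hence $|S'_2|\leq 3|X|-3$.

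For $S'_{\geq 3}$, I consider the bipartite subgraph $H'\coloneqq H[X\cup S'_{\geq 3}]$, which inherits a projective-plane embedding from $H$. Since $H'$ is bipartite, every face of the embedding has length at least $4$, so $n-m+f\geq 1$ combined with $2m\geq 4f$ gives $|E(H')|\leq 2(|X|+|S'_{\geq 3}|)-2$. Each $s\in S'_{\geq 3}$ contributes at least $3$ edges to $H'$, so $3|S'_{\geq 3}|\leq 2|X|+2|S'_{\geq 3}|-2$, and hence $|S'_{\geq 3}|\leq 2|X|-2$. Summing the four bounds yields $|S'|\leq 1+|X|+(3|X|-3)+(2|X|-2)=6|X|-4$, as claimed.

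The main delicacy is to verify that the graph obtained after suppressing all vertices of $S'_2$ really is a simple graph embedded in the projective plane on $X$: this simultaneously uses bipartiteness of $H$ (so suppression never creates a loop) and the distinct-neighborhoods hypothesis (so no two suppressions create the same edge). Once this is in place, the argument is just a routine application of the two standard Euler bounds $3n-3$ and $2n-2$ for simple, respectively bipartite, graphs embedded in the projective plane.
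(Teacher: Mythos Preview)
Your proof is correct and follows the same overall scheme as the paper's: reduce to representatives with pairwise distinct neighbourhoods, split them by degree into classes $d_0,d_1,d_2,d_{\ge 3}$, bound each class, and sum to $6|X|-4$. The only substantive difference is in how the class $d_{\ge 3}$ is handled. The paper constructs a single auxiliary simple graph $G'$ on vertex set $X$ by replacing each degree-$2$ vertex of $S$ by an edge and each degree-$\ge 3$ vertex by a cycle through its neighbours (in their cyclic order in the embedding); one application of Euler's formula to $G'$ then yields both $d_2\le |E(G')|\le 3|X|-3$ and $d_{\ge 3}\le |F(G')|\le 2|X|-2$ at once. You instead keep the bipartite graph $H'=H[X\cup S'_{\ge 3}]$ and combine the bipartite Euler bound $|E(H')|\le 2|V(H')|-2$ with the degree-sum inequality $3|S'_{\ge 3}|\le |E(H')|$. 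Your route is a touch more elementary, avoiding the cycle-replacement construction and the implicit argument that distinct vertices of $S'_{\ge 3}$ yield distinct faces of $G'$; the paper's route has the appeal of reading both non-trivial bounds off a single auxiliary embedding. One small point: the Euler inequalities you invoke strictly require $|X|\ge 3$ (and the paper treats $|X|\in\{1,2\}$ separately), but your individual bounds on $|S'_2|$ and $|S'_{\ge 3}|$ are easily verified directly in those cases, so no harm is done.
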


\begin{proof}
Without loss of generality, we can assume that the vertices in $S$ have pairwise distinct neighborhood.
Then it is enough to prove that $|S|\le 6|X|-5$.
Note that, if $|X|=1$, then $|S|\le 1=6|X|-5$, and, if $|X|=2$, then $|S|\le 3\le6|X|-5$.
Hence, we can assume that $|X|\ge 3$.
If $G$ is not planar (resp. planar), then there is an embedding of $G$ in the projective plane (resp. the sphere), such that each face of $G$ is a disk.

Let $d_i$ be the number of vertices in $S$ of degree $i$, for $i\in[0,2]$, and $d_{\ge 3}$ be the number of vertices of $S$ of degree at least three.
Given that $S$ is an independent set and that the vertices of $S$ have distinct neighborhoods, we have $d_0\le1$ and $d_1\le|X|$.
Let $G'$ be the simple graph obtained from $G$ by removing each vertex of $S$ and
\begin{itemize}
\item for each vertex of $S$ of degree two, adding an edge between its neighbors, and
\item for each vertex of $S$ of degree three or more with neighbors $x_1,\dots, x_\ell$ appearing in this order in the embedding, adding an edge between $x_1$ and $x_{i+1}$ for $i\in[\ell]$ (modulo $\ell$).
\end{itemize}
$G'$ is a graph with vertex set $X$ that is embeddable in the projective plane.
In particular, $d_2$ is bounded by the number of edges of $G'$, and $d_{\ge 3}$ is bounded by the number of faces of $G'$.

Let $v$ be the number of vertices, $e$ be the number of edges, and $f$ be the number of faces of $G'$.
Let us compute the sum $s$ of the degree of the faces of $G'$.
Given that each face has degree at most three when $|V(G')|\ge3$, we have $s\ge 3f$. 
Additionally, given that each edge bounds two faces (or maybe once, but then it counts twice), we have $s=2e$.
Therefore, $3f\le 2e$.
By Euler's formula for the projective plane (resp. the sphere) \cite{diestel2016graph}, we also have $v+f-e=1$ (resp. $v+f-e=2$).
Therefore, we deduce that $e\le 3v-3$ and $f\le 2v-2$.
Therefore, $|S|= d_0+d_1+d_2+d_{\ge 3}\le 1+|X|+(3|X|-3)+(2|X|-2)\le 6|X|-4.$
\end{proof}

We finally prove the rooted version of our global structure theorem.

\begin{theorem}\label{thm_projective_global}
There exists functions $f_{\ref{thm_projective_global}},b_{\ref{thm_projective_global}},w_{\ref{thm_projective_global}}\colon\mathbb{N}\to\mathbb{N}$ such that for every positive integer $k$, every graph $G$, every set $X\subseteq V(G)$ of size at most $3f_{\ref{thm_projective_global}}(k)+1$, and every collection $\Scal$ of subsets of $X$,  either
\begin{enumerate}
    \item $G_\Scal^\star$ contains the long jump grid of order $k$ as a minor, or
    \item there exists a rooted tree decomposition $(T,r,\beta)$ of $G_\Scal^\circ$ where
    \begin{enumerate}
      \item $X\subseteq\beta(r)$,
      \item $(T,\beta)$ has adhesion at most $3f_{\ref{thm_projective_global}}(k)+1$, and
      \item for every $t\in V(T)$, $\torso_{G_\Scal^\circ}(\beta(t))$ has an almost embedding in the projective plane of breadth at most $b_{\ref{thm_projective_global}}(k)$ and width at most $w_{\ref{thm_projective_global}}(k)$.
    \end{enumerate}
\end{enumerate}
Moreover, $f_{\ref{thm_projective_global}}(k),w_{\ref{thm_projective_global}}(k),b_{\ref{thm_projective_global}}(k)=2^{\Ocal(k\log k)}$.
\end{theorem}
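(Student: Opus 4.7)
The plan is to proceed by induction on $|V(G) \setminus X|$, distinguishing two cases at each step according to whether $X$ admits a small $\alpha$-balanced separator in $G$ with $\alpha = 2/3$. If such a separator $S$ exists (of size at most $f_{\ref{thm_projective_global}}(k)$), then for each connected component $C$ of $G - S$ we apply induction to the triple $(G[V(C) \cup S],\ (X \cap V(C)) \cup S,\ \Scal_C)$, where $\Scal_C$ records the traces on $(X \cap V(C)) \cup S$ of the attachment sets of $G_\Scal^\star$-components lying outside $V(C) \cup S$. The resulting rooted tree decompositions are then attached as children of a new root $r$ with bag $X \cup S$. The threshold $3f_{\ref{thm_projective_global}}(k)+1$ on $|X|$, combined with $\alpha = 2/3$, is chosen precisely so that each sub-instance satisfies $|X_C| \le 3f_{\ref{thm_projective_global}}(k)+1$ and the induction hypothesis applies.

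If no such separator exists, then $X$ is well-linked, and \autoref{thm:algogrid} produces a large wall $W \subseteq G$—hence also in $G_\Scal^\star$—whose tangle is a truncation of $\Tcal_X$. Applying \autoref{mainresult} to $G_\Scal^\star$, we either obtain the long-jump minor (first outcome, and we are done), or a $\Sigma$-decomposition $\delta$ of $G_\Scal^\star$ of breadth at most $k-1$ and depth at most $d_{\ref{mainresult}}(k)$, with $\Sigma$ the sphere or the projective plane, containing a large flat wall whose tangle refines $\Tcal_X$. We now carve out the recursion pieces from $\delta$: for each non-vortex cell $c$ we set $G_c^1 \coloneqq \sigma(c)$ and $A_c^1 \coloneqq \pi_\delta(\tilde{c})$, while for each vortex cell $c$ we fix via \autoref{prop_width} a linear decomposition $(Y_1, \ldots, Y_\ell)$ of its vortex society of width at most $2 d_{\ref{mainresult}}(k)+1$ and set $G_c^i \coloneqq G[Y_i]$ with $A_c^i$ the union of its two consecutive adhesions and the $i$-th boundary vertex. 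Setting $X_c^i \coloneqq (X \cap V(G_c^i)) \cup A_c^i$ and letting $\Scal_c^i$ be the traces on $X_c^i$ of attachment sets of components of $G_\Scal^\star \setminus V(G_c^i)$, we apply induction to each $(G_c^i, X_c^i, \Scal_c^i)$; this is legitimate since $(G_c^i)_{\Scal_c^i}^\star$ is a minor of $G_\Scal^\star$ (so the long-jump minor propagates back) and since the flat wall guarantees that each sub-instance is strictly smaller. The outputs are assembled by making their roots children of a fresh root $r$ with $\beta(r) \coloneqq X \cup \cupall_{c,i} A_c^i$.

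The hard part is verifying that $\torso_{G_\Scal^\circ}(\beta(r))$ admits an almost embedding in the projective plane of controlled breadth and width. Starting from $\delta$ restricted to $\beta(r)$, adding cliques on each $A_c^i$ is harmless: a non-vortex $A_c^1$ has at most three vertices, while a vortex $A_c^i$ sits inside two consecutive bags of the linear decomposition, absorbing into the existing vortex society without growing its width beyond $2 d_{\ref{mainresult}}(k)+2$. The genuinely delicate step is cliquing the sets $S \in \Scal$: the stellation vertex $v_S \in V(G_\Scal^\star)$ may be drawn on a node shared by several cells of $\delta$, so the induced clique $S$ creates chords that destroy the embedding. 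To repair this, declare two members $S,S' \in \Scal$ \emph{linked} when $v_S$ and $v_{S'}$ lie on the closure of a common cell of $\delta$, take the transitive closure to obtain connected components $Y_1, \ldots, Y_q$ of $\Scal$, and for each $Y_j$ merge all affected cells into a single new vortex cell $c_{Y_j}$. The key topological input, exclusive to the sphere and the projective plane, is that the removal of any simple closed curve leaves at least one disk component; this lets us realise the $c_{Y_j}$ as pairwise disjoint $\delta$-aligned disks whose interiors absorb the entire neighbourhoods of the associated stellation vertices, so cliquing the $S \in Y_j$ becomes safe inside $c_{Y_j}$. Quantitative control comes from \autoref{lem_propproj}, which bounds the number of distinct $S \in \Scal$ by $6|X|-4$; combined with the depth bound on the original vortices this yields new breadth $b_{\ref{thm_projective_global}}(k)$ and new width $w_{\ref{thm_projective_global}}(k)$, both $2^{\Ocal(k \log k)}$ as required, closing the induction.
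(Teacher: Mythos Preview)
Your proposal follows essentially the same strategy as the paper's proof: induction on $|V(G)\setminus X|$, the balanced-separator versus well-linked dichotomy, applying \autoref{mainresult} to $G_\Scal^\star$ in the well-linked case, recursing on the cells of the resulting $\Sigma$-decomposition, and handling the stellation vertices at the root via the connected-component merging into new vortices, justified by the disk property peculiar to the sphere and projective plane and bounded through \autoref{lem_propproj}. The paper includes a couple of auxiliary reductions you omit (padding $X$ when $|X|<3f_{\ref{thm_projective_global}}(k)+1$, a preliminary case when $G-X'$ is disconnected for some $X'\subseteq X$, and stripping the stellation vertices $V_\Scal$ from the sub-instances before recursing), and it spells out the tangle argument showing $|X_c^i|\le 3f_{\ref{thm_projective_global}}(k)$ in the well-linked branch, but these are routine refinements of the plan you describe.
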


\begin{proof}
Let $c_1$ be the constant from \autoref{thm:algogrid}.
We set
\begin{align*}
f_{\ref{thm_projective_global}}(k) &\coloneqq c_1\cdot\big(f_{\ref{mainresult}}(k,3)\big)^{20},\\
w_{\ref{thm_projective_global}}(k) &\coloneqq(24f_{\ref{thm_projective_global}}(k)+4)\cdot(2d_{\ref{mainresult}}(k)+1),\text{ and}\\
b_{\ref{thm_projective_global}}(k) &\coloneqq 3f_{\ref{thm_projective_global}}(k)+k.
\end{align*}

We prove the claim by induction on $|V(G)\setminus X|$.

\paragraph{If $G$ is small.}
In case $|V(G)|\leq 3f_{\ref{thm_projective_global}}(k)+1$, we may select $T$ to be the tree on one vertex, say $t$, and set $\beta(t)\coloneqq V(G)$.
The resulting tree decomposition $(T,\beta)$ of $G$, which is also a tree decomposition of $G_\Scal^\circ$, trivially meets the requirements of our assertion given that $3f_{\ref{thm_projective_global}}(k)+1\le w_{\ref{thm_projective_global}}(k)$.
Hence, we may assume that $|V(G)|>3f_{\ref{thm_projective_global}}(k)+1$.

\paragraph{If $X$ is small.}
Moreover, if $|X|\leq 3f_{\ref{thm_projective_global}}(k)$, we may now select an arbitrary vertex $v\in V(G)\setminus X$ and set $X'\coloneqq X\cup \{ v\}$.
It follows that $|X'|\leq 3f_{\ref{thm_projective_global}}(k)+1$ and $|V(G)\setminus X|>|V(G)\setminus X'|$.
Hence, by applying the induction hypothesis to $G$, $X'$, and $\Scal$, we obtain either the long jump grid of order $k$ as a minor of $G_\Scal^\star$, and are therefore done, or we obtain a rooted tree decomposition $(T,r,\beta)$ of $G_\Scal^\circ$ meeting requirements \textit{(a)}, \textit{(b)}, and \textit{(c)}.
In particular, we have $X\subsetneq X'\subseteq \beta(r)$ and are therefore done.
Thus, we may also assume that $|X|=3f_{\ref{thm_projective_global}}(k)+1$.

\paragraph{If there is $X'\subseteq X$ such that $G-X'$ is disconnected.}
Let $X' \subseteq X$ and let $H_1,\dots,H_{\ell}$ be the components of $G-X'$.
If $\ell\ge 2$, then we may define a rooted tree decomposition $(T,r,\beta)$ as follows.
For each $i\in[\ell]$, let $G_i$ be the graph induced by $X'\cup V(H_i)$.
Let $\Scal_i$ be the collection of all the sets $N_{G_\Scal^\star}(V(C))$ such that $C$ is a connected component of $G_\Scal^\star-V(G_i)$. 
Note that $(G_i)_{\Scal_i}^\star$ is a minor of $G_\Scal^\star$, given that it is obtained from $G_\Scal^\star$ by contracting each component of $G_\Scal^\star-V(G_i)$ to a single vertex.
Moreover, for each $S\in\Scal_i$, $S\subseteq X\cap V(G_i)$.
Given that $|V(G_i)\setminus X|<|V(G)\setminus X|$, by applying the induction hypothesis to $G_i$, $X\cap V(G_i)$, and $\Scal_i$, we obtain either that $\mathscr{J}_k$ is a minor of $(G_i)_{\Scal_i}^\star$, and thus of $G_\Scal^\star$, and therefore we are done, or we obtain a rooted tree decomposition $(T_i,r_i,\beta_i)$ of $(G_i)_{\Scal_i}^\circ$ meeting requirements \textit{(a)}, \textit{(b)}, and \textit{(c)}.
Let $T$ be the tree with root $r$ obtained from the disjoint union of the trees $T_i$ and the vertex $r$ by joining $r_i$ to $r$ for every $i\in[\ell]$.
We set $\beta(r)\coloneqq X$, and, for each $i\in[\ell]$ and $t\in V(T_i)$, we set $\beta(t)\coloneqq \beta_i(t)$.
Requirements \textit{(a)} and \textit{(b)} are trivially satisfied.
Moreover, given that $3f_{\ref{thm_projective_global}}(k)+1\le w_{\ref{thm_projective_global}}(k)$, $\torso_{G_\Scal^\circ}(\beta(r))$ has an almost embedding in the plane composed of a single vortex of width at most $w_{\ref{thm_projective_global}}(k)$, hence meeting requirement \textit{(c)}.
Finally, notice that, for $t\in V(T_i)$, the torso of $\beta(t)$ in $G_\Scal^\circ$ is equal to the torso of $\beta_i(t)$ in $(G_i)_{\Scal_i}^\circ,$ hence meeting requirement \textit{(c)}.
Therefore $(T,r,\beta)$ is indeed a rooted tree decomposition of $G_\Scal^\circ$ as required by the assertion.
So we may assume that $G-X'$ is connected for any $X'\subseteq X$. In particular, $G$ is connected.\smallskip


\paragraph{If there exists a balanced separator for $X$.}
Suppose there exists a $2/3$-balanced separator $S$ of size at most $f_{\ref{thm_projective_global}}(k)$ for $X$.

In this case let $H_1,\dots,H_{\ell}$ be the components of $G-S$ and, for each $i\in[\ell]$, let $X_i'\coloneqq (X\cap V(H_i))\cup S$.
It follows that 
\begin{align*}
|X_i'| &\leq \frac{2}{3}(3f_{\ref{thm_projective_global}}(k)+1)+|S|\\
&\leq 2f_{\ref{thm_projective_global}}(k)+f_{\ref{thm_projective_global}}(k)\\
&\leq 3f_{\ref{thm_projective_global}}(k).
\end{align*}
Now, for each $i\in[\ell]$, if $|X_i'\cup V(H_i)|\leq d_{\ref{thm_projective_global}}(k)$, we set $X_i\coloneqq X_i'\cup V(H_i)$ and we say that $H_i$ is a \emph{leaf}.

Otherwise, we select an arbitrary vertex $v_i\in V(H_i)\setminus X_i'$ and set $X_i\coloneqq X_i'\cup \{ v_i\}$.
Observe that $|X_i|\leq 3f_{\ref{thm_projective_global}}(k)+1$ in this case.
For every $i\in[\ell]$ for which $H_i$ is \textit{not} a leaf, 
let $G_i=G[X_i\cup V(H_i)]$.
Let also $\Scal_i$ be the collection of all the sets $N_{G_\Scal^\star}(V(C))$ such that $C$ is a connected component of $G_\Scal^\star-V(G_i)$.
This implies that $(G_i)_{\Scal_i}^\star$ can be obtained from $G_\Scal^\star$ by contracting each component of $G_\Scal^\star-V(G_i)$ to a single vertex, and thus, $(G_i)_{\Scal_i}^\star$ is a minor of $G_\Scal^\star$.
Notice that the elements in $\Scal_i$ are subsets of $X_i$.
We have that $|V(H_i)\setminus X_i|<|V(G)\setminus X|$ and thus, 
by applying the induction hypothesis to $G_i$, $X_i$, and $\Scal_i$, we obtain either that $\mathscr{J}_k$ is a minor of $(G_i)_{\Scal_i}^\star$, and thus of $G_\Scal^\star$, and therefore we are done, or we obtain a rooted tree decomposition $(T_i,r_i,\beta_i)$ of $(G_i)_{\Scal_i}^\circ$ meeting requirements \textit{(a)}, \textit{(b)}, and \textit{(c)}.
For every $i\in[\ell]$ where $H_i$ is a leaf we define such a rooted tree decomposition $(T_i,r_i,\beta_i)$ by setting $T_i$ to be the tree with a single vertex $r_i$ and $\beta_i(r_i)\coloneqq X_i$.

Now let us define a rooted tree decomposition $(T,r,\beta)$ for $G$ as follows.
Let $T$ be the tree with root $r$ obtained by taking the disjoint union of the vertex $r$ and the trees $T_i$, and joining $r_i$ to $r$ for all $i\in[\ell]$.
For every $i\in[\ell]$ and $t\in V(T_i)$, we set $\beta(t)\coloneqq \beta_i(t)$, and we set $\beta(r)\coloneqq X\cup S$.
Note that $|\beta(r)|\le 4f_{\ref{thm_projective_global}}(k)+1\le w_{\ref{thm_projective_global}}(k)$, and that, for each $i\in[\ell]$ such that $H_i$ is \emph{not} a leaf, for each $t\in V(T_i)$, $\torso_{G_\Scal^\circ}(\beta(t))=\torso_{(G_i)_{\Scal_i}^\circ}(\beta_i(t))$.
Then it is straightforward to check that $(T,r,\beta)$ is indeed a rooted tree decomposition of $G_\Scal^\circ$ as required by the assertion.
Hence, we may assume that there is no $2/3$-balanced separator of size at most $f_{\ref{thm_projective_global}}(k)$ for $X$.

\paragraph{Local structure theorem.} 
If there is no $2/3$-balanced separator of size at most $f_{\ref{thm_projective_global}}(k)$ for $X$, then $X$ is $(f_{\ref{thm_projective_global}}(k),2/3)$-well-linked in $G$, and thus also in $G_\Scal^\star$.
By \autoref{thm:algogrid}, given that $f_{\ref{thm_projective_global}}(k) \coloneqq c_1\cdot\big(f_{\ref{mainresult}}(k,3)\big)^{20}$, it implies that $G_\Scal^\star$ contains a $f_{\ref{mainresult}}(k,3)$-wall $W$ such that $\Tcal_W$ is a truncation of $\Tcal_X$.
Then, by \autoref{mainresult}, this implies that, either $\mathscr{J}_k$ is a minor of $G_\Scal^\star$, in which case we conclude, or 
$G_\Scal^\star$ has a $\Sigma$-decomposition $\delta=(\Gamma,\Dcal)$ of breadth at most $k-1$ and depth at most $d_{\ref{mainresult}}(k)$, where $\Sigma$ is the projective plane, and there exists a wall $W'$ of height at least three which is flat in $\delta$ and whose tangle is a truncation of the tangle of $W$.
Additionally, the closure of the vortex cells of $\delta$ are pairwise disjoint.
Let $\Ccal_v$ be the set of vortex cells of $\delta$.

Without loss of generality, we may assume that, for each cell $c\in C(\delta)\setminus \Ccal_v$, and each distinct $u,v\in\pi_{\delta}(\tilde{c})$, there is a path from $u$ to $v$ whose internal vertices are in $\sigma_{\delta}(c)-\pi_{\delta}(\tilde{c})$.
If not, then, if $|\tilde{c}|=2$, then $\sigma_{\delta}(c)$ is disconnected and $c$ can be divided into two cells $c_u$ and $c_v$ such that $\pi_{\delta}(\tilde{c}_u)=\{u\}$ and $\pi_{\delta}(\tilde{c}_v)=\{v\}$.
And if $\pi_{\delta}(\tilde{c})=\{u,v,w\}$, then $w$ is a cut vertex of $\sigma_{\delta}(c)$ and $c$ can be divided into two cells $c_u$ and $c_v$ such that $\pi_{\delta}(\tilde{c}_u)=\{u,w\}$ and $\pi_{\delta}(\tilde{c}_v)=\{v,w\}$.

Without loss of generality, we may also assume that, for each ground vertex $v\in\ground(\delta)$, there is $c\in C(\delta)\setminus \Ccal_v$ such that $v\in\pi_{\delta}(\tilde{c})$ and $N_{\sigma_\delta(c)}(v) \neq\emptyset$.
Indeed, suppose that is not the case. Then, given that $G$ is connected, for any $c\in C(\delta)$ such that $v\in\pi_{\delta}(\tilde{c})$, $c$ is a vortex cell.
Given that the closure of the vortex cells are pairwise disjoint, $v$ is thus drawn on the boundary of a unique cell $c\in\Ccal_v$. Then, we can draw $v$ in the interior of $c$ instead of its boundary: It does not increase the depth of the vortex.

For each $\Rcal\subseteq\Scal$, let $V_\Rcal$ be the set of all stellation vertices $v_S$ such that $S\in\Rcal$.
Let $\Scal^{\sf g}$ be the set of $S\in\Scal$ such that $v_S\in\ground(\delta)$.
Let $X_1\coloneqq X\cap\ground(\delta)$, $X_2$ be the set of vertices in $X$ drawn in the interior of non-vortex cells, and $X_3$ be the set of vertices in $X$ drawn in the interior of vortex cells.
Notice that $(X_1,X_2,X_3)$ is a partition of $X$.
We have the following bound of the number of stellation vertices on the ground.

\begin{claim}\label{cl_Sbound}
If $\Scal^{\sf g}\ne\emptyset$, then $|\Scal^{\sf g}|\le 6|X_1\cup X_2|-5$.
\end{claim}
\begin{cproof}
Let $\Ccal_X$ be the set of cells $c\in C(\delta)\setminus \Ccal_v$ such that $X_2\cap \sigma_{\delta}(c)\ne\emptyset$.
Let $F$ be the graph with vertex set the union of $X_1$, $V_{\Scal^{\sf g}}$ and a vertex $v_c$ for each $c\in C_X$, and edge set the edges of $G_\Scal^\star$ with both endpoints in $X_1 \cup V_{\Scal^{\sf g}}$ and, for each $c\in C_{X}$ and each $v_S\in\pi_{\delta}(\tilde{c})\cap V_{\Scal^{\sf g}}$, an edge between $v_c$ and $v_S$.
By construction, $F$ is embeddable in the projective plane.
Let $X'\coloneqq X_1\cup \{v_c\mid c\in C_X\}$. 
$V_{\Scal^{\sf g}}$ is an independent set of $G_\Scal^\star$ and thus of $F$.
Additionally, by the assumptions on $\delta$, for each $v_S\in V_{\Scal^{\sf g}}$, there is $c\in C(\delta)\setminus \Ccal_v$ such that $v_S\in\pi_{\delta}(\tilde{c})$ with $N_{\sigma_\delta(c)}(v_S)\ne\emptyset$.
Therefore, there is $x\in (X_1\cup X_2)\cap S$ such that $x\in V(\sigma_\delta(c))$.
Thus, $N_F(v_S)\cap X'\ne\emptyset$.
We thus conclude, by \autoref{lem_propproj} applied to $V_{\Scal^{\sf g}}$ and $X'$, that $|\Scal^{\sf g}|=|V_{\Scal^{\sf g}}|\le 6|X'|-5\le 6|X_1\cup X_2|-5$.
\end{cproof}

\paragraph{Construction of the tree decomposition.}

For every non-vortex cell $c\in C(\delta)\setminus \Ccal_v$, we set $G_c^1=\sigma_\delta(c)$, $A_c^1=\pi_{\delta}(\tilde{c})$, and $X_c^1=(X\cap V(\sigma_\delta(c)))\cup A_c^1$. 
$(A,B)=(V(\sigma_\delta(c)),V(G_\Scal^\star)\setminus(V(\sigma_\delta(c))\setminus A_c^1))$ is a separation of $G_\Scal^\star$ of order $|A_c^1|\le3\le f_{\ref{thm_projective_global}}(k)$.
Given that $W'$ is flat in $\delta$, and in particular that at most one branch vertex of $W'$ is in $\sigma_\delta(c)-\pi_{\delta}(\tilde{c})$, it follows that $A\setminus B$ does not contain a row and a column of $W'$, and thus $(A,B)\in\Tcal_{W'}\subseteq\Tcal_X$.
Therefore, $|X_c^1|\le|X\cap V(\sigma_\delta(c))|+3\le\frac{1}{3}|X|+3\le 3f_{\ref{thm_projective_global}}(k)$.

For every vortex cell $c\in\Ccal_v$, let $(Y_1,\dots, Y_\ell)$ be a linear decomposition of the vortex society $(\sigma_\delta(c),\Omega_c)$ of $c$ of adhesion at most $d_{\ref{mainresult}}(k)$ (it exists by \autoref{prop_width}), with the vertices of $V(\Omega_c)$ labeled $v_1,\dots,v_\ell$.
For $i\in[\ell]$, let $A_c^i=(Y_i\cap Y_{i-1})\cup(Y_i\cap Y_{i+1})\cup\{v_i\}$ where $Y_0=Y_{\ell+1}=\emptyset$.
For each $i\in[\ell]$, we set $G_c^i$ to be the graph induced by $Y_i$ and $X_c^i=(X\cap Y_i)\cup A_c^i$.
$(A,B)=(Y_i,V(G_\Scal^\star)\setminus(Y_i\setminus A_c^i))$ is a separation of $G_\Scal^\star$ of order $|A_c^i|\le2d_{\ref{mainresult}}(k)+1\le f_{\ref{thm_projective_global}}(k)$.
Given that $W'$ is flat in $\delta$, and in particular that there is no vortex in the disk where the interior of $W'$ is drawn, it follows that $A \setminus B$ does not contain a row and a column of $W'$, and thus $(A,B) \in \Tcal_{W'} \subseteq \Tcal_X$.
Therefore, $|X\cap V(\sigma_\delta(c))|\le\frac{1}{3}|X|$, and thus that $|X_c^i|\le\frac{1}{3}|X|+2d_{\ref{mainresult}}(k)+1\le3f_{\ref{thm_projective_global}}(k)$.

For every cell $c\in C(\delta)$ and each $i$, we set $H_c^i\coloneqq G_c^i-V_\Scal$ and $Z_c^i\coloneqq X_c^i-V_\Scal$.
We also set $S_c^i$ to be the collection of all the sets $N_{G_\Scal^\star}(V(C))$ such that $C$ is a connected component of $G_\Scal^\star-V(H_c^i)$.
This implies that $(H_c^i)_{\Scal_c^i}^\star$ can be obtained from $G_\Scal^\star$ by contracting each component of $G_\Scal^\star-V(H_c^i)$ to a single point, and thus, $(H_c^i)_{\Scal_c^i}^\star$ is a minor of $G_\Scal^\star$.
Notice that, for each $R\in \Scal_c^i$, $R\subseteq Z_c^i$.

We define a rooted tree decomposition $(T,r,\beta)$ of $G$ as follows.
We define $\beta(r)$ to be the union of the sets $Z_c^i$, for all cells $c\in C(\delta)$ and all $i$.
For every cell $c\in C(\delta)$ and each $i$ such that $V(H_c^i)\setminus Z_c^i\ne\emptyset$, there exits $v_c^i\in V(H_c^i)\setminus Z_c^i$.
Let $Z_c'^i=Z_c^i\cup\{v_c^i\}$.
$|Z_c'^i|\le3f_{\ref{thm_projective_global}}(k)+1$ and $|V(H_c^i)\setminus Z_c'^i|<|V(G)\setminus X|$, so we can apply the induction hypothesis to $H_c^i$, $Z_c'^i$, and $\Scal_c^i$.
We obtain either that $\mathscr{J}_k$ is a minor of $(H_c^i)_{\Scal_c^i}^\star$, and thus of $G_\Scal^\star$, and therefore we are done, or we obtain a rooted tree decomposition $(T_c^i,r_c^i,\beta_c^i)$ of $(H_c^i)_{\Scal_c^i}^\star$ meeting requirements \textit{(a)}, \textit{(b)}, and \textit{(c)}.
$T$ is obtain from the union of $r$ and the trees $T_c^i$ by joining $r$ to each $r_c^i$.
For each $t\in V(T_c^i)$, we set $\beta(t)=\beta_c^i(t)$.

It is straightforward to check that $(T,r,\beta)$ meet requirements \textit{(a)} and \textit{(b)}, and that each $t\in V(T)\setminus \{r\}$ meet requirement \textit{(c)} (given that, for $t\in T_c^i$, $\torso_{G_\Scal^\circ}(\beta(t))=\torso_{(G_c^i)_{\Scal_c^i}^\circ}(\beta_c^i(t))$).
It remains to prove that $\torso_{G_\Scal^\circ}(\beta(r))$ has an almost embedding in the projective plane of breadth at most $b_{\ref{thm_projective_global}}(k)$ and width at most $w_{\ref{thm_projective_global}}(k)$.
The difficulty is that we need to make a clique out of $S$ for each $S\in\Scal$.
If all vertices of $S$ are drawn in the same cell, that is, if $v_S$ is drawn in the interior of a cell, then it does not change much.
However, if $v_S$ is a ground vertex, then making a clique out of $S$ destroys the $\Sigma$-decomposition.
Therefore, we need to find a $\Sigma$-decomposition $\delta^\star$ such that each stellation vertex is drawn in the interior of a cell.

\paragraph{If there is no stellation vertex on the ground.}
If $\Scal^{\sf g}=\emptyset$, then $\ground(\delta)\subseteq\beta(r)$ and, for each $S\in\Scal$, there is a cell $c\in C(\delta)$ such that $S\subseteq V(\sigma_\delta(c))$.
Then, we can construct a $\Sigma$-decomposition $\delta^\star$ of $\torso_{G_\Scal^\circ}(\beta(r))$ from the $\Sigma$-decomposition $\delta$ of $G_\Scal^\star$ by keeping only the vertices of $\beta(r)$ and making a clique out of each $Z_c^i$ for $c\in C(\delta)=C(\delta^\star)$ and out of each $S$ for each $S\in\Scal$.
Let $C_0$ be the union of $\Ccal_v$ and the cells of $C(\delta^\star)\setminus \Ccal_v$ that have a vertex drawn on their interior.
Notice that, by the definition of the sets $X_c^1$, the only vertices that can be drawn in the interior of non-vortex cells are vertices of $X_2$.
Therefore, $|C_0|\le k-1+|X_2|\le b_{\ref{thm_projective_global}}(k)$.
For each $c\in C_0\setminus \Ccal_v$, $|V(\sigma_{\delta^\star}(c))|\le 3+|X_2|$.
Let $c\in \Ccal_v$.
Remember that the vortex society $(\sigma_\delta(c),\Omega_c)$ of $c$ in $\delta$ has a linear decomposition $(Y_1,\dots, Y_\ell)$ of adhesion at most $d_{\ref{mainresult}}(k)$.
For $i\in[\ell]$, let $Y_i'\coloneqq (Y_i\cup X)\cap V(\sigma_{\delta^\star}(c))=A_c^i\cup(X_3\cap V(\sigma_{\delta^\star}(c)))$.
Then $(Y_1',\dots, Y_\ell')$ is a linear decomposition of the vortex society $(\sigma_{\delta^\star}(c),\Omega_c)$ in $\delta^\star$ of width at most $2d_{\ref{mainresult}}(k)+1+|X_3|\le w_{\ref{thm_projective_global}}(k)$.
Therefore, $\delta^\star$ is a $\Sigma$-embedding with vortex set $C_0$ of breadth at most $b_{\ref{thm_projective_global}}(k)$ and width at most $w_{\ref{thm_projective_global}}(k)$.
Hence, we now assume that $\Scal^{\sf g}\ne\emptyset$.

\paragraph{If all ground vertices are stellation vertices.}
If $V_{\Scal^{\sf g}}=\ground(\delta)$, then we can define $\delta^\star$ to be the $\Sigma$-decomposition of $\torso_{G_\Scal^\circ}(\beta(r))$ composed of a unique vortex cell $c$ 
with $V(\sigma_{\delta^\star}(c))=\beta(r)$ and one arbitrary vertex on the boundary.
Let us compute the width of the vortex society of $c$.
Given that all vertices in $\ground(\delta)$ are stellation vertices, $(X_2,X_3)$ is a partition of $X$.
Note that $\sum_{c\in \Ccal_v}|V(\Omega_c)|\le |\ground(\delta)|=|\Scal^{\sf g}|$. 
Remember that the boundary of vortex cells of $\delta$ are pairwise disjoint.
This implies that there is an injection between the bags in the linear decomposition $(Y_1,\dots,Y_\ell)$ of vortex cells in $\delta$ and the vertices in $\Scal^{\sf g}$.
Also, for each bag $Y_i$, $|Y_i\cap \beta(r)|=|(Y_i\cap Y_{i-1})\cup(Y_i\cap Y_{i+1})|\le2d_{\ref{mainresult}}(k)$.
Therefore,
\begin{align*}
|\beta(r)| &\leq |X_2|+|X_3|+\sum_{c\in\Ccal_v}|V(\Omega_c)|\cdot 2d_{\ref{mainresult}}(k)\\
&\leq |X|+|\Scal^{\sf g}|\cdot 2d_{\ref{mainresult}}(k)\\
&\le 3f_{\ref{thm_projective_global}}(k)+1+(6(3f_{\ref{thm_projective_global}}(k)+1)-5)\cdot 2d_{\ref{mainresult}}(k)\\
&\le (36f_{\ref{thm_projective_global}}(k)+2)\cdot d_{\ref{mainresult}}(k)+3f_{\ref{thm_projective_global}}(k)+1\\
&\le w_{\ref{thm_projective_global}}(k).
\end{align*}
Therefore, $\delta^\star$ is a $\Sigma$-embedding with vortex set $C_0$ of breadth at most $1\le b_{\ref{thm_projective_global}}(k)$ and width at most $w_{\ref{thm_projective_global}}(k)$.
We now assume that $\ground(\delta)\setminus V_{\Scal^{\sf g}}\ne\emptyset$.

\paragraph{Connected component of stellation vertices and its boundary.}

For $S,S'\in\Scal^{\sf g}$, we say that $S$ and $S'$ are \emph{adjacent} if $v_{S}$ and $v_{S'}$ are drawn on the boundary of the same cell of $\delta$.
We say that $S$ and $S'$ are \emph{connected} if there is a sequence $S_0=S,S_1,\dots,S_{\ell-1},S_{\ell}=S'$ such that, for each $i\in[\ell]$, $S_{i-1}$ and $S_i$ are adjacent.
Hence, a \emph{connected component} $Y$ of $\Scal^{\sf g}$ is a maximal size set of elements of $\Scal^{\sf g}$ that are pairwise connected.
Let us show that, for each connected component $Y$ of $\Scal^{\sf g}$, we can replace the cells containing a stellation vertex $v_S$ for $S\in Y$ in their boundary by a vortex, such that each stellation vertex $v_S$, for $S\in Y$, is drawn in the interior of the vortex.

Let $Y$ be a connected component of $\Scal^{\sf g}$.
It exists given that $\Scal^{\sf g}\ne\emptyset$.
We call \emph{interior} of $Y$ the set of points $x\in N(\delta)$ such that $\pi_{\delta}(x)\in V_{\Scal^{\sf g}}$. Points in the interior of $Y$ are said to be \emph{pink}.
We call \emph{boundary} of $Y$ the set of points $x\in N(\delta)$ that are not pink but are on the boundary of a cell $c\in C(\delta)$ containing a pink point.
We call \emph{exterior} of $Y$ the set of points $x\in N(\delta)$ that are neither in the boundary nor in the interior of $Y$.
Points in the boundary (resp. exterior) of $Y$ are said to be \emph{blue} (resp. \emph{green}).
Notice that, for each cell $c\in C(\delta)$, $\tilde{c}$ cannot contain both pink and green points.
Also, given that $Y\ne\emptyset$, there is at least one pink point, and given that $G$ is connected and that $\ground(\delta)\setminus V_{\Scal^{\sf g}}\ne\emptyset$, there is at least one blue point.

For each face $F$ of $\Sigma-\bigcup_{D\in\Dcal}D$, let $V_F$ be the set of blue points in the closure of $F$.
Let $\Gamma_H$ be a drawing without crossings in $\Sigma$ whose vertices are the blue points and whose edges are, for each face $F$ of $\Sigma-\bigcup_{D\in\Dcal}D$ such that $|V_F|\ge2$, edges between the blue points in $V_F$ inducing a spanning tree.
Notice that, given that $Y$ is connected, $\Gamma_H$ has at most one face containing pink points, that we call the \emph{pink face} of $\Gamma_H$.
The faces of $\Gamma_H$ containing green points are called \emph{green faces}.

\begin{claim}
There is at most one connected component of $\Gamma_H$ 
that bounds green faces of $\Gamma_H$.
\end{claim}

\begin{cproof}
Suppose not and let $x$ and $y$ be two blue points,
$g_x,g_y$ be two green points, and $\Delta_x,\Delta_y\in\Dcal$ be disks such that $x$ and $y$ are not in the same connected component of $\Gamma_H$ and that $x,g_x\in\bd(\Delta_x)$ and $y,g_y\in\bd(\Delta_y)$.
Given that $\Gamma_H$ is drawn in the projective plane $\Sigma$, at most one connected component of $\Gamma_H$ may contain non-contractible cycles, and the other contains only contractible cycles.
Hence, without loss of generality, we may assume that the connected component of $x$ contains only (if any) contractible cycles.
Then it is implied that there is a cycle $T$ in $\bigcup_{\Delta\in\Dcal}\bd(\Delta)$ avoiding blue points such that $x$ and $y$ are contained in different connected components of $\Sigma-T$ and such that the connected component of $\Sigma-T$ containing $x$ is a disk.
Given that the closure of a cell cannot contain both pink and green points, it is implied that $T\cap N(\delta)$ contains either only pink points, or only green points.

If $T\cap N(\delta)$ contains only pink points, then $g_x$ and $g_y$ are in different components of $\Sigma-T$.
This implies that $\bigcup_{S \in Y} \{ v_{S} \}$ separates $\pi_{\delta}(g_x)$ from $\pi_{\delta}(g_y)$ in $\torso_{G_\Scal^\star}(\beta(r))$, and thus that $N_{G_\Scal^\star}(\bigcup_{S \in Y} \{ v_{S} \})\subseteq X$ separates $\pi_{\delta}(g_x)$ from $\pi_{\delta}(g_y)$ in $G$.
This contradicts the fact that, for all $X'\subseteq X$, $G-X'$ is connected.

If $T\cap N(\delta)$ contains only green points, then by the definition of blue points, there is $\Delta_x',\Delta_y'\in\Dcal$ containing pink points $r_x$ and $r_y$ respectively, such that $x\in\bd(\Delta_x')$ and $y\in\bd(\Delta_y')$, and $\Delta_x'$ and $\Delta_y'$ belong to different components of $\Sigma-T$.
However, given that $\pi_{\delta}(r_x),\pi_{\delta}(r_y)\in Y$, this contradicts that fact that $Y$ is connected.
Hence the result.
\end{cproof}

\paragraph{Vortex containing $V_Y$.}
Let $\Gamma_R$ be the connected component of $\Gamma_H$ that bounds the green faces of $\Gamma_H$.
If it does not exists, let $\Gamma_R$ be any connected component of $\Gamma_H$.
By definition of blue points, any edge of $\Gamma_R$ must bound the pink face.
We now redefine the pink face to be the face of $\Gamma_R$ containing pink points, that is, it contains the other components of $\Gamma_H$, if any.
\begin{figure}[h]
\center
\includegraphics[scale=0.75]{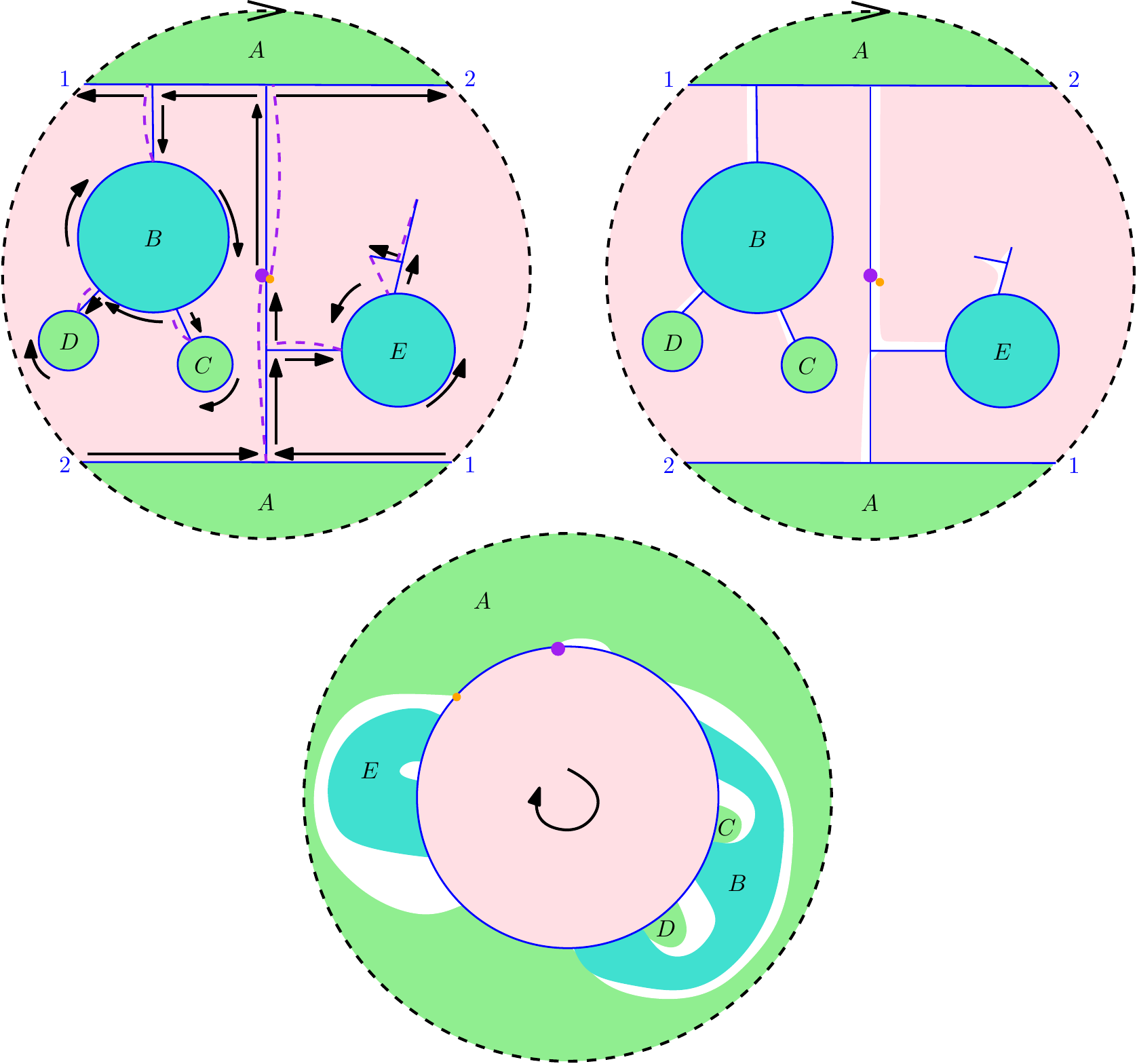}
\caption{An illustration of $\Gamma_R$ embedded on the projective plane: on the left, any point on the dashed cycle is identified to the point opposite to it with respect to the center of the cycle.
From the purple point, we add to the cyclic ordering the blue points that were not yet added following the order induced by the pink face.
The figure on the top right and bottom are the same up to some homeomorphism.
The pink face (figure on the left) becomes a vortex with boundary the blue points (i.e., vertices on the boundary of the pink face) of $\Gamma_R$ (figures on the right and bottom).}
\label{fig_redbluegreen}
\end{figure}
Let $x_1,\dots,x_\ell$ be the blue points of $\Gamma_R$, in the order induced by their appearance on the boundary of the pink face.
Notice that, given that an edge may bound this face twice, some points may appear twice in the ordering. Hence, we only keep the first appearance of each point in the ordering, and this gives us a cyclic ordering $\Omega_Y$ of the blue points of $\Gamma_R$.
See \autoref{fig_redbluegreen} for an illustration.

Let $C_Y$ be the set of all cells in the closure of the pink face, and $\Dcal_Y$ be the closure of the cells in $C_Y$.
Let $G_Y$ be the graph obtained from $\bigcup_{c\in C_Y}\sigma_{\delta}(c)$ by removing the vertices that do not belong to $\beta(r)$.
Let $T$ be any simple closed curve drawn within the pink face which contains all blue points of $\Gamma_{R}$, in the order prescribed by $\Omega_{Y}$.
If $\Sigma$ is the sphere, then both components of $\Sigma-T$ are disks and if $\Sigma$ is the projective plane, then exactly one component of $\Sigma-T$ is a disk (this would not be true if $\Sigma$ was the torus or any other surface of higher Euler genus).
Let $c_Y$ be the (possibly unique) component of $\Sigma-T$ that is contained in the pink face and $D_Y$ be its closure.
If $c_Y$ is a disk, then let $\delta_Y=(\Gamma,\Dcal_Y)$ be the $\Sigma$-decomposition of $G_\Scal^\star$ obtained from $\delta=(\Gamma,\Dcal)$ by setting $\Dcal_Y = \Dcal \setminus \Dcal_Y \cup D_Y$.
Hence, $C(\delta_Y)=C(\delta)\setminus C_Y\cup c_Y$, with $\sigma_{\delta_Y}(c_Y)=G_Y$, and $\pi_{\delta_Y}(\tilde{c}_Y)=\pi_{\delta}(V(\Gamma_R))$.
If $c_Y$ is not a disk, then we remove $c_Y$ from $\Sigma$ and replace it by a disk $c_Y'$, hence obtaining a sphere $\mathbb{S}_0$.
Then, we let $\delta_Y=(\Gamma,\Dcal_Y)$ be the $\mathbb{S}_0$-decomposition of $G_\Scal^\star$ obtained from $\delta$ by setting $\Dcal_Y=\Dcal\setminus \Dcal_Y\cup D_Y'$, where $D_Y'$ is the closure of $c_Y'$.
Since any planar graph can be embedded in the projective plane, let us assume without loss of generality that $\delta_Y$ be the $\Sigma$-decomposition of $G_\Scal^\star$ with new vortex $c_Y$.
Observe that every pink point is now drawn in the interior of $c_Y$, and that no green point is drawn in $c_Y$.
This implies that $V_Y$ is drawn in the interior $c_Y$ and that $V_{\Scal^{\sf g}\setminus Y}\subseteq \ground(\delta_Y)\setminus\pi_{\delta_Y}(\tilde{c}_Y)$.

We can apply this procedure iteratively to each connected component of $\Scal^{\sf g}$ (we do this procedure at most $18f_{\ref{thm_projective_global}}(k)+1$ times by \autoref{cl_Sbound}).
We thus obtain a $\Sigma$-decomposition $\delta'$ of $G_\Scal^\star$ such that, for each component $Y$ of $\Scal^{\sf g}$, there is a cell $c_Y\in C(\delta')$ such that $V_Y$ is drawn in the interior of $c_Y$.
Hence, every stellation vertex is drawn in the interior of a cell of $C(\delta')$.
It allows us to define a $\Sigma$-decomposition $\delta^\star$ of $\torso_{G_\Scal^\circ}(\beta(r))$ from the $\Sigma$-decomposition $\delta'$ of $G_\Scal^\star$ by keeping only the vertices of $\beta(r)$ and making a clique out of each $Z_c^i$ for $c\in C(\delta)$ and out of each set $S\in\Scal$.
Let $C_0$ be the union of the vortex cells of $\delta^\star$ and the cells of $\delta^\star$ whose interior is non empty.
A vortex cell of $\delta^\star$ is either a vortex cell of $\delta$ or a cell $c_Y$ for some component $Y$ of $\Scal^{\sf g}$.
Additionally, any cell of $\delta^\star$ whose interior is non empty is necessarily either a cell $c_Y$ for some component $Y$ of $\Scal^{\sf g}$ with $|\tilde{c}_Y|\le3$, or a cell containing a vertex of $X_2$.
Therefore, $|C_0|\le k-1+|\Scal^{\sf g}|+|X_2|\le b_{\ref{thm_projective_global}}(k)$.

For each component $Y$ of $\Scal^{\sf g}$, let us construct a linear decomposition of width at most $w_{\ref{thm_projective_global}}(k)$ of the vortex society $(G_Y,\Omega_Y)$ of $c_Y$.
Remember that $C_Y$ is the set of cells of $\delta$ that were replaced by $c_Y$.
Let $A\subseteq (V(G_Y) \cap \ground(\delta)) \setminus V_{Y}$ be the set of non-stellation vertices of $G_Y$ that are on the boundary of some non-vortex cell of $C_Y$ in $\delta$.

\begin{claim}
$|A|\le|X_1|+2|X_2|$.
\end{claim}
\begin{cproof}
By definition, $A \cap X \subseteq X_1$.
Let $v\in A\setminus X_1$.
Then there is a non-vortex cell $c\in C_{Y}$ and $v_S\in V_Y$ such that $v,v_S\in\pi_\delta(\tilde{c})$.
By the connectivity assumptions on $\delta$, there is a path $P$ from $v_S$ to $v$ whose internal vertices are in $\sigma_\delta(c)-\pi_\delta(\tilde{c})$.
In particular, the neighbor of $v_S$ in $P$ is in $X_2$.
The cell $c$ contributes for at most two vertices of $A\setminus X_1$ ($v$ and the third vertex of $\pi_\delta(\tilde{c})$, if it exists).
Therefore, $|A\setminus X_1|\le2|X_2|$.
\end{cproof}

Let $B\coloneqq (V(G_Y) \cap \ground(\delta)) \setminus A$.
By definition of $A$ and given that we assume any ground vertex of $\delta$ to be on the boundary of a non-vortex cell, it is implied that $B \subseteq V(\Omega_Y)$.
Moreover, by our connectivity assumptions, each vertex of $B$ is on the boundary of a (unique) vortex cell of $C_Y$.
Let us fix a linear decomposition of each vortex cell of $C_Y$ of adhesion at most $d_{\ref{mainresult}}(k)$.
For each vertex $v$ on the boundary of a (unique) vortex cell of $C_Y$, let $Y_v$ be the bag corresponding to $v$, and let $Y_v'\coloneqq Y_v\cap\beta(r)$.
We have $|Y_v'|\le2d_{\ref{mainresult}}(k)+1$.
Let $A'=V(G_Y)\setminus (\bigcup_{b\in B}Y_b'\setminus X)$.
Let us bound the size of $A'$.
Each vertex in $A'$ is either a vertex of $A$, or a vertex of $X_2\cup X_3$, or a vertex in $Y_a\setminus \{a\}$ for a vertex $a\in\ground(\delta)$ on the boundary of some non-vortex cell of $C_Y$, that is, for $a \in A \cup V_{Y}$.
Therefore, 
\begin{align*}
|A'| & \le|A|+|X_2|+|X_3|+(|A|+|\Scal^{\sf g}|)\cdot2d_{\ref{mainresult}}(k)\\
& \le |X_1|+3|X_2|+|X_3|+(6|X_1|+8|X_2|-5)\cdot2d_{\ref{mainresult}}(k)\\
&\le (8|X|-5)\cdot(2d_{\ref{mainresult}}(k)+1).
\end{align*}
Then, we define a linear decomposition $(Z_v)_{v\in V(\Omega_Y)}$ of $(G_Y,\Omega_Y)$ as follows.
For each $b\in B$, we set $Z_b\coloneqq Y_b'\cup A'$, and for each $a\in V(\Omega_Y)\setminus B$, we set $Z_a=A'$.
$(Z_v)_{v\in V(\Omega_Y)}$ has width at most 
\begin{align*}
|A'| +2d_{\ref{mainresult}}(k)+1& \le(8|X|-4)\cdot(2d_{\ref{mainresult}}(k)+1)\\
& \le(24f_{\ref{thm_projective_global}}(k)+4)(2d_{\ref{mainresult}}(k)+1)\\
&\le w_{\ref{thm_projective_global}}(k).
\end{align*}
Therefore, $\delta^\star$ is an almost embedding of $\torso_{G_\Scal^\circ}(\beta(r))$ in $\Sigma$ with vortex set $C_0$ of breadth at most $b_{\ref{thm_projective_global}}(k)$ and width at most $w_{\ref{thm_projective_global}}(k)$.
Hence, the result.
\end{proof}

As corollary of \autoref{thm_projective_global}, we immediately get the following.

\begin{theorem}\label{corr_proj_global}
Let $k\in\Nbbb$. Let $G$ be a graph that excludes a long jump grid of order $k$ as a minor.
Then there exists a tree decomposition $\Tcal$ of $G$ of adhesion at most $3f_{\ref{thm_projective_global}}(k)+1$ such that the torso of $\Tcal$ at each node has an almost embedding in the projective plane of breadth at most $b_{\ref{thm_projective_global}}(k)$ and width at most $w_{\ref{thm_projective_global}}(k)$.
\end{theorem}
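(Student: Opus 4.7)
The plan is to derive \autoref{corr_proj_global} directly from the rooted version \autoref{thm_projective_global} by setting both the anchored set and the stellation family to be empty, and then converting the output so that the almost embedding applies to the torso of the decomposition rather than to the torso of the bag in $G$.

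First, I apply \autoref{thm_projective_global} with $X \coloneqq \emptyset$ and $\Scal \coloneqq \emptyset$. With this choice, both $G_{\Scal}^{\star}$ and $G_{\Scal}^{\circ}$ coincide with $G$, so the hypothesis that $G$ excludes $\mathscr{J}_{k}$ as a minor rules out the first alternative of \autoref{thm_projective_global}. We thus obtain a rooted tree decomposition $\Tcal' = (T',r,\beta')$ of $G$ of adhesion at most $3 f_{\ref{thm_projective_global}}(k)+1$ such that, for every node $t$, the graph $\torso_{G}(\beta'(t))$ admits an almost embedding in the projective plane of breadth at most $b_{\ref{thm_projective_global}}(k)$ and width at most $w_{\ref{thm_projective_global}}(k)$.

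Next, I invoke the hereditary-class version of \autoref{lem_2torso} with $\Hcal$ taken to be the class of all graphs admitting such an almost embedding; this class is clearly closed under induced subgraphs and hence hereditary. The bag-splitting procedure from \autoref{lem_2torso} only refines each bag along the connected components of $G$ minus the incident bag, so it preserves the adhesion bound and, at termination, every adhesion of the refined tree decomposition $\Tcal = (T,\beta)$ is exactly a set of the form $N_{G}(V(C))$ for some connected component $C$ of $G$ minus the relevant bag. Consequently, the torso of $\Tcal$ at each node $t$ coincides with $\torso_{G}(\beta(t))$, and the latter lies in $\Hcal$. This yields the required tree decomposition.

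The one subtle point — and the main technical obstacle — is the interplay between the two notions of ``torso''. \autoref{thm_projective_global} controls $\torso_{G}(\beta'(t))$, whereas \autoref{corr_proj_global} demands control of the torso of the tree decomposition at each node. The refinement above resolves this mismatch, but verifying that the split bags' torsos-in-$G$ remain in $\Hcal$ requires care, since in general splitting a bag may introduce cliques on subsets of the original adhesion that were not already cliques in $\torso_{G}(\beta'(t))$. Here one exploits the fact that the bounds $b_{\ref{thm_projective_global}}(k)$ and $w_{\ref{thm_projective_global}}(k)$ are chosen generously enough in \autoref{thm_projective_global} that any such additional cliques, which are supported on sets of size bounded by the adhesion $3f_{\ref{thm_projective_global}}(k)+1$, can be absorbed into existing vortex cells of the almost embedding without violating the breadth and width bounds.
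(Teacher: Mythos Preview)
Your high-level plan matches the paper's: apply \autoref{thm_projective_global} with $X=\emptyset$ and $\Scal=\emptyset$ (so that $G_{\Scal}^{\star}=G_{\Scal}^{\circ}=G$ and the first alternative is ruled out), and then invoke \autoref{lem_2torso} to pass from control of $\torso_{G}(\beta'(t))$ to control of the tree-decomposition torso at each node. The paper itself calls \autoref{corr_proj_global} an ``immediate'' corollary and, in the discussion preceding \autoref{thm_projective_global}, points to \autoref{lem_2torso} for exactly this passage. So far so good.

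Where your argument has a genuine gap is the final paragraph. You correctly notice that \autoref{lem_2torso}, as stated, has its hypothesis the \emph{wrong way around}: it assumes the tree-decomposition torso at every node lies in $\Hcal$ and concludes that one can refine so that this torso equals $\torso_{G}(\beta(t))$. What you have from \autoref{thm_projective_global} is only $\torso_{G}(\beta'(t))\in\Hcal$. You also correctly note that after refinement a bag $\beta(t)\subsetneq\beta'(u)$ can have $\torso_{G}(\beta(t))$ containing cliques \emph{not present} in $\torso_{G}(\beta'(u))[\beta(t)]$ (this really happens: take $\beta'(t)=\{a,a'\}$, $\beta'(t')=\{a,a',b,c,d\}$, edges $ab,a'c,bc,ad,a'd$; then the refined bag $\{a,a',b,c\}$ acquires the edge $aa'$ in $\torso_{G}$, absent from $\torso_{G}(\beta'(t'))$). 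Your proposed fix --- that $b_{\ref{thm_projective_global}}(k)$ and $w_{\ref{thm_projective_global}}(k)$ are ``generous enough'' to absorb these extra cliques into existing vortex cells --- is not an argument. Nothing in the proof of \autoref{thm_projective_global} builds in slack for this; there is no bound on the \emph{number} of such extra cliques (one per component of $G-\beta(t)$, hence potentially unbounded); and nothing places these cliques near existing vortices. Since \autoref{corr_proj_global} claims the \emph{same} breadth and width bounds as \autoref{thm_projective_global}, you cannot increase them.

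The honest situation is that the paper leaves this passage implicit, and what is really needed is the \emph{converse} direction of \autoref{lem_2torso}: from ``$\torso_{G}(\beta'(t))\in\Hcal$ for all $t$'' to a tree decomposition whose tree-decomposition torsos lie in $\Hcal$. This does hold (and with no loss in the constants), but not by the hand-wave you give; one must argue, for the refinement at an edge $tt'$, that every extra edge appearing in $\torso_{G}$ of a split bag is already witnessed as a torso edge by some original bag, and then use heredity appropriately. Your write-up does not supply this, so as it stands the proof is incomplete.
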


Also, with the same proof as \autoref{thm_projective_global}, but replacing \autoref{mainresult} with \autoref{mainresult_pl}, we conclude the following.

\begin{theorem}\label{corr_plan_global}
There exist functions $f_{\ref{corr_plan_global}}, b_{\ref{corr_plan_global}}, w_{\ref{corr_plan_global}}:\Nbbb^2\to\Nbbb$ such that the following holds.
Let $k,c\in\Nbbb$. Let $G$ be a graph that excludes a long jump grid of order $k$ and a crosscap grid of order $c$ as a minor.
Then there exists a tree decomposition $\Tcal$ of $G$ of adhesion at most $3f_{\ref{corr_plan_global}}(k,c)+1$ such that the torso of $\Tcal$ at each node has an almost embedding in the plane of breadth at most $b_{\ref{corr_plan_global}}(k,c)$ and width at most $w_{\ref{corr_plan_global}}(k,c)$.

Moreover $f_{\ref{corr_plan_global}}(k,c), b_{\ref{corr_plan_global}}(k,c), w_{\ref{corr_plan_global}}(k,c)=2^{\Ocal(k\log(k\cdot c))}$.
\end{theorem}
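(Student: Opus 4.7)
The plan is to mirror, almost verbatim, the proof of \autoref{thm_projective_global}, with the sole substitution of the local structure theorem \autoref{mainresult} by \autoref{mainresult_pl}. The key payoff is that excluding both $\mathscr{J}_k$ and $\mathscr{C}_c$ as minors forces each local piece to embed in the sphere $\Sbbb_0$ rather than the projective plane, which in fact makes the topological bookkeeping strictly easier.

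I would first formulate a rooted analogue: for every graph $G$, every $X \subseteq V(G)$ of size at most $3 f_{\ref{corr_plan_global}}(k,c)+1$, and every collection $\Scal$ of subsets of $X$, either $G_\Scal^\star$ contains $\mathscr{J}_k$ or $\mathscr{C}_c$ as a minor, or there is a rooted tree decomposition $(T,r,\beta)$ of $G_\Scal^\circ$ with $X \subseteq \beta(r)$, bounded adhesion, and whose every torso admits an almost embedding in the plane of bounded breadth and width. The parameters I would set are $f_{\ref{corr_plan_global}}(k,c) \coloneqq c_1 \cdot (f_{\ref{mainresult_pl}}(k,c,3))^{20}$, with $w_{\ref{corr_plan_global}}$ and $b_{\ref{corr_plan_global}}$ defined analogously to the projective case but in terms of $d_{\ref{mainresult_pl}}(k,c)$. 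The induction is on $|V(G) \setminus X|$; the base cases (small $G$, small $X$, presence of an $X' \subseteq X$ with $G - X'$ disconnected, existence of a balanced separator for $X$) transfer verbatim. Otherwise $X$ is $(f_{\ref{corr_plan_global}}(k,c), 2/3)$-well-linked in $G_\Scal^\star$, so \autoref{thm:algogrid} produces a large wall whose tangle truncates $\Tcal_X$, and \autoref{mainresult_pl} then delivers either one of the excluded minors (and we are done), or a $\Sbbb_0$-decomposition $\delta$ of $G_\Scal^\star$ of breadth at most $k-1$, depth at most $d_{\ref{mainresult_pl}}(k,c)$, with pairwise-disjoint closures of vortex cells and a flat wall whose tangle truncates that of $X$.

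From $\delta$ I would build the tree decomposition as before: recurse on each non-vortex cell with its three-vertex boundary added to the inherited set, and on each bag $Y_i$ of a linear decomposition of each vortex society obtained via \autoref{prop_width}. The delicate step, as in the projective case, is constructing an almost embedding of $\torso_{G_\Scal^\circ}(\beta(r))$: each $S \in \Scal$ must be turned into a clique without wrecking embeddability, and the pink/blue/green-point analysis used to create a new vortex $c_Y$ for each connected component $Y$ of $\Scal^{\sf g}$ transfers. Indeed this step simplifies, because every simple closed curve on $\Sbbb_0$ bounds two disks, so the case distinction involving contractibility of cycles on the projective plane disappears entirely; the component $\Gamma_R$ that bounds green faces is unique for purely planar reasons, and the disk $D_Y$ bounded by $T$ always exists directly without the ad hoc sphere-replacement used in the projective argument. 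The bound $|\Scal^{\sf g}| \le 6|X_1 \cup X_2| - 5$ from \autoref{lem_propproj} still applies since any $\Sbbb_0$-embeddable graph is projective.

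The main obstacle, as originally, is controlling the width of the new vortex $c_Y$: the same counting argument (bounding $|A|$, $|B|$, and the contribution of each adhesion set in the linear decompositions of the old vortex societies) yields $|A'| \le (8|X|-5)(2 d_{\ref{mainresult_pl}}(k,c) + 1)$, which forces $w_{\ref{corr_plan_global}}(k,c) = (24 f_{\ref{corr_plan_global}}(k,c)+4)(2 d_{\ref{mainresult_pl}}(k,c) + 1)$ and $b_{\ref{corr_plan_global}}(k,c) = 3 f_{\ref{corr_plan_global}}(k,c) + k$, both in $2^{\Ocal(k \log(k \cdot c))}$ as claimed. Finally, instantiating the rooted version with $X = \emptyset$ and $\Scal = \emptyset$ and applying \autoref{lem_2torso} to pass from torsos-of-bags to torsos-of-the-tree-decomposition yields \autoref{corr_plan_global}.
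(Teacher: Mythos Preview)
Your proposal is correct and follows exactly the approach the paper takes: the paper's proof of \autoref{corr_plan_global} is the single sentence ``with the same proof as \autoref{thm_projective_global}, but replacing \autoref{mainresult} with \autoref{mainresult_pl},'' and you have faithfully unfolded precisely that substitution. Your observation that the pink/blue/green topological step actually simplifies on the sphere (since every simple closed curve bounds two disks) is a nice clarification beyond what the paper spells out, but the overall route is identical.
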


\subsection{Identifying vortices}\label{sec_id_global}

We now deduce our global structure theorem as it was stated in the introduction.\medskip

In \cite{ThilikosW2025excluding}, it was proved that vortices of bounded width have bounded bidimensionality.

\begin{proposition}[Lemma 3.9, \cite{ThilikosW2025excluding}]\label{prop_id}
For every graph $G$, every $k\in\Nbbb$, and every surface $\Sigma$ with Euler genus at most $g$, if $\delta$ is a $\Sigma$-decomposition of $G$ with width at most $w$ and breadth at most $b$ and $X=\bigcup\{\sigma(c)\mid c \text{ is a vortex of } \delta\}$,
then $\bidim(G,X)=\Ocal((b^4\cdot(b\cdot g\cdot w)^4)$.
\end{proposition}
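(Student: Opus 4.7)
The plan is to derive a contradiction from the assumption that $G$ contains a large $X$-rooted grid minor, by showing that such a minor forces either a very wide vortex, a very large breadth, or a surface structure richer than allowed by the Euler genus. So suppose $G$ contains the $(k\times k)$-grid $H$ as an $X$-rooted minor via branch sets $\{B_v\mid v\in V(H)\}$. For each $v\in V(H)$, pick a representative $x_v\in B_v\cap X$ and note that $x_v$ lies in $\sigma(c)$ for some vortex cell $c$ of $\delta$. This yields a map $\chi\colon V(H)\to\Ccal_v$ into the set $\Ccal_v$ of vortex cells, where $|\Ccal_v|\le b$. By pigeonhole, there exists a vortex $c^\star$ with at least $k^2/b$ vertices assigned to it.

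I would next exploit the path-likeness inside a single vortex. The vortex society $(\sigma(c^\star),\Omega_{c^\star})$ admits a linear decomposition of width at most $w$, so $\sigma(c^\star)$ itself has pathwidth at most $w$. Hence any minor model lying inside $\sigma(c^\star)$ has pathwidth bounded by $w$, and in particular cannot contain a $(w+2)\times 2$-grid. The set $Y\coloneqq \chi^{-1}(c^\star)$ is a large subset of $V(H)$ (of size $\ge k^2/b$), and standard subgrid-extraction inside the planar grid $H$ yields an $X$-rooted sub-grid of dimension $\Omega(\sqrt{k^2/b})=\Omega(k/\sqrt{b})$ whose branch-set representatives all lie in $\sigma(c^\star)$. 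Combined with the pathwidth bound this already forces $k=\Ocal(w\sqrt{b})$.

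To tighten this to the stated polynomial bound, I would also account for how branch sets cross between vortices through the part of $G$ that is embedded in $\Sigma$. Because the closures of the vortices are pairwise disjoint and $\delta$ has breadth at most $b$, the interaction of any branch set with the embedded part occurs through at most $b$ vortex boundaries; and the embedded part lives in a surface of Euler genus at most $g$. One can package this into a derived graph $\tilde G$ by identifying each vortex to a single representative vertex and show that any $X$-rooted $(k\times k)$-grid minor of $G$ projects to a rooted minor of $\tilde G$ of dimension $\Omega(k/(bw))$, with every branch set hitting one of the $b$ distinguished vertices. Then, using that $\tilde G$ is embedded in a surface of Euler genus $g$ (after contracting the $b$ vertices), the number of pairwise edge-disjoint non-contractible curves a grid minor can traffic through it is $\Ocal(g)$, giving a further bound of the form $k/(bw)=\Ocal(\sqrt{g}\cdot b)$ after multiplying out.

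The main obstacle I anticipate is the rooted-minor bookkeeping in the extraction step: subgrid extraction lemmas for planar grids are standard when the target set is large, but here we need to preserve the property that each branch set meets $X$, which requires choosing the sub-grid whose entire branch collection comes from a fixed vortex or fixed small family of vortices. Handling this in the presence of branch sets that cross vortex boundaries is the technical heart; once that is in place, combining the pathwidth bound $w$ inside vortices, the breadth $b$ across them, and the Euler genus $g$ for the embedded part yields the advertised bound $\bidim(G,X)=\Ocal(b^4\cdot(bgw)^4)$.
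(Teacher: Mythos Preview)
First, note that the paper does not prove this proposition; it is quoted verbatim as Lemma~3.9 from \cite{ThilikosW2025excluding} and used as a black box in the derivation of \autoref{th_proj_global}. So there is no in-paper proof to compare against.

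On its own merits, your proposal has a genuine gap at the key step. After pigeonholing $k^2/b$ representatives into a single vortex $c^\star$ and extracting a sub-grid of $H$ whose vertices lie in $Y=\chi^{-1}(c^\star)$, you invoke the pathwidth bound on $\sigma(c^\star)$ to conclude that this sub-grid cannot be large. But the sub-grid is still a minor of $G$, not of $\sigma(c^\star)$: the branch sets $B_v$ live in $G$ and may leave $\sigma(c^\star)$ arbitrarily; only the chosen representatives $x_v$ are guaranteed to lie in $\sigma(c^\star)$. The fact that $\sigma(c^\star)$ has small pathwidth places no constraint on an $X$-rooted minor of $G$ whose representatives happen to fall there. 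To make this work you would need to show that such a rooted minor forces a large (ordinary) minor \emph{inside} $\sigma(c^\star)$, and that is exactly the missing link. The later projection-to-$\tilde G$ argument has the same defect: contracting each vortex to a point collapses many branch sets onto the same vertex, so the image is not a model of any grid, and your asserted dimension $\Omega(k/(bw))$ is unsupported.

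A secondary quantitative slip: ``standard subgrid-extraction'' from a subset $Y$ of a $(k\times k)$-grid yields a $Y$-rooted grid of order $\Omega(|Y|^{1/4})$ (cf.\ \autoref{@subjection}), not $\Omega(\sqrt{|Y|})$; a subset lying entirely in a single row shows that the square-root bound is false.
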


Therefore, we immediately get our upper bound.

\begin{theorem}\label{th_proj_global}
Let $k\in\Nbbb$. Let $G$ be a graph that excludes a long jump grid of order $k$ as a minor.
Then $\mathsf{idpr}^\star(G)=2^{\Ocal(k\log k)}$.
\end{theorem}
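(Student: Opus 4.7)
The plan is to derive \cref{th_proj_global} by combining the almost-embedding decomposition supplied by \cref{corr_proj_global} with the bidimensionality bound for vortex vertices in \cref{prop_id}, and then to realise the resulting identifications by collapsing each vortex to a single vertex.

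Starting from a graph $G$ excluding $\mathscr{J}_k$ as a minor, I would first invoke \cref{corr_proj_global} to obtain a tree decomposition $(T,\beta)$ of $G$ of adhesion $2^{O(k\log k)}$ such that every torso $G_t$ admits an almost embedding $\delta_t$ in the projective plane of breadth $b \le b_{\ref{thm_projective_global}}(k)$ and width $w \le w_{\ref{thm_projective_global}}(k)$, both $2^{O(k\log k)}$. For each node $t$, let
$$X_t \coloneqq \bigcup\{ V(\sigma_{\delta_t}(c)) \mid c \textrm{ is a vortex cell of } \delta_t \}.$$
Applying \cref{prop_id} to $\delta_t$ with Euler genus $g = 1$ yields $\bidim(G_t, X_t) = O(b^4 (b\cdot g\cdot w)^4) = 2^{O(k\log k)}$.

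It then remains to show that $G_t$ can be made projective by identifying vertices of $X_t$, i.e.\ that $\mathcal{M}(G_t, X_t)\cap\gprojective\neq\emptyset$. I would consider the partition of $X_t$ whose parts are the vertex sets $V(\sigma_{\delta_t}(c))$ taken over the vortex cells $c$ (merging any two parts that share a ground vertex) and identify each part to a single vertex $v_c$. Geometrically, $v_c$ can be placed at an interior point of the closed disk bounding $c$; every edge of $G_t$ incident to a vertex of $\sigma_{\delta_t}(c)$ either lies inside that disk, in which case it becomes a loop and is discarded, or enters via one of the at most $w$ boundary nodes, and can be rerouted inside the disk to terminate at $v_c$ without introducing crossings. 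After performing this collapse for every vortex cell, the remainder of $\delta_t$ is vortex-free — all other cells in $\delta_t$ have at most three boundary vertices and contribute no crossings — so the resulting drawing is a genuine embedding in the projective plane. Hence $\mathsf{idpr}(G_t) = 2^{O(k\log k)}$, and since $(T,\beta)$ is a valid witness, $\mathsf{idpr}^\star(G) \le \max_t \mathsf{idpr}(G_t) = 2^{O(k\log k)}$.

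The main point of care is the vortex-collapse step, where a ground vertex could \emph{a priori} sit on the boundaries of two different vortex cells. This is handled either by invoking the disjointness of vortex-cell closures (which \cref{mainresult} guarantees at the local level and which the construction in \cref{thm_projective_global} preserves) or, more robustly, by merging vortex societies that share a ground vertex before collapsing; neither the bidimensionality bound from \cref{prop_id} nor the projective embeddability of the collapsed graph is affected by such a merging, so the argument goes through unchanged.
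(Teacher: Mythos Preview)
Your proposal is correct and follows essentially the same approach as the paper's proof: apply \cref{corr_proj_global} to get the tree decomposition with almost-embedded torsos, define $X_t$ as the union of the vortex societies, invoke \cref{prop_id} with $g=1$ to bound $\bidim(G_t,X_t)$, and observe that collapsing each vortex to a point yields a genuine projective embedding so that $\Mcal(G_t,X_t)\cap\gprojective\neq\emptyset$. The paper's proof is terser (it simply asserts $G_t/\!\!/\Pcal_t\in\gprojective$ without the geometric rerouting discussion), while you give more detail on the collapse step and explicitly address the case of vortices sharing a ground vertex; both arrive at the same bound in the same way.
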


\begin{proof}
By \autoref{corr_proj_global}, there exists a tree decomposition $\Tcal=(T,\beta)$ of $G$ such that the torso $G_t$ of $\Tcal$ at each node $t\in V(T)$ has an almost embedding $\delta_t$ in the projective plane of breadth at most $b_{\ref{thm_projective_global}}(k)$ and width at most $w_{\ref{thm_projective_global}}(k)$.
For $t\in V(T)$, let $\Ccal_t$ be the set of all vortex cells of $\delta_t$, $\Pcal_t=(\sigma_{\delta_t}(c))_{ c\in\Ccal_t}$, and $X_t=\bigcup\Pcal_t$.
Given that $G_t/\!\!/ \Pcal_t\in\Gcal_{\sf projective}$, it implies that $\Mcal(G_t,X_t)\cap \Gcal_{\sf projective}\ne\emptyset$.
The projective plane has Euler genus one.
Hence, by \autoref{prop_id}, $\bidim(G,X_t)=\Ocal((b_{\ref{thm_projective_global}}^4\cdot(b_{\ref{thm_projective_global}}\cdot w_{\ref{thm_projective_global}})^4)=2^{\Ocal(k\log k)}$.
We conclude that $\mathsf{idpr}^\star(G)\le \max_{t\in V(T)}\bidim(G,X_t)=2^{\Ocal(k\log k)}$.
\end{proof}

Similarly, we also conclude the following from \autoref{corr_plan_global} and \autoref{prop_id}.

\begin{theorem}\label{th_plan_global}
Let $k,c\in\Nbbb$. Let $G$ be a graph that excludes a long jump grid of order $k$ and a crosscap grid of order $c$ as a minor.
Then $\mathsf{idpl}^\star(G)=2^{\Ocal(k\log (k\cdot c))}$.
\end{theorem}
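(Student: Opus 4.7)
The plan is to mimic the argument of \autoref{th_proj_global} verbatim, simply swapping the input of the global theorem with the stronger decomposition available when the crosscap grid is also excluded. First, I would invoke \autoref{corr_plan_global} on $G$, which yields a tree decomposition $\Tcal=(T,\beta)$ of $G$ of adhesion $\Ocal(f_{\ref{corr_plan_global}}(k,c))$ such that for every $t\in V(T)$ the torso $G_t$ admits an almost embedding $\delta_t$ \emph{in the sphere} with breadth at most $b_{\ref{corr_plan_global}}(k,c)$ and width at most $w_{\ref{corr_plan_global}}(k,c)$.

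Next, for each $t\in V(T)$, let $\Ccal_t$ be the set of vortex cells of $\delta_t$, set $\Pcal_t=(V(\sigma_{\delta_t}(c)))_{c\in\Ccal_t}$, and let $X_t=\cupall\Pcal_t$ be the union of all vertices drawn inside a vortex. Observe that identifying, for each vortex cell $c$, all vertices of $\sigma_{\delta_t}(c)$ into a single vertex turns $\delta_t$ into a genuine sphere embedding of the resulting graph: the pairwise-disjoint closures of the vortex cells ensure that this sequence of identifications can be performed without breaking the drawing. Hence $G_t/\!\!/\Pcal_t\in\gplanar$, so $\Mcal(G_t,X_t)\cap\gplanar\neq\emptyset$, which is precisely the condition required in the definition of $\mathsf{idpl}$.

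Finally, to bound the bidimensionality of $X_t$ inside $G_t$, I would apply \autoref{prop_id} with $\Sigma$ the sphere (so $g\le 1$ suffices as an upper bound on the Euler genus), together with $b\le b_{\ref{corr_plan_global}}(k,c)$ and $w\le w_{\ref{corr_plan_global}}(k,c)$. This gives
\[
\bidim(G_t,X_t)\ =\ \Ocal\bigl(b_{\ref{corr_plan_global}}(k,c)^{4}\cdot(b_{\ref{corr_plan_global}}(k,c)\cdot w_{\ref{corr_plan_global}}(k,c))^{4}\bigr)\ =\ 2^{\Ocal(k\log(k\cdot c))},
\]
where the last equality uses the explicit bounds $b_{\ref{corr_plan_global}}(k,c), w_{\ref{corr_plan_global}}(k,c)=2^{\Ocal(k\log(k\cdot c))}$ from \autoref{corr_plan_global}. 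Taking the maximum over $t\in V(T)$ and using the definition of the clique-sum extension $\mathsf{idpl}^\star$, I conclude $\mathsf{idpl}^\star(G)\le\max_{t\in V(T)}\bidim(G_t,X_t)=2^{\Ocal(k\log(k\cdot c))}$, as required.

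There is no real obstacle here beyond bookkeeping, since all the conceptual work has already been done: \autoref{corr_plan_global} provides the almost-embedding in the sphere with no apices, and \autoref{prop_id} converts width of vortices into a bidimensionality bound. The only mild care needed is to verify that the vortex cells of $\delta_t$ have pairwise disjoint closures (inherited from \autoref{mainresult_pl} through the construction of $\delta^{\star}$ in the proof of the rooted variant of the global theorem), so that the vortex-wise identifications producing the planar graph can be carried out simultaneously without collision.
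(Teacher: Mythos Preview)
Your proposal is correct and follows essentially the same approach as the paper: the paper's own proof of \autoref{th_plan_global} is literally the one-line remark that it follows from \autoref{corr_plan_global} and \autoref{prop_id} in exactly the way \autoref{th_proj_global} follows from \autoref{corr_proj_global} and \autoref{prop_id}, which is precisely what you spell out. Your extra remark about the disjointness of the vortex closures is more care than the paper itself takes at this point, but it is harmless and reasonable.
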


Combined with \autoref{tsw_asll}, \autoref{th_proj_global} immediately implies \autoref{upper_b}.
Additionally, we know from \cite{gavoille2023minoruniversal} that $\Gcal_{\sf projective}$ is the set of all minors of crosscap grids $\Ccal$.
This, combined with \autoref{tsw_asll} and \autoref{th_plan_global}, implies \autoref{upper_sb}.

\section{The lower bound} 
\label{power_b_ec}

The previous three sections where devoted to the proof that $\mathsf{idpr}^{\star}\preceq \p_{\mathscr{J}}$. To prove the equivalence of the two parameters  we now prove 
%
%
prove that  $\p_{\mathscr{J}} \preceq \mathsf{idpr}^{\star}$. Likewise, \autoref{lower_b} follows as all graphs 
in $\mathscr{J}$ are graphs in $\gedgeapex$.
%

For the proof of  $\p_{\mathscr{J}} \preceq \mathsf{idpr}^{\star}$
we first prove in \autoref{sub1} that $\idpr(\mathscr{J}_k)=Ω(k^{1/4})$ (\autoref{jopegftasi}). 
Then, in \autoref{sub2}, using the result of the previous subsection, we prove that $\idpr^\star(\mathscr{J}_k)=Ω(k^{1/384})$ (\autoref{lowbound}). 
To do so, we need a result from \cite{ThilikosW2025excluding} (\autoref{blackbox}) that essentially states that if the bidimensionality of some set  $X$ in  the torso of some bag of  a tree decomposition of $G$ is small, then so is the bidimensionality of $X$ in  $G$.
To see why this implies    that $\p_{\mathscr{J}} \preceq \idpr^{\star}$, we need the following.

\begin{lemma}\label{lem_id_min}
Both parameters $\mathsf{idpr}$ and $\mathsf{idpl}$ are minor-monotone. \end{lemma}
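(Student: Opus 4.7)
The plan is to verify minor-monotonicity by checking each of the three elementary minor operations (vertex deletion, edge deletion, edge contraction); the general case then follows by iteration. Fix a graph $G$ with $\mathsf{idpr}(G) = k$, witnessed by a set $X \subseteq V(G)$ and a partition $\Pcal$ of $X$ such that $\bidim(G,X) \leq k$ and $G /\!\!/ \Pcal \in \gprojective$. I will describe how to transport the witness $(X, \Pcal)$ along each minor operation.

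First, for vertex deletion and edge deletion (producing a graph $G'$), keep $X' \coloneqq X \setminus (V(G) \setminus V(G'))$ and let $\Pcal'$ be $\Pcal$ restricted to $X'$ (dropping emptied parts). Then $G' /\!\!/ \Pcal'$ is easily checked to be a subgraph of $G /\!\!/ \Pcal$, hence projective. For the bidimensionality, any $X'$-rooted $(r \times r)$-grid minor of $G'$ is an $X$-rooted $(r \times r)$-grid minor of $G$ since $X' \subseteq X$ and $G'$ is a subgraph of $G$; therefore $\bidim(G',X') \leq \bidim(G,X) \leq k$.

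The main case is edge contraction of $e = uv$, producing $G/e$ with merged vertex $w$. Define $X' \coloneqq X$ if $\{u,v\} \cap X = \emptyset$, and otherwise $X' \coloneqq (X \setminus \{u,v\}) \cup \{w\}$. Let $\Pcal'$ be obtained from $\Pcal$ by replacing each of $u,v$ that lies in $X$ by $w$ and merging the two parts of $\Pcal$ containing $u$ and $v$ (if these are distinct). The key observation is that $(G/e) /\!\!/ \Pcal'$ can be obtained from $G /\!\!/ \Pcal$ by identifying the images of $u$ and $v$; since $u$ and $v$ are adjacent in $G$, their images are either equal or adjacent in $G /\!\!/ \Pcal$, so this identification is a contraction (or a no-op). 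Consequently $(G/e)/\!\!/\Pcal'$ is a minor of $G /\!\!/ \Pcal$, and hence projective. For the bidimensionality, I would lift any $X'$-rooted $(r \times r)$-grid model $\Scal = \{S_x\}$ in $G/e$ to an $X$-rooted model in $G$ by replacing $w$ in whichever branch set contains it with the pair $\{u,v\}$; the edge $uv$ preserves connectedness of the affected branch set, disjointness is maintained, and each branch set still contains a vertex of $X$ since $w \in X'$ forces $\{u,v\} \cap X \neq \emptyset$. Thus $\bidim(G/e, X') \leq \bidim(G,X) \leq k$, giving $\mathsf{idpr}(G/e) \leq k$.

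There is no genuine obstacle; the only subtlety is the bookkeeping in the contraction case, where one must handle correctly the three subcases (neither, exactly one, or both of $u,v$ lie in $X$) to ensure that $\Pcal'$ indeed certifies that $(G/e)/\!\!/\Pcal'$ is a minor of $G /\!\!/ \Pcal$. Finally, the identical argument works verbatim for $\mathsf{idpl}$, since $\gplanar$ is also minor-closed and the only property of $\gprojective$ used above is minor-closedness.
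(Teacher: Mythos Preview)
Your proof is correct. It differs from the paper's argument in structure: you proceed operation-by-operation (vertex deletion, edge deletion, edge contraction) and transport the witness $(X,\Pcal)$ along each step, whereas the paper handles an arbitrary minor $H$ of $G$ in one shot via a model $\{S_x \mid x\in V(H)\}$. In the paper's version, one sets $Y_i=\{x\in V(H)\mid S_x\cap X_i\neq\emptyset\}$, takes $Y=\bigcup_i Y_i$, merges overlapping $Y_i$'s into a genuine partition $\Pcal'$, and then exhibits an explicit model of $H/\!\!/\Pcal'$ inside $G/\!\!/\Pcal$. Your approach is more elementary and makes the commutation of contraction with identification transparent; the paper's approach is more uniform and avoids iterating, at the cost of having to argue directly that the merged partition $\Pcal'$ yields a minor of $G/\!\!/\Pcal$. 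Both rely only on minor-closedness of the target class, so both cover $\mathsf{idpr}$ and $\mathsf{idpl}$ simultaneously.
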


\begin{proof}
Let $G$ be a graph and let  $\mathsf{idpr}(G)≤k$ or $\mathsf{idpl}(G)≤k$.
Then there is $X\subseteq V(G)$ and $\Pcal=(X_1,\dots,X_r)\in\Pcal(X)$ be such that $\bidi(G,X)≤k$ and $G/\!\!/\Pcal\in\Gcal$ where $\Gcal\in\{\gprojective,\gplanar\}$, depending the variant of the parameter that we consider.
Let $H$ be a minor of $G$ and $\{S_x\mid x\in V(H)\}$ be a model of $H$ in~$G$.
For $i\in[r]$, let $Y_i=\{x\in V(H)\mid S_x\cap X_i\ne\emptyset\}$ and $Y=\bigcup_{i\in[r]}Y_i$.
Observe that  $\bidi(H,Y)\le \bidi(G,X)≤k$.
Note that the sets $Y_i$ may intersect, so $(Y_1,\dots,Y_r)$ is not a partition.
Let $\Pcal'=(Y_1',\dots,Y_s')\in\Pcal(Y)$ be such that for each $i\in[r]$, there is $j\in[s']$ such that $Y_i\subseteq Y_j'$ and, if $Y_j'\setminus Y_i\ne\emptyset$, then there is $i'\in[r]$ such that $Y_{i'}\subseteq Y_j'$ and $Y_i\cap Y_{i'}\ne\emptyset$.
Then, $\{S_x\mid x\in V(H)\setminus Y\}\cup\{\bigcup_{x\in Y_j'}S_x\setminus X\cup\{x_i\mid Y_i\subseteq Y_j'\}\mid j\in[s]\}$ is a model of $H/\!\!/\Pcal'$ in $G/\!\!/\Pcal$, where $x_i$ is the vertex of $G/\!\!/\Pcal$ obtained from the identification of~$X_i$.
Given that, in any case,  $\Gcal$ is minor-closed and that $G/\!\!/\Pcal\in\Gcal$, it implies that $\Mcal(H,Y)\ne\emptyset.$
Therefore, $\mathsf{idpr}(H)≤k$ or $\mathsf{idpl}(H)≤k$, as required
\end{proof}

It is easy to see that if $\p$ is a minor-monotone parameter, then the same holds also for its clique-sum extension $\p^{\star}$. This along with  \autoref{lem_id_min},  imply 
that   $\idpr^\star$ is also minor-monotone. This fact, together with the fact that  that $\idpr^\star(\mathscr{J}_k)=Ω(k^{1/384})$, which we prove in the next two subsections,  implies that $\p_{\mathscr{J}} \preceq \idpr^{\star}$ which is the main result of this section.
\subsection{Identifications in a long-jump grid}\label{sub1}

We first prove that $\p_{\mathscr{J}} \preceq \idpr$.

\begin{figure}[ht]
\begin{center}
\includegraphics[scale = 0.55]{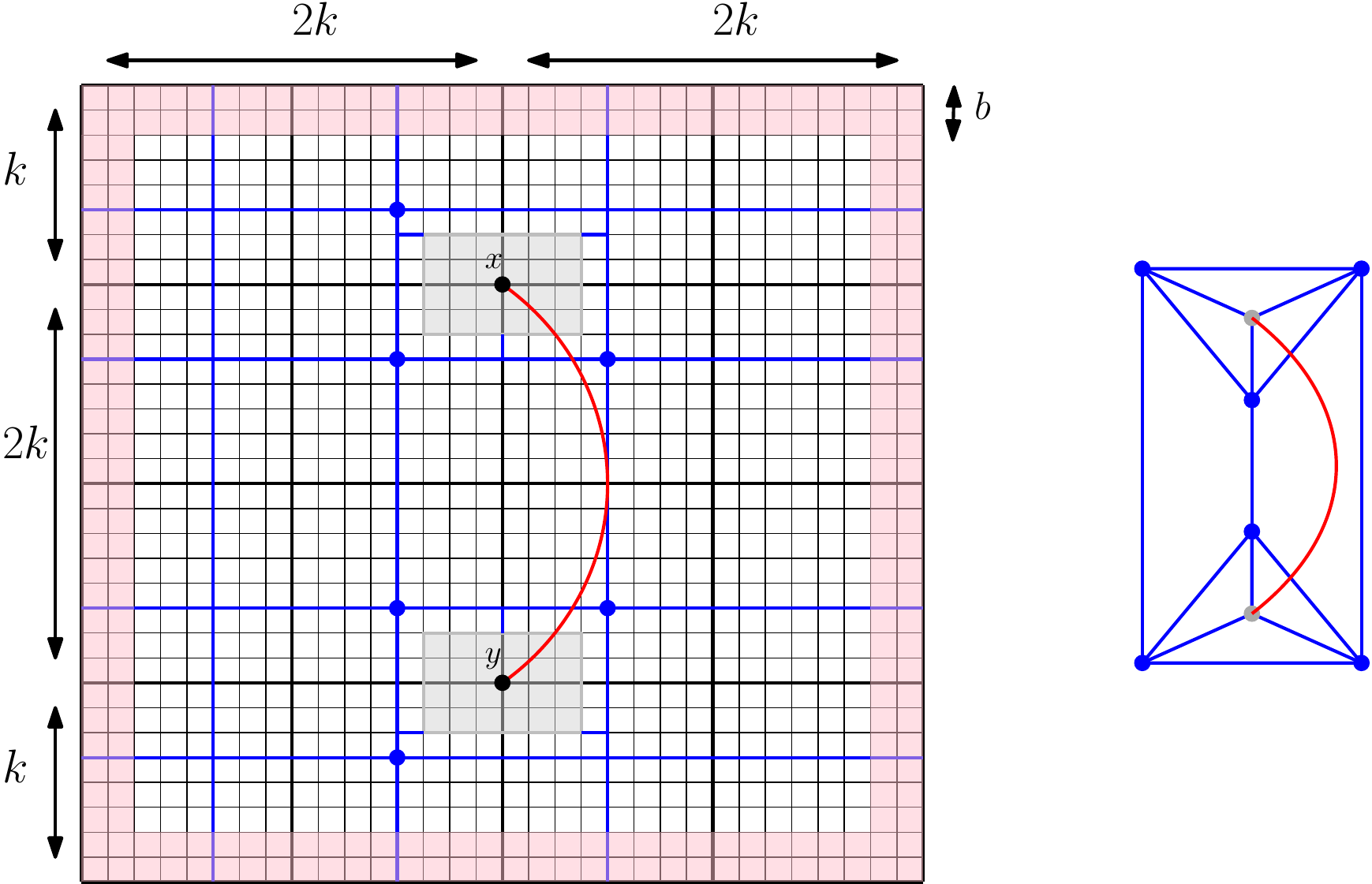}
\end{center}
  \caption{The graph $\mathscr{J}_8'$ and a  non-projective graph $F$.}
\label{fig_vgrtsrchyd}
\end{figure}

We define $\mathscr{J}' = \{\mathscr{J}_k' \mid k \in \mathbb{N}\}$ where $\mathscr{J}_k'$ is the graph obtained from a $(4k+1)\times (4k+1)$-grid $G_{4k+1}$ after adding the edge $\{x,y\}=\{(k+1,2k+1),(3k+1,2k+1)\}$ (see \autoref{fig_vgrtsrchyd} for an example where $k=8$).
It is easy to see that $\p_{\mathscr{J}}$ and $\p_{\mathscr{J}'}$ are linearly equivalent.

\medskip
Let $F$ be the graph depicted in \autoref{fig_vgrtsrchyd} (on the right).
$F$ is known to be a minor-obstruction of the projective plane (see $\Dcal_{17}$ in \cite{AsadiPT11mino,MoharT01Graphs}).

We need the following result.

\begin{proposition}[\!\!\cite{DemaineFHT04bidi}]\label{@subjection}
Let $G_m$ be the $(m\times m)$-grid and $S$ be a subset of the vertices in the central $(m-2\ell)\times(m-2\ell)$-subgrid of $G_m$, where $\ell=\lfloor\sqrt[4]{|S|}\rfloor$.
Then $G$ contains the $(\ell\times \ell)$-grid as an $S$-minor.
 \end{proposition}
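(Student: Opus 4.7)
The plan is to construct, directly inside $G_m$, a model of the $(\ell \times \ell)$-grid whose $\ell^2$ branch sets each contain a distinct vertex of $S$. The argument splits naturally into two parts: a combinatorial selection of $\ell^2$ vertices of $S$ arranged in a monotone grid pattern inside the central subgrid, followed by a routing stage that uses the $\ell$-thick boundary annulus of $G_m$ around the central subgrid to wire these representatives into a grid minor.

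First I would perform a double pigeonhole on the coordinates of $S$. Writing $N=|S|$ so that $\ell^4 \leq N$, I would sort the vertices of $S$ by column coordinate and split them into $\ell$ consecutive blocks $S_1,\ldots,S_\ell$, each of size at least $\lceil N/\ell\rceil \geq \ell^3$. Within each $S_i$ I would then sort by row coordinate and split again into $\ell$ consecutive blocks $S_{i,1},\ldots,S_{i,\ell}$, each of size at least $\ell^2$. Choosing one representative $v_{i,j} \in S_{i,j}$ yields $\ell^2$ vertices that are strictly monotone in the column coordinate across $i$ and, for each fixed $i$, strictly monotone in the row coordinate across $j$. In particular, between any two consecutive representatives (in whichever coordinate is relevant) there remain many unused rows or columns of the central subgrid available for routing.

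Next, for each $(i,j) \in [\ell]^2$, I would build a branch set $B_{i,j}$ shaped as an L (or short comb) of grid segments passing through $v_{i,j}$: a vertical subpath from $v_{i,j}$ reaching a private horizontal highway dedicated to the $j$-th row of the target minor, and a horizontal subpath from $v_{i,j}$ reaching a private vertical highway dedicated to the $i$-th column of the target minor. The $2\ell$ rows and columns of $G_m$ that lie outside the central subgrid supply exactly $\ell$ disjoint horizontal and $\ell$ disjoint vertical tracks, one per row and one per column of the target $(\ell \times \ell)$-grid. The arms from each $v_{i,j}$ to these highways can be routed through the unused rows and columns provided by the monotone pattern, so that branch sets in different rows or columns of the target minor live in pairwise disjoint strips of $G_m$. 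The adjacency $B_{i,j} \sim B_{i+1,j}$ is then realized along the $j$-th horizontal highway, and $B_{i,j} \sim B_{i,j+1}$ along the $i$-th vertical highway.

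The main obstacle will be the routing stage: one must allocate the private highways and the connecting arms consistently so that, simultaneously for all $\ell^2$ branch sets, the required adjacencies are realized and no two branch sets share a vertex. The two ingredients that make this go through are the monotone ordering of the representatives coming out of the double pigeonhole, which renders the wiring essentially planar and collision-free, and the $\ell$-thick buffer around $S$, which supplies exactly enough dedicated tracks to separate the $2\ell$ rows and columns of the target minor. Once the routing is fixed, connectivity of each $B_{i,j}$ and the fact that $v_{i,j} \in B_{i,j} \cap S$ follow immediately from the L-shape construction, yielding the desired $S$-rooted $(\ell \times \ell)$-grid minor of $G_m$.
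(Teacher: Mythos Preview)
The paper does not prove this proposition; it is imported from \cite{DemaineFHT04bidi}, so there is no in-paper argument to compare against. Your high-level plan is the standard one, but the routing step you sketch has a genuine gap.

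The double pigeonhole yields column-monotonicity in $i$ and, for each fixed $i$, row-monotonicity in $j$. It gives \emph{no} relation between the rows of $v_{i,j}$ and $v_{i',j}$ for $i\neq i'$, and nothing prevents all of $v_{i,1},\dots,v_{i,\ell}$ from lying in a single column of $I_i$ (as happens, for instance, when all of $S_i$ sits in one column). In that case your vertical arms to the highways $H_1,\dots,H_\ell$ all run along that column and overlap pairwise. Similarly, the horizontal arm from $v_{i,j}$ to $V_i$ must traverse every block $I_{i'}$ with $i'<i$, and you have no mechanism to keep it disjoint from the branch sets $B_{i',j'}$ living there; the ``unused rows and columns provided by the monotone pattern'' are internal to each $I_i$ and do not help across blocks. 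The routing \emph{can} be made to work, but it needs a further ingredient---typically a preliminary dichotomy on whether $S$ meets at least $\ell^2$ distinct columns (so that one can reserve $\ell$ private connector columns per block) or is instead concentrated in few columns and therefore spreads over at least $\ell^2$ distinct rows (and one argues symmetrically). That missing case analysis is precisely what separates your sketch from a proof.
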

 
We now have all ingredients for the main result of this part.
We essentially prove that if a set $X\subseteq V(\mathscr{J}_k')$ has too small biggest grid parameter, then every graph in $\Mcal(\mathscr{J}_k',X)$ contains $F$ as a minor.
And since $F$ is not projective, it thus implies that $\Mcal(\mathscr{J}_k',X)\cap\Gcal_{\sf projective}=\emptyset$.

\begin{lemma}\label{jopegftasi}
Let $k\in\Nbbb$.
Then $\idpr(\mathscr{J}_k)=Ω(k^{1/4})$.
\end{lemma}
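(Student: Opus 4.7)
The plan is to prove the contrapositive: if $t\coloneqq \bidim(\mathscr{J}_k',X)$ is too small compared to $k^{1/4}$, then every graph in $\Mcal(\mathscr{J}_k',X)$ contains the non-projective graph $F$ of \autoref{fig_vgrtsrchyd} as a minor, which forces $\Mcal(\mathscr{J}_k',X)\cap \gprojective=\emptyset$ and hence rules out $X$ as a witness to a small value of $\idpr(\mathscr{J}_k')$. Since $\mathscr{J}_k$ and $\mathscr{J}_k'$ are linearly equivalent, the desired bound will follow.

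The first step is to confine $X$ to a thin outer annulus of the grid, via \autoref{@subjection}. Let $C_j$ denote the central $((4k+1)-2j)\times((4k+1)-2j)$-subgrid of the grid underlying $\mathscr{J}_k'$ and set $j=t+1$. If $|X\cap V(C_{t+1})|\ge (t+1)^4$, then any subset $S\subseteq X\cap V(C_{t+1})$ of size $(t+1)^4$ satisfies the hypothesis of \autoref{@subjection} with $\ell=t+1$, yielding a $((t+1)\times(t+1))$-grid as an $S$-rooted, hence $X$-rooted, minor of $\mathscr{J}_k'$; this contradicts $\bidim(\mathscr{J}_k',X)\le t$. Therefore $|X\cap V(C_{t+1})|<(t+1)^4$, and as long as $t\le k-1$ the jump edge $xy$ lies inside $C_{t+1}$.

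The second step constructs an $F$-minor of $\mathscr{J}_k'$ whose branch sets lie in $(V(C_{t+1})\setminus X)\cup\{x,y\}$, with $\{x\}$ and $\{y\}$ as the two branch sets containing the endpoints of the jump edge, so that the jump edge itself is the corresponding model edge. Observe that $F$ is edge-apex: removing the long edge drawn on the right of \autoref{fig_vgrtsrchyd} leaves a planar graph, so the "planar part" of $F$ has a planar model of constant size embeddable in a grid. Anchoring two distinguished branch sets at $x$ and $y$ and invoking a classical routing argument for vertex-disjoint paths and subtrees in grids, we realize the full $F$-model inside $C_{t+1}$ while skirting the at most $(t+1)^4$ vertices of $X$. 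Provided $t\le c\cdot k^{1/4}$ for a small constant $c>0$ (otherwise there is nothing to prove), the region $C_{t+1}$ has side length $\Theta(k)$ and contains only $O(k)$ forbidden vertices, so there is ample room for such a routing.

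The third step verifies that this model survives an arbitrary identification $\Pcal\in\Pcal(X)$. The only branch sets of the model that can intersect $X$ at all are the singletons $\{x\}$ and $\{y\}$, so the only way the model can fail is if $\Pcal$ places $x$ and $y$ in the same class, collapsing the jump edge to a loop. For this exceptional case we use an alternative construction: identifying two interior grid vertices of a sufficiently large grid produces a non-planar structure at the merged vertex, and combining this local non-planarity with the remaining grid structure and the sparsity bound of Step~1, one can again build an $F$-minor of $\mathscr{J}_k'/\!\!/\Pcal$ disjoint from the image of $X$. In every case, $\mathscr{J}_k'/\!\!/\Pcal$ contains $F$ as a minor and is therefore not projective, which yields $\idpr(\mathscr{J}_k')=\Omega(k^{1/4})$. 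The main obstacle is this exceptional sub-case of Step~3: constructing an $F$-minor around a merged interior vertex requires a small local topological analysis, whereas the rest of the argument is a routine routing-plus-pigeonhole computation built on top of \autoref{@subjection}.
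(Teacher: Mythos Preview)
Your overall plan and Step~1 are correct: they mirror the paper's strategy of showing that every $G'\in\Mcal(\mathscr{J}_k',X)$ contains the obstruction $F$ whenever $\bidim(\mathscr{J}_k',X)$ is small, and your contrapositive use of \autoref{@subjection} is equivalent to the paper's pigeonhole bound $|X_b|\ge k-b$. The gap is in Steps~2--3. Your Step~3 ``exceptional sub-case'' (when $\Pcal$ puts $x$ and $y$ in the same class, so the jump edge collapses to a loop) is acknowledged but not proved: you only assert that ``one can again build an $F$-minor'' via ``a small local topological analysis'', with no construction given. There is no obvious reason a large grid with two interior vertices identified must contain the specific obstruction $F$, let alone an $F$-model avoiding the image of $X$, and this is precisely the step where the non-projectivity has to come from. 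Step~2 is also thinner than you suggest: insisting on singleton branch sets $\{x\},\{y\}$ caps their model-degree at $5$, which you have not checked against the degrees of the long-edge endpoints in $F$, and the ``classical routing argument'' around $(t+1)^4$ forbidden vertices with two prescribed anchors is asserted rather than carried out.

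The paper bypasses all of this with a device you did not use: since $\gprojective$ is minor-closed, after performing the identification $\Pcal$ you may further \emph{contract} any connected subgraph of $G/\!\!/\Pcal$ and remain projective. A simple count (assuming $|X_b|\le k-b-1$) locates a handful of full rows and columns of the underlying grid that miss $X$ entirely (the ``blue paths''); these frame two connected grid blocks $V_x\ni x$ and $V_y\ni y$, which are allowed to contain vertices of $X$. Contracting $V_x$ and $V_y$ in $G/\!\!/\Pcal$ is a minor operation, so the result is still projective, and the $F$-minor is then read off directly from the blue paths together with the contracted blocks, as in \autoref{fig_vgrtsrchyd}. There is no routing around $X$ inside the grey regions and no separate case for $x\sim y$ to patch; the bidimensionality bound then follows from \autoref{@subjection} applied at a suitably chosen margin.
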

\begin{proof}

Given that $\p_{\mathscr{J}}$ and $\p_{\mathscr{J}'}$ are linearly equivalent, it is enough to prove the result for $\p_{\mathscr{J}'}$.
Let $X\subseteq V(\mathscr{J}_k')$ and $\Pcal\in\Pcal(X)$ be such that $G'\coloneqq \mathscr{J}_k'/\!\!/\Pcal\in\Gcal_{\sf projective}$. Let us prove that $\bidim(\mathscr{J}_k',X)=Ω(k^{1/4})$.

For each margin $b\in[0,\lfloor k/2\rfloor]$, let $X_{b}$ be the set of vertices of $X$ that belong in the $(k-b) \times (k-b)$-subgrid $G_{k-b}$ of $G_{4k+1}$ and $\Pcal_b=\Pcal\cap X_b$.
We claim that $|X_b|\geq k-b$.
Suppose towards a contradiction that $|X_b|\leq k-b-1$.
Then, in $G_{k-b}$, there are at least two vertical paths on the left of $x$ and $y$, one vertical path on the right of $x$ and $y$, one horizontal path above $x$, two horizontal paths between $x$ and $y$, and one horizontal path below $y$, whose vertices do not intersect $X$ (drawn in blue in \autoref{fig_vgrtsrchyd}).
Let $V_x,V_y\subseteq V(G_{k-b})$ be the sets of all the vertices in the two squares containing $x$ and $y$ surrounded by the aforementioned paths (drawn in grey in \autoref{fig_vgrtsrchyd}), respectively.
Let $\Pcal'$ be obtained from $\Pcal$ by adding two new parts: $V_x$ and $V_y$.
Given that $G[V_x]$ (resp. $G[V_y]$) is connected, the identification of all vertices in $V_x$ (resp. $V_y$) is equivalent to the contraction of all edges in $G[V_x]$ (resp. $G[V_y]$).
Given that $\mathscr{J}_k'/\!\!/\Pcal\in\Gcal_{\sf projective}$ and that $\Gcal_{\sf projective}$ is closed under contractions, we thus have $G'/\!\!/V_x/\!\!/V_y=G/\!\!/\Pcal'\in\Gcal_{\sf projective}$.
However, as depicted in \autoref{fig_vgrtsrchyd}, $F$ is a minor of $G/\!\!/\Pcal'$, a contradiction.
We conclude that $|X_{b}|\geq  k-b$.

We choose the margin $b_k$ as the maximum integer $b$ such that    $\lfloor\sqrt[4]{{k-b}}\rfloor\geq 2b$. This implies that 
 $ \lfloor \sqrt[4]{|X_{b_k}|} \rfloor\geq 2b_k$, therefore, from \autoref{@subjection}, $\bidim(\mathscr{J}_k',X)\geq \lfloor\sqrt[4]{|X_{b_k}|} \rfloor=2b_{k}$. As $b_{k}=Ω(k^{1/4})$, we are done.
\end{proof}

\subsection{Lower bound under the presence of clique-sums}\label{sub2}

In this section we extend the polynomial bound  in \cref{jopegftasi} from $\idpr$ to $\idpr^\star$.
For this we need a series of definitions and preliminary results from \cite{ThilikosW2025excluding}.

We say that a graph G is \emph{$(f,k)$-tightly connected} for some $f:\Nbbb\to\Nbbb$ and $k\in\Nbbb$, if
for every separation $(B_1,B_2)$ of $G$ of order $q<k$ such that both $G[B_{1}\setminus B_{2}]$ and $G[B_{2}\setminus B_{1}]$ are connected, it holds that one of $B_1,$ $B_2$ has at most $f(q)$ vertices.
We say that a parametric graph $\mathscr{H}=\langle \mathscr{H}_k \rangle_{k\in \Nbbb}$ is \emph{$f$-tightly connected} if for each $k\in\Nbbb$, $\mathscr{H}_k$ is $(f,k)$-tightly connected.

\begin{proposition}[Lemma 4.9, \cite{ThilikosW2025excluding}]
\label{letas_s}
Let $r\in\Nbbb$ and let $g,h:\Nbbb\to\Nbbb$ be two non-decreasing functions.
Let $\p$ be a minor-monotone graph parameter such that for every graph $H$, $\hw(H)≤h(\p(H))$.
Let $G$ be a $(g,h(r)+1)$-tightly connected graph with $|V(G)|>2g(h(r))$ and $\p^\star(G)\le r$.
Then 
$G$ admits a tree decomposition $({T},{\beta})$ where 
${T}$ is a star with center $t$, where for the torso $G_{t}$ of $(T, \beta)$ at $t$, $\p(G_{t})≤r$, 
 and where, for every  $e=tt'\in E({T})$, $G[{\beta }(t')\setminus {\beta }(t)]$ is a connected graph and $|{\beta }(t')|≤g(h(r))$.
\end{proposition}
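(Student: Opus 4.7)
The plan is to start from any tree decomposition witnessing $\p^\star(G)\le r$, refine it so that every tree-edge induces a separation with connected sides, use the tight-connectedness hypothesis to show that all but a bounded amount of $G$ sits in one ``central'' bag, and then collapse the decomposition into the desired star.

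First I would fix a tree decomposition $(T',\beta')$ of $G$ with $\p(G'_s)\le r$ at every node $s$, where $G'_s$ denotes the torso of $(T',\beta')$ at $s$. Since the adhesion $\beta'(s)\cap\beta'(s')$ across any edge $ss'$ of $T'$ is a clique in $G'_s$, the hypothesis $\hw(H)\le h(\p(H))$ immediately gives $|\beta'(s)\cap\beta'(s')|\le\hw(G'_s)\le h(r)$. Writing $(A_e,B_e)$ for the separation of $G$ induced by an edge $e=ss'$ (so $A_e=\bigcup\{\beta'(u):u \text{ on the } s\text{-side of } T'-e\}$ and symmetrically for $B_e$), this yields $|A_e\cap B_e|\le h(r)<h(r)+1$ for every $e$.

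Next I would refine $(T',\beta')$ so that for every edge $e$ both interiors $G[A_e\setminus B_e]$ and $G[B_e\setminus A_e]$ are connected, by the standard trick of splitting each branch whose induced interior is disconnected into one sub-branch per connected component of that interior. The refined bags are restrictions of the original bags and the refined adhesions are subsets of the corresponding original adhesions, so every new torso is (at worst) an induced subgraph of the original torso at the same node; by minor-monotonicity of $\p$ the bound $\p\le r$ survives at every refined torso. With connected interiors established, $(g,h(r)+1)$-tight connectedness of $G$ now forces $\min(|A_e|,|B_e|)\le g(|A_e\cap B_e|)\le g(h(r))$ for every edge $e$. Because $|V(G)|>2g(h(r))$, the ``small side'' is uniquely defined for each edge, so orienting every edge toward its larger side gives an acyclic orientation of the refined tree; any such orientation admits a sink $t$, and at $t$ every neighbour $s'$ satisfies $|B_{ts'}|\le g(h(r))$.

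Finally I would assemble the desired star $(T,\beta)$: take $t$ as the centre, set $\beta(t)=\beta'(t)$, and attach one leaf $t_{s'}$ per neighbour $s'$ of $t$ in the refined tree, with $\beta(t_{s'})=B_{ts'}$. Routine verification confirms this is a valid tree decomposition of $G$. By construction $|\beta(t_{s'})|\le g(h(r))$, and $G[\beta(t_{s'})\setminus\beta(t)]=G[B_{ts'}\setminus A_{ts'}]$ is connected from the refinement. Moreover, the adhesion of the star-edge $tt_{s'}$ equals $\beta'(t)\cap\beta'(s')$, which is exactly the adhesion of the edge $ts'$ in the refined tree, so the torso of the star at $t$ coincides with the refined torso at $t$, giving $\p(G_t)\le r$. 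The main technical obstacle I anticipate is the refinement step: one must verify that splitting branches by connected components truly produces new torsos that are subgraphs (hence minors) of the original torsos, so that minor-monotonicity of $\p$ can be invoked to preserve the torso bound; the remaining orient-sink-assemble portion is a routine argument on trees.
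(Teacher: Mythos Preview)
The paper does not prove this proposition; it is quoted as Lemma~4.9 of \cite{ThilikosW2025excluding} and used as a black box in \cref{lowbound}. Your argument is the natural one and is correct: adhesions are cliques in the torsos, hence bounded by $h(r)$ via the Hadwiger-number hypothesis; refining to connected interiors lets $(g,h(r)+1)$-tight connectedness single out a unique small side at every tree edge; orienting each edge toward its large side and taking a sink (which exists because any orientation of a tree is acyclic) gives the centre of the desired star, and the verification that the star's torso at the centre coincides with the refined torso there is exactly as you describe. The refinement step you flag is indeed the only point needing attention, and your justification is the right one: refined bags are subsets of original bags and refined adhesions are subsets of original adhesions, so every refined torso is an induced subgraph---hence a minor---of some original torso, and minor-monotonicity of $\p$ carries the bound across.
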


\begin{proposition}[Lemma 4.11, \cite{ThilikosW2025excluding}]\label{blackbox}
Let $G$ be a 4-connected  graph of maximum degree $\delta $.
Let $r\in\Nbbb$ and let $(T,\beta)$ be a tree decomposition of $G$ where $T$ is a star
with center $t$ and such that for every  $e=tt'\in E(T)$, $G[\beta (t')\setminus \beta(t)]$ is a connected graph on at most ${l}$ vertices. Let $G_{t}$ be the torso of $(T,\beta)$ at $t$ and 
we denote  $G^c=(V(G),E(G)\cup E(G_{t}))$, $m=\hw(G^c)$, 
and $B=\bigcup_{t'\in V(T)\setminus\{t\}}\beta (t')$.
There is a function $f_{\ref{blackbox}}:\Nbbb^4\to\Nbbb$ such that
if  $X$ is a subset of $V(G_t)$ where $\bidim(G_t,X)≤  r$, then  $\bidim(G^c,X\cup B)≤ f_{\ref{blackbox}}(r,m,l,\Delta)$.
Moreover, $f_{\ref{blackbox}}(r,m,l,\Delta)\in \mathcal{O}(m^{48}+r^{1/2}\cdot ({m^8l^4}+\delta m^{24}{l^2}))\subseteq \poly(m+l+r+\delta )$.
\end{proposition}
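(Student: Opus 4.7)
The plan is to prove the statement by contrapositive: assuming $\bidim(G^c, X\cup B)$ is large, we produce a correspondingly large $X$-rooted grid minor in $G_t$. Concretely, given an $(X\cup B)$-rooted $(q\times q)$-grid model $\{S_v : v\in V(H)\}$ in $G^c$, I would transform it into an $X$-rooted grid minor in $G_t$ whose side length is only polynomially smaller than $q$; choosing $q$ slightly above $f_{\ref{blackbox}}(r,m,l,\delta)$ will then contradict $\bidim(G_t,X)\le r$.

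For each leaf $t'\ne t$, set $L_{t'}=\beta(t')\setminus\beta(t)$ and $A_{t'}=\beta(t)\cap\beta(t')$. Three structural features enable the transformation: (i) $G[L_{t'}]$ is connected with $|L_{t'}|\le l$; (ii) $A_{t'}$ is a clique both in $G_t$ and in $G^c$; (iii) every edge of $G$ incident to $L_{t'}$ either stays inside $L_{t'}$ or has its other endpoint in $A_{t'}$. By (iii), a branchset $S_v$ can exit $L_{t'}$ only through $A_{t'}$, and by (ii) we may replace $S_v\cap L_{t'}$ by a connected subset of $A_{t'}$ (indeed, by a single vertex) while preserving every grid adjacency $uv\in E(H)$. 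The cost is twofold: branchsets whose only root in $X\cup B$ lay in $L_{t'}$ become rootless in $X$, and several branchsets may be forced to land on the same vertex of $A_{t'}$, producing collisions that destroy some grid vertices.

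The heart of the argument is to show that only a polynomial number of grid vertices are damaged by this surgery, so that the deletion of few rows and columns isolates a clean subgrid. Here the four parameters intervene: the max-degree bound $\delta$ limits the number of branchsets that can cross a single cut $A_{t'}$ (at most $\delta|A_{t'}|$); the size bound $l$ limits the number of roots that disappear per leaf bag; and the Hadwiger-number bound $m=\hw(G^c)$ globally caps the number of leaves that can simultaneously host deep pieces of distinct branchsets, since such pieces together form a clique minor in $G^c$. Finally, the 4-connectivity of $G$ is used through a Menger-type rerouting to reconnect trimmed branchsets whose root in $X$ survived, so that they remain connected in $G_t$ after the surgery. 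The hardest step, and where the unusual exponents $m^{48}$ and the $r^{1/2}$ factor of the advertised bound come from, is balancing how many rows/columns need to be deleted to absorb the collisions against how many branchsets per leaf bag are lost to rootlessness; a counting argument bounding the number of vertex-disjoint ``bad'' branchsets by $\mathcal{O}(m\cdot l)$ and extracting a square subgrid of side $q-\mathcal{O}(\sqrt{ml})$ yields the claimed polynomial dependence $f_{\ref{blackbox}}\in \poly(m+l+r+\delta)$.
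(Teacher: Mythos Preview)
This proposition is not proved in the present paper; it is imported verbatim as Lemma~4.11 of \cite{ThilikosW2025excluding} and used as a black box in the proof of \autoref{lowbound}. So there is no ``paper's own proof'' to compare against here.

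Evaluating your sketch on its own merits: the overall contrapositive strategy (start from a large $(X\cup B)$-rooted grid in $G^c$, push branchsets out of the leaf bags $L_{t'}$ into the adhesion cliques $A_{t'}$, and clean up the resulting collisions by deleting rows and columns) is a reasonable shape for such an argument. However, several of the key steps you invoke are not justified and, as stated, appear to be wrong. First, the claim that ``the Hadwiger-number bound $m=\hw(G^c)$ globally caps the number of leaves that can simultaneously host deep pieces of distinct branchsets, since such pieces together form a clique minor in $G^c$'' does not follow: branchsets of a grid minor are only required to be adjacent when the corresponding grid vertices are adjacent, so pieces sitting in different leaf bags have no reason to be pairwise adjacent, and hence do not form a clique minor. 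You need a different mechanism to bring $m$ into the bound. Second, the assertion that ``at most $\delta|A_{t'}|$'' branchsets can cross a single adhesion is unclear; since branchsets are vertex-disjoint, the number that \emph{meet} $A_{t'}$ is at most $|A_{t'}|$, and it is not obvious how the degree bound $\delta$ enters at this point. Third, the final ``counting argument bounding the number of vertex-disjoint `bad' branchsets by $\mathcal{O}(m\cdot l)$'' is simply asserted, and neither the $m^{48}$ term nor the $r^{1/2}$ factor in the stated bound is derived from anything you wrote.

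In short, the high-level plan is plausible, but the actual work --- why $m$, $l$, $\delta$, and the $4$-connectivity each contribute what they do, and how the specific polynomial emerges --- is missing, and at least one of your stated mechanisms (the clique-minor argument for $m$) is incorrect as written. If you want to fill this in you will need to consult the original proof in \cite{ThilikosW2025excluding}.
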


\begin{lemma}
\label{@neutrality}
For every graph $G$ it holds that  $\hw(G)\leq (\mathsf{idpr}(G))^2+5$.
\end{lemma}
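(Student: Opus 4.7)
The plan is to unpack the definition of $\mathsf{idpr}(G)$ and then see how a large clique minor of $G$ interacts with the identification operation. Setting $k \coloneqq \mathsf{idpr}(G)$, I obtain a set $X \subseteq V(G)$ and a partition $\Pcal$ of $X$ such that $\bidim(G, X) \le k$ and $G' \coloneqq G /\!\!/ \Pcal$ is embeddable in the projective plane. Suppose $K_h$ is a minor of $G$ with branch sets $S_1, \ldots, S_h$; I will split the indices according to $A \coloneqq \{j \in [h] \mid S_j \cap X \neq \emptyset\}$ and $B \coloneqq [h] \setminus A$, and bound $|A|$ and $|B|$ separately. The intuition is that $B$-branch sets are untouched by the identification and are therefore constrained by the projective structure of $G'$, whereas $A$-branch sets provide a rich $X$-rooted substructure inside $G$ itself.

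For the first bound, note that for every $j \in B$ the set $S_j$ lies entirely in $V(G) \setminus X$, and likewise every $G$-edge between two such branch sets has both endpoints outside $X$. Consequently, the branch sets $\{S_j \mid j \in B\}$ persist as pairwise disjoint connected subsets of $V(G')$ and the required inter-branch edges survive, exhibiting $K_{|B|}$ as a minor of $G'$. Since the projective plane does not admit $K_7$, I would conclude $|B| \le 6$. For the second bound, every $S_j$ with $j \in A$ contains a vertex of $X$, so $\{S_j \mid j \in A\}$ realises $K_{|A|}$ as an $X$-rooted minor of $G$. Using that the $(m \times m)$-grid appears as a subgraph of $K_{m^2}$, taking $m = \lfloor \sqrt{|A|} \rfloor$ yields an $(m \times m)$-grid as an $X$-rooted minor of $G$ built from $m^2$ of the $A$-branch sets. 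Hence $\bidim(G, X) \ge \lfloor \sqrt{|A|} \rfloor$, and combined with $\bidim(G, X) \le k$ this forces $|A| \le (k+1)^2 - 1 = k^2 + 2k$.

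Combining the two bounds gives $h \le k^2 + 2k + 6$, which already establishes the quadratic inequality $\hw(G) \le (\mathsf{idpr}(G))^2 + O(\mathsf{idpr}(G))$. The main obstacle is therefore not the structural content but the sharpening of the additive constant down to exactly $+5$: the floor in $\lfloor \sqrt{|A|} \rfloor$ and the bound $|B| \le 6$ coming from the nonembeddability of $K_7$ are each a little lossy, and squeezing out the stated constant would require a more careful estimate, perhaps by absorbing some of the $B$-branch sets into a refined projective drawing of $G'$ or by arguing that at most $|B| = 5$ of them can be chosen simultaneously disjoint from $X$ in an extremal configuration. For the application through \cref{blackbox} and the lower bound $\p_{\mathscr{J}} \preceq \mathsf{idpr}^\star$, only the $O(1)$ additive constant matters, so the heart of the proof is the split $A \sqcup B$ together with the two complementary bounds above.
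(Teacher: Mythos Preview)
Your argument is correct and yields $\hw(G)\le(\mathsf{idpr}(G))^2+O(\mathsf{idpr}(G))$, which is all that the application through \cref{letas_s} and \cref{blackbox} requires. The paper follows a closely related but slightly different route: instead of splitting the branch sets of a $K_h$-model in $G$ according to whether they meet $X$, it first invokes the minor-monotonicity of $\mathsf{idpr}$ (\cref{lem_id_min}) to transfer the whole problem to $K_t$ itself, obtaining $Y\subseteq V(K_t)$ with $\bidim(K_t,Y)\le k$ and some $K_t/\!\!/\Pcal\in\gprojective$. Since identifying the parts of $\Pcal$ turns $K_t$ into $K_{t-|Y|+|\Pcal|}$, which must have at most six vertices, one reads off $|Y|\ge t-5$ directly and finishes with $k\ge\bidim(K_t,Y)\ge\lfloor|Y|^{1/2}\rfloor$. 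Your direct approach has the virtue of being self-contained (no appeal to \cref{lem_id_min}); the paper's reduction to $K_t$ makes the counting marginally sharper, essentially because the merged vertex resulting from the identification still occupies a slot in the projective $K_{\le 6}$, which is why the paper gets the analogue of ``$|B|\le 5$'' rather than your $|B|\le 6$. Both arguments incur the same floor-function slack in the $\sqrt{|A|}$ step, so neither quite nails the stated constant $+5$ without a small extra remark; as you note, this is immaterial.
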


\begin{proof}
Let $t=\hw(G)$ and let $\mathsf{idpr}(G)≤k$.
Recall also that $K_{7}$ cannot be embedded in the projective plane while $K_{6}$ can \cite{MoharT01Graphs}. This implies that if $G$ is projective, then $
\hw(G)≤6$.
From \autoref{lem_id_min}, 
 given that $K_t$ is a minor of $G$, there exists $Y\subseteq V(K_t)$ such that $\bidim(K_t,Y)\le k$ and $\Mcal(K_t,Y)\cap\gprojective\ne\emptyset$.
Let $G'\in \Mcal(K_t,Y)\cap\gprojective$. As $G'$ is a projective graph,  $\hw(G')\le 6$.
Therefore, the identification of a partition of $Y$ reduces the size of the clique  by at least $t-6$.
Hence, given that identifying $p+1$ vertices reduce that size by at most $p$, we conclude that $|Y|\ge t-6+1$.
Notice that $k\ge\bidim(K_{t},Y)\geq  \lfloor |Y|^{1/2}\rfloor \geq  (t-5)^{1/2}.$
This implies that $t\leq k^2+5$.
\end{proof}

\begin{figure}[h!]
\begin{center}
\includegraphics[scale = 1.1]{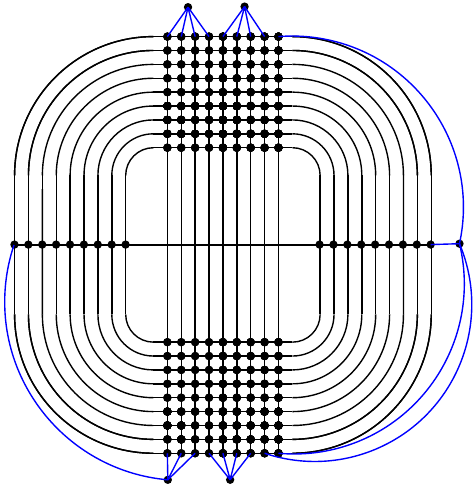}
\end{center}
  \caption{The graph $\mathscr{I}_4$.}
\label{fig_regular}
\end{figure}

\paragraph{A new variant of the long-jump grid.}

For the purposes of the proofs in this section, we consider an ``enhanced version'' of $\mathscr{J}_k,$ namely $\mathscr{I} = \langle \mathscr{I}_k \rangle_{k \in \Nbbb}$ which is obtained from $\mathscr{J}_{2k+1}$ as follows.
We partition the $4k+4$ vertices in the outermost cycle of $\mathscr{J}_{2k+1}$ into $k+1$ sets of four consecutive vertices.
For each such set, we add a new vertex adjacent to the four vertices of the set, as illustrated in \autoref{fig_regular}.
These $k+1$ new vertices are called \emph{satellite} vertices.
Note that $\mathscr{I}_k$ is 4-connected and 4-regular.
Moreover, we easily have that $\p_{\mathscr{J}}$ and $ \p_{\mathscr{I}}$ are linearly equivalent.

\begin{lemma}
\label{@vegetables}
 $\mathscr{J}$ is $f_{\ref{@vegetables}}$-tightly connected for $f_{\ref{@vegetables}}(q)=(2q+1)^2$.
\end{lemma}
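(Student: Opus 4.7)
The plan is to exploit the grid substructure of $\mathscr{J}_k$ to show that any ``balanced'' small separator is impossible. I would fix a separation $(B_1, B_2)$ of $\mathscr{J}_k$ of order $q < k$, with $S \coloneqq B_1 \cap B_2$ and $A_i \coloneqq B_i \setminus B_{3-i}$ connected for $i \in [2]$, and aim to show that $\min\{|B_1|, |B_2|\} \le q^2 + q \le (2q+1)^2$. The starting point is to read off from the construction of $\mathscr{J}_k$ (\autoref{figlongjumpbis}) a spanning grid substructure: $2k+1$ pairwise vertex-disjoint ``rows'' $R_1,\dots,R_{2k+1}$ and $2k+1$ pairwise vertex-disjoint ``columns'' $C_1,\dots,C_{2k+1}$, each being a connected subgraph (a path in the underlying grid), each pair $R_i\cap C_j$ consisting of a single vertex, and every vertex of $\mathscr{J}_k$ lying in exactly one row and one column. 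The extra ``long jump'' edge of $\mathscr{J}_k$ contributes no new vertices, so this substructure survives intact.

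The core argument proceeds by calling a row or column \emph{clean} if it is disjoint from $S$. Since each vertex of $S$ lies on exactly one row and one column, at most $q$ rows and at most $q$ columns fail to be clean; as $q<k\le 2k$, at least one clean row and one clean column exist. Each clean row is a connected subgraph of $\mathscr{J}_k-S$, hence lies entirely inside $A_1$ or entirely inside $A_2$ (no edges cross the separation); the same holds for clean columns. Any clean row and any clean column share a vertex, so all clean rows and columns must lie on the same side; without loss of generality, in $A_1$. Consequently every vertex of $A_2$ simultaneously lies in a non-clean row and a non-clean column, and there are at most $q\cdot q=q^2$ such vertices. This gives $|A_2|\le q^2$, and therefore $|B_2|=|A_2|+|S|\le q^2+q\le(2q+1)^2$, which is the required bound.

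The steps are essentially routine, and I do not expect a genuine obstacle; the only point that warrants care is verifying the grid substructure for the specific incarnation of $\mathscr{J}_k$ adopted by the convention in \autoref{figlongjumpbis}. As long as the rows and columns satisfy the ``unique intersection vertex'' property (which is immediate for a grid and unaffected by the addition of the jump edge), the counting argument yields the bound directly. Nothing in the argument uses the jump edge at all, which suggests that the same proof applies verbatim to the annulus grid $\mathscr{A}_k$ and, with minor cosmetic changes, to the crosscap grid $\mathscr{C}_k$ as well.
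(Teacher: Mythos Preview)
Your counting argument—every vertex of $A_2$ lies in a non-clean row and a non-clean column, so $|A_2|\le q^2$—is correct and more direct than the paper's route, but two discrepancies deserve comment. First, the convention $\mathscr{J}_k$ of \autoref{figlongjumpbis} is annulus-based (concentric cycles and radial tracks), not a square grid; this is cosmetic and your argument carries over verbatim with cycles and tracks in place of rows and columns. Second, the paper's proof in fact treats $\mathscr{I}_k$ rather than $\mathscr{J}_k$: it opens with ``Let $G\coloneqq\mathscr{I}_k$'', and the lemma is invoked for $\mathscr{I}_k$ in \autoref{lowbound}, so the statement appears to contain a typo. In $\mathscr{I}_k$ each satellite vertex lies on four extended tracks and on no cycle, so a satellite in the separator contaminates four tracks at once; adapting your count yields at most $q$ non-clean cycles and at most $4q$ non-clean tracks, and hence $|B_2|\le 4q^2+2q\le(2q+1)^2$, still within the claimed bound.

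As for method, after showing that the union of clean cycles and tracks is connected and lies on the $D_1$ side (as you also do), the paper does not perform your direct intersection count. Instead it argues that any path inside $G[B_2]$ meets only boundedly many cycles and tracks, infers that all of $B_2$ lies in a ball of graph-radius $q$, and bounds that ball by $(2q+1)^2$. Your approach is shorter and sidesteps the ball-size estimate entirely.
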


\begin{proof}
Let $G\coloneqq \mathscr{I}_{k}.$ 
Let $(B_1,B_2)$ be a separation of $G$ of order $q<k.$
Let  $A=B_{1}\cap B_{2},$ $D_{1}=B_{1}
\setminus A,$ and $D_{2}=B_{2}\setminus A.$ We assume that both $G[D_{1}]$ and  $G[D_{2}]$ are connected.
Let us show that one of $B_1$ and $B_2$ has at most $(2q+1)^2$ vertices.

Let $k'=2k+1$.
Let $Q$ be the spanning $((2k'+2)\times  k')$-annulus grid of $G.$
Clearly $Q$ contains $k'$ cycles. Also it contains  $2k'+2$ paths, each  on $k'$ vertices, which we call {\emph{tracks}}.
We also use $R$ for the set of the satellite vertices of $\mathscr{I}_{k}$.
We  enhance the tracks by extending each of them to the unique satellite vertex that is adjacent to one of its {endpoints}.
Let $Y$ be the union of all the $\geq k'-q$ cycles that are \textsl{not} met by $A$ and 
of all $\geq2k'+2-4q$ tracks that are \textsl{not} met by $A$ (observe that  if a vertex of $A$ belongs to $R$, then it meets four tracks).

As every track in $Y$ has a common endpoint to every cycle in $Y,$  we obtain that $Y$ is connected, therefore $V(Y)$ is either a subset of $D_{1}$ or a subset of $D_{2}.$
W.l.o.g., we assume that $V(Y)\subseteq D_{1}.$ 
We next prove that $|B_{2}|\leq (2q+1)^2.$

Let $x$ be a vertex of $D_{2}$
and let $P_x$ be some path of $G[B_{2}]$ starting from $x$ and finishing 
to some vertex of $A$ and with all internal vertices in $D_{2}.$
This path cannot meet more than $q=|A|$ cycles of  $Q$ because  each such cycle contains some vertex of $Y\subseteq D_{1}.$ Similarly, $P_{x}$ cannot meet more than $q=|A|$  tracks as each such track contains some vertex of  $Y\subseteq D_{1}.$
This implies that every vertex of $G[B_{2}]$ should be within distance 
at most $q$ from $x.$
It is now easy to verify that, in $G,$ the vertices within distance at most $q$
from some $x\in A$ is upper bounded by $(2q+1)^2.$
As all vertices of $B_{2}$ are accessible from $x$ within this distance in $G[B_{2}],$ we conclude that   $|B_{2}|\leq (2q+1)^2.$
\end{proof}

\begin{lemma}\label{lowbound}
Let $k\in\Nbbb$. Then $\idpr^\star(\mathscr{J}_k)=Ω(k^{1/384})$.
\end{lemma}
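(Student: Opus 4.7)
The plan is to combine \autoref{letas_s}, \autoref{blackbox}, and the already established \autoref{jopegftasi}. Since $\p_{\mathscr{J}}$ and $\p_{\mathscr{I}}$ are linearly equivalent, it suffices to show $\mathsf{idpr}^\star(\mathscr{I}_k) = \Omega(k^{1/384})$. Setting $G \coloneqq \mathscr{I}_k$ and $r \coloneqq \mathsf{idpr}^\star(G)$, the goal is to prove $\mathsf{idpr}(G) = \Ocal(r^{96})$; paired with $\mathsf{idpr}(\mathscr{I}_k) = \Omega(k^{1/4})$ from \autoref{jopegftasi}, this forces $r = \Omega(k^{1/384})$.

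All hypotheses of \autoref{letas_s} for $\p = \mathsf{idpr}$ are in hand: minor-monotonicity is \autoref{lem_id_min}, the bound $\hw(H) \le h(r) \coloneqq r^2 + 5$ is \autoref{@neutrality}, the $f_{\ref{@vegetables}}$-tight connectivity of $\mathscr{I}$ with $g(q) = (2q+1)^2$ is \autoref{@vegetables}, and $|V(\mathscr{I}_k)| > 2g(h(r))$ for $k$ sufficiently large. It yields a star tree decomposition $(T,\beta)$ of $G$ centered at some $t$, with $\mathsf{idpr}(G_t) \le r$, every leaf bag of size at most $l \coloneqq g(h(r)) = \Ocal(r^4)$, and $G[\beta(t')\setminus\beta(t)]$ connected for every leaf $t'$. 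Pick $X \subseteq V(G_t)$ and a partition $\Pcal_X = (X_1,\ldots,X_s)$ witnessing $\mathsf{idpr}(G_t) \le r$, i.e., $\bidim(G_t,X) \le r$ and $G_t/\!\!/\Pcal_X \in \gprojective$; set $B \coloneqq \bigcup_{t'\ne t}\beta(t')$ and $G^c \coloneqq (V(G), E(G)\cup E(G_t))$. Since $(T,\beta)$ is also a tree decomposition of $G^c$, each clique of $G^c$ either lies inside $G_t$ and so has size at most $\hw(G_t) \le r^2+5$, or it sits inside some leaf bag $\beta(t')$ where it is essentially a subset of the adhesion clique $A_{t'}$ (of size at most $h(r) = r^2+5$), augmented by $\Ocal(1)$ further vertices because $\Delta(\mathscr{I}_k) = 4$. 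Hence $m \coloneqq \hw(G^c) = \Ocal(r^2)$.

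Now $\mathscr{I}_k$ is $4$-connected and $4$-regular, so \autoref{blackbox} applies and gives $\bidim(G^c, X\cup B) \le f_{\ref{blackbox}}(r,m,l,4) = \Ocal(m^{48}) = \Ocal(r^{96})$, the remaining summands in $f_{\ref{blackbox}}$ being of lower order under $m = \Ocal(r^2)$ and $l = \Ocal(r^4)$. Monotonicity of bidimensionality in the ambient graph combined with $G \subseteq G^c$ then yields $\bidim(G, X\cup B) = \Ocal(r^{96})$. It remains to exhibit a partition $\Pcal^{**}$ of $X\cup B$ with $G/\!\!/\Pcal^{**} \in \gprojective$. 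I take the partition whose blocks are the connected components of the ``merger graph'' on vertex set $\{X_1,\ldots,X_s\}\cup\{\beta(t_1),\beta(t_2),\ldots\}$, with an edge between two parts whenever they share a vertex (which can only happen via the adhesions $A_{t'}\subseteq\beta(t)$). Then $G/\!\!/\Pcal^{**}$ is a subgraph of $G^c/\!\!/\Pcal^{**}$, and the latter coincides with $G_t/\!\!/\Pcal_{X,A}$, where $\Pcal_{X,A}$ is obtained from $\Pcal_X$ by further grouping each adhesion $A_{t'}$ together with any $X_i$ meeting it. Each $A_{t'}$ is a clique in $G_t$, so iteratively contracting $A_{t'}/\!\!/\Pcal_X$ to a single vertex in $G_t/\!\!/\Pcal_X$ is a sequence of clique contractions; the clique property is preserved along the way since every merged vertex inherits all adjacencies of its contracted members. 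Therefore $G_t/\!\!/\Pcal_{X,A}$ is a minor of $G_t/\!\!/\Pcal_X \in \gprojective$, hence projective, and so is $G/\!\!/\Pcal^{**}$. This gives $\mathsf{idpr}(G) \le \bidim(G, X\cup B) = \Ocal(r^{96})$, and combined with \autoref{jopegftasi} yields $r = \Omega(k^{1/384})$. The most delicate step, and the one I expect to demand most care, is this final projective-identification argument: verifying that the iterated clique contractions indeed operate on cliques at every intermediate stage, even when several adhesions share vertices after collapsing via $\Pcal_X$.
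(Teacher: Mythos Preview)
Your approach mirrors the paper's: both work with $\mathscr{I}_k$, apply \autoref{letas_s} and \autoref{blackbox}, and combine with \autoref{jopegftasi} to obtain $\idpr(G)=\Ocal(r^{96})$. Two remarks are in order.

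First, your bound on $\hw(G^c)$ argues about \emph{cliques}, but the Hadwiger number concerns clique \emph{minors}; bounding $\omega(G^c)$ does not bound $\hw(G^c)$. The correct argument uses closure of Hadwiger number under clique-sums: since $(T,\beta)$ is also a tree decomposition of $G^c$, one has $\hw(G^c)\le\max\big(\hw(G_t),\max_{t'}\hw(\text{leaf torso at }t')\big)$, with $\hw(G_t)\le h(r)$ by \autoref{@neutrality} and each leaf torso having $\hw\le |A_{t'}|+\Ocal(1)$ because $\mathscr{I}_k$ itself has bounded Hadwiger number. (The paper is equally terse here, simply asserting $\hw(G^c)\le h(r)$.)

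Second, the paper's projective-identification step is considerably simpler than yours and avoids your ``delicate step'' entirely. Instead of collapsing each full bag $\beta(t')$ and then arguing that iterated contractions of adhesion cliques remain minor operations, the paper identifies only $Y_{t'}\coloneqq(\beta(t')\setminus A_{t'})\cup\{a_{t'}\}$ for a single arbitrary $a_{t'}\in A_{t'}$. Since $N_{G^c}(\beta(t')\setminus A_{t'})\subseteq A_{t'}$ and $a_{t'}$ is already adjacent in $G_t$ to every other vertex of $A_{t'}$, this identification has exactly the same effect as deleting $\beta(t')\setminus A_{t'}$. Performing it for every leaf yields $G^c/\!\!/\{Y_{t'}\mid t'\}=G_t$ on the nose, whence $G^c/\!\!/\{Y_{t'}\}/\!\!/\Pcal_t=G_t/\!\!/\Pcal_t\in\gprojective$. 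Your clique-contraction argument is correct (identifications preserve cliques, so each successive adhesion image is still a clique), but it is unnecessary.
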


\begin{proof}
We prefer to work with $\mathscr{I}_k$ (cf. \autoref{fig_regular}) because it is 4-connected and 4-regular.
By \cref{jopegftasi}, we obtain that $\idpr(\mathscr{J}_k)=Ω(k^{1/4})$. 
Thus, given that $\mathscr{J}_{k}$ and $\mathscr{I}_{k}$ are linearly equivalent, it is enough to prove that $$\idpr(\mathscr{I}_{k})\in\mathcal{O}\big(\idpr^{\star}(\mathscr{I}_{k})^{96}\big).$$

For simplicity, we set $G=\mathscr{I}_k$.
Let $r$ be such that $\idpr^\star(G)\leq r$.
Our objective is to {prove that 
$\idpr(G)\in \mathcal{O}(r^{96})$}.
%
Let $h,g:\Nbbb\to\Nbbb$ be such that
\begin{eqnarray}
h(x) & = &  x^2+5,  \mbox{~and} \label{@naturphilosophie}\\
g(x) & = & (2x+1)^2. \label{@eco}
\end{eqnarray}
Thus, by applying \autoref{@neutrality}, for any graph $H$, we have
\begin{eqnarray}
\hw(H) & \leq &h(\idpr(H)).\label{@eq}
\end{eqnarray}
Given that $|V(G)|=8k^2+13k+5$, and that $\idpr(G)\leq|V(G)|$, we may assume that
\begin{eqnarray}
|V(G)| & > & 2g(h(r)),  \mbox{~and} \label{@assurances}\\
h(r)+1 & \leq & k \label{@introjected}
\end{eqnarray}
Indeed, otherwise, in the first case, $\idpr(G)\leq2g(h(r))\in \mathcal{O}(r^4)\subseteq \mathcal{O}(r^{96})$, and in the second case,
$\idpr(G)\leq8k^2+13k+5\leq8h(r)^2+13h(r)+5\in \mathcal{O}(r^4)\subseteq \mathcal{O}(r^{96})$. In both cases, we are done.
By \autoref{@vegetables}, $G$ is $(g,k)$-tightly connected, and thus, by \eqref{@introjected}, $G$ is $(g,h(r)+1)$-tightly connected.
Thus, we have all ingredients to apply \autoref{letas_s}.
We obtain a tree decomposition $(T,\beta)$ of $G$ where $T$ is a star with center $t$, where $\idpr(G_t)\leq r$ and where for every $tt'\in E(T')$, $G[\beta(t')\setminus\beta(t)]$ is a connected graph and $|\beta(t')|\leq g(h(r))$.

Since $\idpr(G_t)\leq r$, there is a set $X_t\subseteq \beta(t)$ and $\Pcal_t=(X_1,\dots,X_p)\in\Pcal(X_t)$ such that $G_t/\!\!/\Pcal_t\in\Gcal_{\sf projective}$ and $\bidim(G_t,X_t)\leq r$.
Let $G^c$ denote the graph $(V(G),E(G)\cup E(G_{t}))$ and $B=\bigcup_{t'\in V(T)\setminus\{t\}}\beta (t')$.
By \eqref{@eq}, $\hw(G^c)\leq h(r)$.
Given that $G$ is 4-connected and has maximum degree four, we can hence apply \autoref{blackbox} with $\Delta=4, l=g(h(r))$ and $m=h(r)$ on the tree decomposition $(T,\beta)$.
We conclude that 
\begin{eqnarray}
\bidim(G^c,X_t\cup B)&\leq & f_{\ref{blackbox}}(r,\eta,g(h(r)),4)\in\mathcal{O}(r^{96}).
\label{@ubi}
\end{eqnarray}

Clearly $G$ is a (spanning) subgraph of $G^{\mathsf{c}}.$
For each $t'\in N'_{T}(t)$, let $A_{t'}$ be the adhesion of $t'$ and $t$, let $C_{t'}=\beta(t')\setminus A_{t'}$,
and let $Y_{t'}$ be a set composed of $C_{t'}$ and an arbitrary vertex $a_{t'}$ of $A_{t'}$.
Remember that $G^c[A_{t'}]$ is a clique.
Hence, the identification of $C_{t'}$ with $a_{t'}$ does not create additional edges, and thus has the same effect as removing $C_{t'}$.
Therefore, $G_t=G^c/\!\!/\{Y_{t'}\mid t'\in N_T(t)\}$, and thus $G^c/\!\!/\{Y_{t'}\mid t'\in N_T(t)\}/\!\!/\Pcal_t\in\Gcal_{\sf projective}$.
Thus $\idpr(G^c)\leq\bidim(G^{\mathsf{c}},X_t\cup B_t)\in^{\eqref{@ubi}} \Ocal(r^{96})$.
Given that $\idpr(G)\leq \idpr(G^c)$, the statement follows.
\end{proof}

%

If we want to go to the plane instead of the projective plane, the following lemma is an easy corollary of \autoref{lowbound} and \cite[Lemma 7.13]{ThilikosW2025excluding}.
Remember that $\mathscr{C}_k$ is the crosscap grid.

\begin{lemma}
\label{i9opb1}
Let $k\in\Nbbb$.
Then $\mathsf{idpl}^\star(\mathscr{J}_k)=Ω(k^{1/384})$ and $\mathsf{idpl}^\star(\mathscr{C}_k)=Ω(k^{1/480})$.
\end{lemma}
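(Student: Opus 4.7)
The proof of \autoref{i9opb1} reduces, via \autoref{lowbound}, to an elementary order-preserving comparison and to the technical Lemma 7.13 of \cite{ThilikosW2025excluding}.

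For the first claim, $\mathsf{idpl}^\star(\mathscr{J}_k)=\Omega(k^{1/384})$, I would first establish the pointwise inequality $\mathsf{idpr}(G)\leq \mathsf{idpl}(G)$ valid for every graph $G$. Indeed, since $\gplanar\subseteq \gprojective$, any pair $(X,\mathcal{P})$ witnessing $\mathsf{idpl}(G)\leq k$, i.e.\ with $\bidim(G,X)\leq k$ and $G/\!\!/\mathcal{P}\in\gplanar$, automatically witnesses $\mathsf{idpr}(G)\leq k$ because $G/\!\!/\mathcal{P}\in\gprojective$ as well. This inequality is preserved under the clique-sum extension (any tree decomposition certifying a value for $\mathsf{idpl}^\star$ certifies the same value for $\mathsf{idpr}^\star$), so $\mathsf{idpr}^\star\preceq \mathsf{idpl}^\star$ holds identically. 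Combining this pointwise bound with \autoref{lowbound} immediately yields $\mathsf{idpl}^\star(\mathscr{J}_k)\geq \mathsf{idpr}^\star(\mathscr{J}_k)=\Omega(k^{1/384})$.

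For the second claim, $\mathsf{idpl}^\star(\mathscr{C}_k)=\Omega(k^{1/480})$, the plan is to invoke \cite[Lemma 7.13]{ThilikosW2025excluding} as the planar-identification analogue of the transfer mechanism developed here. Reading the ratios of the exponents is instructive: we have $384=4\cdot 96$ and $480=5\cdot 96$, where the factor $96$ is exactly the exponent produced by the black-box \autoref{blackbox} used inside \autoref{lowbound}, while the factors $4$ and $5$ correspond to the sizes of the relevant minor-obstructions -- the projective obstruction $F$ used in \cref{jopegftasi} (giving the $k^{1/4}$ floor for $\mathsf{idpr}(\mathscr{J}_k)$) and, in the crosscap setting, a planar obstruction (such as $K_5$ or $K_{3,3}$) that a bounded-bidimensionality identification of $\mathscr{C}_k$ cannot remove. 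The role of Lemma 7.13 is to quantify this bidimensionality loss for $\mathscr{C}_k$ under planar identifications, providing a $\mathsf{idpl}(\mathscr{C}_k)=\Omega(k^{1/5})$-type bound; pushing it through the tight-connectivity/torso machinery of \autoref{letas_s}-\autoref{blackbox} exactly as in the proof of \autoref{lowbound} then loses the additional factor $96$ in the exponent and produces $\Omega(k^{1/480})$.

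I do not expect any genuinely new difficulty: both parts amount to short assemblies of already established tools. The only nontrivial ingredient is the black-box content of Lemma 7.13 of \cite{ThilikosW2025excluding}, which handles the planar variant of \cref{jopegftasi} in the same way \autoref{lowbound} handles its projective counterpart; once that is in hand, the derivation is a direct reading of the exponents.
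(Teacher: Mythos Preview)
Your treatment of the first claim is exactly the paper's argument: the containment $\gplanar\subseteq\gprojective$ gives the pointwise inequality $\mathsf{idpr}\leq\mathsf{idpl}$, hence $\mathsf{idpr}^\star\leq\mathsf{idpl}^\star$, and \autoref{lowbound} finishes.

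For the second claim your plan diverges from the paper and rests on a mischaracterization of \cite[Lemma~7.13]{ThilikosW2025excluding}. That lemma does \emph{not} supply a base bound of the form $\mathsf{idpl}(\mathscr{C}_k)=\Omega(k^{1/5})$ to be fed into the tight-connectivity/torso machinery of \autoref{letas_s}--\autoref{blackbox}. Rather, it already delivers the clique-sum-extended bound $\p^\star(\mathscr{C}_k)=\Omega(k^{1/480})$ for the vertex-\emph{deletion} parameter
\[
\p(G)=\min\{k\mid \exists\, X\subseteq V(G)\text{ with }\bidim(G,X)\le k\text{ and }G-X\in\gplanar\}.
\]
The paper then only needs the pointwise comparison $\mathsf{idpl}(G)\ge\p(G)$, which follows from the elementary fact that $G-X$ is a subgraph of every $G/\!\!/\Pcal$ with $\Pcal\in\Pcal(X)$: if some identification of $X$ yields a planar graph, then deleting $X$ does too. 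This one-line comparison replaces the entire re-run of \autoref{lowbound} that you propose. Incidentally, your numerological reading of the exponents is off: the factor $4$ in $k^{1/4}$ comes from the fourth root in \autoref{@subjection}, not from the size of the obstruction $F$.
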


\begin{proof}
By \autoref{lowbound}, $\idpr^\star(\mathscr{J}_k)=Ω(k^{1/384})$.
Given that $\Gcal_{\sf planar}\subseteq \Gcal_{\sf projective}$, we thus have that, for every graph $G$, $\mathsf{idpl}(G)\ge\idpr(G)$, and therefore $\mathsf{idpl}^\star(\mathscr{J}_k)=Ω(k^{1/384})$.

We define the parameter $\p$ such that  $$\p(G)=\min\{k\mid\text{ there exists } X\subseteq V(G)\text{ s.t. } \bidim(G,X)\le k \text{ and } G- X\in\Gcal_{\sf planar}\}.$$
By \cite[Lemma 7.13]{ThilikosW2025excluding}, 
$\p^\star(\mathscr{C}_k)=Ω(k^{1/480})$.
Given that $\Mcal(G,X)\in\Gcal_{\sf planar}$ implies $G-X\in\Gcal_{\sf planar}$, we thus have that, for every graph $G$, 
$\mathsf{idpl}(G)\ge\p(G)$, and therefore $\mathsf{idpl}^\star(\mathscr{C}_k)=Ω(k^{1/480})$.
\end{proof}

\section{Conclusion and open problems}
%

In this paper, we prove a decomposition-based 
min-max theorem for the structure of graphs excluding edge-apex graphs  or, alternatively, graphs embedded in the pinched sphere as minors.
We prove that such graphs can be tree-decomposed so that each torso 
contains a set of vertices of bounded bidimensionality whose identification into few vertices results in a projective graph.
Moreover, this decomposition is optimal in the sense that apex-graphs do not admit such a decomposition.
If the excluded graph is embeddable in a surface, analogous decomposition
theorems are proven in \cite{ThilikosW2025excluding}.
In that sense, this paper can be seen as the first step towards extending the results of \cite{ThilikosW2025excluding} to pseudosurfaces.
This seems particularly interesting for those that define graphs classes that are minor-closed (see \cite{Knor96charac}).
Thinking further in this direction, we believe that the decomposition theorems for pinched surfaces of higher genus cannot be expected to avoid the presence of apices.
This is due to the special nature of the sphere and the projective plane that allowed for the arguments used in \cref{subsec:localtoglobal}.

Notice that the operations we apply to the torsos of our decomposition are not removals of low bidimensionality vertex sets, as it is the case of the interpretation of the GMST in \cite{ThilikosW2025excluding}, but vertex identifications within such sets.
Interestingly, vertex identification, as a modification operation become a topic of research only recently 
(see \cite{MorelleST2024vertex,MorelleST25grap} for some first results).
Our structure theorem says that, in the absence of a long jump, the vortices that appear as part of the almost embeddings can be identified into a small number of vertices, turning the almost embedding of the torso at any node of our tree decomposition into a proper embedding.
More generally, we may combine vertex removals and vertex identifications in order to describe the modification operations for the version of GMST proposed in \cite{ThilikosW2025excluding}:
when it comes to apices we remove vertex sets of small size and when it comes to vortices we identify them as vertex sets of low bidimensionality in order to obtain  some surface embedding of the resulting graph. 
The removal of few additional vertices is unavoidable because, similar to the result in \cite{DvorakT14list}, apices generally serve two purposes:
Firstly, they are responsible for guaranteeing a flat structure in the embedded part, and secondly, they \textsl{separate} the vortices from each other.
This second purpose becomes unnecessary when excluding a long jump.
However, the long-jump grid itself cannot be turned into a planar graph by identifying a vertex set of small bidimensionality.
Hence, one would still need to allow for an additional apex set, or one would need to allow for an, of course bounded, increase of the Euler-genus of the surface.

Another direction of research is whether one may use the decomposition of \cref{upper_b}  (or \cref{upper_sb})
for algorithmic purposes. Are there interesting problems for which the presence of  $\mathscr{J}_k$ (or of $\mathscr{J}_k$ and $\mathscr{C}_k$) as a minor in the input graph directly certifies an answer?

A last question is whether the functional dependencies of our results (i.e., the function $f$ in  \cref{upper_b}  or \cref{upper_sb}) can be reduced to polynomial ones.
The techniques used in this paper originate from the proof of the GMST by Kawarabayashi, Thomas, and Wollan \cite{kawarabayashi2020quickly}.
In particular, \cref{main2} and \cref{main1} are heavily based on the approach of Kawarabayashi, Thomas, and Wollan.
The core theorems from \cite{kawarabayashi2020quickly} as well as \cref{main2} and \cref{main1} inherently force exponential dependencies.
One source of these dependencies can be found in the one generates new nests for the two freshly found young vortices obtained from splitting an older vortex.
These theorems require the sacrifice of roughly half of the previous nest every time a vortex is split.
Recently, Gorsky, Seweryn, and Wiederrecht \cite{gorsky2025polynomial} found a way to avoid these sources for exponential blowup when splitting vortices, culminating in polynomial bounds for the GMST.
We believe that it should be possible to adapt their methods to our setting.
This would allow us to reprove variants of \cref{main2} and \cref{main1} that do not force exponential blowup, allowing for overall polynomial bounds for the functions in our main theorem.

%

\end{document}